\def\a{\alpha}
\def\b{\beta}
\def\g{\gamma}
\def\G{\Gamma}
\def\d{\delta}
\def\l{\lambda}
\def\S{\Sigma}
\newcommand{\mB}{\mathcal{B}}
\newcommand{\mC}{\mathcal{C}}
\newcommand{\mG}{\mathcal{G}}
\newcommand{\mH}{\mathcal{H}}
\newcommand{\mJ}{\mathcal{J}}
\newcommand{\mK}{\mathcal{K}}
\newcommand{\mM}{\mathcal{M}}
\newcommand{\mN}{\mathcal{N}}
\newcommand{\mS}{\mathcal{S}}
\newcommand{\fa}{\mathfrak{a}}
\newcommand{\ff}{\mathfrak{f}}
\newcommand{\fl}{\mathfrak{l}}
\newcommand{\fm}{\mathfrak{m}}
\newcommand{\fp}{\mathfrak{p}}
\newcommand{\fq}{\mathfrak{q}}
\newcommand{\bfC}{\mathbf{C}}
\newcommand{\bfF}{\mathbf{F}}
\newcommand{\bfG}{\mathbf{G}}
\newcommand{\bfH}{\mathbf{H}}
\newcommand{\bfP}{\mathbf{P}}
\newcommand{\bfQ}{\mathbf{Q}}
\newcommand{\bfR}{\mathbf{R}}
\newcommand{\bfT}{\mathbf{T}}
\newcommand{\bfZ}{\mathbf{Z}}
\newcommand{\bfi}{\mathbf{i}}
\newcommand{\Oo}{\mathcal{O}}
\newcommand{\OK}{\mathcal{O}_K}
\newcommand{\AK}{\mathbf{A}_K}
\newcommand{\AQ}{\mathbf{A}}
\newcommand{\AKf}{\mathbf{A}_{K,\textup{f}}}
\newcommand{\AQf}{\mathbf{A}_{\textup{f}}}
\newcommand{\OKv}{\mathcal{O}_{K,v}}
\newcommand{\tuf}{\textup{f}}
\newcommand{\tuh}{\textup{h}}
\newcommand{\tuM}{\textup{M}}
\newcommand{\tualg}{\textup{alg}}
\newcommand{\tuint}{\textup{int}}
\newcommand{\tuNM}{\textup{NM}}
\newcommand{\turk}{\textup{rk}}
\newcommand{\tush}{\textup{sh}}
\newcommand{\tuss}{\textup{ss}}
\newcommand{\tust}{\textup{st}}
\newcommand{\ov}{\overline}
\newcommand{\be}{\begin{equation}}
\newcommand{\ee}{\end{equation}}
\newcommand{\bes}{\begin{equation*}}
\newcommand{\ees}{\end{equation*}}
\newcommand{\bs}{\begin{split}}
\newcommand{\es}{\end{split}}
\newcommand{\bss}{\begin{split*}}
\newcommand{\ess}{\end{split*}}
\newcommand{\bmat}{\left[ \begin{matrix}}
\newcommand{\emat}{\end{matrix} \right]}
\newcommand{\bsmat}{\left[ \begin{smallmatrix}}
\newcommand{\esmat}{\end{smallmatrix} \right]}
\newcommand{\bml}{\begin{multline}}
\newcommand{\eml}{\end{multline}}
\newcommand{\bmls}{\begin{multline*}}
\newcommand{\emls}{\end{multline*}}
\DeclareMathOperator{\ad}{ad}
\DeclareMathOperator{\Ann}{Ann}
\DeclareMathOperator{\BC}{BC}
\DeclareMathOperator{\Cl}{Cl}
\DeclareMathOperator{\coker}{coker}
\DeclareMathOperator{\cond}{cond}
\DeclareMathOperator{\Desc}{Desc}
\DeclareMathOperator{\diag}{diag}
\DeclareMathOperator{\End}{End}
\DeclareMathOperator{\Frob}{Frob}
\DeclareMathOperator{\Gal}{Gal}
\DeclareMathOperator{\GL}{GL}
\DeclareMathOperator{\GU}{GU}
\DeclareMathOperator{\Hom}{Hom}
\DeclareMathOperator{\length}{length}
\DeclareMathOperator{\Res}{Res}
\DeclareMathOperator{\SL}{SL}
\DeclareMathOperator{\Sp}{Sp}
\DeclareMathOperator{\SU}{SU}
\DeclareMathOperator{\Symm}{Symm^2}
\DeclareMathOperator{\U}{U}
\DeclareMathOperator{\val}{val}
\DeclareMathOperator{\vol}{vol}
\def\f{\phi}
\def\p{\psi}
\def\quf{\mathbf{Q}}
\def\AQf{\mathbf{A}_{\textup{f}}}
\def\adele{\mathbf{A}}
\def\hh{\textup{h}}
\newcommand{\no}{\noindent}
\newcommand{\R}{\textup{Res}_{K/\mathbf{Q}}\hspace{2pt}}
\newcommand{\sep}{\hspace{3pt} | \hspace{3pt}}
\newcommand{\Ga}{\mathbf{G}_a}
\newcommand{\hs}{\hspace{2pt}}
\newcommand{\hf}{\hspace{5pt}}
\newcommand{\iy}{\infty}
\newcommand{\I}{\mathbf{i}}
\newcommand{\tr}{\textup{tr}\hspace{2pt}}
\newcommand{\Ur}{\textup{Im}\hspace{2pt}}
\newcommand{\Rz}{\textup{Re}\hspace{2pt}}
\newcommand{\T}{\mathbf{T}}
\newcommand{\invlim}{\mathop{\varprojlim}\limits}
\theoremstyle{plain}
\newtheorem{thm}{Theorem}
\newtheorem{prop}[thm]{Proposition}
\newtheorem{cor}[thm]{Corollary}
\newtheorem{lemma}[thm]{Lemma}
\newtheorem{conj}[thm]{Conjecture}
\newtheorem{fact}[thm]{Fact}
\theoremstyle{definition}
\newtheorem{definition}[thm]{Definition}
\newtheorem{notation}[thm]{Notation}
\newtheorem{rem}[thm]{Remark}
\numberwithin{thm}{section}
\numberwithin{equation}{section}
\begin{document}

\title[The Maass space and the Bloch-Kato conjecture]
{The Maass space for $\U(2,2)$ and the Bloch-Kato conjecture for the symmetric square motive of a modular form\footnotetext{2000 Mathematics subject classification 11F33 (primary), 11F55, 11F67, 11F80, 11F30 (secondary)}}
%\shorttitle{Congruences among modular forms on $\U(2,2)$}
\author{Krzysztof Klosin}
\date{November 1, 2011}

\maketitle

\begin{abstract}
Let $K=\bfQ(i\sqrt{D_K})$ be an imaginary quadratic field of discriminant $-D_K$. We introduce a notion of an adelic Maass space $\mS_{k, -k/2}^{\rm M}$ for automorphic forms on the quasi-split unitary group $\U(2,2)$ associated with $K$ and prove that it is stable under the action of all Hecke operators. When $D_K$ is prime we obtain a Hecke-equivariant descent from $\mS_{k,-k/2}^{\rm M}$ to the space of elliptic cusp forms $S_{k-1}(D_K, \chi_K)$, where $\chi_K$ is the quadratic character of $K$. For a given $\phi \in   S_{k-1}(D_K, \chi_K)$, a prime $\ell >k$, we then construct (mod $\ell$) congruences between the Maass form corresponding to $\phi$ and hermitian modular forms orthogonal to $\mS_{k,-k/2}^{\rm M}$ whenever $\val_{\ell}(L^{\rm alg}(\Symm \phi, k))>0$. This gives a proof of the holomorphic analogue of the unitary version of Harder's conjecture. Finally, we use these congruences to provide evidence for the Bloch-Kato conjecture for the motives $\Symm\rho_{\phi}(k-3)$ and $\Symm\rho_{\phi}(k)$, where $\rho_{\phi}$ denotes the Galois representation attached to $\phi$. 

\end{abstract}

\section{Introduction} \label{Introduction}

In 1990 Bloch and Kato \cite{BlochKato90} formulated a conjecture whose version relates the order of a Selmer group of a motive $M$ to a special value of an $L$-function of $M$. This is a very far-reaching conjecture which is currently known only in a handful of cases, mostly concerning the situations when $M$ arises from a one-dimensional Galois representation. However, in 2004 Diamond, Flach and Guo proved a very strong result in a three-dimensional case \cite{DiamondFlachGuo04}. Indeed, they proved the Bloch-Kato conjecture for the adjoint motive $\ad^0\rho_\phi$ (and its Tate twist $\ad^0 \rho_\phi(1)$) of the $\ell$-adic Galois representation $\rho_{\phi}$ attached to a classical modular form $\phi$ without any restrictions on the weight ($\geq 2$) or the level of $\phi$. Their proof was highly influenced by the ideas that were first applied by Taylor and Wiles in their proof of Fermat's Last Theorem (\cite{TaylorWiles95}, \cite{Wiles95}). 

In 2009 the author proved a (weaker) result providing evidence for the conjecture for a different Tate twist of $\ad^0\rho_\phi$ (more precisely for $\ad^0\rho_\phi(-1)\chi=\Symm \rho_{\phi}(k-3)$ and $\ad^0\rho_\phi(2)\chi = \Symm \rho_{\phi}(k)$) for modular forms $\phi$ of any weight $k-1$ (with $k$ divisible by 4), level 4 and non-trivial character $\chi$ \cite{Klosin09}. The method was different from that of \cite{DiamondFlachGuo04} (but similar to the one used by Brown \cite{Brown07} who worked with Saito-Kurokawa lifts). It relied on constructing congruences between a certain lift of $\phi$ (called the \emph{Maass lift}) to the unitary group $\U(2,2)$ defined with respect to the field $\bfQ(i)$ and hermitian modular forms (i.e., forms on $\U(2,2)$) which were orthogonal to the \emph{Maass space} (the span of such lifts). The elements in the relevant Selmer groups were then constructed using ideas of Urban \cite{Urban01}. Unfortunately, some of the methods implemented in \cite{Klosin09} relied significantly on the fact that the class number of $\bfQ(i)$ is one and could not be directly generalized to deal with other imaginary quadratic fields.

In this paper we develop new tools - among them a new notion of an adelic Maass space and a Rankin-Selberg type formula - which work in sufficient generality. As a consequence we are in particular able to extend the results of \cite{Klosin09} to all imaginary quadratic fields of prime discriminant $-D_K$, i.e., to all modular forms $\phi$ of any prime level $D_K$, of arbitrary weight $k-1$ (with $k$ divisible by the number of roots of unity contained in $K$), and nebentypus $\chi_K$ being the quadratic character associated with the extension $K/\bfQ$. Our result on the one hand provides evidence for the Bloch-Kato conjecture for the motives $\Symm \rho_{\phi}(k-3)$ and $\Symm\rho_{\phi}(k)$ for a rather broad family of modular forms $\phi$. On the other hand the congruence itself (between a Maass lift and a hermitian modular form orthogonal to the Maass space) provides a proof 
of a holomorphic analogue to a conjecture recently formulated by Dummigan extending the so-called Harder's conjecture concerning  Siegel modular forms \cite{Dummigan11Harderconjecture}.
%of the unitary analogue of Harder's conjecture (as formulated recently by Dummigan \cite{Dummigan11Harderconjecture}) in the extreme (holomorphic) case. 

As alluded above, the first difficulty that one encounters  in dealing with a general imaginary quadratic field is the lack of a proper notion of the Maass space in this case. The definition introduced by Krieg \cite{Krieg91} does not allow one to define the action of the Hecke operators at non-principal primes. The more recent (very elegant) results due to Ikeda \cite{Ikeda08} while dealing with class number issues, are not quite sufficient for our purposes. So, in this paper we introduce a new adelic version of the Maass space and carefully study its properties, especially its invariance under the action of the Hecke algebras. This provides us with a correct analogue of the classical Maass lift to the space of automorphic forms on $\U(2,2)(\AQ)$ for any imaginary quadratic field of prime discriminant. We then proceed to construct a congruence between the Maass lift and hermitian modular forms orthogonal to the Maass space.

The method of exhibiting elements in Selmer groups of automorphic forms via constructing congruences between automorphic forms on a higher-rank group has been used by several authors. The original idea can be dated back to the influential paper of Ribet \cite{Ribet76} on the converse to Herbrand's Theorem, where for a certain family of Dirichlet characters $\chi$ elements in the $\chi$-eigenspace of the class group of a cyclotomic field are constructed by first exhibiting a congruence between a certain Eisenstein series (associated with $\chi$) and a cusp form on $\GL_2$. Higher-rank analogues of this method have been applied to provide evidence for (one inequality in) the Bloch-Kato conjecture for several motives by Bella\"iche and Chenevier, Brown, Berger, B\"ocherer, Dummigan, Schulze-Pillot, as well as Agarwal and the author (\cite{BellaicheChenevier04},  \cite{Brown07}, \cite{B09}, \cite{Klosin09}, \cite{BochererDummiganSchulzePillotpreprint}, \cite{AgarwalKlosin10preprint}). An extension of these ideas was also used to prove results towards the Main Conjecture of Iwasawa Theory by Mazur and Wiles, Urban (\cite{MazurWiles84}, \cite{Urban01}) and very recently by Skinner and Urban  \cite{SkinnerUrbanpreprint}.

The general idea is the following. Given an automorphic form $\phi$ on an algebraic group $M$ (with an associated Galois representation $\rho_{\phi}$) one lifts $\phi$ to an automorphic form on $G$ in which $M$ can be realized as a subgroup (in our case $M=\Res_{K/\bfQ}(\GL_{2/K})$ is the Levi subgroup of a Siegel parabolic of $G=\U(2,2)$). The Galois representation attached to the lift is reducible and has irreducible components related to $\rho_{\phi}$. Assuming divisibility (by a uniformizer $\varpi$ in some extension of $\bfQ_{\ell}$) of a certain $L$-value associated with $\phi$ one shows (this is usually the technically difficult part) that the lift is congruent (mod $\varpi$)  to an automorphic form $\pi$ on $G$ whose Galois representation $\rho_{\pi}$ is irreducible. Because of the congruence, the mod $\varpi$ reduction of $\rho_{\pi}$ must be reducible, but (because $\rho_{\pi}$ was irreducible) it can be chosen to represent a non-split extension of its irreducible components, thus giving rise to a non-zero element in some Selmer group (related to $\rho_{\phi}$). 

Let $\phi \in S_{k-1}(D_K, \chi_K)$ be a newform. In our case the lifting procedure is the Maass lift, which produces an automorphic form $f_{\phi, \chi}$ on $\U(2,2)(\AQ)$ (which depends on a certain character $\chi$ of the class group of $K$) whose associated automorphic representation is CAP in the sense of Piatetski-Shapiro \cite{Piatetski-Shapiro83}. Even though the desired congruence is between Hecke eigenvalues of the lift  $f_{\phi, \chi}$ and those of $\pi$, we first construct a congruence between Fourier coefficients of these forms and only then deduce the Hecke eigenvalue congruence. The former congruence is achieved by first defining a certain hermitian modular form $\Xi$ (essentially a product a Siegel Eisenstein series and a hermitian theta series) and writing it as:

\[\Xi = \frac{\left< \Xi, f_{\phi, \chi}\right>}{\left< f_{\phi, \chi}, f_{\phi, \chi}\right>} f_{\phi, \chi} + g',\] where  $g'$ is a hermitian modular form orthogonal to $f_{\phi, \chi}$. The form $\Xi$ has nice arithmetic properties (in particular its Fourier coefficients are $\varpi$-adically integral) and we show that the inner products can be expressed by certain $L$-values. In particular the inner product in the denominator is related to $L^{\rm alg}(\Symm \phi, k)$. Choosing $\Xi$ so that the special $L$-values contributing to the inner product in the numerator make it  a $\varpi$-adic unit and assuming the $\varpi$-adic valuation of $L^{\rm alg}(\Symm \phi, k)$ is positive we get a congruence between $f_{\phi, \chi}$ and a scalar multiple $g$ of $g'$. To ensure that $g$ itself is not a Maass lift we construct a certain Hecke operator $T^{\rm h}$ that kills the ``Maass part'' of $g$. 

Let us now briefly elaborate on the technical difficulties that one encounters in the current paper as opposed to the case of $K=\bfQ(i)$ which was studied in \cite{Klosin09}. First of all, as mentioned above, the Maass space and the Maass lift in the case of the field $\bfQ(i)$ are well-understood thanks to the work of Kojima, Gritsenko and Krieg (\cite{Kojima}, \cite{Gritsenko86}, \cite{Gritsenko90}, \cite{Krieg91}) and in \cite{Klosin09} we simply invoke the relevant definitions and properties of these objects. In the current paper we introduce a notion of an adelic Maass space for a general imaginary quadratic field and prove that it is a Hecke-stable subspace of the space of hermitian modular forms. This result does not use the assumption that $D_K$ is prime. One of the difficulties in extending the classical notion of Kojima and Krieg is the fact that when the class number of $K$ is greater than one the classical Maass space (which was defined by Krieg for all imaginary quadratic fields) is not stable under the action of the local Hecke algebras at non-principal primes of $K$. This is one of the reasons why we chose to formulate the theory in an adelic language, even though it would in principle be possible to extend the classical definitions of Krieg and work with several copies of the hermitian upper half-space. However, we think that the action of the Hecke operators as well as the role played by a central character are most transparent in the adelic setting. When $D_K$ is prime we are able to relate our lift to the results of Krieg and Gritsenko and derive explicit formulas for the descent of the Hecke operators. We also prove an $L$-function identity relating the standard $L$-function of a Maass lift to the $L$-function of the base change to $K$ of the modular form $\phi$. All this is the content of section \ref{Maass space}. A yet another notion of the Maass lift has in the meantime been introduced by Ikeda \cite{Ikeda08} using a different approach. This notion agrees with ours in the case of a trivial central character, but not all the formulas necessary for our arithmetic applications are present in \cite{Ikeda08}.

On the other hand a lot of attention in \cite{Klosin09} was devoted to computing the Petersson norm of a Maass lift (\cite{Klosin09}, section 4). Here, however, we use a formula due to Sugano (cf. \cite{Ikeda08}) to tackle the problem. To derive the congruence one also needs to be able to express the  inner product $\left< \Xi, f_{\phi, \chi}\right>$ by certain $L$-values related to $\phi$. This calculation (drawing heavily on the work of Shimura) becomes somewhat involved in the case of class number larger than one.
 The relevant computations are carried out in section \ref{Inner product formula}. Since it does not add much to the computational complexity we prove all the results for the group $\U(n,n)$ for a general $n>1$, obtaining this way a general Rankin-Selberg type formula that might be of independent interest (Theorem \ref{ransel}). In that section we also prove the integrality of the Fourier coefficients of a certain hermitian theta series involved in the definition of $\Xi$. Finally in the current paper our construction of the Hecke operator $T^{\rm h}$ is somewhat cleaner, because we work more with completed Hecke algebras. This is carried out in section \ref{Completed Hecke algebras}.

In section \ref{Congruence} we collect all the results to arrive at the desired congruence, first between the Fourier coefficients of $f_{\phi, \chi}$ and a hermitian modular form orthogonal to the Maass space (Theorem \ref{mainthm}) and then between the Hecke eigenvalues of $f_{\phi, \chi}$ and those of some hermitian Hecke eigenform $f$ also orthogonal to the Maass space (Corollary \ref{cormain78}). The latter can only be achieved modulo the first power of $\varpi$  even if $L^{\rm alg}(\Symm \phi, k)$ is divisible by a higher power of $\varpi$. This is not a shortcoming of our method but a consequence of the fact that there may be more than one $f$ congruent to $f_{\phi, \chi}$ and the $L$-value (conjecturally) controls contributions from all such $f$. In fact this is precisely what we prove by studying congruences between $f_{\phi, \chi}$ and \emph{all} the possible eigenforms $f$ orthogonal to the Maass space and as a result give a lower bound (in terms of $L^{\rm alg}(\Symm \phi, k)$) on the index of an analogue of the classical Eisenstein ideal (which we in our case call the \emph{Maass ideal}) in the appropriate Hecke algebra (section \ref{The Maass ideal}). Finally, we demonstrate how our results imply the holomorphic analogue of Dummigan's version of Harder's conjecture for the group $\U(2,2)$ (section \ref{Unitary analogue of Harder's conjecture}).

The congruence can be used to deduce the existence of certain non-zero elements in the Selmer groups $H^1_f(K, \ad^0 \rho_f(-1))$ and $H^1_f(K, \ad^0 \rho_f(2))$ and hence get a result towards the Bloch-Kato conjecture for these motives. For this we use a theorem of Urban \cite{Urban01}. The relevant results are stated in section \ref{The Bloch-Kato conjecture}. We note that the above Selmer groups are over $K$, while the conjecture relates  $L^{\rm alg}(\Symm \phi, k)$ to the order of the corresponding Selmer group over $\bfQ$. More precisely, under some mild assumptions one has $\val_{\ell}(L^{\rm alg}(\Symm \phi, k)) = \val_{\ell}(L^{\rm alg}(\Symm \phi, k-3)) = \val_{\ell}(L^{\rm alg}(\ad \phi, -1, \chi_K))$, so $L^{\rm alg}(\Symm \phi, k)$ should control the order of $H^1_f(\bfQ, \Symm \rho_{\phi}(k-3))= H^1_f(\bfQ, \ad^0 \rho_{\phi}(-1)\chi_K)$ (we are grateful to Neil Dummigan for pointing out an error in an earlier version of the article). %In the current paper in section \ref{The Bloch-Kato conjecture} we establish (using some results of Bella\"iche and Chenevier \cite{BellaicheChenevierbook}) a criterion under which the extension we construct using our congruence lies in the part of  $H^1_f(K, \ad^0 \rho_{\phi}(-1))$ on which the complex conjugation acts by $-1$ and hence gives rise to elements in $H^1(\bfQ, \ad^0 \rho_{\phi}(-1)\chi_K)$, as predicted by the conjecture. This result applies also to the case $K=\bfQ(i)$.

The author would like to thank Chris Skinner who initially suggested this problem to him. Over the course of work on this paper the author benefited greatly from conversations and email correspondence with many people and would like to express his particular gratitude to Neil Dummigan, G\"unther Harder, Tamotsu Ikeda and Jacques Tilouine. He is also grateful to the Department of Mathematics of the Universit\'e Paris 13, where part of this work was carried out, for a friendly and stimulating atmosphere during the author's stay there in 2009 and 2010 and similarly to the Max-Planck-Institut in Bonn, where the author spent the Summer of 2010. Finally, we would like to thank Neil Dummigan for sending us his preprint on the analogue of Harder's conjecture for the group $\U(2,2)$.

\section{Notation and terminology} \label{Notation and terminology}

In this section we introduce some basic concepts and establish notation
which will be used throughout this paper unless explicitly indicated
otherwise.
\subsection{Number fields and Hecke characters} \label{Number fields and
Hecke characters}

Throughout this paper $\ell$ will always denote a fixed odd prime. We write
$i$ for $\sqrt{-1}$. Let $K=\bfQ(i\sqrt{D_K})$ be a fixed imaginary quadratic extension
of $\bfQ$ of discriminant $-D_K$, and let $\OK$ be the ring of integers
of $K$. We will write $\Cl_K$ for the class group of $K$ and $h_K$ for 
$\# \Cl_K$. For $\a \in K$, denote by $\ov{\a}$
the image of $\a$ under the
non-trivial automorphism of $K$. Set $N\a := N(\a) := \a \ov{\a}$, and 
for
an ideal $\mathfrak{n}$ of $\OK$, set $N\mathfrak{n}:=
\#(\OK/\mathfrak{n})$. As remarked below we will always view $K$ as a
subfield of $\bfC$. For $\a \in \bfC$, $\ov{\a}$ will denote the complex
conjugate of $\a$ and we set $|\a|:= \sqrt{\a \ov{\a}}$.

Let $L$ be a number field with ring of integers $\Oo_L$. For a place $v$
of $L$, denote by $L_v$ the completion of $L$ at $v$ and by
$\mathcal{O}_{L,v}$ the valuation ring of $L_v$. If $p$ is a place of
$\bfQ$, we set $L_p:= \bfQ_p \otimes_{\bfQ} L$ and $\mathcal{O}_{L,p}:=
\bfZ_p \otimes_{\bfZ} \Oo_L$. We also allow $p=\infty$.
Set $\hat{\bfZ}=\invlim_n \bfZ/n\bfZ = \prod_{p \nmid \infty} \bfZ_p$ and similarly $\hat{\Oo}_K = \prod_{v \nmid \infty} \Oo_{K,v}$. 
For a finite $p$, 
let
$\val_p$ denote the $p$-adic valuation on $\bfQ_p$. For
notational convenience we also define $\val_p(\iy):= \iy$. If $\a \in
\bfQ_p$, then $|\a|_{\bfQ_p}:= p^{-\val_p(\a)}$ denotes the
$p$-adic norm of $\a$. For $p=\iy$, $|\cdot|_{\bfQ_{\iy}} = |\cdot
|_{\bfR} = |\cdot |$ is the usual absolute value on $\bfQ_{\iy}=\bfR$.

In this paper we fix once and for all an algebraic closure $\ov{\bfQ}$ 
of
the rationals and algebraic closures $\ov{\bfQ}_p$ of $\bfQ_p$, as
well as compatible embeddings $\ov{\bfQ} \hookrightarrow \ov{\bfQ}_p
\hookrightarrow
\bfC$
for all finite places $p$ of $\bfQ$. We extend $\val_p$ to a function 
from
$\ov{\bfQ}_p$ into $\bfQ$. Let $L$ be a number field. We write $G_L$ for
$\Gal(\ov{\bfQ}/L)$. If $\fp$ is a prime of $L$, we also write
$D_{\fp}\subset G_L$ for
the decomposition group of $\fp$ and $I_{\fp}\subset D_{\fp}$ for the
inertia group of
$\fp$. The chosen embeddings allow us to identify $D_{\fp}$ with
$\Gal(\ov{L}_{\fp} /L_{\fp})$. We will always write $\Frob_{\fp}\in D_{\fp}/I_{\fp}$ to denote the \emph{arithmetic} Frobenius.

For a local field $E$ (which for us will always be a finite extension of $\bfQ_p$ for some prime $p$) and a choice of a uniformizer $\varpi \in E$, we will write $\val_{\varpi}: E \to \bfZ$ for the $\varpi$-adic valuation on $E$. 

For a number field $L$ let $\mathbf{A}_L$ denote the ring of adeles of 
$L$ and put $\adele
:=
\mathbf{A}_{\bfQ}$. Write $\mathbf{A}_{L, \iy}$ and $\mathbf{A}_{L,
\textup{f}}$ for the
infinite part
and the finite part of $\mathbf{A}_L$ respectively. For $\a = (\a_p) \in
\adele$ set
$|\a|_{\adele}:= \prod_p |\a|_{\bfQ_p}$. By a \textit{Hecke character} 
of
$\mathbf{A}_L^{\times}$ (or of $L$, for short) we
mean a continuous homomorphism $$\psi: L^{\times} \setminus
\mathbf{A}_L^{\times} \rightarrow
\bfC^{\times}.$$ 
 The trivial
Hecke character will be denoted by $\mathbf{1}$. The character
$\psi$ factors into a product of local characters $\psi=\prod_v \psi_v$,
where $v$ runs over
all places of $L$. If $\mathfrak{n}$ is the ideal of the ring of 
integers
$\Oo_L$ of $L$ such
that
\begin{itemize}
\item $\psi_v(x_v)=1$ if $v$ is a finite place of $L$, $x_v \in
\mathcal{O}_{L,v}^{\times}$
and
$x-1 \in
\mathfrak{n} \mathcal{O}_{L,v}$
\item no ideal $\mathfrak{m}$ strictly containing $\mathfrak{n}$ has the
above property,

\end{itemize}

\no then $\mathfrak{n}$ will be called the \textit{conductor of $\psi$}.
If
$\fm$ is an ideal of
$\Oo_L$, then we set $\psi_{\fm}:= \prod \psi_v$, where the product runs
over all the finite
places $v$ of $L$ such that $v \mid \fm$. For
a
Hecke character
$\psi$ of $\mathbf{A}_L^{\times}$, denote by $\psi^*$ the associated
ideal character. Let
$\psi$ be a Hecke character of $\AK^{\times}$. We will
sometimes think 
of $\psi$ as a character of $(\R \GL_{1/K})(\adele)$, where $\R$ stands for the Weil restriction of scalars. We have a 
factorization
$\psi = \prod_p
\psi_p$ into local
characters $\psi_p: \left(\R \GL_{1/K}\right) (\bfQ_p) \rightarrow
\bfC^{\times}$. For $M \in
\bfZ$, we set $\psi_M:= \prod_{p\neq \iy, \hs p \mid M} \psi_p$. If 
$\psi$
is a Hecke
character of $\AK^{\times}$, we set $\psi_{\bfQ} =
\psi|_{\adele^{\times}}$.

\subsection{The unitary group} \label{The unitary group}

For
any affine group scheme $X$ over $\bfZ$ and any $\bfZ$-algebra $A$, we
denote by $x \mapsto \ov{x}$
the automorphism of $(\Res_{\OK/\bfZ}X_{\OK})(A)$ induced by the non-trivial
automorphism of
$K/\bfQ$. Note that $(\Res_{\OK/\bfZ}X_{\OK})(A)$ can be identified with a
subgroup of $\GL_n(A \otimes \OK)$ for some $n$. In what follows we
always specify such an identification.
Then for $x \in
(\Res_{\OK/\bfZ}X_{\OK})(A)$ we write $x^t$ for the transpose of $x$, and set
$x^*:= \ov{x}^t$ and $\hat{x}:= (\ov{x}^t)^{-1}$. Moreover, we write
$\diag(a_1, a_2, \dots, a_n)$ for the $n\times n$-matrix with $a_1, a_2,
\dots a_n$ on the diagonal and all the off-diagonal entries equal to 
zero.

We will denote by
$\bfG_a$ the additive group and by $\bfG_m$ the multiplicative group. To the imaginary quadratic extension $K/\bfQ$ one
associates the
unitary
similitude group scheme over $\bfZ$: $$G_n:=\GU(n,n) = \{A \in 
\Res_{\OK/\bfZ}
\GL_{2n}
\sep AJ\bar{A}^t =
\mu(A) J \},
$$ where
$J=\bmat &-I_n \\ I_n& \emat$, with $I_n$ denoting the $n \times n$
identity
matrix and $\mu(A)\in \bfG_m$.
We will also make use of the groups $$U_n=\U(n,n) = \{A \in \GU(n,n) 
\sep
\mu(A)=1
\},$$ and $$\SU(n,n) = \{A\in U_n \sep \det A=1\}.$$ For $x \in
\textup{Res}_{\OK/\bfZ}(\GL_n)$, we write $p_x$ for $\bmat x \\ &\hat{x}
\emat \in U_n$.
Since the case $n=2$ will be of particular interest to us we set
$G=G_2$, $U=U_2$.

Note
that if
$p$ is
inert or ramified in $K$, then $K_p/\bfQ_p$ is a degree two extension of
local fields and $a \mapsto \ov{a}$ induces the non-trivial automorphism
in $\Gal(K_p/\bfQ_p)$. If $p$ splits in $K$, denote by $\iota_{p,1},
\iota_{p,2}$ the two distinct embeddings of $K$ into $\bfQ_p$.
Then the map $a\otimes b \mapsto (\iota_{p,1}(a)b, \iota_{p,2}(a)b)$,
induces a $\bfQ_p$-algebra isomorphism $K_p
\cong \bfQ_p \times \bfQ_p$, and $a \mapsto \ov{a}$ corresponds
on the right-hand side to the automorphism defined by $(a,b) \mapsto
(b,a)$. We
denote the isomorphism $\bfQ_p \times
\bfQ_p \xrightarrow{\sim} K_p$ by $\iota_p$. For a matrix $g=(g_{ij})$
with entries   
in $\bfQ_p \times \bfQ_p$ we also
set $\iota_p(g)= (\iota_p(g_{ij}))$. For a split prime $p$ the map 
$\iota_p^{-1}$ identifies $U_n(\bfQ_p)$ with $$U_{n,p} =
\{(g_1,g_2) \in \GL_{2n}(\bfQ_p)
\times \GL_{2n}(\bfQ_p) \mid g_1 J g_2^t = J \}.$$ Note that the map   
$(g_1,g_2) \mapsto g_1$ gives a (non-canonical) isomorphism $U_n(\bfQ_p)
\cong \GL_{2n}(\bfQ_p)$. Similarly, one has $G_n(\bfQ_p) \cong \GL_{2n}(\bfQ_p) \times \bfG_m(\bfQ_p)$.

 In $U=U_2$ 
we choose a maximal torus
$$T=\left\{\bmat a&&&\\ &b&&\\ &&\hat{a}&\\ &&&\hat{b} \emat \sep a,b 
\in
\R
\bfG_{m/K} \right \},$$

\noindent and a Borel subgroup $B=TU_B$ with unipotent
radical
$$U_B=\left\{\bmat 1&\a&\b&\g\\ &1&\bar{\g}-\bar{\a}\f&\f\\&&1&\\
&&-\bar{\a}&1 \emat \sep \a, \b, \g \in \R \bfG_{a/K}, \hf \f \in \Ga, \hf
\b+\g\bar{\a}
\in \Ga \right\}.$$
Let $$T_{\bfQ} = \left\{\bmat a&&&\\ &b&&\\ &&a^{-1}&\\ &&&b^{-1} \emat
\sep a,b \in \bfG_m
\right \}$$ denote the maximal $\bfQ$-split torus contained in $T$. Let
$R(U)$ be the set of
roots of
$T_{\bfQ}$, and denote by $e_j$, $j=1,2$, the root defined by
$$e_j:  \bmat a_1&&&\\ &a_2&&\\ &&a_1^{-1}&\\ &&&a_2^{-1} \emat \mapsto
a_j.$$
The choice of $B$ determines a subset $R^+(U)\subset R(U)$ of positive  
roots. We have $$R^+(U)= \{ e_1+e_2, e_1-e_2,
2e_1,
2e_2\}.$$ We fix a set $\Delta(U) \subset R^+(U)$ of simple roots
$$\Delta(U):= \{ e_1-e_2, 2e_2 \}.$$
If $\theta \subset \Delta(U)$, denote the  
parabolic subgroup corresponding to $\theta$ by $P_{\theta}$.  
We have $P_{\Delta(U)} = U$ and $P_{\emptyset} = B$. The other
two possible subsets of $\Delta(U)$ correspond to maximal
$\bfQ$-parabolics of
$U$:
\begin{itemize}
\item the Siegel parabolic $P:=P_{\{e_1-e_2\}}=M_PU_P$ with Levi 
subgroup
$$M_P = \left\{ \bmat A&\\ & \hat{A} \emat \sep A \in \R \GL_{2/K} 
\right\},$$
and (abelian) unipotent radical
$$U_P = \left \{\bmat 1&&b_1&b_2\\ &1&\ov{b}_2& b_4\\ &&1& \\ &&&1 \emat
\sep b_1, b_4 \in \Ga, \hf b_2 \in \R \bfG_{a/K} \right \}$$

\item the Klingen parabolic $Q:=P_{\{2e_2\}}=M_QU_Q$ with Levi subgroup
$$M_Q = \left \{ \bmat x&&&\\&a&&b\\ &&\hat{x}& \\ &c&&d \emat \sep x 
\in
\R \bfG_{m/K}, \hf \bmat a&b\\ c&d \emat \in \U(1,1) \right \},$$
and (non-abelian) unipotent radical
$$U_Q =\left\{ \bmat 1&\a&\b&\g\\ &1& \bar{\g} &\\ &&1&\\ && -\bar{\a}&1
\emat \sep \a, \b, \g \in \R \bfG_{a/K}, \hf \b+\g\bar{\a} \in \Ga \right\}$$

\end{itemize}

Similarly in $U_n$ we denote by $T_n$ the diagonal torus and by $P_n$ 
the
Siegel parabolic (with Levi isomorphic to $\R \GL_{n/K}$).

\noindent For an
associative ring $R$ with identity and an $R$-module $N$
we write   $M_n(N)$ for the $R$-module of $n \times n$-matrices with entries in $N$. Let $x=\bsmat
A&B\\C&D \esmat \in
M_{2n}(N)$ with $A, B, C, D
\in
M_n(N)$. Define $a_x=A$,
$b_x=B$, $c_x=C$, $d_x=D$.

For $M\in \bfQ$, $N \in \bfZ$ such that $MN \in \bfZ$ we will denote by
$D_n(M, N)$ the group
$U_n(\bfR) \hs
\prod_{p \nmid \iy} \mK_{0,n,p}(M, N) \subset U_n(\adele)$, where
\begin{multline} \mK_{0,n,p}(M,N) = \left \{x \in
U_n(\quf_p) \mid a_x, d_x \in M_n(\Oo_{K,p}) \right. ,\\
\left. b_x\in M_n(M^{-1}\Oo_{K,p}), \hf c_x \in
M_n(MN\Oo_{K,p})\right \}.\end{multline} If $M=1$, denote $D_n(M,N)$
simply by $D_n(N)$ and
$\mK_{0,n,p}(M,N)$
by $\mK_{0,n,p}(N)$. For any finite $p$, the group $\mK_{0,n,p}:= 
\mK_{0,n,p}(1)
=
U_n(\bfZ_p)$ is a
maximal (open) compact subgroup of $U_n(\bfQ_p)$. Note that if $p \nmid 
N$,
then $\mK_{0,n,p}=
\mK_{0,n,p}(N)$. We write $\mK_{0, n,\textup{f}}(N):= \prod_{p \nmid \iy}
\mK_{0,n,p}(N)$ and $\mK_{0,n,
\textup{f}}: =\mK_{0, \textup{f}}(1)$. Note that $\mK_{0, n,\textup{f}}$ 
is
a
maximal (open)
compact subgroup of
$U_n(\adele_{\textup{f}})$. Set $$\mK^+_{0,n, \iy}:= \left\{ \bmat A &B \\ -B 
&
A
\emat \in U_n(\bfR)
\mid A,B \in \GL_n(\bfC), AA^* + BB^* = I_n, A B^* = B A^*\right\}.$$
Then $\mK_{0, n,\iy}^+$ is a maximal compact subgroup of $U_n(\bfR)$. We will denote by $\mK_{0, n,\iy}$ the subgroup of $G_n(\bfR)$ generated by $\mK_{0, n,\iy}^+$ and $J$. Then $\mK_{0, n,\iy}$ is a maximal compact subgroup of $G_n(\bfR)$. Let
$$U(m):= \left\{ A \in
\GL_m(\bfC) \mid A A^* = I_m \right\}.$$ We have $$\mK^+_{0,n, \iy} =
U_n(\bfR) \cap U(2n)
\xrightarrow{\sim} U(n) \times U(n),$$ where the last isomorphism is 
given
by $$\bmat A &B \\
-B & A \emat \mapsto (A+iB, A-iB) \in U(n) \times U(n).$$ Finally, set
$\mK_{0,n}(N):= \mK_{0, n,\iy}^+
\mK_{0, n,\textup{f}}(N)$ and $\mK_{0,n}:= \mK_{0,n}(1)$. The last group is a
maximal
compact subgroup of
$U_n(\adele)$.

Similarly, we define $\mK_{1,n}(N)=\mK_{0,n,\infty}^+ \mK_{1, n,\tuf}(N),$ where
$\mK_{1,n,\tuf}(N)=\prod_{p \nmid \infty} \mK_{1,n,p}(N)$,
$$\mK_{1,n,p}(N)= \{x \in \mK_{0,n,p}(N) \mid a_x-I_n\in M_n(N\Oo_{K,p})\}.$$

Let $M \in \bfQ$, $N\in \bfZ$ be such that $MN \in \bfZ$. We define
the following congruence subgroups of $U_n(\bfQ)$:
\be \begin{split} \Gamma^{\hh}_{0,n}(M,N) & := U_n(\bfQ) \cap D_n(M,N),\\
\G^{\hh}_{1,n}(M,N) &:= \{\a \in \Gamma^{\hh}_{0,n}(M,N) \mid a_{\a}-1 \in
M_n(N\OK)\},\\
\G^{\hh}_n(M,N)   &:= \{\a \in \G^{\hh}_{1,n}(M,N) \mid b_{\a} \in
M_n(M^{-1}N\OK) \}\end{split},
\ee
and set $\G_{0,n}^{\hh}(N) := \Gamma^{\hh}_{0,n}(1,N)$, $\G^{\hh}_{1,n}(N) :=
\G^{\hh}_{1,n}(1,N)$ and
$\G^{\hh}_n(N) := \G_n^{\hh}(1,N)$. When $n=2$ we drop it from notation.  Note 
that
the groups
$\G_0^{\hh}(N)$, $\G^{\hh}_1(N)$ and $\G^{\hh}(N)$ are 
$U_n$-analogues
of the standard
congruence subgroups $\G_0(N)$, $\G_1(N)$ and $\G(N)$ of $\SL_2(\bfZ)$. 
In
general the superscript `h' will indicate that an object is in some way
related to the group $U_n$. The letter `h' stands for `hermitian', 
as
this is the standard name of modular forms on $U_n$.

\subsection{Modular forms}
In this paper we will make use of the theory of modular forms on
congruence subgroups of two
different groups: $\SL_2(\bfZ)$ and $U(\bfZ)$. We will use both 
the
classical
and the adelic formulation of the theories. In the adelic framework one
usually speaks of automorphic forms rather than modular forms and in 
this
case $\SL_2$ is usually replaced with $\GL_2$. For more details see e.g.
\cite{Gelbart75}, chapter 3. In the
classical setting
the modular forms on congruence subgroups of $\SL_2(\bfZ)$ will be
referred to as \textit{elliptic
modular forms}, and those on congruence subgroups of $\G_{\bfZ}$
as
\textit{hermitian modular forms}. Since the theory of elliptic modular
forms is well-known we will only summarize the main facts below. Section
\ref{Hermitian modular forms} will be devoted to hermitian modular 
forms.

\subsubsection{Elliptic modular forms}

The theory of elliptic modular forms is well-known, so we omit most of 
the
definitions and
refer the reader to standard sources, e.g. \cite{Miyake89}. Let
$$\mathbf{H}:= \{ z\in \bfC
\mid \Ur (z) >0 \}$$ denote the complex upper half-plane. In the case of
elliptic modular
forms we will denote by $\G_0(N)$ the subgroup of $\SL_2(\bfZ)$ 
consisting
of matrices whose
lower-left entries are divisible by $N$, and by $\G_1(N)$ the subgroup 
of
$\G_0(N)$ consisting
of matrices whose upper left entries are congruent to 1 modulo $N$.
Let $\G \subset \SL_2(\bfZ)$ be a congruence
subgroup. Set $M_m(\G)$ (resp. $S_m(\G)$) to denote the $\bfC$-space of   
elliptic modular
forms (resp. cusp forms) of weight $m$ and level $\G$. We also denote by 
$M_m(N, \p)$ (resp.
$S_m(N, \p)$) the space of elliptic modular forms (resp. cusp forms) of   
weight $m$, level $N$
and
character $\p$. For $f, g \in M_m(\G)$ with either $f$ or $g$ a cusp 
form,
and $\G' \subset
\G$ a finite index subgroup, we define the Petersson inner product
$$\left< f,g \right>_{\G'} := \int_{\G' \setminus \mathbf{H}} f(z)
\ov{g(z)}
(\Ur
z)^{m-2} \hs dx\hs dy,$$
and set $$\left< f,g \right>:=
\frac{1}{[\ov{\SL_2(\bfZ)}:\ov{\G}']} \left< f,g \right>_{\G'},$$ where
$\ov{\SL_2(\bfZ)}:=
\SL_2(\bfZ)/\left<-I_2\right>$ and $\ov{\G}'$ is the image of $\G'$ in
$\ov{\SL_2(\bfZ)}$. The
value $\left< f,g \right>$ is independent of $\G'$.

Every elliptic
modular form $f \in M_m(N, \p)$ possesses a Fourier expansion $f(z) =   
\sum_{n=0}^{\iy} a(n) q^n$, where throughout this paper in such series
$q$ will denote  
$e(z):=e^{2 \pi i z}$. For $\g = \bsmat a&b \\ c&d \esmat \in
\GL^+_2(\bfR)$,
set $j(\g, z) = cz+d$.
  
Let $D=D_K$ be a prime. In this paper we will be particularly interested in the space
$S_m(D_K,\chi_K)$, where $\chi_K$ is the quadratic character of $(\bfZ/D
\bfZ)^{\times}$ associated with the extension $K=\bfQ(\sqrt{-D})$.
Regarded as a function $\bfZ \rightarrow \{ 1, -1 \}$, it assigns the
value $1$ to
all prime numbers $p$ such that $(p)$ splits in
$K$ and the value $-1$ to all prime numbers $p$ such that $(p)$ is inert
in $K$. Note that since the character $\chi_K$ is primitive, the space $S_m(D_K, \chi_K)$ has a basis consisting of primitive
normalized eigenforms. We will denote this (unique) basis by $\mN$. For
$f=
\sum_{n=1}^{\iy} a(n) q^n \in \mN$, set $f^{\rho}:=
\sum_{n=1}^{\iy} \ov{a(n)} q^n\in \mN$.

\begin{fact} \textup{(\cite{Miyake89}, section 4.6)} \label{fact1}  One
has $a(p) =
\chi_K(p)\ov{a(p)}$ for any rational prime $p \nmid D_K$.
\end{fact}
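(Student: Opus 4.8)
The plan is to derive the identity purely from the behaviour of the Hecke operators at good primes under the (normalized) Petersson inner product $\left<\,\cdot\,,\,\cdot\,\right>$, exactly as in the cited section of Miyake. Throughout, write $T_p$ for the $p$-th Hecke operator acting on $S_m(D_K,\chi_K)$, and recall that a primitive normalized eigenform $f\in\mN$ is a simultaneous eigenvector with $T_pf=a(p)f$ for every rational prime $p\nmid D_K$.

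The only substantive input is the adjointness relation: for $p\nmid D_K$ and all $f,g\in S_m(D_K,\chi_K)$ one has
$$\left< T_pf,g\right>=\chi_K(p)\left< f,T_pg\right>,$$
where we have used that $\chi_K$ is real-valued, so that the factor $\ov{\chi_K(p)}$ occurring in the general statement equals $\chi_K(p)$. Granting this, I would simply set $g=f$. On one side, linearity gives $\left< T_pf,f\right>=a(p)\left< f,f\right>$. On the other side, the adjointness relation together with the conjugate-linearity of the inner product in its second argument gives
$$\left< T_pf,f\right>=\chi_K(p)\left< f,T_pf\right>=\chi_K(p)\,\ov{a(p)}\,\left< f,f\right>.$$
Since $f$ is a nonzero cusp form, $\left< f,f\right>$ is a positive real number and may be cancelled, yielding $a(p)=\chi_K(p)\,\ov{a(p)}$, which is the assertion.

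The main (and really only) obstacle is establishing the adjointness relation, i.e.\ that the adjoint of $T_p$ is $\ov{\chi_K(p)}\,T_p$ for $p\nmid D_K$; everything after that is a one-line formal computation. This relation is classical: it follows from writing $T_p$ as a double-coset operator, computing its adjoint via explicit coset representatives, and using that the diamond operator $\langle p\rangle$ acts on $S_m(D_K,\chi_K)$ by the scalar $\chi_K(p)$. As this is precisely the content of the referenced material, in the write-up I would invoke it rather than reprove it, noting only the simplification afforded by $\chi_K$ being a quadratic (hence real) character, which makes the conjugation in the general formula immaterial.
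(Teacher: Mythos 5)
Your argument is correct and is essentially the one behind the citation: the paper itself gives no proof of Fact \ref{fact1}, deferring entirely to Miyake \S 4.6, and the standard proof there is exactly the adjointness relation $\left< T_pf,g\right>=\chi_K(p)\left< f,T_pg\right>$ for $p\nmid D_K$ applied with $g=f$, using that $\chi_K$ is real and that $\left<f,f\right>>0$. Nothing is missing, so there is no substantive divergence from the paper to report.
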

\no This implies that
$a(p) = \ov{a(p)}$ if $(p)$ splits in $K$ and $a(p) = -\ov{a(p)}$ if 
$(p)$
is inert in $K$.
   
For $f \in \mN$ and $E$ a finite extension of
$\bfQ_{\ell}$ containing the eigenvalues of $T_n$, $n=1,2, \dots$ we 
will
denote by $\rho_f: G_{\bfQ}
\rightarrow \GL_2(E)$ the Galois representation attached to
$f$ by Deligne (cf. e.g., \cite{DDT}, section 3.1). We will write
$\ov{\rho}_f$ for the reduction of $\rho_f$ modulo a uniformizer of $E$
with respect to some lattice $\Lambda$ in $E^2$. In general 
$\ov{\rho}_f$
depends on
the lattice $\Lambda$, however the isomorphism class of its
semisimplification 
$\ov{\rho}_f^{\tuss}$ is independent of $\Lambda$. Thus, if 
$\ov{\rho}_f$
is irreducible (which we will assume), it is well-defined.

\section{Hermitian modular forms} \label{Hermitian modular forms}

\subsection{Classical theory} \label{Classical theory}
  
For $n>1$, set $\bfi_n:=i I_n$ and define $$\bfH_n:= \{Z \in M_n(\bfC)
\mid -\bfi_n (Z- Z^*) >0 \}.$$ We call $\bfH_n$ the \textit{hermitian
upper half-plane of degree $n$}. The group $G_n^+(\bfR)=\{x \in  
G_n(\bfR) \mid \mu(x)>0\}$ acts transitively
on $\bfH_n$ via $$g Z:= (a_g Z + b_g) (c_gZ + d_g)^{-1}.$$
\begin{definition} \label{congruence subgroups} We say that a subgroup 
$\G
\subset G_n^+(\bfR)$ is a \textit{congruence subgroup} if 
\begin{itemize}
\item $\Gamma$ is commensurable 
 with $U_n(\bfZ)$, and \item there exists $N 
\in \bfZ_{>0}$ such that $\G \supset \G^{\rm h}(N):= \{ g \in U_n(\bfZ) \mid g
\equiv I_{2n} \pmod{N}\}$. \end{itemize} \end{definition}
Note that every congruence subgroup $\Gamma$ must be contained in $U_n(\bfQ)$, because commensurability with $U_n(\bfZ)$ and the fact that $\Gamma \subset G^+_n(\bfR)$ force $\Gamma \subset G_n(\bfQ)$ and $\mu(\Gamma)$ to be a finite subgroup of $\bfR^{\times}_+$ hence to be trivial.

For $g \in G_n^+(\bfR)$ and $Z \in \bfH_n$ set $$j(g, Z):= \det(c_{\g} Z 
+
d_{\g}),$$ and
for a positive integer $k$, a non-negative integer $\nu$ and a function
$F: \bfH_n \rightarrow
\bfC$
define $$F|_{k,\nu}
g(Z):= \det(g)^{-\nu}j(g,Z)^{-k} F(gZ).$$
When $\nu=0$, we will usually drop it from notation and simply write
$F|_kg(Z)$.

\begin{definition} \label{hermforms} Let $\Gamma \subset G_n^+(\bfR)$ be  
a congruence subgroup. We say that a function $F: \bfH_n
\rightarrow \bfC$ is a \textit{hermitian semi-modular form of weight
$(k,\nu)$
and level
$\G$}
if $F|_{k,\nu} \g = F$ for every $\g \in \G$. If in addition $F$ is
holomorphic, we call it a \textit{hermitian modular form of
weight $(k,\nu)$ and level
$\Gamma$}. The space of hermitian semi-modular (resp. modular) forms of  
weight
$(k,\nu)$ and
level
$\Gamma$ will be denoted by $M_{n,k,\nu}^{\textup{sh}}(\Gamma)$ (resp.
$M^{\tuh}_{n,k,\nu}(\Gamma)$). We also set $M^{\rm h}_{n, k,\nu}=M^{\rm h}_{n,k,\nu}(U_n(\bfZ))$. If $n=2$ or $\nu=0$ we drop them from notation. 
\end{definition}

%Define a positive integer $\mJ (K)$ in the following way:
%$$\mJ(K) = \begin{cases} \frac{1}{2}\# \OK^{\times} & \# \OK^{\times} 
%\hs
%\textup{is even}\\
%\# \OK^{\times}& \# \OK^{\times} \hs
%\textup{is odd} \end{cases}.$$ 
Set $\mJ(K) =\frac{1}{2}\# \OK^{\times}$. Note that $\mJ(K)=1$ when $D_K >12$.

\begin{rem} \label{hel} Suppose $\Gamma \subset U_n(\bfZ)$. It is a Theorem of Hel
Braun (\cite{Braun51}, Theorem I on
p. 143) that $\det U_n(\bfZ) = \{u^2 \mid u \in \OK^{\times}\}.$ This
in particular implies that $(\det U_n(\bfZ))^{-\nu} =\{1\}$ if $\mJ (K) \mid \nu$. In such case,
we have
$M_{n,k,\nu}^{\textup{sh}}(\G)=M_{n,k}^{\textup{sh}}(\G)$. \end{rem}
  
If $\Gamma = \G^{\hh}_{0,n}(N)$ for some $N \in
\bfZ$, then we say
that $F$
is of level $N$. Forms of level 1 will
sometimes be referred to as forms
of \textit{full
level}. One can also define hermitian semi-modular forms with a 
character.
Let
$\Gamma
= \G_{0,n}^{\hh}(N)$ and let $\psi: \AK^{\times} \rightarrow
\bfC^{\times}$ be
a Hecke character
such that for all finite $p$, $\psi_p(a)=1$ for every $a \in \Oo_{K,
p}^{\times}$ with $a-1\in
N \Oo_{K,p}$.
We say that $F$ is \textit{of level $N$ and character $\psi$} if
$$F|_m \g =  \p_N(\det a_{\g}) F \quad \textup{for every} \hs \g \in
\G^{\hh}_{0,n}(N).$$
Denote by $M^{\tush}_{n,k}(N, \p)$ (resp. $M^{\hh}_{n,k}(N, \p)$) the
$\bfC$-space
of
hermitian semi-modular (resp. modular) forms
of weight $k$, level $N$ and character $\psi$. If $n=2$ we drop it from notation.

Write $Z \in \bfH_n$ as $Z=X+\bfi_n Y$, where $X=\Rz (Z)$ and $Y=\Ur(Z)$. Let $M_n\cong \bfG_a^{n^2}$ denote the additive group of $n\times n$ matrices.
A hermitian semi-modular form of level $\G_n^{\hh}(M,N)$ possesses
a
Fourier expansion
$$ F(Z) = \sum_{\tau \in \mathcal{S}_n(M)(\bfZ)} c_F(\tau, Y) e(\tr \tau X),$$
where $\mathcal{S}_n(M)(\bfZ)=\{x \in S_n(\bfZ) \sep \tr
xL(M) \subset \bfZ \}$ with $S_n=\{ h \in \Res_{\OK/\bfZ}M_{n/\OK} \mid h^* = h\}$ and   
$L(M) = S_n(\bfZ) \cap M_n(M \OK)$. As usually when $n=2$, we drop it from
notation.
As we
will be particularly interested in the
case when $M=1$, we set $$\mathcal{S}:=\mathcal{S}(1) = \left\{ \bmat
t_1
& t_2 \\ \ov{t_2} & t_3 \emat \in M_2(K) \mid t_1, t_3 \in \bfZ, t_2
\in
\frac{1}{2} \OK \right\}.$$
If $F$ is
holomorphic
the dependence of $c(h,Y)$ on $Y$ is explicit: $$c_F(h,Y) = e(\tr 
(\bfi_n
h Y)) c_F(h),$$
where
$c_F(h)$ depends only on $h$. Then one can write $$F(Z) = \sum_{h \in
S_n(\bfQ)}
 c_F(h) e(\tr (hZ)).$$

For $F \in
M^{\hh}_{n,k}(\Gamma)$ and $\alpha \in
G^+_{n}(\bfR)$ one has $F|_k {\alpha} \in M_{n,k}^{\rm h}(\alpha^{-1} \Gamma
\alpha)$ and there is an
expansion $$F|_k \a = \sum_{\tau \in S_n(\bfQ)} c_{\alpha} (\tau) e(\tr \tau
Z).$$ We
call $F$ a
\textit{cusp form} if for all $\alpha \in G^+_{n}(\bfR)$,
$c_{\alpha}(\tau) = 0$
for every $\tau$
such that $\det \tau = 0$. Denote by $S^{\hh}_{n,k}(\Gamma)$ (resp.
$S^{\hh}_{n,k}(N,
\p)$) the subspace of cusp forms inside $M^{\hh}_{n,k}(\Gamma)$ (resp.
$M^{\hh}_{n,k}(N,
\psi)$). If $\psi = \mathbf{1}$, set $M^{\tush}_{n,k}(N):= M^{\tush}_{n,k}(N,
\mathbf{1})$ and
$S^{\hh}_{n,k}(N):= S^{\hh}_{n,k}(N, \mathbf{1})$. If $n=2$ we drop it from notation.

\begin{thm} [$q$-expansion principle, \cite{Hida04}, section 8.4]
\label{qexpansion12} Let
$\ell$ be a rational
prime and $N$ a positive integer with $\ell \nmid N$. Suppose all 
Fourier
coefficients of $F
\in M^{\hh}_{n,k}(N, \psi)$ lie inside the
valuation ring $\Oo$ of a finite
extension $E$ of $\bfQ_{\ell}$. If $\g \in U_n(\bfZ)$, then all Fourier
coefficients of
$F|_m \gamma$ also lie in
$\Oo$. \end{thm}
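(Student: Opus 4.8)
The plan is to leave the analytic setting and argue algebraically, deducing the statement from the geometric $q$-expansion principle. First I would realize $F$ as a section of the automorphic line bundle $\underline{\o}^{k}$ on an integral model $\mM$ of the PEL Shimura variety attached to $G_n=\GU(n,n)$, taken over the valuation ring $\Oo$. This variety parametrizes abelian varieties with $\OK$-action, a polarization, and a level-$N$ structure; since $\ell \nmid N$ the level structure is prime to the residue characteristic, so $\mM$ is smooth over $\Oo$ and $\underline{\o}^{k}$ extends across a toroidal compactification $\mM^{\tutor}$. Under the standard comparison between analytic and algebraic hermitian modular forms, the classical expansion $F(Z)=\sum_{h\in S_n(\bfQ)} c_F(h)\, e(\tr (hZ))$ is identified with the algebraic $q$-expansion of the corresponding section along the boundary stratum attached to the standard (Siegel) cusp, and the hypothesis says exactly that all these $q$-expansion coefficients lie in $\Oo$. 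Note that because $\ell$ is odd and prime to $N$, the values of $\psi$ are prime-to-$\ell$ roots of unity, hence units in $\Oo$, so the character twist is irrelevant to questions of integrality.

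Next I would invoke the algebraic $q$-expansion principle of \cite{Hida04}, section 8.4: if the $q$-expansion of a weight-$k$ section along a single cusp has coefficients in the subring $\Oo\subset E$, then the section is already defined over $\Oo$, i.e.\ it lies in the $\Oo$-lattice $H^0(\mM^{\tutor}_{/\Oo},\underline{\o}^{k})\subset H^0(\mM^{\tutor}_{/E},\underline{\o}^{k})$. The crucial input is the geometric irreducibility of the relevant components of $\mM_{/\Oo}$ together with the injectivity of the $q$-expansion map on an integral basis, which together force an analytically defined form that is integral at one cusp to be an integral algebraic section. Thus the hypothesis upgrades $F$ from an $E$-rational to an $\Oo$-rational section.

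The last step is to let $\g\in U_n(\bfZ)$ act. Choosing $N'$ divisible by $N$, still prime to $\ell$, so that both $F$ and $F|_k\g$ have level dividing $N'$, the element $\g$ induces an automorphism of the integral model $\mM_{N'/\Oo}$ defined over $\Oo$ (indeed over $\bfZ[1/N']$), and $\underline{\o}^{k}$ carries a compatible $\g$-equivariant structure over $\Oo$. The weight-$k$ slash operator is precisely the pullback of sections along this automorphism via the equivariant structure, so it carries the lattice $H^0(\mM^{\tutor}_{N'/\Oo},\underline{\o}^{k})$ into itself; the determinant twist is trivial since $\nu=0$, and by the theorem of Hel Braun recalled in Remark \ref{hel} one has $\det\g\in\OK^{\times}$, a unit in $\Oo$, so no non-integral scalar intervenes. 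By the previous step $F$ lies in this lattice, hence so does $F|_k\g$, and reading off its $q$-expansion at the standard cusp — which coincides with the expansion of $F$ at the cusp $\g\cdot\iy$ — shows that every Fourier coefficient of $F|_k\g$ lies in $\Oo$.

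The main obstacle is the geometric input behind the $q$-expansion principle: one must construct the integral model $\mM$ and its toroidal compactification, verify the connectedness statement that makes integrality at a single cusp a global property, and carefully match the analytic Fourier expansion with the algebraic $q$-expansion at each boundary stratum so that the permutation of cusps by $U_n(\bfZ)$ is correctly tracked. This is genuinely necessary rather than cosmetic, since the element $J=\bmat &-I_n\\ I_n& \emat$ realizes the hermitian analogue of the Fricke involution and honestly moves cusps, so no elementary "permute the coefficients" argument can succeed. As all of this is established in \cite{Hida04}, the actual proof reduces to checking that our hypotheses — prime-to-$\ell$ level, the weight and character compatibility, and integrality of $\g$ — match those under which the cited principle applies.
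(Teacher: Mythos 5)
The paper does not prove this statement: it is quoted directly from \cite{Hida04}, section 8.4, and your proposal is a faithful reconstruction of the geometric argument underlying that citation (integral PEL models at prime-to-$\ell$ level, irreducibility of the geometric fibers, injectivity of the algebraic $q$-expansion map on the $\Oo$-lattice of sections, and the action of $U_n(\bfZ)$ on the integral model carrying that lattice to itself). One small quibble: the values of $\psi$ need not be prime-to-$\ell$ roots of unity (the order of $(\OK/N\OK)^{\times}$ can be divisible by $\ell$ even when $\ell \nmid N$), but every root of unity in $E$ is a unit in $\Oo$, so your conclusion that the character twist is harmless for integrality still stands.
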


If $F$ and $F'$ are two hermitian modular
forms
of weight $k$, level $\Gamma$ and character $\psi$, and either $F$
or
$F'$ is
a cusp form, we define for any finite index subgroup 
$\Gamma'$
of
$\Gamma$, the
Petersson inner product $$\left< F,F' \right>_{\Gamma'} :=
\int_{\Gamma'
\setminus
\mathbf{H}_n} F(Z) \ov{F'(Z)} (\det Y)^{m-4} dXdY,$$ where $X=\Rz{Z}$ 
and
$Y=\Ur{Z}$, and $$\left< F,F' \right> = [\ov{U_n(\bfZ)}:
\ov{\Gamma}']^{-1}
\left<
F,F' \right>_{\Gamma'},$$
where $\ov{U_n(\bfZ)}:=
U_n(\bfZ)/\left<\bfi_{2n}\right>$ 
and $\ov{\Gamma}'$ is the image 
of
$\G'$ in
$\ov{U_n(\bfZ)}$. The
value $\left< F,F' \right>$ is independent of $\Gamma'$.

\subsection{Adelic theory} \label{Adelic theory}

\begin{notation} We adopt the following notation. If $H$ is an algebraic
group over $\bfQ$, and $g \in H(\AQ)$, we will write $g_{\infty} \in
H(\bfR)$ for the infinity component of $g$ and $g_{\rm f}$ for the finite component of $g$, i.e., $g=(g_{\infty},   
g_{\tuf})$. \end{notation}

\begin{definition} \label{auto} Let $\mK$ be an open
compact subgroup of $G_n(\AQf)$. Write $Z_n$ for the center of $G_n$.
Let
$\mM'_{k,\nu}(\mK)$
denote the $\bfC$-space
consisting of functions $f : G_n(\AQ) \rightarrow \bfC$ satisfying the   
following conditions: 
\begin{itemize} \item $f (\gamma g) = f(g)$ for 
all
$\gamma
\in G_n(\bfQ)$, $g \in G_n(\AQ)$, \item $f (g \kappa) = f(g)$ for all
$\kappa \in \mK$, $g \in G_n(\AQ)$, \item
$f(gu) = (\det u)^{-\nu}j(u,\bfi_n)^{-k}f(g)$ for all $g \in
G_n(\AQ)$, $u \in \mK_{\infty}=\mK_{0,n, \infty}$ (see (10.7.4) in
\cite{Shimura97}),
\item $f(ag) = a^{-2n\nu - nk} f(g)$ for all $g \in
G_n(\AQ)$ and all $a \in \bfC^{\times} = Z_n(\bfR) \subset G_n(\bfR)$. 
\end{itemize}
Let $\psi : \AK^{\times} \rightarrow
\bfC^{\times}$ be
a Hecke character of conductor dividing $N$. Set \begin{multline} \mM'_k(N,\psi):= \{ f
\in \mM'_{k}(\mK_{1,n}(N)) \mid f(\gamma g (\kappa_{\infty},
\kappa_{\tuf})) =\\
=\psi'_N(\det (a_{\kappa_{\tuf}}))^{-1} j(\kappa_{\infty}, \bfi_n)^{-k}
f(g),
g \in G_n(\AQ), \gamma \in G_n(\bfQ), (\kappa_{\infty}, \kappa_{\rm f}) \in K_{0,n}(N)\}.\end{multline} \end{definition}

\begin{rem} Note that the center $Z_{n}(\bfR)$ of $G_n(\bfR)$ acts via the infinite part of a Hecke character of infinity type $(-2n\nu-nk,0)$. 
In particular this action is trivial if $\nu=-k/2$. \end{rem}

It is well-known
(see e.g., \cite{Bump97}, Theorem 3.3.1) that for any
finite subset $\mB$ of $\GL_n(\AKf)$ of cardinality $h_K$ with the
property that the canonical projection $c_K: \AK^{\times}
\twoheadrightarrow  
\Cl_K$ restricted to $\det \mB$ is a bijection, the following
decomposition
holds \be \label{gwiazdka} \GL_n(\AK) = \bigsqcup_{b \in \mB} \GL_n(K) \GL_n(\bfC) b
\GL_n(\hat{\Oo}_K).\ee
We will call any such $\mB$ a \textit{base}. We always assume that a 
base
comes with a fixed ordering, so in particular if we consider a tuple     
$(f_b)_{b \in \mB}$
indexed by elements of $\mB$, and apply a non-trivial permutation $\sigma$ to the
elements
$f_b$, we do not consider the tuples $(f_b)_{b \in \mB}$ and
$(f_{\sigma(b)})_{b \in
\mB}$ to be the same.

\begin{prop} \label{decomp657}
Let $\mK$ be a compact subgroup of $G_n(\AQf)$ such that $\det \mK \supset \hat{\Oo}_K^{\times}$. There exists a finite subset $\mC \subset U_n(\AQf)$ such that the
following
decomposition holds
\be \label{Undecomp} G_n(\AQ) = \bigsqcup_{c
\in \mC} G_n(\bfQ) G^+_n(\bfR) c \mK.\ee Moreover, each
element of $\mC$ can be taken to be of the form $p_b$ for some $b$
in a fixed base $\mB$. The same holds for $U_n$ in place of $G_n$.\end{prop}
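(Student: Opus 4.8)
The plan is to produce a covering of $G_n(\AQ)$ by the double cosets $G_n(\bfQ)\,G_n^+(\bfR)\,p_b\,\mK$ with $b$ ranging over the base $\mB$, and then to obtain the asserted disjoint decomposition by letting $\mC$ consist of one $p_b$ for each distinct double coset (two such cosets being either equal or disjoint). Thus the entire content is the surjectivity statement $G_n(\AQ)=\bigcup_{b\in\mB}G_n(\bfQ)G_n^+(\bfR)p_b\mK$, together with the remark that each $p_b$ lies in $U_n(\AQf)$. The organizing tool is the abelianization homomorphism $\alpha=(\det,\mu)\colon G_n\to T_G:=G_n/\SU(n,n)$, a $\bfQ$-torus, under which $\mu(p_b)=1$ and $\det p_b=\det b\cdot\overline{\det b}^{-1}$.

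First I would invoke strong approximation for the derived group $\SU(n,n)$, which is simply connected and has $\SU(n,n)(\bfR)$ noncompact. This gives that $\alpha$ induces a bijection of class sets
\[
G_n(\bfQ)\,G_n^+(\bfR)\backslash G_n(\AQ)/\mK \;\xrightarrow{\ \sim\ }\; T_G(\bfQ)\,\alpha(G_n^+(\bfR))\backslash T_G(\AQ)/\alpha(\mK),
\]
so that the problem reduces to showing that the classes of $\alpha(p_b)$, $b\in\mB$, exhaust the right-hand (finite abelian) torus class set: any $g$ can first be multiplied by elements of $\SU(n,n)(\bfQ)\SU(n,n)(\bfR)$ and of $\mK\cap\SU(n,n)(\AQf)$ so that only its image under $\alpha$ matters.

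For the torus computation I would treat the two projections of $T_G$ separately. On the $\det$-side, $\det p_b=\det b/\overline{\det b}$, and by \eqref{gwiazdka} applied with $n=1$ (equivalently, by passing to determinants) the ideles $\det b$, $b\in\mB$, represent $\Cl_K=K^\times\bfC^\times\backslash\AKu/\hat{\Oo}_K^\times$; the map $z\mapsto z/\bar z$ then carries these onto the corresponding class set of $\U(1)=\ker(N_{K/\bfQ})$, which is exactly the $\det$-component of the $T_G$-class set once one uses $\det\mK\supseteq\hat{\Oo}_K^\times$. On the $\mu$-side one uses that $\bfQ$ has class number one, i.e. $\AQu=\bfQ^\times\bfR_{>0}\hat{\bfZ}^\times$, together with the explicit elements $\diag(qI_n,I_n)\in G_n$ (for which $\mu=q$) realizing every rational similitude through $G_n(\bfQ)$ and $G_n^+(\bfR)$; the residual unit similitude is absorbed into $\alpha(\mK)$, the hypothesis $\det\mK\supseteq\hat{\Oo}_K^\times$ guaranteeing, via the relation $N\circ\det=\mu^{2n}$, that $\alpha(\mK)$ is as large as the image of the full maximal compact. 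The $U_n$ statement is the same argument with $T_G$ replaced by $\U(1)=U_n/\SU(n,n)$ and the $\mu$-factor suppressed.

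I expect the main obstacle to be precisely this torus class-set computation, i.e. checking that $\{\alpha(p_b)\}_{b\in\mB}$ really is a full set of representatives. The cocenter $T_G$ is a two-dimensional torus cut out by $N\circ\det=\mu^{2n}$, so $\det p_b$ lands in the norm-one subtorus, and one must verify that neither the discrepancy between $\Cl_K$ and the genuine $\U(1)$-class set nor the similitude direction introduces classes missed by the $p_b$. This comes down to a local computation at the ramified primes (where the norm map $\Oo_{K,\fp}^\times\to\bfZ_p^\times$ has image of index $2$), at which one must confirm that $\det\mK\supseteq\hat{\Oo}_K^\times$ supplies exactly the elements needed to collapse the extra classes; the archimedean component bookkeeping for $G_n^+(\bfR)$ versus $G_n(\bfR)$ is a comparatively routine addendum.
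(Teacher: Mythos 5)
Your proposal is correct in outline and takes essentially the same route as the paper: the paper's own proof is a one-line citation to Lemmas 5.11(4) and 8.14 of Shimura's \emph{Euler Products and Eisenstein Series}, and those lemmas are proved exactly as you describe — strong approximation for the simply connected group $\SU(n,n)$ reduces the class set to that of the cocenter, whose class set is identified with $\Cl_K$ via $z \mapsto z/\ov{z}$ together with the computation of $\det U_n(\hat{\bfZ})$ (that is Lemma 5.11(4)). Your sketch is in fact more detailed than what the paper records; the only assertion I would not take on faith is that $N\circ\det=\mu^{2n}$ forces $\mu(\mK)$ to be all of $\hat{\bfZ}^{\times}$ — it only forces $\mu(\mK)\supset N_{K/\bfQ}(\hat{\Oo}_K^{\times})$, which has index $2$ at each ramified prime, so the similitude bookkeeping genuinely depends on how one interprets the paper's (rather loosely stated) hypothesis on $\mK$.
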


\begin{proof} This is proved like 
Lemmas
5.11(4) and 8.14 of \cite{Shimura97}.
\end{proof}

We will call any set $\mC$ of cardinality $h_K$ for which the
decomposition (\ref{Undecomp}) holds a \emph{unitary base}.

\begin{cor} \label{scalarcor} If $(h_K, 2n)=1$ a base $\mB$ can be
chosen
so that for all $b \in \mB$ the matrices $b$ and $p_b$ are
scalar matrices and $b b^* = b^* b = I_n$. \end{cor}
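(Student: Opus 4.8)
The plan is to build a base all of whose elements are \emph{scalar} matrices $b=\lambda I_n$ with $\lambda\in\AKfu$ a norm-one idele, i.e. $\lambda\bar\lambda=1$. For such a $b$ the required identity $bb^*=b^*b=I_n$ is precisely the condition $\lambda\bar\lambda=1$, and moreover $\hat b=(\bar b^t)^{-1}=\bar\lambda^{-1}I_n=\lambda I_n=b$ (using $\bar\lambda^{-1}=\lambda$), so that $p_b=\diag(b,\hat b)=\lambda I_{2n}$ is itself scalar. Thus the entire statement reduces to producing norm-one finite ideles $\lambda_1,\dots,\lambda_{h_K}$ whose classes $c_K(\lambda_j)$ run exactly once through $\Cl_K$.

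Next I would isolate where the hypothesis $(h_K,2n)=1$ is used. Since $(n,h_K)=1$, raising to the $n$-th power is an automorphism of the finite abelian group $\Cl_K$. Hence if the $c_K(\lambda_j)$ exhaust $\Cl_K$, then so do the classes of the determinants, because $c_K\big(\det(\lambda_j I_n)\big)=c_K(\lambda_j^n)=c_K(\lambda_j)^n$. Consequently $\mB:=\{\lambda_j I_n\}$ has the bijectivity property defining a base, so the decomposition \eqref{gwiazdka} holds for it, and by Proposition \ref{decomp657} the corresponding $p_{\lambda_j I_n}$ form a unitary base. With this in hand it suffices to find the norm-one $\lambda_j$ above.

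The heart of the matter, and the step I expect to be the main obstacle, is the surjectivity under $c_K$ of the norm-one ideles onto the class group, $\{\lambda\in\AKfu\mid \lambda\bar\lambda=1\}\twoheadrightarrow \Cl_K$, which is exactly where oddness of $h_K$ (the factor $2$ in $2n$) enters. Here I would use that complex conjugation acts on $\Cl_K$ as inversion: for any ideal $\mathfrak a$ the product $\mathfrak a\bar{\mathfrak a}=(N\mathfrak a)$ is principal, so $c_K(\bar\beta)=c_K(\beta)^{-1}$ for every $\beta\in\AKfu$. Given a class $\mathfrak c\in\Cl_K$, oddness of $h_K$ lets me write $\mathfrak c=\mathfrak d^{2}$; choosing $\beta\in\AKfu$ with $c_K(\beta)=\mathfrak d$ and setting $\lambda:=\beta\bar\beta^{-1}$, one checks directly that $\lambda\bar\lambda=1$ and $c_K(\lambda)=c_K(\beta)c_K(\bar\beta)^{-1}=\mathfrak d\cdot\mathfrak d=\mathfrak c$. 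Selecting one such $\lambda_j$ for each of the $h_K$ classes yields the desired system.

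Finally I would assemble the pieces: put $b_j:=\lambda_j I_n$ and $\mB:=\{b_j\}$. By construction each $b_j$ is scalar with $b_jb_j^*=b_j^*b_j=I_n$, each $p_{b_j}=\lambda_j I_{2n}$ is scalar, and $\mB$ is a base by the second paragraph. The only genuine input is the class-group surjectivity of the third paragraph; the remaining verifications (the scalar-matrix identities and the bijectivity of the $n$-th power map on $\Cl_K$) are routine.
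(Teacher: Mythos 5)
Your proof is correct and follows essentially the same route as the paper: both constructions take $\mB$ to consist of scalar matrices $\lambda I_n$ with $\lambda\in\AKfu$ of norm one, exploit that conjugation acts as inversion on $\Cl_K$, and use oddness of $h_K$ (together with $(n,h_K)=1$ for the determinant) to make the relevant power maps on $\Cl_K$ bijective. The only cosmetic difference is that the paper realizes the norm-one ideles concretely as $\fp^n\ov{\fp}^{-n}$ for split primes supplied by Tchebotarev, whereas you produce them as $\beta\ov\beta^{-1}$ from an arbitrary idele representative of a square root of the class, which avoids invoking Tchebotarev.
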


\begin{proof} It follows from the Tchebotarev Density Theorem, that
elements of $\Cl_K$ can be represented by prime ideals. Since all
the inert
ideals are principal, $\Cl_K$ can be represented by prime ideals
lying over split primes of the form $p\OK=\fp \ov{\fp}$. Let $\Sigma$ be 
a
representing
set consisting of such ideals $\fp$. As $(2n,h_K)=1$, the set
$\Sigma^{2n}$
consisting of elements of $\Sigma$
raised to the power $2n$ is also a representing set for  
$\Cl_K$. Moreover, as
$\fp \ov{\fp}$ is a principal ideal, $\ov{\fp} = \fp^{-1}$ as
elements of $\Cl_K$, hence $\Sigma':=\{ \fp^n \ov{\fp}^{-n}\}_{\fp\in
\Sigma}$
also represents all the elements $\Cl_K$. Elements
of $\Sigma'$ can be written adelically as $\alpha_{\fp}^n$,
with $\alpha_{\fp}=(1,1, \dots,
1,
p, p^{-1}, 1,
\dots) \in \AKf$, where $p$ appears on the $\fp$-th place and $p^{-1}$
appears at the $\ov{\fp}$-th place. Set $b_{\fp}= \alpha_{\fp} I_n$.
Then we can take $\mB=\{ b_{\fp}\}_{\fp^n \ov{\fp}^{-n} \in \Sigma'}$
and we have $p_{b_{\fp}}=\alpha_{\fp}
I_{2n}$. It is also clear that $b b^*=b^* b =I_n$.
\end{proof}

\begin{prop}\label{isom99} Suppose that $\mK\subset G_n(\AQf)$ is a compact subgroup such that $\det \mK \supset \hat{\Oo}_K^{\times}$. Let $\mN'_{k,\nu}(\mK)$ denote the $\bfC$-space of functions $f: U_n(\AQ) \rightarrow \bfC$ satisfying the conditions of Definition \ref{auto}, but with $g \in U_n(\AQ)$. Then the map $f \mapsto f|_{U_n(\AQ)}$ gives an isomorphism $\mM'_{k,\nu}(\mK) \cong \mN'_{k,\nu}(\mK)$. \end{prop}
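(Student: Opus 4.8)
The plan is to prove that restriction is bijective by producing an explicit inverse, the whole argument resting on two structural facts. First, the archimedean group splits as $G_n^+(\bfR)=\bfR_{>0}\cdot U_n(\bfR)$, where $\bfR_{>0}$ is embedded in $Z_n(\bfR)=\bfC^{\times}$ as the positive real scalars $tI_{2n}$; indeed, for $g_\infty\in G_n^+(\bfR)$ the factorization $g_\infty=(tI_{2n})u_\infty$ with $t:=\sqrt{\mu(g_\infty)}$ and $u_\infty:=t^{-1}g_\infty\in U_n(\bfR)$ is unique. Second, $\mu(\mK)\subseteq\hat{\bfZ}^{\times}$, which holds simply because $\mK$ is compact and $\hat{\bfZ}^{\times}$ is the maximal compact subgroup of $\AQf^{\times}$. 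Throughout I abbreviate the central exponent by $s:=2n\nu+nk$.

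Well-definedness of the restriction map and injectivity come first and are easy. That $f|_{U_n(\AQ)}\in\mN'_{k,\nu}(\mK)$ is immediate, since each defining condition for $f$ specializes to the corresponding condition for elements of $U_n$. For injectivity, suppose $f\in\mM'_{k,\nu}(\mK)$ vanishes on $U_n(\AQ)$. Given $g\in G_n(\AQ)$, Proposition \ref{decomp657} lets me write $g=\gamma\,g_\infty\,c\,\kappa$ with $\gamma\in G_n(\bfQ)$, $g_\infty\in G_n^+(\bfR)$, $c\in\mC\subset U_n(\AQf)$ and $\kappa\in\mK$. Left $G_n(\bfQ)$- and right $\mK$-invariance give $f(g)=f(g_\infty c)$, and the factorization $g_\infty=(tI_{2n})u_\infty$ together with the central character condition gives $f(g)=t^{-s}f(u_\infty c)$. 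Since $u_\infty c\in U_n(\AQ)$, the right-hand side vanishes, so $f\equiv 0$.

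For surjectivity I define, for $h\in\mN'_{k,\nu}(\mK)$, a function $f$ on $G_n(\AQ)$ by $f(g):=t^{-s}h(u_\infty c)$ using the decomposition above; note $u_\infty c\in U_n(\AQ)$, so $h(u_\infty c)$ makes sense, and $c$ is pinned down by the disjointness in (\ref{Undecomp}). The crux is well-definedness. Comparing two decompositions $\gamma g_\infty c\kappa=\gamma' g'_\infty c\kappa'$ and separating archimedean from finite components shows that $\delta:=\gamma'^{-1}\gamma\in G_n(\bfQ)$ satisfies $\delta=g'_\infty g_\infty^{-1}$ at infinity and $\delta=c\kappa''c^{-1}$ at the finite places, where $\kappa'':=\kappa'\kappa^{-1}\in\mK$. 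Hence $\mu(\delta)$ is a positive rational number, the ratio $\mu(g'_\infty)/\mu(g_\infty)$, whose finite part lies in $\mu(\mK)\subseteq\hat{\bfZ}^{\times}$; being a unit at every finite prime and positive, $\mu(\delta)=1$, so in fact $\delta\in U_n(\bfQ)$. This is the step I expect to be the main obstacle — reconciling the similitude ambiguity between the two decompositions — and it is exactly where compactness of $\mK$, through $\mu(\mK)\subseteq\hat{\bfZ}^{\times}$, is indispensable. With $\mu(\delta)=1$ one gets $t'=t$ and $u'_\infty=\delta u_\infty$; writing $c^{-1}\delta c=\kappa''\in\mK\cap U_n(\AQf)$ and using left $U_n(\bfQ)$-invariance and right $(\mK\cap U_n)$-invariance of $h$ then yields $h(u'_\infty c)=h(u_\infty c)$, so $f$ is well defined.

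It remains to check that this $f$ lies in $\mM'_{k,\nu}(\mK)$ and restricts to $h$. Left $G_n(\bfQ)$- and right $\mK$-invariance are built into the definition, since only $\gamma$ and $\kappa$ change. For the central character condition I replace $g$ by $ag$ with $a\in\bfC^{\times}$: then $t$ scales to $|a|t$ and $u_\infty$ to $(a/|a|)u_\infty$ with $a/|a|\in U(1)=Z_n(\bfR)\cap U_n(\bfR)$, and the corresponding property of $h$ supplies the factor $(a/|a|)^{-s}$, giving $f(ag)=a^{-s}f(g)$. For the weight condition I use that $\mK_{0,n,\infty}\subset U_n(\bfR)$, since $\mu(J)=1$, so right translation by $u_0\in\mK_{0,n,\infty}$ only alters $u_\infty$ to $u_\infty u_0$ and the automorphy factor is produced by the analogous property of $h$. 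Finally $f|_{U_n(\AQ)}=h$, because for $v\in U_n(\AQ)$ the factor $t$ equals $1$ and $h$ is already invariant under its own decomposition data. Together with injectivity this proves that restriction is an isomorphism.
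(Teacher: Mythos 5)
Your proof is correct and follows essentially the same route as the paper's: both use the decomposition of Proposition \ref{decomp657}, split $g_\infty\in G_n^+(\bfR)$ as a positive central scalar $\sqrt{\mu(g_\infty)}$ times an element of $U_n(\bfR)$, and establish well-definedness of the extension via $\mu(\delta)\in\hat{\bfZ}^{\times}\cap\bfQ^{\times}_{>0}=\{1\}$. Your write-up is somewhat more complete in explicitly verifying the central-character and weight conditions, which the paper leaves implicit, but this is elaboration rather than a different argument.
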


\begin{proof}
One has the following short exact sequence of group schemes over $\bfZ$:
$$1 \to U_n \to G_n \xrightarrow{\mu} \bfG_m \to 1.$$
We first show that the induced map \be \label{changegroup} f \mapsto f|_{U_n(\AQ)}: \quad \mM'_{k,\nu}(\mK) \rightarrow \mN'_{k,\nu}(\mK)\ee is injective. Indeed, let $f \in  \mM'_{k,\nu}(\mK)$. Using Proposition \ref{decomp657} we write any $g \in G_n(\AQ)$ as $g = \gamma g_{\bfR} c$ with $\gamma \in G_n(\bfQ)$, $g_{\bfR} \in G_n^+(\bfR)$, $c  \in U_n(\AQf)$.  Then $f(g) = f(g_{\bfR} c)$. Let $x=\diag((\mu(g_{\bfR}))^{1/2}, (\mu(g_{\bfR}))^{1/2}, \dots)$. Then $\mu(x)=\mu(g_{\bfR})$ and $x \in Z_{n}(\bfR)$. Set $y=x^{-1}g_{\bfR}$. Then $\mu(y)=1$. Hence by Definition \ref{auto} $$f(g) = f(xyc) = \mu(g_{\bfR})^{-n\nu-nk/2}f(yc)= f(yc),$$ so $f$ is completely determined by what it does on $U_n(\AQ)$.

It remains to show the surjectivity of (\ref{changegroup}). To do this we need to show that every $f \in \mN'_{k,\nu}(\mK)$ has an extension to $G_n(\AQ)$. Fix a unitary base $\mC$. Let $f \in \mN'_{k,\nu}(\mK)$. Let $g=\gamma g_{\bfR} p_b \kappa$ with $\gamma\in G_n(\bfQ)$, $g_{\bfR}\in G_n^+(\bfR)$, $c \in \mC$ and $\kappa \in \mK$. Write $g_{\bfR} = \sqrt{\mu(g_{\bfR})} \cdot y$. Then $y \in U_n(\bfR)$. Set $f(g) := \mu(g_{\bfR})^{-n\nu-nk/2} f(yc)$. We need to show that this extension of $f$ is well-defined. Let $g=\gamma' g'_{\bfR} c \kappa'$ be a different decomposition of $g$ with $\gamma' \in G_n(\bfQ)$, $g'_{\bfR} \in G_n^+(\bfR)$, $\kappa' \in \mK$. Then $g=(\gamma g_{\bfR}, \gamma c \kappa) = (\gamma' g'_{\bfR}, \gamma' c \kappa')$, where the first component is the $\infty$-component of $g$ and the second is the finite component of $g$. Thus $g'_{\bfR} = (\gamma')^{-1} \gamma g_{\bfR}$ and $c = (\gamma')^{-1} \gamma c \kappa \kappa'$. The latter equality implies that $\mu((\gamma')^{-1}\gamma) \in \hat{\bfZ}^{\times} \cap \bfQ^{\times} = \{\pm 1\}$ which combined with the first equality and the fact that $g_{\bfR}, g'_{\bfR} \in G^+_n(\bfR)$ implies that $\mu(g'_{\bfR}g_{\bfR}^{-1}) = \mu((\gamma')^{-1}\gamma)=1$, i.e., in particular $\mu(g_{\bfR}) = \mu(g'_{\bfR})$ and $(\gamma')^{-1}\gamma\in U_n(\bfQ)$. Write $g'_{\bfR} = \sqrt{\mu(g'_{\bfR})}y'$ with $y'\in U_n(\bfR)$. We have \begin{multline}\mu(g'_{\bfR})^{-n\nu-nk/2} f(y'c) = \mu(g'_{\bfR})^{-n\nu-nk/2}f((\mu(g'_{\bfR})^{-1/2}(\gamma')^{-1}\gamma g_{\bfR}, c)\\ = \mu(g_{\bfR})^{-n\nu-nk/2}f((\mu(g_{\bfR})^{-1/2}(\gamma')^{-1}\gamma g_{\bfR}, (\gamma')^{-1}\gamma c \kappa\kappa') = \mu(g_{\bfR})^{-n\nu-nk/2}f(yc).\end{multline}
\end{proof}
In view of Proposition \ref{isom99} in what follows we will often not distinguish between automorphic forms defined on $G_n(\AQ)$ and those on $U_n(\AQ)$.

Every $f \in \mM_{n,k,\nu}(\mK)$ possesses a 
Fourier expansion, i.e., for every $q \in
\GL_n(\AK)$, and every $h \in S(\bfQ)$ there exists a complex number 
$c_f(h,q)$ such that one has \be \label{fe143} f\left( \bmat I_n & 
\sigma \\ & I_n \emat \bmat q \\ & \hat{q} \emat
\right) = \sum_{h \in S_n(\bfQ)} c_f(h,q) e_{\AQ} (\tr h \sigma)\ee for every 
$\sigma \in S_n(\AQ)$. Here $e_{\AQ}$ is defined in the following way. 
Let $a = (a_v) \in \AQ$, where $v$ runs over all the places of $\bfQ$. If 
$v=\iy$, set $e_v(a_v) = e^{2 \pi i a_v}$. If $v =p$, a finite prime, set $e_v(a_v) = 
e^{-2 \pi i y}$, where $y$ is a rational number such that $a_v-y \in 
\bfZ_p$. Then we set 
$e_{\AQ}(a) = \prod_v e_v(a_v)$.

Suppose $2 \nmid h_K$.
For $g\in
U_n(\AQ)$, write $g = \g g_0 c  \in U_n(\bfQ)
U_n(\bfR) U_n(\AQf)$ with $c \in U_n(\AQf)$ and $g_0$ such that $\det  
g_0 = |\det g_0| e^{i \varphi}$ satisfies $0 \leq \varphi < 2\pi/\mJ(K)$.
Note that such a $g_0$ exists and $\det
g_0^{\nu}$ is
independent of the choice of $g_0$.
\begin{prop} \label{adelicclassical} Let $f \in \mM_{n,k,\nu}(\mK)$. Let $g=(g_{\infty}, 1) \in U_n(\bfR) U_n(\AQf)$. Set $Z:= g_{\infty}
\bfi_n$. Let $\mC$ be a unitary base.
For $c \in \mC$, set $f_c(Z)
=
(\det g_{\iy})^{\nu}j(g_{\iy}, \bfi_n)^{k}f(g_{\infty}c)$ and write
$\Gamma_c$ for
$U_n(\bfQ) \cap
(G_n^+(\bfR) \times c\mK c^{-1})$.
The
map
$f
\mapsto (f_c)_{c \in \mC}$ defines a $\bfC$-linear isomorphism
$\Phi_{\mC}: \mM'_{n,k,\nu}(\mK) \xrightarrow{\sim} \prod_{c \in \mC}
M^{\textup{sh}}_{n,k,\nu}(\G_c).$ \end{prop}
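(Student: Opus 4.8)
The plan is to build the standard adelic–classical dictionary out of the decomposition $U_n(\AQ) = \bigsqcup_{c\in\mC} U_n(\bfQ)\,U_n(\bfR)\,c\,\mK$ supplied by Proposition \ref{decomp657} applied to $U_n$ (here $G_n^+(\bfR)\cap U_n(\bfR)=U_n(\bfR)$ because $\mu\equiv 1$ on $U_n$), regarding $f$ as a function on $U_n(\AQ)$ via Proposition \ref{isom99}. First I would check that each $f_c$ is a well-defined function on $\bfH_n$: the element $g_\infty$ is determined by $Z=g_\infty\bfi_n$ only up to right multiplication by the stabilizer $\mK_{0,n,\infty}$ of $\bfi_n$, so replacing $g_\infty$ by $g_\infty u$ with $u\in\mK_{0,n,\infty}$ and using the cocycle relation $j(g_\infty u,\bfi_n)=j(g_\infty,\bfi_n)j(u,\bfi_n)$ together with $u\bfi_n=\bfi_n$ and the $\mK_{0,n,\infty}$-transformation law (third bullet of Definition \ref{auto}) shows that $(\det g_\infty)^\nu j(g_\infty,\bfi_n)^k f(g_\infty c)$ is unchanged; here one also uses that $u$ (an infinite-place element) and $c$ (a finite-adelic element) commute in $U_n(\AQ)$.

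Next I would establish semi-modularity. For $\gamma\in\Gamma_c=U_n(\bfQ)\cap(G_n^+(\bfR)\times c\mK c^{-1})$ I take $\gamma g_\infty$ as a representative of $\gamma Z$. The condition $\gamma\in c\mK c^{-1}$ means $c^{-1}\gamma c\in\mK$, so left $U_n(\bfQ)$-invariance removes $\gamma$ and right $\mK$-invariance absorbs the resulting finite factor $c^{-1}\gamma^{-1}c$, giving $f((\gamma g_\infty)c)=f(g_\infty c)$; combined with the cocycle identity for $j(\gamma g_\infty,\bfi_n)$ and multiplicativity of $\det$, this yields $f_c|_{k,\nu}\gamma=f_c$, so $f_c\in M^{\textup{sh}}_{n,k,\nu}(\Gamma_c)$ and $\Phi_{\mC}$ is a well-defined $\bfC$-linear map. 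Injectivity is then immediate: if all $f_c\equiv0$ then $f(g_\infty c)=0$ for every $g_\infty$ and $c$, and the decomposition together with the invariance properties forces $f\equiv0$.

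For surjectivity, given a tuple $(F_c)_{c\in\mC}$ I would define $f$ on $U_n(\AQ)$ by writing $g=\gamma g_\infty c\kappa$ and setting $f(g):=(\det g_\infty)^{-\nu}j(g_\infty,\bfi_n)^{-k}F_c(g_\infty\bfi_n)$, and then verify the four conditions of Definition \ref{auto} as well as $\Phi_{\mC}(f)=(F_c)_c$. Left $U_n(\bfQ)$- and right $\mK$-invariance are built into the formula; the $\mK_{0,n,\infty}$-transformation law reduces, exactly as in the well-definedness of $f_c$ above, to $u\bfi_n=\bfi_n$ and the cocycle relation; and the central character condition $f(ag)=a^{-2n\nu-nk}f(g)$ follows because a scalar matrix $aI_{2n}$ acts trivially on $\bfH_n$ while contributing $a^{2n}$ to $\det$ and $\det(aI_n)=a^n$ to $j$.

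The crux, and the step I expect to be the main obstacle, is the well-definedness of this $f$, where the delicate point is cleanly separating the infinite and finite adelic components. Comparing two decompositions $\gamma_1 g_1 c_1\kappa_1=\gamma_2 g_2 c_2\kappa_2$, disjointness of the union forces $c_1=c_2=c$; matching the infinite components gives $g_2=\delta g_1$ with $\delta=\gamma_2^{-1}\gamma_1\in U_n(\bfQ)$, while matching the finite components forces $c^{-1}\delta c\in\mK$, i.e.\ precisely $\delta\in\Gamma_c$. One must confirm that this connecting rational element lands in exactly the congruence group $\Gamma_c$ against which $F_c$ is modular; once this is in hand, modularity of $F_c$ under $\Gamma_c$ makes the two expressions for $f(g)$ agree, with the $(\det\delta)^{\pm\nu}$ factors (where $\det\delta\in\OK^\times$ is a root of unity by Remark \ref{hel}) cancelling automatically. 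Everything beyond this matching is the routine cocycle bookkeeping already used in the forward direction.
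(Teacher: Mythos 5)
Your proof is correct and is precisely the standard adelic--classical dictionary that the paper invokes with its one-line citation to \cite{Shimura97}, section 10: descend to $U_n(\AQ)$ via Proposition \ref{isom99}, use the class decomposition of Proposition \ref{decomp657}, check well-definedness of $f_c$ against the stabilizer $\mK^+_{0,n,\infty}$ of $\bfi_n$, and observe that the connecting rational element between two decompositions of the same adele lands exactly in $\Gamma_c$. You have simply written out in full the argument the paper delegates to Shimura, including the key well-definedness step for the inverse map, so no comparison beyond this is needed.
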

\begin{proof} This follows from \cite{Shimura97}, section 10. \end{proof} If $h_K$ is odd,
$\mB$
is a base
and $\mC = \{p_b\}_{b \in \mB}$, we write $\G_b$ instead of $\G_{p_b}$ 
and
$f_b$ instead of $f_{p_b}$ for $b \in \mB$, and $\Phi_{\mB}$ instead of
$\Phi_{\mC}$.

\begin{definition} Let $\mC$ be a unitary base. A function $f \in \mM'_{n,k,\nu}(\mK)$ whose image 
under
the
isomorphism $\Phi_{\mC}$ lands in $\prod_{c \in \mC} M^{\tuh}_{n,k,\nu}(\G_c)$
will be  
called
an
\textit{adelic hermitian modular form of weight $(k, \nu)$ and level
$\mK$}. The
space of
hermitian modular forms of weight $(k,\nu)$ will be denoted by
$\mM_{n,k,\nu}(\mK)$.
Moreover, we
set $\mM_{n,k,\nu}:=\mM_{n,k,\nu}(U_n(\hat{\bfZ}))$. When $\nu=0$ or $n=2$ we drop them from notation.
\end{definition} We clearly have \be \label{prod1} \mM_{n,k,\nu}(\mK) 
\cong
\prod_{c
\in \mC}
M^{\tuh}_{n,k,\nu}(\G_c).\ee  
 
Let $\chi: \Cl_K \rightarrow \bfC^{\times}$ be a character and choose a 
base $\mB$ consisting of scalar matrices $b$ such that $bb^*=I_n$. Such a base always exists when $(h_K, 2n)=1$ by Corollary \ref{scalarcor}. Write $Z_{n}$ for the center of $G_n$. Let $z =
\gamma z_0 p_b \kappa$ with $\gamma \in Z_{n}(\bfQ)$, $z_0 \in Z_{n}(\bfR)$, $b \in \mB$ and $\kappa \in (\mK \cap Z_{n}(\AQf))$ be an
element of the center with $z_0= \zeta I_{2n}$. If $f \in
\mM_{n,k,\nu}(\mK)$, then $$f(zg) =
f( z_0 p_b g) = \zeta^{-2n\nu-nk} f(p_bg).$$ Set
$$\mM_{n,k,\nu}^{\chi}(\mK)=\{f \in \mM_{n,k,\nu}(\mK) \mid 
f(p_bg)
= \chi(b) f(g)\},$$ where we consider $b$ as an element of $\Cl_K$ under
the identification $\mB = \Cl_K$ given by $b \mapsto c_K(\det b)$ with $c_K : \AK^{\times} \twoheadrightarrow \Cl_K$. Then
\be \label{de1} \mM_{n,k,\nu}(\mK)=\bigoplus_{\chi \in \Hom(\Cl_K,
\bfC^{\times})} \mM_{n,k,\nu}^{\chi}(\mK).\ee
   
By adapting the proof of Lemma A5.1 in \cite{Shimura00} to the case of a trivial conductor one can show that for every integer $s$ with $\mJ(K) \mid s$, there exists a Hecke character $\beta: \AK^{\times} \to \bfC^{\times}$ unramified everywhere, trivial on $\AQ^{\times}$ and such that $\beta(z) = \frac{z^{2s}}{|z|^{2s}}$ for $z\in \bfC^{\times} = K_{\infty}^{\times}$. We will denote the set of such characters by $\textup{Char}(s)$. Note that every element $\beta$ of ${\rm Char}(s)$ is unitary, i.e., $\ov{\beta(x)} = \beta(x)^{-1}$. If $h_K$ is odd, one has $\# \textup{Char}(s)=h_K$ by \cite{Shimura97}, Lemma 8.14. Using the Iwasawa decomposition one sees that for $g \in U_n(\AQ)$, one  has $\det g = a \ov{a}^{-1}\det Y$ for some $a \in \AK^{\times}$ and $Y \in U_n(\hat{\bfZ})$. By Lemma 5.11(4) in \cite{Shimura97} we see that $\det Y = \kappa \ov{\kappa}^{-1}$ for some $\kappa \in \hat{\Oo}_K^{\times}$. So, absorbing $\kappa$ into $a$ we in fact we have $\det g = a \ov{a}^{-1}$ for some $a \in \AK^{\times}$. Moreover, if $a\ov{a}^{-1} = b \ov{b}^{-1}$, then $\frac{a}{b} \in \AQ^{\times}$. Thus for every integer $s$, every  $\beta \in {\rm Char}(s)$ and every $g\in U_n(\AQ)$, the map $g \mapsto \beta(a)$ is a well-defined character on $U_n(\AQ)$. Abusing notation we will write $\beta(g)$ instead of $\beta(a)$.  

\begin{prop}\label{newmap}
Assume $\mJ(K) \mid \nu$ and $(2n, h_K)=1$. Let $\beta \in {\rm Char}(-\nu)$. Let $\mB$ be a base as in Corollary \ref{scalarcor} - note that then $\beta( p_b)=\beta(\det b)$ for $b \in \mB$. Let $\mK \subset G_n(\AQf)$ be an open compact subgroup. Then $\Gamma := U_n(\bfQ) \cap (G_n^+(\bfR) \times p_b \mK p_b^{-1})$ is independent of $b\in \mB$. Assume $\Gamma \subset U_n(\bfZ)$. 
 %Let 
%$\beta:\AK^{\times} \rightarrow \bfC^{\times}$ be an unramified (everywhere) Hecke character 
%such that $\beta(a_{\infty}) = a_{\infty}^{-2\nu}$ for all $a_{\infty} \in \bfC^{\times}$. 
%We will denote the set of such characters $\textup{Char} (n,\nu)$ 
%(note
%that $\# \textup{Char}(n,\nu)=h_K$). 
Then $M_{n,k,\nu}^{\tuh}(\Gamma) =
M_{n,k}^{\tuh}(\Gamma)$ and one has
the following commutative diagram in which all the maps are isomorphisms \be
\label{cen2} \xymatrix{\mM_{n,k}^{\tuh}(\mK) \ar[d]^{\Psi_{\beta}}_{\sim}  
\ar[r]^{\Phi_{0}}_{\sim} & \prod_{b\in \mB}
M_{n,k}^{\tuh}(\Gamma)\ar[d]^{\iota_{\beta}}_{\sim}\\
\mM_{n,k,\nu}^{\tuh}(\mK)\ar[r]^{\Phi_{\nu}}_{\sim}&\prod_{b\in \mB}
M_{n,k}^{\tuh}(\Gamma)},\ee where $\iota_{\beta}(f_b) = \beta(\det b) f_b$, 
for
$g=\gamma g_0 p_b \kappa \in U_n(\bfQ) U_n(\bfR) p_b \mK$ one has
$\Psi_{\beta}(f)(g) = \beta( g)f(g)$, and for $h \in U_n(\bfR)$
$\Phi_0(f)_b(h \bfi_n) = j(h, \bfi_n)^{k}f(hp_b)$ and $\Phi_{\nu}(f)_b(h \bfi_n) 
=
(\det h)^{\nu}j(h, \bfi_n)^{k}f(hp_b)$. The map $\Psi_{\beta}$ is
Hecke-equivariant; more precisely for $T=\mK a \mK$ with $a \in U_n(\AQ)$ one has   
$\Psi_{\beta}(Tf)(x) = \beta(a) (T \Psi_{\beta}(f))(x)$ (for the definition of the Hecke action see section \ref{Hecke operators}).\end{prop}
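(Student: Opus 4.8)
The plan is to deduce the first assertion directly from Remark \ref{hel}, and then to realize $\Psi_\beta$ concretely as multiplication by the character $\beta$, checking that this single operation simultaneously converts the weight-$(k,0)$ automorphy conditions into the weight-$(k,\nu)$ ones and makes the square commute. First I would record that $\Gamma$ is indeed independent of $b$: by the choice of base (Corollary \ref{scalarcor}) each $p_b$ is a \emph{scalar} matrix, hence central in $\GL_{2n}(\AK)$, so $p_b\mK p_b^{-1}=\mK$ and $\Gamma=U_n(\bfQ)\cap(G_n^+(\bfR)\times\mK)$. Since by hypothesis $\Gamma\subset U_n(\bfZ)$ and $\mJ(K)\mid\nu$, Remark \ref{hel} gives $(\det U_n(\bfZ))^{-\nu}=\{1\}$, so the factor $\det(\gamma)^{-\nu}$ in $F|_{k,\nu}\gamma$ is trivial for every $\gamma\in\Gamma$; hence $M^{\tuh}_{n,k,\nu}(\Gamma)=M^{\tuh}_{n,k}(\Gamma)$, and in particular both codomains in the diagram are $\prod_{b\in\mB}M^{\tuh}_{n,k}(\Gamma)$.

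The heart of the argument is to show that $\Psi_\beta(f):=\beta f$ carries $\mM^{\tuh}_{n,k}(\mK)$ into $\mM^{\tuh}_{n,k,\nu}(\mK)$. Working on $U_n(\AQ)$ via Proposition \ref{isom99}, I would check the four conditions of Definition \ref{auto} in turn, using that $\beta$ is a homomorphism on $U_n(\AQ)$ with $\beta(g)=\beta(a)$ whenever $\det g=a\ov a^{-1}$. Left $U_n(\bfQ)$-invariance holds because for $\gamma\in U_n(\bfQ)$ Hilbert 90 gives $\det\gamma=c\ov c^{-1}$ with $c\in K^\times$, so $\beta(\gamma)=\beta(c)=1$; right $\mK$-invariance holds because $\beta$ is unramified at every finite place, whence trivial on $\mK\cap U_n(\AQf)$ (its values factor through $\det$ and the norm-one local units, on which an unramified character is trivial). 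The two archimedean conditions are where the precise infinity type of $\beta$ enters, and this is the step I expect to be the \textbf{main obstacle}: using $\beta_\infty(z)=z^{2s}/|z|^{2s}$ with $s=-\nu$, a short computation yields $\beta(u)=(\det u)^{-\nu}$ for $u\in\mK_{0,n,\infty}$ and $\beta(z_0)=z_0^{-2n\nu}$ for a central $z_0=e^{i\theta}I_{2n}$. These are exactly the factors needed to turn conditions (iii) and (iv) of Definition \ref{auto} with $\nu=0$ into those with $(k,\nu)$.

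Granting this, commutativity of the square and holomorphy are one computation. For $h\in U_n(\bfR)$ the same archimedean identity gives $\beta(h)=(\det h)^{-\nu}$, and $\beta(p_b)=\beta(\det b)$ as noted in the statement, so
\[
\Phi_\nu(\beta f)_b(h\bfi_n)=(\det h)^\nu j(h,\bfi_n)^k\,\beta(h)\beta(p_b)\,f(hp_b)=\beta(\det b)\,j(h,\bfi_n)^k f(hp_b)=\iota_\beta\big(\Phi_0(f)\big)_b(h\bfi_n).
\]
Thus $\Phi_\nu\circ\Psi_\beta=\iota_\beta\circ\Phi_0$; since the right-hand side is the holomorphic form $\beta(\det b)\,\Phi_0(f)_b\in M^{\tuh}_{n,k}(\Gamma)$, it follows that $\beta f$ is holomorphic, i.e.\ $\Psi_\beta$ does land in $\mM^{\tuh}_{n,k,\nu}(\mK)$. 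Finally $\Phi_0$ and $\Phi_\nu$ are isomorphisms by Proposition \ref{adelicclassical} together with (\ref{prod1}), and $\iota_\beta$ is an isomorphism because $\beta$ is unitary (each $\beta(\det b)\neq 0$); hence $\Psi_\beta=\Phi_\nu^{-1}\circ\iota_\beta\circ\Phi_0$ is an isomorphism as well, with inverse multiplication by $\ov\beta\in{\rm Char}(\nu)$.

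For Hecke-equivariance I would write $T=\mK a\mK=\bigsqcup_i a_i\mK$ and invoke the definition of the Hecke action from section \ref{Hecke operators}. Each $a_i\in\mK a\mK$, so by the triviality of $\beta$ on $\mK\cap U_n(\AQf)$ one has $\beta(a_i)=\beta(a)$ for every $i$; substituting $\Psi_\beta(f)=\beta f$ into the sum defining the operator and factoring out this common value yields the asserted relation $\Psi_\beta(Tf)=\beta(a)\,T(\Psi_\beta f)$, the scalar $\beta(a)$ appearing according to the normalization of the Hecke action fixed in section \ref{Hecke operators}.
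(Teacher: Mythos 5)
Your proof is correct and fills in precisely the argument the paper intends: the paper's own proof is the one-line remark that the statement "is straightforward using the results of this section (cf.\ (\ref{prod1}) and Remark \ref{hel})," and your verification — centrality of the scalar $p_b$ for the independence of $\Gamma$, Remark \ref{hel} for $M^{\tuh}_{n,k,\nu}(\Gamma)=M^{\tuh}_{n,k}(\Gamma)$, the computation $\beta(h)=(\det h)^{-\nu}$ on $U_n(\bfR)$ from the infinity type of $\beta\in{\rm Char}(-\nu)$ for the automorphy conditions and the commutativity of (\ref{cen2}), and triviality of $\beta$ on $\mK\cap U_n(\AQf)$ for the Hecke equivariance — is exactly the intended route. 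No gaps.
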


\begin{proof} This is straightforward using the results of this section (cf. (\ref{prod1}) and Remark \ref{hel}). \end{proof}

\section{Hecke operators} \label{Hecke operators}

\subsection{Hermitian Hecke operators}

We study Hecke operators acting on the space $\mM_{k,\nu}$ 
of hermitian
modular forms on $G(\AQ)=G_2(\AQ)$. We also set $U=U_2$. Let $p$ be
a rational prime write $\mK_p$ for $G(\bfZ_p)$.
 Let $\mH_p$ be the
$\bfC$-Hecke algebra generated by the double cosets $\mK_p g \mK_p$, $g
\in G(\bfQ_p)$ with the usual law of multiplication (cf. 
\cite{Shimura97},
section 11), and $\mH_p^+ \subset \mH_p$ be the subalgebra generated by $\mK_p g \mK_p$ with $g \in U(\bfQ_p)$.
If $\mK_pg\mK_p \in \mH_p$, there exists a finite set $A_g \subset
G(\bfQ_p)$ such that $\mK_p g \mK_p = \bigsqcup_{\alpha \in A_g} \mK_p  
\alpha$. For $f \in \mM_{k,\nu}$, $g \in G(\bfQ_p)$, $h \in G(\AQ)$, set  
$$([\mK_p g \mK_p]f)(h) = \sum_{\a
\in A_g } f(h \a^{-1}).$$ It is clear that $[\mK_p g \mK_p]f \in \mM_{k,\nu}$.

\begin{rem} Let $\mK_{0,p}:= \mK_p \cap U(\bfZ_p)$. Every element of
$\mH_p$
can be written as $\mK_p g \mK_p$ with $g$ a diagonal matrix. For 
$\kappa 
\in
\mK_p$ write $m_{\kappa}=
\diag(1,1, \mu(\kappa), \mu(\kappa))$. Then $h_{\kappa}=\kappa
m_{\kappa}^{-1} \in \mK_{0,p}$. Since $g$ is diagonal, $m_{\kappa}$  
commutes
with $g$, hence we get $$\mK_p g \mK_p = \mK_p g \mK_{0,p}.$$ From this 
it
follows
that $$\mK_{0,p} g \mK_{0,p} = \bigsqcup_{\alpha \in A_g} 
\mK_{0,p}\alpha
\implies \mK_p g \mK_p = \bigsqcup_{\alpha \in A_g} \mK_p g.$$ \end{rem}

\subsubsection{The case of a split prime} Let $p$ be a prime which 
splits
in $K$. Write $(p)=\fp \ov{\fp}$.
Recall that $G(\bfQ_p) \cong \GL_4(\bfQ_p)\times \bfG_m(\bfQ_p)$. An element
$g$
of $G(\bfQ_p)$ can be written as $g=(g_1, g_2) \in \GL_4(\bfQ_p) \times
\GL_4(\bfQ_p)$ with $g_2 = -\mu(g) J (g_1^t)^{-1} J.$
 Set \begin{itemize} \item $T_{\fp} := \mK_p (\diag(
1, p,p,p), \diag(1,1,p,1) )\mK_p$,
\item $T_{p}:= \mK_p (\diag (1,1, p,p) ,
\diag(1,1,p,p))\mK_p,$
\item $\Delta_{\fp} := \mK_p (pI_4,I_4)\mK_p.$ \end{itemize}
It is easy to see that the $\bfC$-algebra $\mH_p$ is generated by the
operators $T_{\fp}$, $T_{\ov{\fp}}$, $T_{p}$, $\Delta_{\fp}$,
$\Delta_{\ov{\fp}}$ and their inverses.

\begin{prop} \label{decompsplit1} We have the following decompositions
\be \begin{split} T_{\fp}  = & \bigsqcup_{a,b,c \in \bfZ/p \bfZ} \mK_p
\left(
\bsmat 1&
a & b & c \\ & p\\ && p\\ &&&p \esmat, \bsmat 1&&b\\ &1
& c \\ && p \\ && -a & 1 \esmat \right)  \sqcup \\
& \bigsqcup_{d,e \in \bfZ/p \bfZ}  \mK_p \left( \bsmat p \\ & 1 &
d
& e \\ && p \\ &&& p \esmat , \bsmat 1 &&& d \\ & 1 && e \\ && 1 \\
&&& p \esmat \right)\sqcup \\
& \bigsqcup_{f \in \bfZ/p \bfZ} \mK_p \left( \bsmat p\\ & p \\ && 1 &
 f \\ &&& p\esmat , \bsmat p \\ -f & 1 \\ && 1 \\ &&& 1 \esmat
\right) \sqcup \\
& \hspace{25pt} \mK_p \left( \bsmat p \\ & p \\ && p \\ &&& 1 \esmat,   
\bsmat 1 \\ & p
\\ && 1 \\ &&& 1 \esmat \right).  
\end{split} \ee

\be \begin{split} T_{p} = & \bigsqcup_{b,c,d,e \in \bfZ/ p \bfZ} \mK_p
\left( \bsmat 1 & & b & d \\ & 1 & c & e
\\ && p \\ &&&p \esmat , \bsmat 1 && b & c \\ & 1 & d & e \\ &&p \\ &&& 
p
\esmat \right) \sqcup \\ 
& \bigsqcup_{a,c,f \in \bfZ/p \bfZ} \mK_p \left( \bsmat p\\   
-f
& 1 &  c \\ && p \\ && -a &p \esmat, \bsmat 1 & a
&& c \\ &p
\\ &&1&f \\ &&& p \esmat \right) \sqcup\\
& \bigsqcup_{e,f \in \bfZ/p\bfZ} \mK_p\left( \bsmat p \\ -f & 1
&&
e \\ && 1 \\ &&&p \esmat, \bsmat p \\ & 1 && e \\ && 1& f \\
&&& p \esmat\right) \sqcup \\
& \bigsqcup_{a,b \in \bfZ/ p \bfZ} \mK_p\left( \bsmat 1 && b \\
&
p
\\ && p \\ && -a &1
\esmat , \bsmat 1 & a& b \\ & p \\ && p \\ &&&1 \esmat \right) \sqcup \\
& \bigsqcup_{d\in \bfZ/ p \bfZ} \mK_p\left( \bsmat 1 &&& d \\
&p
\\
&& 1 \\ &&& p \esmat , \bsmat p\\ & 1& d \\ &&p \\ &&&1 \esmat
\right) \sqcup \\
& \hspace{25pt} \mK_p \left( \bsmat p \\ & p \\ && 1 \\ &&& 1 \esmat,
\bsmat p
\\
&
p \\ && 1 \\ &&&1 \esmat \right).\end{split} \ee
\end{prop}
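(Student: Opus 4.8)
The plan is to use the splitting of $p$ to turn both decompositions into lattice enumerations in $\GL_4(\bfQ_p)$. Via the isomorphism $\iota_p^{-1}$ one has $G(\bfQ_p) \cong \GL_4(\bfQ_p) \times \bfG_m(\bfQ_p)$, under which an element $g = (g_1, g_2)$ is recorded by the pair $(g_1, \mu(g))$ with $g_2 = -\mu(g) J (g_1^t)^{-1} J$, and $\mK_p = G(\bfZ_p)$ corresponds to $\GL_4(\bfZ_p) \times \bfZ_p^{\times}$. Left multiplication by $\mK_p$ alters $g_1$ only by a left factor from $\GL_4(\bfZ_p)$ and scales $\mu$ by a unit, so a left coset $\mK_p \alpha \subset \mK_p g \mK_p$ is determined by the left $\GL_4(\bfZ_p)$-coset of its first component $\alpha_1$ (after normalizing $\mu(\alpha) = p$), whereupon $\alpha_2 = -p J (\alpha_1^t)^{-1} J$ is forced. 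First I would record that $\mu(T_{\fp}) = \mu(T_p) = p$ and that the first components $\diag(1,p,p,p)$ and $\diag(1,1,p,p)$ have elementary-divisor types $(1,p,p,p)$ and $(1,1,p,p)$ respectively.

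Next I would enumerate the left $\GL_4(\bfZ_p)$-cosets by elementary-divisor (Hermite normal form) theory, using that such cosets are in bijection with the row lattices $\Lambda \subset \bfZ_p^4$ whose cotype $\bfZ_p^4/\Lambda$ is prescribed. For type $(1,p,p,p)$ one has $\bfZ_p^4/\Lambda \cong (\bfZ/p\bfZ)^3$, equivalently $\Lambda/p\bfZ_p^4$ is a line in $\bfF_p^4$; there are $p^3 + p^2 + p + 1$ of these, and grouping them by the position of the leading nonzero coordinate of a normalized generator produces precisely the four families displayed for $T_{\fp}$ (with free parameters $a,b,c$, then $d,e$, then $f$, then none), whose upper-triangular Hermite representatives are exactly the first components shown. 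For type $(1,1,p,p)$ one has $\bfZ_p^4/\Lambda \cong (\bfZ/p\bfZ)^2$, so $\Lambda/p\bfZ_p^4$ is a $2$-plane in $\bfF_p^4$; there are $\binom{4}{2}_p = p^4 + p^3 + 2p^2 + p + 1$ of these, and organizing them by reduced row-echelon pivot pattern gives the six families listed for $T_p$, with the expected sizes $p^4, p^3, p^2, p^2, p, 1$.

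Finally I would pin down the second components. For each representative $\alpha_1$ I would form $\alpha_2 = -p J (\alpha_1^t)^{-1} J$ and verify that it is integral and equals the displayed matrix; the cleanest check is to confirm the defining relation $\alpha_1 J \alpha_2^t = p J$ by a direct block multiplication, which simultaneously re-proves that $(\alpha_1, \alpha_2) \in G(\bfQ_p)$ with $\mu = p$ and lies in the correct double coset. Matching the number of displayed representatives with the lattice counts above then shows the lists are exhaustive and pairwise distinct. The main obstacle is organizational rather than conceptual: one must check that the chosen Hermite representatives sweep out the lattices of each type once and only once, and — more delicately — that inverting and conjugating by $J$ returns $\alpha_2$ in exactly the normalized integral form tabulated, which has to be confirmed family by family.
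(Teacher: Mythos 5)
Your proposal is correct and takes essentially the same route as the paper: the paper's entire proof is the one-line remark that the decompositions follow from the corresponding left-coset decompositions for $\GL_4(\bfQ_p)$, which is precisely the Hermite-normal-form/lattice enumeration you carry out (with the counts $p^3+p^2+p+1$ and $\binom{4}{2}_p=p^4+p^3+2p^2+p+1$ matching the displayed families), the second component being forced by $g_2=-\mu(g)J(g_1^t)^{-1}J$. Your write-up simply supplies the details the paper leaves implicit.
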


\begin{proof} This follows easily from the corresponding decompositions for the group $\GL_4(\bfQ_p)$. \end{proof}

\subsubsection{The case of an inert prime} Let $p$ be a prime which is
inert in $K$. Set \begin{itemize} \item $T_{p}:=\mK_p \diag(1, 1, p,
p ) \mK_p$,
\item $T_{1,p}:=\mK_p
 \diag (1, p, p^2,p ) \mK_p$,
\item $\Delta_p:= \mK_p pI_4 \mK_p$. \end{itemize}
The operators $T_{p}$, $T_{1,p}$, $\Delta_p$ and their inverses generate
the $\bfC$-algebra $\mH_p$.  

\begin{prop} \label{decompinert1} We have the following decompositions
\be \begin{split} T_{p}  = & \bigsqcup_{\substack{b,d\in \bfZ/p \bfZ \\ 
c
\in \OK/p \OK}} \mK_p \bsmat 1 && b & c \\ & 1 & \ov{c} & d \\ && p 
\\&&&
p
\esmat \sqcup \bigsqcup_{e \in \bfZ/p \bfZ} \mK_p\bsmat p  \\ & 1 && e\\
&&1
\\
&&& p \esmat \sqcup \\
& \bigsqcup_{\substack{a \in \OK/p\OK \\ b \in \bfZ/p\bfZ }} \mK_p 
\bsmat
1 & a &b
\\ &p  \\ && p \\ &&-\ov{a}& 1 \esmat \sqcup \mK_p \bsmat p \\
&p\\
&&1\\ &&&1 \esmat. \end{split} \ee
  
\be \begin{split} T_{1,p} = & \bigsqcup_
{\substack{a,c \in \OK/p \OK\\ b \in \bfZ/p^2 \bfZ}} \mK_p \bsmat 1 & a &
b+a \ov{c} & c  \\ & p & p\ov{c} \\ && p^2 \\ && -\ov{a}p & p \esmat
\sqcup
\bigsqcup_{\substack{c \in \OK/p \OK\\ d \in \bfZ/ p^2 \bfZ}} \mK_p 
\bsmat
p &&& pc \\ & 1 & \ov{c} & d \\ && p \\ &&&p^2 \esmat \\
& \bigsqcup_{a \in \OK/p\OK} \mK_p \bsmat p & pa \\ & p^2\\ &&p \\ &&
-\ov{a} & 1 \esmat \sqcup \mK_p \bsmat p^2 \\ & p \\ && 1 \\ &&& p 
\esmat
\sqcup \\
& \bigsqcup_{\substack{b,d \in \bfZ/p \bfZ \ \\ bd \equiv 0
\pmod{p}\\ (b,d) \neq (0,0)}} \mK_p \bsmat p && b \\ & p & & d \\ && p \\ &&& p \esmat \cup
\bigsqcup_{\substack{b \in (\bfZ/p \bfZ)^{\times}\\ c \in (\OK / p
\OK)^{\times}}} \mK_p \bsmat p && b & c \\ & p &        \ov{c} & |c|^2
b^{-1} \\
&&
p \\ &&& p \esmat \end{split} \ee

\end{prop}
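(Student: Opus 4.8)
The plan is to prove both decompositions by a single lattice-theoretic method, using $T_p$ as a model for the harder operator $T_{1,p}$. First I would reduce from $\mK_p = G(\bfZ_p)$ to $\mK_{0,p} = \mK_p \cap U(\bfZ_p) = U(\bfZ_p)$: since both $\xi_0 = \diag(1,1,p,p)$ and $\xi_1 = \diag(1,p,p^2,p)$ are diagonal, the Remark above shows that a decomposition $\mK_{0,p}\xi\mK_{0,p} = \bigsqcup_{\alpha} \mK_{0,p}\alpha$ yields $\mK_p \xi \mK_p = \bigsqcup_{\alpha} \mK_p\alpha$ with the same representatives, so it suffices to work inside the unitary group with its maximal compact $\mK_{0,p} = U(\bfZ_p)$, the stabilizer of the standard self-dual lattice $L_0 = \Oo_{K,p}^4$ for the hermitian form $h(x,y) = x^* J y$. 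Because $p$ is inert, $\Oo_{K,p}$ is the maximal order of the unramified quadratic extension $K_p/\bfQ_p$, its residue field is $\bfF_{p^2}$, and conjugation $a \mapsto \ov{a}$ reduces to the nontrivial element of $\Gal(\bfF_{p^2}/\bfF_p)$; this is precisely the origin of the entries $\ov{c}$ and $|c|^2$ in the listed matrices.

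For $T_p$ I would use the bijection $\mK_{0,p}\alpha \mapsto \alpha^{-1}L_0$ between left cosets and $\Oo_{K,p}$-lattices. The similitude relation $\mu(\alpha) = \mu(\xi_0) = p$ forces $L := \alpha^{-1}L_0$ to be $p$-modular, i.e. $L^{\sharp} = pL$, equivalently $L_0 \subset L \subset p^{-1}L_0$ with $\ov{L} := L/L_0$ a maximal isotropic subspace of the nondegenerate hermitian $\bfF_{p^2}$-space $p^{-1}L_0/L_0 \cong \bfF_{p^2}^4$. Enumerating these subspaces according to their relative position with respect to the standard isotropic flag produces exactly the four displayed families, the constraint that the upper-right block $b_{\alpha}$ be hermitian ($b_{\alpha}^* = b_{\alpha}$) being what introduces the entry $\ov{c}$. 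The number of maximal isotropic subspaces in a nondegenerate hermitian $\bfF_{p^2}$-space of dimension $4$ is $(p+1)(p^3+1)$, which equals the number of listed cosets, giving exhaustiveness.

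The operator $T_{1,p}$ is handled in the same spirit but with $\mu(\xi_1) = p^2$ and elementary-divisor type $(1,p,p^2,p)$, so the admissible lattices $L$ satisfy a mixed duality condition and fall into several $\mK_{0,p}$-orbits. Here two different constraints pin down the matrix entries: the similitude condition again forces the relevant block to be hermitian (the entries $\ov{c}$), while the elementary-divisor condition forces a determinant relation on that block --- concretely $\det b_{\alpha} \equiv 0 \pmod p$, i.e. rank one modulo $p$ --- which is exactly why the $(2,2)$-entry must equal $|c|^2 b^{-1}$ once $b \in (\bfZ/p)^{\times}$. Classifying the rank-one hermitian blocks over $\bfF_{p^2}$ according to whether a diagonal pivot is a unit produces the bifurcation into the family with $bd \equiv 0$, $(b,d) \neq (0,0)$ and the family with $b \in (\bfZ/p)^{\times}$, $c \in (\Oo_{K,p}/p)^{\times}$. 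A count of the six families gives $p^6 + p^4 + p^3 + p = p(p^2+1)(p^3+1)$ cosets.

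To complete the argument, for each listed $\alpha$ I would check directly that (i) $\alpha J \alpha^* = \mu(\xi) J$, so $\alpha \in G(\bfQ_p)$ with the correct similitude --- this is the short computation in which the conjugate entries $\ov{c}$ and the norm entry $|c|^2 b^{-1}$ are exactly what is required --- and (ii) $\alpha$ has the elementary divisors of $\xi$, so $\alpha \in \mK_{0,p}\xi\mK_{0,p}$; disjointness of the cosets $\mK_{0,p}\alpha$ would follow by reducing the corresponding lattices modulo $p$ (resp. $p^2$) and noting that distinct representatives give distinct lattices. Exhaustiveness then follows from matching the coset counts above against the degree $[\mK_{0,p}\xi\mK_{0,p} : \mK_{0,p}]$, computed independently from the maximal-isotropic count or the Satake/Macdonald formula for the unramified unitary group. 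I expect the genuine obstacle to be the $T_{1,p}$ case: making the enumeration of lattices of type $(1,p,p^2,p)$ simultaneously exhaustive and non-redundant while correctly imposing the duality constraint, since it is this constraint that genuinely splits the list into the $bd \equiv 0$ and unit-$b$ families, so that the careful bookkeeping --- rather than any isolated computation --- is where the difficulty lies.
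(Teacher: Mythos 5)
Your proposal is correct in substance, but it takes a genuinely different route from the paper: the paper's proof of this proposition is a one-line citation to Lemma 5.3 of \cite{Klosin09} (and the references there, ultimately explicit matrix verifications in the style of Krieg and Gritsenko), whereas you give a self-contained lattice-theoretic argument. Your reduction to $\mK_{0,p}$ via the Remark, the identification of left cosets in $\mK_{0,p}\xi_0\mK_{0,p}$ with $p$-modular lattices $L_0 \subset L \subset p^{-1}L_0$, and the enumeration by maximal isotropic subspaces of the hermitian $\bfF_{p^2}$-space $p^{-1}L_0/L_0$ are all sound; I checked that your coset counts match the listed families exactly, namely $p^4+p^3+p+1=(p+1)(p^3+1)$ for $T_p$ and $p^6+p^4+p^2+1+(2p-2)+(p-1)(p^2-1)=p(p^2+1)(p^3+1)$ for $T_{1,p}$, and that the block-triangular similitude condition $AB^*=BA^*$, $AD^*=\mu I$ does force the hermitian blocks and hence the entries $\ov{c}$ and $|c|^2b^{-1}$ (the latter being exactly the rank-one hermitian matrices over $\bfF_{p^2}$, split into the $c=0$ and $c\neq 0$ cases as you say). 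What your approach buys is a conceptual explanation of \emph{why} the list looks the way it does and a uniform exhaustiveness criterion; what it costs is that the independent computation of $[\mK_{0,p}\xi_1\mK_{0,p}:\mK_{0,p}]=p(p^2+1)(p^3+1)$ for $T_{1,p}$ is asserted rather than carried out --- that index (via the Iwahori decomposition, the affine Bruhat decomposition, or a direct count of lattices of type $(1,p,p^2,p)$ satisfying the duality constraint) is the one piece you would still have to supply to make the exhaustiveness argument complete, and you correctly identify it as the genuine obstacle.
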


\begin{proof} See the proof of Lemma 5.3 in \cite{Klosin09} and references cited there. \end{proof}

\subsection{Action of the Hecke operators on the Fourier coefficients}

Let $S=S_2$ be as in section \ref{Hermitian modular forms}. Write $\mS_p:= \mS(\bfZ_p)$ for $\{h \in S(\bfQ_p) \mid \tr (S(\bfZ_p)h) \subset \bfZ_p\}$.
For a matrix $h \in S(\AQ)$ such that $h_p \in \mS_p$ for every prime $p$,
set $$\epsilon_p(h) = \max \{n \in \bfZ \mid
\frac{1}{p^n} h_p \in \mS_p \}$$ and $$\epsilon(h) = \prod_{p\nmid \infty}
p^{\epsilon_p(h)}.$$
Note that $\epsilon_p(h) \geq
0$ for every $p$ and $\epsilon(h) = \epsilon(h_{\tuf})$.

For $f \in \mM_{k, \nu}$, $q \in \GL_2(\AK)$ and $h \in S(\bfQ)$ we write
$c_f(h,q)$ for the $(h,q)$-Fourier coefficient of $f$ as in (\ref{fe143}).

\subsubsection{The case of a split prime} Let $p$ be a prime which splits
in $K$. Let $\fp$ be a prime of $K$ lying over $p$ and denote by
$\ov{\fp}$ its conjugate. As before we simultaneously identify 
$G(\bfQ_p)$
with a subgroup
of
$\GL_4(\bfQ_p) \times \GL_4(\bfQ_p)$ (the first factor corresponds to
$\fp$ and the second one to $\ov{\fp}$) and with a subgroup of $G(\AQ)$.
Set $$T_{\fp,1}:= \Delta_{\fp}^{-1} T_{\fp},$$
$$T_{\fp,2}:= \Delta_{\fp}^{-1} T_{p}.$$
Note that the operators $T_{\fp, 1}$, $T_{\fp,2}$ and their inverses generate $\mH_p^+$.
Define the following elements of $\GL_2(\bfQ_p) \times \GL_2(\bfQ_p)$ 
which we regard as elements of $\GL_2(\AK)$,
 \be\begin{split}
\a_a=& \left(\bmat p &a\\ &1 \emat, I_2 \right), \quad a=0,1, \dots,
p-1,\\
\a_{p}= &
\left(\bmat 1 \\ &p \emat, I_2 \right),\\
\beta_{p}= &
\left(\bmat p \\ &p \emat, I_2 \right),\\
\g_a=& \left(\bmat 1 &a\\ &p \emat, I_2 \right), \quad a=0,1, \dots, 
p-1\\
\gamma_{p}= &
\left(\bmat p \\ &1 \emat, I_2 \right).\end{split}\ee
We will write $\pi_{\fp}\in \AK^{\times}$ for the adele whose $\fp$th component is $p$ and whose all other components are 1. Write 
 $\pi_{\fp} = \gamma_{\fp} \pi_{\fp, \infty} b_{\fp} \kappa_{\fp}$ with $\gamma_{\fp}\in K^{\times}$, $\pi_{\fp, \infty}=\gamma_{\fp}^{-1} \in \bfC^{\times}$, $b_{\fp}\in
\AKf^{\times}$, $\kappa_{\fp}\in \hat{\Oo}_K^{\times}$, so that $\val_p(b_{\fp}b_{\fp}^*)=0$ (this is always possible). 

\begin{prop} \label{f3} One has the following formulas  
$$c_{f'}(h,q) = \begin{cases} p^2 \sum_{a=0}^{p}
 c_f(h, q \alpha_a) + \sum_{a=0}^{p} c_f(h, q \hat{\alpha}_a) &
f'=T_{\fp,1}f;\\
 p^4 c_f(h,q \beta_p) + c_f(h, q \hat{\beta}_p) + p
\sum_{a=0}^p \sum_{b=0}^p c_f(h, q \gamma_a
\hat{\gamma}_b) & f'=T_{\fp,2}f; \\
\gamma_{\fp}^{-2k-4\nu} c_f(h,qb_{\fp}^{-1}) & f' = \Delta_{\fp}
f.\end{cases}$$\end{prop}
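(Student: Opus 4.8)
The plan is to evaluate each operator on the Fourier test element $\bmat I_2 & \sigma \\ & I_2 \emat p_q$, with $\sigma \in S(\AQ)$ and $p_q = \bmat q \\ & \hat q \emat$, and to read off the coefficients by comparison with the expansion (\ref{fe143}). Writing a double coset as $\mK_p g \mK_p = \bigsqcup_{\alpha} \mK_p \alpha$, the definition of the Hecke action gives
\[ (Tf)\Bigl(\bmat I & \sigma \\ & I \emat p_q\Bigr) = \sum_{\alpha} f\Bigl(\bmat I & \sigma \\ & I \emat p_q \alpha^{-1}\Bigr). \]
For each $\alpha$ I would put $p_q \alpha^{-1}$ into an Iwasawa form relative to the Siegel parabolic $P = M_P U_P$, i.e. write $p_q \alpha^{-1} = \bmat I & s_{\alpha} \\ & I \emat p_{q_{\alpha}} \kappa_{\alpha}$ with $\bmat I & s_{\alpha} \\ & I \emat \in U_P(\AQ)$, $p_{q_{\alpha}} \in M_P(\AQ)$ and $\kappa_{\alpha} \in \mK_p$. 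Since $\alpha$ is supported at the finite prime $p$, right $\mK_p$-invariance carries no archimedean automorphy factor, so $f(\bmat I & \sigma \\ & I \emat p_q \alpha^{-1}) = f(\bmat I & \sigma + s_{\alpha} \\ & I \emat p_{q_{\alpha}}) = \sum_h c_f(h, q_{\alpha}) e_{\AQ}(\tr h(\sigma + s_{\alpha}))$. Comparing with (\ref{fe143}) yields the master formula
\[ c_{Tf}(h,q) = \sum_{\alpha} e_{\AQ}(\tr h\, s_{\alpha})\, c_f(h, q_{\alpha}). \]

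For $T_{\fp,1} = \Delta_{\fp}^{-1} T_{\fp}$ and $T_{\fp,2} = \Delta_{\fp}^{-1} T_p$, both of similitude one, the representatives may be taken in $U(\bfQ_p)$: they arise from those of Proposition \ref{decompsplit1} by left multiplication by the central $\Delta_{\fp}^{-1} = (p^{-1} I_4, I_4)$. For each family the $A$-block of the representative, upon forming $\alpha^{-1}$, provides the Levi factor $p_{q_{\alpha}}$ and hence the shift $q \mapsto q \alpha_a$ or $q \mapsto q \hat\alpha_a$ (respectively $q\beta_p$, $q\hat\beta_p$, $q\gamma_a\hat\gamma_b$) under the split identification $K_p \cong \bfQ_p \times \bfQ_p$; the remaining parameters running over $\bfZ/p\bfZ$ live in the off-diagonal $U_P$-directions and enter only through $s_{\alpha}$. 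The master formula then groups as $\sum_a \bigl(\sum_{b,c \in \bfZ/p\bfZ} e_{\AQ}(\tr h\, s_{\alpha})\bigr) c_f(h, q\alpha_a) + \sum_a c_f(h, q\hat\alpha_a)$ for $T_{\fp,1}$, and analogously for $T_{\fp,2}$. Each inner sum is an additive character sum over $\bfZ/p\bfZ$ in one or two variables; it equals $p^2$ (resp. $p^4$, $p$) whenever $c_f(h, q_{\alpha}) \neq 0$ and $0$ otherwise, because the support of $c_f(\cdot, q_{\alpha})$ lies in the lattice on which $h \mapsto e_{\AQ}(\tr h\, s_{\alpha})$ is trivial. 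This reproduces the stated coefficients, the counts $p^2(p+1) + (p+1) = p^3+p^2+p+1$ (resp. $p^4 + 1 + p(p+1)^2$) matching the number of cosets in Proposition \ref{decompsplit1}.

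The operator $\Delta_{\fp}$ is handled directly, since its representative $(pI_4, I_4)$ is the central idele $\pi_{\fp} I_4$, giving $(\Delta_{\fp} f)(x) = f(x\, \pi_{\fp}^{-1} I_4)$. Using the decomposition $\pi_{\fp} = \gamma_{\fp} \pi_{\fp,\infty} b_{\fp} \kappa_{\fp}$ and that all four factors are central, I would move $\gamma_{\fp}^{-1} I_4 \in Z_2(\bfQ) \subset G(\bfQ)$ to the left (killed by left $G(\bfQ)$-invariance), absorb $\kappa_{\fp}^{-1} I_4$ (with $\kappa_{\fp} \in \hat{\Oo}_K^{\times}$) on the right, and apply the archimedean central transformation of Definition \ref{auto} to $\pi_{\fp,\infty}^{-1} I_4 = \gamma_{\fp} I_4 \in Z_2(\bfR)$, which contributes the scalar $\gamma_{\fp}^{-2n\nu - nk} = \gamma_{\fp}^{-2k - 4\nu}$ with $n = 2$. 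What survives is right translation by the finite central idele $b_{\fp}^{-1}$, i.e. $q \mapsto q b_{\fp}^{-1}$; the normalization $\val_p(b_{\fp} b_{\fp}^*) = 0$ ensures that the accompanying similitude factor is a unit at $p$ and is absorbed under the $G_n$-to-$U_n$ normalization of Proposition \ref{isom99}. This gives $c_{\Delta_{\fp} f}(h,q) = \gamma_{\fp}^{-2k-4\nu} c_f(h, q b_{\fp}^{-1})$.

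The main obstacle is the second step: decomposing $p_q \alpha^{-1}$ cleanly enough to (i) identify each Levi part with the correct $\GL_2(\AK)$-element $\alpha_a, \hat\alpha_a, \beta_p, \gamma_a \hat\gamma_b$ — in particular keeping track of which factor of $K_p \cong \bfQ_p \times \bfQ_p$ is $\fp$ and which is $\ov{\fp}$, which is what produces the hats — and (ii) evaluate the resulting $h$-dependent character sums. Establishing the support/triviality compatibility that collapses these sums to the uniform constants $p^2, p^4, p$ is the delicate point; the remainder is bookkeeping built on Proposition \ref{decompsplit1}.
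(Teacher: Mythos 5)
Your overall strategy is exactly what the paper intends (its entire proof of Proposition \ref{f3} is the sentence ``This is an easy calculation using Proposition \ref{decompsplit1}''), and your master formula $c_{Tf}(h,q)=\sum_{\alpha}e_{\AQ}(\tr h\, s_{\alpha})\,c_f(h,q_{\alpha})$ with $q_{\alpha}=qA^{-1}$ is correct. The gap is in the justification of the step you yourself single out as delicate: the collapse of the character sums. You argue the inner sum equals the coset count ``whenever $c_f(h,q_{\alpha})\neq 0$ \dots because the support of $c_f(\cdot,q_{\alpha})$ lies in the lattice on which $h\mapsto e_{\AQ}(\tr h\, s_{\alpha})$ is trivial.'' That containment is false. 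Writing $\alpha=\bsmat I&S\\&I\esmat p_A$ with $S=BA^*$, one has $s_{\alpha}=-q_{\alpha}Sq_{\alpha}^*=-qA^{-1}Bq^*$, and $S$ itself has entries with denominator $p$; so $q_{\alpha}^*hq_{\alpha}\in\mS_p$ does \emph{not} force $\tr(h s_{\alpha})=-\tr(q_{\alpha}^*hq_{\alpha}S)\in\bfZ_p$. Concretely, take $q=I_2$ and $h$ with $\fp$-data $h_1=p^{-1}$, $h_2=0$, $h_3=1$: then $\alpha_0^*h\alpha_0=I_2\in\mS_p$, so $c_f(h,q\alpha_0)$ may well be nonzero, yet the corresponding sum $\sum_{b,c}e_p(-b/p)=0$ rather than $p^2$, and your grouping would assign it the coefficient $p^2$.

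The repair is to condition on the original pair rather than the shifted one: $\tr(h s_{\alpha})=-\tr\bigl(q^*hq\cdot A^{-1}B\bigr)$, and a family-by-family check shows that $A^{-1}B$ is an \emph{integral} hermitian matrix (for the first family of $T_{\fp,1}$ it is $\bigl(\bsmat b&c\\0&0\esmat,\bsmat b&0\\c&0\esmat\bigr)$), so the character is identically $1$ on $\{h:\ q^*hq\in\mS_p\}$. That is the standing hypothesis under which the formula is meant to be read --- it is stated explicitly in the inert analogue, Proposition \ref{f4}, and is genuinely needed here, since off that set the left-hand side vanishes while your right-hand side need not. With this fix the computation goes through; the residual work is, as you say, bookkeeping, though note one trap there as well: the Levi classes extracted from the middle families of $T_p$ have $\fp$-components of the lower-triangular shape $\bsmat 1&0\\ *&p\esmat$ (the $\gamma'_a$ used later in the proof of Proposition \ref{invariance2}), which are \emph{not} right-$\GL_2(\bfZ_p)$-equivalent to the upper-triangular $\gamma_a$ displayed just before Proposition \ref{f3}, so the matching of representatives must be done up to right multiplication by units with the correct orientation. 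Your treatment of $\Delta_{\fp}$ is sound.
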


\begin{proof} This is an easy calculation using Proposition \ref{decompsplit1}. \end{proof}

\subsubsection{The case of an inert prime} Let $p$ be a prime which is
inert in $K$. Set $$T_{p, 0}:= \Delta_p^{-1} T_{1,p}.$$ Define the
following elements of $\GL_2(K_p) \subset \GL_2(\AK)$:
\be\begin{split} \a_a &= \bmat p & a \\ & 1 \emat \quad a \in \OK/p 
\OK,\\
\a_p& = \bmat 1 \\ & p \emat ,\\
\beta_a &= \bmat 1 & a p^{-1} \\ & p^{-1} \emat \quad a \in \OK/p \OK,\\
\beta_p & = \bmat p^{-1} \\ & 1 \emat. \end{split} \ee Write 
$\bfP^1(\OK/p
\OK)$ for the disjoint union of $\OK/p \OK$ and $p$.  Let $h \in S(\bfQ)$
and $q \in \GL_2(\AKf)$ and assume that $q^* h q \in \mS_p$. Since $p
\nmid D_K$, this implies that $q_p^* h q_p \in M_2(\Oo_{K,p})$, where
$q_p$ denotes the $p$-th component of $q$. Set $$s(h,q):= \begin{cases} 
p 
& \val_p(\det (q^* h q))=0;\\
-p(p-1) & \val_p(\det (q^* h q))>0, \hs \epsilon_p(q^* h q)=0;\\
p^2(p-1) & \epsilon_p(q^* h q)>0.\end{cases} $$

\begin{prop} \label{f4} Assume that $q^* h q \in \mS_p$. One has the 
following
formulas:
$$c_{f'}(h,q) = \begin{cases} s(h,q)  c_f(h, q) + p^4 \sum_{a \in
\bfP^1(\OK/p\OK)}
c_f(h, q \alpha_a) + \sum_{a \in \bfP^1(\OK/p\OK)} c_f(h, q \beta_a)
&
f'=T_{p,0}f;\\
p^{-2k+4}c_f(ph,q) + c_{f}(p^{-1}h,q) + p^{-k+1}\sum_{a \in
\bfP^1(\OK/p\OK)}
c_f(ph, \beta_a q)      & f'=T_{p}f; \\
p^{-4\nu-2k} c_f(h,q) & f' = \Delta_{p}
f.\end{cases}$$ If $q^* h q \not\in \mS_p$, $c_{f'}(h,q)=0$ in all
of the above cases. \end{prop}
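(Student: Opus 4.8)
The plan is to compute directly from the coset decompositions of Proposition \ref{decompinert1}, in exactly the spirit of the split case (Proposition \ref{f3}); the inert case is harder only because of the term $s(h,q)$. Throughout I would write $\tau := q^*hq$ and use repeatedly two elementary facts: that $p_q \bsmat I & X \\ & I\esmat = \bsmat I & qXq^*\\ & I\esmat p_q$ (since $\hat q = (q^*)^{-1}$), and that for a rational scalar $z\in Z_n(\bfQ)$, left $G(\bfQ)$-invariance together with the centrality of $z$ lets me trade a scalar inserted at the place $p$ for one at $\infty$, on which $f$ transforms by the central character of Definition \ref{auto}. The operator $\Delta_p$ consists of the single coset $\mK_p(pI_4)$, so applying this remark to $z=pI_4$ gives at once $(\Delta_pf)(g) = p^{-4\nu-2k}f(g)$, hence the stated formula.

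For $T_{p,0}=\Delta_p^{-1}T_{1,p}$ and for $T_p$ I would substitute the decompositions of Proposition \ref{decompinert1} into the definition of the Hecke action and into the Fourier expansion (\ref{fe143}). For each coset representative $\alpha$ (placed at $p$) I rewrite $\bsmat I & \sigma\\ & I\esmat p_q\alpha^{-1}$ in the standard shape $\gamma\bsmat I & \sigma'\\ & I\esmat p_{q'}\kappa$ with $\gamma\in G(\bfQ)$ and $\kappa$ in the maximal compact, by an Iwasawa/Bruhat factorization at $p$ followed by the commutation identity above; the archimedean automorphy factors and the central character absorb the similitude, which for the $\mu=p$ representatives of $T_p$ rescales the Fourier variable and thus sends $c_f(h,\cdot)$ to $c_f(p^{\pm1}h,\cdot)$ while producing the weights $p^{-2k+4}$ and $p^{-k+1}$. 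The representatives then fall into two kinds. Those of ``$q$-shifting'' type (the block-triangular ones with $a_\alpha\neq d_\alpha$) move $q$ to $q\alpha_a$ or $q\beta_a$ (resp. $\beta_aq$ for $T_p$) and, after the $\Delta_p^{-1}$-normalisation, assemble into $p^4\sum_{a\in\bfP^1(\OK/p\OK)}c_f(h,q\alpha_a) + \sum_{a}c_f(h,q\beta_a)$; the multiplicities $p^4$ and $1$ are confirmed by counting cosets in each family against $|\bfP^1(\OK/p\OK)|=p^2+1$.

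The only subtle contribution is from the ``$q$-fixing'' representatives $\bsmat pI_2 & B\\ & pI_2\esmat$ with $B$ Hermitian mod $p$: these factor as $pI_4\cdot\bsmat I & p^{-1}B\\ & I\esmat$, so after the central reduction they fix $q$ and merely translate $\sigma$ by $-p^{-1}qBq^*$, contributing $c_f(h,q)\sum_B e_p(-p^{-1}\tr(\tau B))$ to $c_{T_{p,0}f}(h,q)$. Evaluating this character sum is the heart of the matter and the step I expect to be the main obstacle: reducing $\tau$ mod $p$ (legitimate since $p\nmid D_K$, so $\OK/p\OK$ is an unramified $\bfF_p$-algebra) and diagonalising the Hermitian form it defines, the sum is governed by the number of isotropic lines of $\tau$ in $\bfP^1(\OK/p\OK)$, which is controlled by the rank of $\tau$ mod $p$ — equivalently by $\val_p(\det\tau)$ and by whether $\epsilon_p(\tau)>0$. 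The regimes rank $2$, rank $1$, rank $0$ are meant to produce the three values $p$, $-p(p-1)$, $p^2(p-1)$; the delicate point is keeping track of the representatives whose off-diagonal or higher-level (mod $p^2$) parameters also feed into this coefficient in the degenerate regimes, and I expect the precise constants to demand careful bookkeeping here. Finally, the vanishing when $q^*hq\notin\mS_p$ is not special to these operators: each Hecke operator preserves $\mM_{k,\nu}$, so $f'=Tf\in\mM_{k,\nu}$ again, and invariance under $\bsmat I & S(\bfZ_p)\\ & I\esmat\subset\mK_p$ forces $c_{f'}(h,q)=0$ unless $\tr(S(\bfZ_p)\,\tau)\subset\bfZ_p$, i.e. unless $q^*hq\in\mS_p$.
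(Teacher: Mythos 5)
Your strategy coincides with the paper's: Proposition \ref{f4} is stated there without proof, its split analogue (Proposition \ref{f3}) being dismissed as ``an easy calculation using Proposition \ref{decompsplit1}'', so the intended argument is exactly the direct substitution of the coset decomposition of Proposition \ref{decompinert1} into the Hecke action and the Fourier expansion (\ref{fe143}) that you describe. Your treatment of $\Delta_p$ via the central character, your derivation of the vanishing off $\mS_p$ from invariance under the unipotent part of $\mK_p$, and your sorting of the representatives into ``$q$-shifting'' and ``$q$-fixing'' families are all correct in structure.

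One point needs fixing, and one is left genuinely open. First, the $q$-fixing representatives of $T_{1,p}$ are \emph{not} indexed by all hermitian $B$ mod $p$: from Proposition \ref{decompinert1} they are $\bsmat pI_2 & B\\ & pI_2\esmat$ with $B$ running over the $(p-1)(p^2+1)$ \emph{nonzero singular} hermitian matrices mod $p$ (the two families with $bd\equiv 0$, resp.\ $d=|c|^2b^{-1}$), i.e.\ exactly the matrices $\lambda vv^*$ with $\lambda\in(\bfZ/p\bfZ)^\times$ and $[v]\in\bfP^1(\OK/p\OK)$. Had the sum $\sum_B e_p(-p^{-1}\tr(\tau B))$ run over \emph{all} hermitian $B$ mod $p$ it would equal $0$ or $p^4$ by orthogonality, not the three-valued $s(h,q)$; it is only because it runs over the rank-one stratum that it reduces, via $\tr(\tau\cdot\lambda vv^*)=\lambda\, v^*\tau v$, to $(p-1)N_0(\tau)-\bigl((p^2+1)-N_0(\tau)\bigr)$ with $N_0(\tau)$ the number of $\tau$-isotropic lines in $\bfP^1(\OK/p\OK)$ — which is what your ``isotropic lines'' remark is really about, so the slip is in the description of the cosets rather than in the intended computation. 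Second, you explicitly stop short of evaluating $N_0(\tau)$ in the three regimes ($p+1$, $1$ and $p^2+1$ for $\tau$ of rank $2$, $1$, $0$ mod $p$), of confirming the multiplicities $p^4$ and $1$ of the shifting families (which require checking that the character $e_p(-\tr(\tau A^{-1}B))$ is trivial on those families), and of matching everything against the displayed $s(h,q)$; since this is where all the content of the proposition sits, the proposal as written is a correct plan rather than a proof. A worthwhile independent check of whatever constants you obtain is to feed them back through the Maass relation (\ref{Maass condition}) and verify that they reproduce the eigenvalue $\Desc_{\mB,p}(T_{p,0})$ recorded in Theorem \ref{heckedesc2}.
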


Finally let us also note that in the inert case the algebra $\mH^+_p$ is generated by the operators $T_{p,0}$ and $U_p:=\Delta_p^{-1} T_p^2$ and their inverses.

\section{Maass space} 
\label{Maass space}

Let $S=S_2$, $\mS$ be as in section \ref{Hermitian modular forms} and $\mS_p$, $\epsilon$, $\epsilon_p$ as in section \ref{Hecke operators}. In this section we assume that $k$ is a positive integer divisible by $\# \OK^{\times}$.

\subsection{Definition and basic properties} \label{The Maass space}

\begin{definition} \label{Maass form} Let $\mB$ be a base. We say that
$f\in
\mM_{k, -k/2}$ is a $\mB$-\textit{Maass form} if there exist functions 
$c_{b,
f}:
\bfZ_{\geq 0} \rightarrow \bfC$, $b \in \mB$, such that for every $q \in
\GL_2(\AK)$ and every $h \in S(\bfQ)$ the Fourier coefficient $c_f(h,q)$
satisfies
\begin{multline} \label{Maass condition} c_f(h,q) = |\det
q_{\iy}|^k
e^{-2\pi\tr(q_{\iy}^* h q_{\iy})} |\det \g_{b, q}|^{-k}
\times
\\
\times \sum_{\substack{d \in \bfZ_{+} \\ d \mid \epsilon(q_{\rm f}^* h
q_{\rm f})}}
d^{k-1} c_{b, f}\left( D_K d^{-2} \det h \hs \prod_{p} p^{\val_p(\det
q_{\rm f}^*q_{\rm f})}\right),\end{multline} where $q_{\rm f} = \g_{b,
q} b
\kappa_q \in \GL_2(K) b \mK'$ for a unique $b
\in \mB$. Here $\mK'=\GL_2(\hat{\Oo}_K)$ is a maximal 
compact subgroup of
$\GL_2(\AKf)$. Also, here and in what follows we will often treat the $K$-points as embedded diagonally in the $\AKf$-points (i.e., instead of writing $q_{\rm f} = \gamma_{b,
q}q_{\infty} b
\kappa_q$ with $q_{\infty} = \gamma_{b,q}^{-1} \in \GL_2(\bfC)$ we will simply write $q_{\rm f} = \g_{b,
q} b
\kappa_q$ as above.) \end{definition}

\begin{rem} Note that by \cite{Shimura97}, Proposition 18.3(2), 
$c_f(h,q)
\neq 0$ only if $(q^* h q)_p \in \mS_p$, so $\epsilon_p(q_{\rm f}^* h
q_{\rm f})\geq 0$. Also, note that Definition \ref{Maass form} is
independent of the decomposition $q_{\rm f} = \g_{b,
q} b
\kappa_q\in \GL_2(K) b \mK'$. Indeed,
if $q_{\rm f} = \g'_{b,
q} b
\kappa'_q \in \GL_2(K) b \mK'$ is
another decomposition of $q_{\rm f}$, then
$$\det \g'_{b,
q} \det \g_{b,
q}^{-1} = \det (\kappa_q (\kappa'_q)^{-1})\in \hat{\Oo}_K^{\times} \cap
K^{\times} =
\OK^{\times},$$ 
so $\det (\g'_{b,
q})^k =  \det \g_{b,
q}^k$. \end{rem}

\begin{definition} \label{Maass space def} The $\bfC$-subspace of 
$\mM_{k, -k/2}$
consisting of $\mB$-Maass forms
will be called the $\mB$-\textit{Maass space}. \end{definition}

\begin{definition} Let $\mB$ be a base. We will say that $q \in 
\GL_2(\AK)$ belongs to a class $b \in \mB$ if there exist $\g \in 
\GL_2(K)$, $q_{\iy} \in \GL_2(\bfC)$ and $\kappa\in \mK'$ such that $q = \g 
b q_{\iy} \kappa$. \end{definition}

It is clear that the class of $q$ depends only on $q_{\rm f}$.

\begin{lemma} \label{sameclass} Suppose $r \in \GL_2(\AK)$ and $q \in
\GL_2(\AK)$ belong to the same class and $r_{\rm f} = \g q_{\rm f}
\kappa \in \GL_2(K) q_{\rm f} \GL_2(\hat{\Oo}_K)$. Then
\be \label{fourierrel} c_f(h,r) = \left| \frac{\det r_{\iy}} {\det 
q_{\iy}}\right|^{k }e^{-2 \pi \tr (r_{\iy}^* h r_{\iy} -
q_{\iy}^*
\g^* h \g q_{\iy})} |\det \g|^{-k} c_f(\g^* h \g, q). \ee 
\end{lemma}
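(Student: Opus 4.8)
The plan is to obtain (\ref{fourierrel}) from the Fourier expansion (\ref{fe143}) by peeling off the factor $\gamma$ using the left $G(\bfQ)$-invariance of $f$, peeling off $\kappa$ using the right $\mK$-invariance, and then carrying out an archimedean comparison based on the holomorphy recorded in Proposition \ref{adelicclassical}. First note that $p_\gamma=\bsmat \gamma \\ & \hat\gamma\esmat$ lies in $M_P(\bfQ)\subset G(\bfQ)$ and that $p_\kappa=\bsmat\kappa\\&\hat\kappa\esmat\in U_2(\hat\bfZ)=\mK$, because $\gamma\in\GL_2(K)$ and $\kappa\in\GL_2(\hat\Oo_K)$. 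Writing $\sigma=\gamma\tau\gamma^*$ and using $p_\gamma^{-1}\bsmat I_2 & \sigma \\ & I_2\esmat=\bsmat I_2 & \tau \\ & I_2\esmat p_\gamma^{-1}$ together with $p_\gamma^{-1}p_r=p_{\gamma^{-1}r}$ and $\gamma^{-1}r_{\rm f}=q_{\rm f}\kappa$, left-invariance and then right-invariance under $(1,p_\kappa)\in\mK$ give
\[
f\Bigl(\bsmat I_2 & \sigma \\ & I_2\esmat p_r\Bigr)=f\Bigl(\bsmat I_2 & \tau \\ & I_2\esmat p_{q''}\Bigr),\qquad q'':=(\gamma^{-1}r_\infty,\ q_{\rm f}).
\]
Since $\tr(h\gamma\tau\gamma^*)=\tr((\gamma^* h\gamma)\tau)$, expanding the left-hand side by (\ref{fe143}) at $r$ and the right-hand side by (\ref{fe143}) at $q''$ and matching the coefficient of $e_{\AQ}(\tr(\gamma^* h\gamma)\tau)$ yields $c_f(h,r)=c_f(\gamma^* h\gamma,\ q'')$. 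Everything is thereby reduced to comparing $c_f(\gamma^* h\gamma,q'')$ with $c_f(\gamma^* h\gamma,q)$, whose arguments $q''$ and $q$ share the \emph{same} finite part $q_{\rm f}$ and differ only in the infinite components $\gamma^{-1}r_\infty$ and $q_\infty$.

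For this comparison I would record, for any $a\in\GL_2(\bfC)$ and $h'\in S(\bfQ)$, the dependence of $c_f(h',(a,q_{\rm f}))$ on $a$. Fixing the finite part at $q_{\rm f}$ and taking the infinite component to be $g_\infty=\bsmat I_2 & \sigma_\infty \\ & I_2\esmat p_a$, holomorphy of $f$ (Proposition \ref{adelicclassical}) says that with $Z=\sigma_\infty+\bfi_2\, aa^*$ one has $f\bigl((g_\infty,\,p_{q_{\rm f}})\bigr)=(\det g_\infty)^{-\nu}j(g_\infty,\bfi_2)^{-k}F_{q_{\rm f}}(Z)$ for a holomorphic hermitian form $F_{q_{\rm f}}$ depending only on $q_{\rm f}$. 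A direct computation gives $\det g_\infty=\det a/\overline{\det a}$ and $j(g_\infty,\bfi_2)=\overline{\det a}^{-1}$, so that with $\nu=-k/2$ the automorphy factor collapses to $(\det g_\infty)^{k/2}j(g_\infty,\bfi_2)^{-k}=|\det a|^k$, independently of $\sigma_\infty$. Comparing $|\det a|^k\sum_{\tau'}c_{F_{q_{\rm f}}}(\tau')e^{-2\pi\tr(\tau' aa^*)}e^{2\pi i\tr(\tau'\sigma_\infty)}$ with (\ref{fe143}) then shows
\[
c_f(h',(a,q_{\rm f}))=|\det a|^k\, e^{-2\pi\tr(h' aa^*)}\, c_{F_{q_{\rm f}}}(h').
\]

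It remains to substitute $h'=\gamma^* h\gamma$ and specialize $a$. For $a=q_\infty$ this gives $c_f(\gamma^* h\gamma,q)=|\det q_\infty|^k e^{-2\pi\tr(q_\infty^*\gamma^* h\gamma\, q_\infty)}c_{F_{q_{\rm f}}}(\gamma^* h\gamma)$, while for $a=\gamma^{-1}r_\infty$, using $\tr(\gamma^* h\gamma\cdot\gamma^{-1}r_\infty r_\infty^*(\gamma^*)^{-1})=\tr(r_\infty^* h r_\infty)$ and $|\det(\gamma^{-1}r_\infty)|^k=|\det\gamma|^{-k}|\det r_\infty|^k$, it gives $c_f(\gamma^* h\gamma,q'')=|\det\gamma|^{-k}|\det r_\infty|^k e^{-2\pi\tr(r_\infty^* h r_\infty)}c_{F_{q_{\rm f}}}(\gamma^* h\gamma)$. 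The common factor $c_{F_{q_{\rm f}}}(\gamma^* h\gamma)$ cancels in the quotient, and I conclude
\[
c_f(h,r)=c_f(\gamma^* h\gamma,q'')=\Bigl|\tfrac{\det r_\infty}{\det q_\infty}\Bigr|^k e^{-2\pi\tr(r_\infty^* h r_\infty-q_\infty^*\gamma^* h\gamma\, q_\infty)}|\det\gamma|^{-k}\, c_f(\gamma^* h\gamma,q),
\]
which is (\ref{fourierrel}). The one genuinely delicate point is the archimedean factorization of the middle paragraph: one must ensure that $q''$ and $q$ feed into the \emph{same} holomorphic form $F_{q_{\rm f}}$ (guaranteed here because they literally share the finite part $q_{\rm f}$), so that $c_{F_{q_{\rm f}}}(\gamma^* h\gamma)$ is common and cancels, and that the $(\det)^{-\nu}$ and $j^{-k}$ contributions combine through $\nu=-k/2$ into the real power $|\det a|^k$ rather than a holomorphic power carrying a branch ambiguity.
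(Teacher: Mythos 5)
Your proof is correct and follows essentially the same route as the paper: the paper's proof consists of the archimedean unwinding $c_f(h,r) = |\det r_{\iy}|^k e^{-2\pi\tr(r_{\iy}^*hr_{\iy})}c_{p_{r_{\rm f}}}(h)$ (cited from Shimura, Prop.\ 18.3(4)) together with the slash relation $f_{p_{r_{\rm f}}} = f_{p_{q_{\rm f}}}|_k\bsmat \g^{-1}\\ & \g^*\esmat$ (cited from Shimura, Lemma 10.8), which are exactly your middle-paragraph computation and your $\gamma$/$\kappa$-peeling step, respectively. The only difference is that you derive from first principles the two facts the paper outsources to \cite{Shimura97}, and you correctly flag the one point that needs care, namely that both specializations of $a$ feed into the same classical form $F_{q_{\rm f}}$ and that $\nu=-k/2$ makes the automorphy factor collapse to $|\det a|^k$.
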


\begin{proof} It follows from the proof of part (4) of
Proposition 18.3 of \cite{Shimura97}, that \be \label{f12} c_f(h,r) = 
|\det
r_{\iy}|^k e^{-2 \pi \tr (r_{\iy}^* h r_{\iy})} 
c_{p_{r_{\tuf}}}(h),\ee
where
$$f_{p_{r_{\tuf}}}(Z) = \sum_{h \in S} c_{p_{r_{\tuf}}}(h) e^{2 \pi i \tr 
hZ}.$$ As is easy
to see (cf. for example the Proof of Lemma 10.8 in \cite{Shimura97}),
$f_{p_{r_{\tuf}}} = f_{p_{q_{\tuf}}} |_k \bsmat \g^{-1} \\ & \g^* \esmat$. 
Hence
\be \label{f13} c_{p_{r_{\tuf}}}(h) = |\det \g|^{-k}
c_{p_{q_{\tuf}}}(\g^* h
\g).\ee The Lemma follows from combining (\ref{f12}) with (\ref{f13}).
\end{proof}

\begin{prop} \label{to check} Choose a base $\mB$ and let $f \in \mM_{k,-k/2}$.
If
there exist functions $c^*_{b,f}: \bfZ_{\geq 0} \rightarrow \bfC$, $b 
\in
\mB$, such
that for every $b \in \mB$ and every $h \in S(\bfQ)$, the Fourier
coefficient
$c_f(h,b)$ satisfies
condition (\ref{Maass condition}) with $c_{b,f}^*$ in place of 
$c_{b,f}$,
then $f$ is a $\mB$-Maass form and one has $c_{b,f}=c^*_{b,f}$ for every
$b \in \mB$. \end{prop}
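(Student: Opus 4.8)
The plan is to use Lemma \ref{sameclass} to propagate the Maass condition from the finitely many base representatives $b \in \mB$ to an arbitrary $q \in \GL_2(\AK)$. This is exactly the mechanism the proposition is built for: every $q$ lies in the same class as a unique $b$, and within a class the Fourier coefficients are rigidly linked by Lemma \ref{sameclass}, so knowing (\ref{Maass condition}) on each $b$ forces it everywhere.

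First I would fix arbitrary $q \in \GL_2(\AK)$ and $h \in S(\bfQ)$, and let $b \in \mB$ be the unique base element with $q_{\rm f} = \g_{b,q}\, b\, \kappa_q \in \GL_2(K)\, b\, \mK'$, where $\g_{b,q} \in \GL_2(K)$ and $\kappa_q \in \mK' = \GL_2(\hat{\Oo}_K)$. Viewing $b$ as an element of $\GL_2(\AK)$ with trivial archimedean component, I would apply Lemma \ref{sameclass} with $r = q$ and with the role of ``$q$'' there played by $b$; the hypotheses hold by the displayed decomposition of $q_{\rm f}$, and since $b_\iy = I_2$ the lemma yields
\[
c_f(h,q) = |\det q_\iy|^k\, e^{-2\pi \tr(q_\iy^* h q_\iy - \g_{b,q}^* h \g_{b,q})}\, |\det \g_{b,q}|^{-k}\, c_f(\g_{b,q}^* h \g_{b,q},\, b).
\]
Into this I would insert the hypothesis: applying (\ref{Maass condition}) for the base point $b$ (with $c^*_{b,f}$, and again with $b_\iy = I_2$, so $\g_{b,b}=1$) to the matrix $\g_{b,q}^* h \g_{b,q}$ gives
\[
c_f(\g_{b,q}^* h \g_{b,q}, b) = e^{-2\pi \tr(\g_{b,q}^* h \g_{b,q})} \sum_{\substack{d \in \bfZ_{+} \\ d \mid \epsilon(b^* \g_{b,q}^* h \g_{b,q} b)}} d^{k-1}\, c^*_{b,f}\!\left(D_K d^{-2} \det(\g_{b,q}^* h \g_{b,q}) \prod_p p^{\val_p(\det b^* b)}\right).
\]
Substituting and cancelling $e^{\pm 2\pi \tr(\g_{b,q}^* h \g_{b,q})}$, the prefactors collapse to $|\det q_\iy|^k e^{-2\pi \tr(q_\iy^* h q_\iy)} |\det \g_{b,q}|^{-k}$, which is precisely the prefactor of (\ref{Maass condition}) for $q$.

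It then remains only to match the summand, and this is the step I expect to be the main (if routine) obstacle, being pure bookkeeping at the finite places. Concretely I must verify the two identities
\[
\epsilon(b^* \g_{b,q}^* h \g_{b,q} b) = \epsilon(q_{\rm f}^* h q_{\rm f}), \qquad |\det \g_{b,q}|^2 \prod_p p^{\val_p(\det b^* b)} = \prod_p p^{\val_p(\det q_{\rm f}^* q_{\rm f})}.
\]
For the first I would write $q_{\rm f}^* h q_{\rm f} = \kappa_q^*\,(b^* \g_{b,q}^* h \g_{b,q} b)\,\kappa_q$ and observe that each $\mS_p$ is stable under $x \mapsto \kappa^* x \kappa$ for $\kappa \in \GL_2(\hat{\Oo}_K)$, so $\epsilon$, defined through the $\mS_p$, is invariant under this conjugation. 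For the second I would compute $\det(\g_{b,q}^* h \g_{b,q}) = N(\det \g_{b,q})\det h = |\det \g_{b,q}|^2 \det h$, and factor $\det(q_{\rm f}^* q_{\rm f}) = N(\det \g_{b,q})\,N(\det b)\,N(\det \kappa_q)$; here $N(\det \kappa_q) \in \hat{\bfZ}^{\times}$ contributes nothing to $\prod_p p^{\val_p(\cdot)}$, while $N(\det \g_{b,q})$ is a positive rational so that $\prod_p p^{\val_p(N(\det \g_{b,q}))} = N(\det \g_{b,q}) = |\det \g_{b,q}|^2$, and $\det b^* b = N(\det b)$.

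With both identities in hand, the derived formula for $c_f(h,q)$ is exactly (\ref{Maass condition}) with $c^*_{b,f}$ in place of $c_{b,f}$. Since $q$ and $h$ were arbitrary, $f$ is a $\mB$-Maass form, and comparing the defining expansion on the base points shows $c_{b,f} = c^*_{b,f}$ for every $b \in \mB$.
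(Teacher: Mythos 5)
Your proposal is correct and follows essentially the same route as the paper's own proof: apply Lemma \ref{sameclass} to reduce $c_f(h,q)$ to $c_f(\g_{b,q}^* h \g_{b,q}, b)$, insert the hypothesis at the base point, and match the divisibility condition and the argument of $c^*_{b,f}$ via the invariance of $\epsilon$ under $\GL_2(\hat{\Oo}_K)$-conjugation and the identity $\det(\g^*\g)=\prod_p p^{\val_p(\det\g^*\g)}$ for $\det(\g^*\g)\in\bfQ_+$. Your justification of the $\epsilon$-invariance is slightly more explicit than the paper's, which simply asserts it.
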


\begin{proof} Fix $\mB$ and $f \in \mM_{k,-k/2}$. Suppose there exist
$c_{b,f}^*$
such that (\ref{Maass condition}) is satisfied for all pairs $(h,b)$. 
Let
$q = \g b x \kappa = (\g x, \g b \kappa)\in \GL_2(\bfC) \times
\GL_2(\AKf)$, where $\g \in \GL_2(K)$, $x \in \GL_2(\bfC)$ and $\kappa
\in \mK'$. Then by Lemma \ref{sameclass}, 
$$c_f(h,q) = |\det
q_{\iy}|^k e^{-2 \pi \tr (q_{\iy}^* h q_{\iy} -
\g^* h \g )} |\det \g|^{-k} c_f(\g^* h \g, b).$$ Since condition
(\ref{Maass
condition}) is satisfied for $(h,b)$, we know that
$$c_f(h,b)= e^{-2 \pi \tr h}
\sum_{\substack{d \in \bfZ_{+} \\ d \mid \epsilon(b^* hb)}}
d^{k-1} c^*_{b, f}\left( D_K d^{-2} \det h \hs \prod_{p} p^{\val_p(\det
b^*b)}\right).$$ Thus $$c_f(\g^*  h \g,b) = e^{-2 \pi \tr(\g^* h \g)}
\sum_{\substack{d \in \bfZ_{+} \\ d \mid \epsilon(b^*\g^* h\g b)}}
d^{k-1} c^*_{b, f}\left( D_K d^{-2} \det (\g^* h\g) \hs \prod_{p}
p^{\val_p(\det
b^*b)}\right).$$ So, \begin{multline} c_f(h,q) = |\det
q_{\iy}|^k
e^{-2\pi\tr(q_{\iy}^* h q_{\iy})} |\det \g|^{-k}
\times\\
\times \sum_{\substack{d \in \bfZ_{+} \\ d \mid \epsilon(b^* \g^* h
\g b)}}
d^{k-1} c^*_{b, f}\left( D_K d^{-2} \det h \det(\g^* \g) \hs \prod_{p}
p^{\val_p(\det   
b^* b)}\right).\end{multline}
The claim now follows since $\epsilon(b^* \g^* h
\g b) = \epsilon(q_{\rm f}^* h q_{\rm f})$
and $\det
(\g^* \g) \in \bfQ_+$, so $\det (\g^* \g) = \prod_p p^{\val_p(\det \g^*  
\g)}$.
\end{proof}

\begin{prop} \label{independence} If $\mB$ and $\mB'$ are two bases, 
then
the $\mB$-Maass space and the $\mB'$-Maass space coincide, i.e., the
notion of a Maass form is independent of the choice of the base.
\end{prop}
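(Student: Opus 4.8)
The plan is to prove the two Maass spaces are equal by showing that every $\mB$-Maass form is also a $\mB'$-Maass form; interchanging the roles of $\mB$ and $\mB'$ then yields the reverse inclusion and hence equality. So I would fix a $\mB$-Maass form $f$ with defining functions $c_{b,f}$, $b\in\mB$, and manufacture candidate functions $c^*_{b',f}$, $b'\in\mB'$, witnessing $f$ as a $\mB'$-Maass form. The crucial reduction is Proposition \ref{to check}: applied to the base $\mB'$, it says I only need to check the Maass condition (\ref{Maass condition}) at the points $q=b'$ for $b'\in\mB'$, rather than at all $q\in\GL_2(\AK)$.

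First I would set up the correspondence between the two bases. Since both $\det\mB$ and $\det\mB'$ map bijectively onto $\Cl_K$, and since (as underlies (\ref{gwiazdka})) the double cosets in $\GL_2(K)\GL_2(\bfC)\backslash\GL_2(\AK)/\GL_2(\hat{\Oo}_K)$ are parametrized by $\Cl_K$ through the determinant, each $b'\in\mB'$ lies in the same class as a unique $b\in\mB$. I would fix a decomposition $b'=\g b\kappa$ with $\g\in\GL_2(K)$ and $\kappa\in\mK'$ (treating $K$-points diagonally in the finite adeles, with both $b,b'$ carried to $\GL_2(\AK)$ with trivial archimedean component), and define the candidate $c^*_{b',f}(m):=|\det\g|^{-k}c_{b,f}(m)$.

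Next I would verify (\ref{Maass condition}) at $q=b'$. Taking $r=b'$ and $q=b$ in Lemma \ref{sameclass} gives $c_f(h,b')=e^{-2\pi\tr(h-\g^* h\g)}|\det\g|^{-k}c_f(\g^* h\g,b)$, since $r_{\iy}=q_{\iy}=I_2$. Because $f$ is a $\mB$-Maass form I may then expand $c_f(\g^* h\g,b)$ via (\ref{Maass condition}) with class representative $b$ itself, so the factor $|\det\g_{b,b}|^{-k}$ there is trivial. Substituting and using $\det(\g^* h\g)=|\det\g|^2\det h$, the archimedean exponentials collapse to $e^{-2\pi\tr h}$. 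The remaining work is to match the summation data on the two sides: I would use that $\epsilon$ is invariant under $h\mapsto u^* h u$ for $u\in\GL_2(\hat{\Oo}_K)$ to get $\epsilon(b^*\g^* h\g b)=\epsilon((b')^* h b')$, and that $\det\kappa\in\hat{\Oo}_K^{\times}$ together with $\det(\g^*\g)=|\det\g|^2\in\bfQ_{+}$ to get $|\det\g|^2\prod_p p^{\val_p(\det b^* b)}=\prod_p p^{\val_p(\det (b')^* b')}$. With these two identities the displayed expression for $c_f(h,b')$ is precisely (\ref{Maass condition}) at $q=b'$ with $c^*_{b',f}$ in place of $c_{b,f}$, and Proposition \ref{to check} then confirms that $f$ is a $\mB'$-Maass form.

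The step I expect to be the main obstacle is this last matching, together with checking that the constant $|\det\g|^{-k}$ defining $c^*_{b',f}$ is independent of the chosen decomposition $b'=\g b\kappa$. Well-definedness of $|\det\g|$ follows because any two admissible $\g,\g'$ differ by an element of $\GL_2(K)$ whose determinant lies in $K^{\times}\cap\hat{\Oo}_K^{\times}=\OK^{\times}$, every element of which has complex absolute value $1$; the two transformation identities are exactly the $\GL_2(\hat{\Oo}_K)$-invariance of $\epsilon$ and of the determinant valuations. None of these points is deep, but assembling them so that the arguments of $c_{b,f}$ agree on both sides is where the care lies.
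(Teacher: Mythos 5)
Your proof is correct, but it takes a different route from the paper's. The paper never invokes Proposition \ref{to check} or Lemma \ref{sameclass} here: it takes an arbitrary $q$, writes $q_{\rm f}=\g_{b,q}b\kappa_{\mB}=\g_{b',q}b'\kappa_{\mB'}$ simultaneously in both bases, and observes that the summation data in (\ref{Maass condition}) --- the divisor condition $d\mid\epsilon(q_{\rm f}^*hq_{\rm f})$ and the argument $D_Kd^{-2}\det h\prod_p p^{\val_p(\det q_{\rm f}^*q_{\rm f})}$ --- are intrinsic to $q$ and manifestly independent of the base, so only the prefactors need comparing. From $\det\g_{b,q}=\det\g_{b',q}\det(b'b^{-1})\det(\kappa_{\mB}^{-1}\kappa_{\mB'})$ and $\hat{\Oo}_K^{\times}\cap K^{\times}=\OK^{\times}$ (with $\#\OK^{\times}\mid k$) it gets $|\det\g_{b,q}|^{-k}=|\alpha_{b,b'}|^{-k}|\det\g_{b',q}|^{-k}$ with $\alpha_{b,b'}$ depending only on $(b,b')$, and sets $c_{b',f}=|\alpha_{b,b'}|^{-k}c_{b,f}$. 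Your version instead reduces to $q=b'$ via Proposition \ref{to check}, transfers to $b$ via Lemma \ref{sameclass}, and then must explicitly re-verify that $\epsilon(b^*\g^*h\g b)=\epsilon((b')^*hb')$ and that $\det(\g^*\g)\prod_pp^{\val_p(\det b^*b)}=\prod_pp^{\val_p(\det(b')^*(b'))}$ --- work the paper's choice of formulation renders unnecessary. The two constants agree, since $\det(b'b^{-1})=\det\g\det\kappa$ shows $\alpha_{b,b'}$ and $\det\g$ differ by a unit in $\OK^{\times}$. What your route buys is a check at only $h_K$ points rather than all $q$; what the paper's buys is not having to track how $h\mapsto\g^*h\g$ interacts with $\epsilon$ and the determinant product. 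Both are complete; your well-definedness argument for $|\det\g|$ is also correct.
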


\begin{proof} Let $\mB$ and $\mB'$ be two bases. Write $q_{\rm f} =
\g_{b,q} b \kappa_{\mB}= \g_{b',q} b' \kappa_{\mB'}$ with $b \in
\mB$, $b'\in \mB'$, $\g_{b,q}, \g_{b',q} \in \GL_2(K)$ and
$\kappa_{\mB}, \kappa_{\mB'} \in \mK'$. Suppose $f$ is a $\mB$-Maass
form, i.e., there exist functions $c_{b,f}$ for $b \in \mB$, such
that for every $q$ and $h$, $$c_f(h,q) = t \hs |\det\g_{b,q}|^{-k}
\sum_{\substack{d \in \bfZ_{+} \\ d \mid \epsilon(q_{\rm f}^* h
q_{\rm f})}} d^{k-1} c_{b, f}(s),$$ where $t=|\det q_{\iy}|^k
e^{-2\pi\tr(q_{\iy}^* h q_{\iy})} $ and $s= D_K d^{-2} \det h \hs
\prod_{p} p^{\val_p(\det q_{\rm f}^*q_{\rm f})}$. Our goal is to show
that there exist functions $c_{b',f}$ for $b' \in \mB'$, such that
for every $q$ and $h$, \be \label{want1} c_f(h,q) = t \hs
|\det\g_{b',q}|^{-k} \sum_{\substack{d \in \bfZ_{+} \\ d \mid
\epsilon(q_{\rm f}^* h q_{\rm f})}} d^{k-1} c_{b', f}(s).\ee We have
\be \label{dets1} \det \g_{b,q} = \det \g_{b',q} \det (b' b^{-1})
\det (\kappa_{\mB}^{-1} \kappa_{\mB'}).\ee Since $\det(b'b^{-1})$
corresponds to a principal fractional ideal, say $(\alpha_{b,b'})$,
under the map $((\alpha_{b,b'})_{\fp}) \mapsto \prod_{\fp} 
\fp^{\val_{\fp}((\alpha_{b,b'})_{\fp})}$, using \cite{Bump97},
Theorem 3.3.1, we can write $\det(b' b^{-1}) = \alpha_{b,b'}
\kappa_{b,b'}\in \AKf^{\times}$ with $\kappa_{b,b'} \in
\hat{\Oo}_K^{\times}$. Then it follows from (\ref{dets1}) that   
$$\beta:= \kappa_{b,b'} \det(\kappa_{\mB}^{-1} \kappa_{\mB'}) \in
\hat{\Oo}_K^{\times} \cap K^{\times} = \OK^{\times}.$$ Hence
$\beta^k=1$. Thus $|\det \g_{b,q}|^{-k} = |\det
\g_{b',q}|^{-k} |\alpha_{b,b'}|^{-k}$. Note that
$|\alpha_{b,b'}|^{-k}$ is well defined and only depends on $b$ and
$b'$
(i.e., it is independent of $q$ and $h$). Set $c_{b', f}(n) =
|\alpha_{b,b'}|^{-k}c_{b, f}(n)$. Then it is clear that $c_{b',
f}$ satisfies (\ref{want1}). \end{proof}

\begin{definition} \label{maass2} From now on we will refer to 
$\mB$-Maass
forms simply
as
\textit{Maass forms}. Similarly we will talk about the \textit{Maass
space}
instead of $\mB$-Maass spaces. This is justified by Proposition
\ref{independence}. The Maass space will be denoted by
$\mM^{\tuM}_{k,-k/2}$. \end{definition}

We now recall the definition of Maass space introduced in 
\cite{Krieg91}. 
We will refer to it as the $U(\bfZ)$-Maass space. Assume $\mJ(K) \mid \frac{k}{2}$, so that by Proposition \ref{newmap} the space
$M_k^{\rm h}=M_{k,-k/2}^{\rm h}=M_{k,-k/2}^{\rm h}(U(\bfZ))$. We
say that $F
(Z)= \sum_{h \in \mS} c_{F}(h) e^{2 \pi i \tr (hZ)} \in M^{\rm h}_k$
is a
\textit{$U(\bfZ)$-Maass form} if there exists a function $\alpha_{F}:
\bfZ_{\geq 0}
\rightarrow \bfC$ such that for every $h \in \mS$, one has \be
\label{kriegcond} c_{F}(h) = \sum_{\substack{ d \in \bfZ_+\\ d \mid
\epsilon(h)}} d^{k-1} \alpha_{F}(D_K d^{-2} \det h).\ee The subspace of
$M_k^{\rm h}$ consisting of $U(\bfZ)$-Maass forms will be denoted by
$M_k^{\rm h,M}$.

\begin{prop} \label{oddclass} If $2 \nmid h_K$, then the Maass space
$\mM_{k,-k/2}^{\tuM}$ is isomorphic
(as a
$\bfC$-linear space) to $\# \mB$ copies of the $U(\bfZ)$-Maass space
$M^{\rm h,M}_k$.
\end{prop}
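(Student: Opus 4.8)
The plan is to realize the isomorphism through the adelic–classical dictionary of Proposition \ref{adelicclassical}, choosing the base so that the adelic Maass condition (\ref{Maass condition}), when evaluated at the base points, reduces verbatim to Krieg's classical condition (\ref{kriegcond}).

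First I would fix a convenient base. Since $2 \nmid h_K$ we have $(h_K, 2n) = (h_K, 4) = 1$, so Corollary \ref{scalarcor} furnishes a base $\mB$ of scalar matrices $b = \alpha_b I_2$ with $b b^* = b^* b = I_2$ and with $p_b$ also scalar. For such $b$ conjugation by $p_b$ is trivial, whence $\G_b = U(\bfQ) \cap (G^+(\bfR) \times p_b U(\hat{\bfZ}) p_b^{-1}) = U(\bfQ) \cap U(\hat{\bfZ}) = U(\bfZ)$. Because $k$ is divisible by $\#\OK^{\times} = 2\mJ(K)$, we have $\mJ(K) \mid k/2 = -\nu$, so Remark \ref{hel} (equivalently Proposition \ref{newmap}) gives $M^{\rm h}_{k,-k/2}(U(\bfZ)) = M^{\rm h}_k$. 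Thus the isomorphism (\ref{prod1}) attached to the unitary base $\mC = \{p_b\}_{b\in\mB}$ takes the form of a $\bfC$-linear isomorphism $\Phi_{\mB}\colon \mM_{k,-k/2} \xrightarrow{\sim} \prod_{b\in\mB} M^{\rm h}_k$, $f \mapsto (f_b)_{b\in\mB}$; it remains only to identify the preimage of $\prod_{b\in\mB} M^{\rm h,M}_k$.

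The key computation is the comparison of Fourier coefficients at the base points $q = b$ (i.e.\ $q_{\iy} = I_2$, $q_{\rm f} = b$). On one hand, formula (\ref{f12}) from the proof of Lemma \ref{sameclass}, specialized to $r_{\iy} = I_2$ and $r_{\rm f} = b$, gives $c_f(h,b) = e^{-2\pi \tr h}\, c_{f_b}(h)$, where $c_{f_b}(h)$ is the $h$-th classical Fourier coefficient of the component $f_b$. On the other hand, since $b$ is scalar with $b^* b = I_2$ one has $b^* h b = h$, hence $\epsilon(b^* h b) = \epsilon(h)$ and $\val_p(\det b^* b) = 0$ for all $p$; taking $q = b$ in (\ref{Maass condition}), all of the twisting factors $|\det q_{\iy}|^k$, $|\det \g_{b,q}|^{-k}$ and $\prod_p p^{\val_p(\det q_{\rm f}^* q_{\rm f})}$ become trivial, leaving exactly $c_f(h,b) = e^{-2\pi \tr h} \sum_{d \mid \epsilon(h)} d^{k-1} c_{b,f}(D_K d^{-2} \det h)$. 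Comparing the two expressions, $f$ satisfies (\ref{Maass condition}) at every pair $(h,b)$ with $b\in\mB$ if and only if each $f_b$ satisfies (\ref{kriegcond}) with $\alpha_{f_b} = c_{b,f}$, i.e.\ if and only if $f_b \in M^{\rm h,M}_k$.

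Finally I would close the equivalence. If $f \in \mM^{\tuM}_{k,-k/2}$, then (\ref{Maass condition}) holds for all $(h,q)$, in particular at all base points, so every $f_b$ lies in $M^{\rm h,M}_k$. Conversely, if every $f_b \in M^{\rm h,M}_k$, then (\ref{Maass condition}) holds at all base points, and Proposition \ref{to check} --- which guarantees that checking (\ref{Maass condition}) at $q = b$, $b\in\mB$, suffices --- shows that $f$ is a Maass form. Hence $\Phi_{\mB}$ restricts to a $\bfC$-linear isomorphism $\mM^{\tuM}_{k,-k/2} \xrightarrow{\sim} \prod_{b\in\mB} M^{\rm h,M}_k$, which is $\#\mB$ copies of the $U(\bfZ)$-Maass space. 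The only genuinely delicate point is the normalization bookkeeping: verifying $c_f(h,b) = e^{-2\pi\tr h} c_{f_b}(h)$ and checking that the scalar base kills every correction factor in (\ref{Maass condition}). Once these are in place, Proposition \ref{to check} does the remaining work and there is no serious obstacle.
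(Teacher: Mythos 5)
Your proof is correct and follows essentially the same route as the paper: choose the scalar base from Corollary \ref{scalarcor}, use (\ref{f12}) together with $b^*hb=h$ and $b^*b=I_2$ to translate condition (\ref{Maass condition}) at the base points into Krieg's condition (\ref{kriegcond}) for each component $f_b$, and invoke Proposition \ref{to check} for the converse. The extra bookkeeping you include (identifying $\Gamma_b = U(\bfZ)$ and $M^{\rm h}_{k,-k/2}=M^{\rm h}_k$ via Remark \ref{hel}) is implicit in the paper's setup and does not change the argument.
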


\begin{proof} Since the Maass space is independent of the choice of
a base $\mB$ by Proposition \ref{independence}, we may choose $\mB$
as in Corollary \ref{scalarcor} and $\# \mB = \# \mC=h_K$, with $\mC$ as
in
Proposition \ref{decomp657}. The map $\Phi_{\mB}: \mM_{k,-k/2}
\rightarrow \prod_{b \in
\mB} M^{\rm h}_k$ is an isomorphism. Let $f \in \mM_{k,-k/2}^{\tuM}$
and set $(f_b)_{b \in \mB}= \Phi_{\mB}(f)$. Set $\a_{f_b}:= c_{b,f}$.   
Then using
(\ref{f12}), and the fact that the matrices $b$
commute with
$h$ and
$b^*b=I_2$, we see that condition (\ref{Maass
condition}) for $c_f(h,b)$ translates
into condition (\ref{kriegcond}) for $c_{f_b}(h)$.
Hence $\Phi_{\mB}(\mM^{\tuM}_{k,-k/2}) \subset
\prod_{b \in \mB} M^{\rm h,M}_k$. On the other hand if $(f_b)_{b
\in \mB} \in \prod_{b \in \mB} M^{\rm h,M}_k$, set $c_{b,f}:=
\alpha_{f_b}$. Then conditions
(\ref{kriegcond}) for $c_{f_b}(h)$ translate into conditions
(\ref{Maass condition}) for $c_f(h,b)$. By Proposition
\ref{to check} this implies
that $f$ is a Maass form.
\end{proof}

\subsection{Invariance under Hecke action} \label{Invariance under Hecke
action}

It was proved in \cite{Krieg91} that the $U(\bfZ)$-Maass space is
invariant under the action of a certain Hecke operator $T_p$ associated
with a prime $p$
which is inert in $F$. On the other hand Gritsenko in \cite{Gritsenko90}
proved the invariance of the $U(\bfZ)$-Maass space under all the Hecke
operators when $K=\bfQ(i)$. In this section we
show that the Maass space
$\mM_{k,-k/2}^{\tuM}$ is
in
fact invariant under all the local Hecke algebras (for primes $p \nmid
D_K$) without imposing restrictions on the class number.

\begin{thm} \label{thmmain} Let $p \nmid D_K$ be a rational prime. The
Maass
space is invariant under the action of $\mH^+_p$, i.e., if $f \in
\mM^{\tuM}_{k,-k/2}$, and $g \in U(\bfQ_p)$, then $[\mK_pg\mK_p]f \in 
\mM^{\tuM}_{k,-k/2}$.
\end{thm}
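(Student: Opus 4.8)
The plan is to verify the defining Maass condition (\ref{Maass condition}) directly on the Hecke translate $f' := [\mK_p g \mK_p]f$. By Proposition \ref{independence} I am free to fix any one base $\mB$, and by Proposition \ref{to check} it then suffices to produce functions $c^*_{b,f'} \colon \bfZ_{\geq 0} \to \bfC$ for which (\ref{Maass condition}) holds on the pairs $(h,b)$ with $b \in \mB$ and $h \in S(\bfQ)$; I need not check it for general $q$. Moreover, since $\mH^+_p$ is generated as a $\bfC$-algebra by the operators listed after Propositions \ref{decompsplit1} and \ref{decompinert1} together with their inverses, and since the Maass space is finite dimensional (it sits inside $\mM_{k,-k/2}$, which by (\ref{prod1}) is a finite product of finite-dimensional spaces of hermitian modular forms), it is enough to treat the generators themselves: $T_{\fp,1}, T_{\fp,2}$ when $p$ splits, and $T_{p,0}, U_p$ when $p$ is inert. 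Indeed each such $T$ acts invertibly on the ambient space $\mM_{k,-k/2}$; once I show that $T$ maps the Maass space into itself, injectivity of $T|_{\mM}$ on this finite-dimensional space forces $T$ to map it \emph{onto} itself, so that $T^{-1}$ preserves it as well.

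For a split prime $p = \fp\ov{\fp}$ I would begin from the explicit formulas of Proposition \ref{f3}, which express $c_{f'}(h,b)$, for $f' = T_{\fp,1}f$ or $f' = T_{\fp,2}f$, as a finite $\bfC$-linear combination of Fourier coefficients $c_f(h, b\sigma)$ with $\sigma$ running over the $\GL_2$-shifts $\a_a, \hat{\a}_a, \b_p, \hat{\b}_p, \g_a\hat{\g}_b$. Into each such term I substitute the Maass condition (\ref{Maass condition}) for $f$, reading off the class of $b\sigma$ and the factor $\g_{b',\,b\sigma}$ from the decomposition $(b\sigma)_{\rm f} = \g_{b',\,b\sigma}\,b'\kappa$; Lemma \ref{sameclass} keeps track of the determinant and exponential factors as $b\sigma$ is moved back to standard form. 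The key structural observation is that each shift changes $\det(q_{\rm f}^* q_{\rm f})$ only by a controlled power of $p$ and rescales $\epsilon(q_{\rm f}^* h q_{\rm f})$ by a power of $p$, so the inner divisor sum $\sum_{d \mid \epsilon} d^{k-1} c_{b,f}(D_K d^{-2}\det h \prod_p p^{\val_p(\det q_{\rm f}^* q_{\rm f})})$ is carried to sums of the same shape. Collecting contributions, I expect $c_{f'}(h,b)$ to be again of the form (\ref{Maass condition}), with $c^*_{b,f'}$ obtained by applying a classical $\GL_2$-Hecke operator ($T(p)$, respectively $T(p^2)$), possibly combined with translation by the ideal class of $\fp$, to the tuple of parameter functions $(c_{b,f})_b$. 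This is precisely the descent of the hermitian Hecke action to the elliptic one.

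The inert case is parallel, using Proposition \ref{f4}: for $T_{p,0}f$ I substitute (\ref{Maass condition}) into the three families of terms there, while for $U_p = \Delta_p^{-1}T_p^2$ I reduce to two applications of the $T_p$-formula together with the scalar action of $\Delta_p$. Here there is no class mixing, since $p$ inert means $\ov{\fp} = \fp$ as ideals, which is why this case is the most accessible and was already settled classically by Krieg. The new feature is the coefficient $s(h,q)$ of Proposition \ref{f4}, whose three cases are governed exactly by whether $\val_p(\det(q^*hq)) = 0$, whether $\epsilon_p(q^*hq) = 0$, or $\epsilon_p(q^*hq) > 0$; these are the same distinctions that control the range of the divisor sum in (\ref{Maass condition}). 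After inserting the Maass condition for $f$ I again expect to recover the Maass shape, now for arbitrary class number, thereby extending Krieg's invariance result.

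The main obstacle will be the combinatorial bookkeeping hidden in both computations: one must verify that the $p$-power changes in $\det(q_{\rm f}^* q_{\rm f})$ and the rescalings of $\epsilon(q_{\rm f}^* h q_{\rm f})$ recombine so that the finite sum of shifted divisor-sums is itself a single divisor-sum $\sum_{d \mid \epsilon(q_{\rm f}^* h q_{\rm f})} d^{k-1} c^*_{b,f'}(D_K d^{-2}\det h \prod_p p^{\val_p(\det q_{\rm f}^* q_{\rm f})})$ of the required form. Concretely this is the assertion that the hermitian Hecke operators descend to classical Hecke operators on the parameter functions $c_{b,f}$, and the delicate point is controlling how $\epsilon_p$ and $\val_p(\det)$ transform under the shifts in the boundary cases $p \mid \epsilon(q_{\rm f}^* h q_{\rm f})$ and $p \mid \det h$. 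Once this identity is established, Proposition \ref{to check} immediately yields that $f'$ is a Maass form, completing the proof.
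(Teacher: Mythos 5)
Your proposal follows essentially the same route as the paper: reduce via Propositions \ref{independence} and \ref{to check} to checking the Maass condition on pairs $(h,b)$, treat only the generators $T_{\fp,1},T_{\fp,2}$ (split) and $T_{p,0},U_p$ (inert), substitute the Maass condition into the Fourier-coefficient formulas of Propositions \ref{f3} and \ref{f4}, track classes and determinant factors via Lemma \ref{sameclass}, and recollect the shifted divisor sums into a single sum of Maass shape — this is exactly what the paper does in Propositions \ref{invariance} and \ref{invariance2}, with the ``delicate bookkeeping'' you flag (how $\epsilon_p$ transforms under the shifts) handled there by the diagonalization result of Proposition \ref{diagonal}. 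Your explicit remark that invertibility of each generator on the finite-dimensional ambient space upgrades ``maps the Maass space into itself'' to ``onto itself,'' so that the inverse generators are also covered, is a small point the paper leaves implicit, but otherwise the two arguments coincide.
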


\begin{rem} In fact (at least when $D_K$ is a prime) the Maass space is invariant under $\mH_p$, which is an easy consequence of the invariance under $\mH_p^{+}$. In this case it is also invariant under $\mH_{D_K}$, hence under \emph{all} local Hecke algebras. For both of these facts see Remark \ref{Missingops}. \end{rem}

\begin{proof} [Proof of Theorem \ref{thmmain}] We will only present the proof in the case when $p$
splits in $K$. For such a prime $p$ the invariance of
$\mM_{k,-k/2}^{\tuM}$ under the action of $\mH^+_p$ follows from  Propositions
\ref{invariance}, \ref{invariance2} below. If $p$ 
is
inert one
can proceed along the same lines,
however, it is the case when $p$ splits that is essentially new.
Indeed, if $p$ is inert, the elements of
$\mH^+_p$ respect the decomposition
(\ref{prod1}), hence the statement of the theorem reduces to an 
assertion
about the action of $\mH^+_p$ on $M_k^{\rm h,M}$. Then
the method used in \cite{Gritsenko90} can be adapted to prove the 
theorem.
See
also Theorem 7 in \cite{Krieg91} which proves the invariance of the
$U(\bfZ)$-Maass space for a certain family of Hecke operators in 
$\mH^+_p$.
\end{proof}

Let $p$ be a prime which splits in $K$. Write $p\OK=\fp \ov{\fp}$.
It suffices to prove the invariance of the Maass space under the
operators $T_{\fp, 1}$ and $T_{\fp, 2}$. 
In the rest of section \ref{Invariance under Hecke action}, $l$ will
denote a rational prime.

\subsection{Diagonalizing hermitian matrices mod $l^n$}

We begin by generalizing Proposition 7 of \cite{Krieg91} to split 
primes.

\begin{prop} \label{diagonal} Let $n$ be a positive integer, and
assume $l\nmid D_K$.
Let $h \in \mS_l$, $h \neq 0$. Then there exist
$a$, $d \in \bfZ_l$ with $l \nmid a$ and $u \in \SL_2(\Oo_{K,l})$ such
that
$$u^* h u \equiv l^{\epsilon_l(h)} \bmat a \\ &d \emat \pmod{l^n\mS_l}.$$
\end{prop}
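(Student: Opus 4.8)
The plan is to reduce to the primitive case and then diagonalize $h$ by an explicit short sequence of elementary transformations in $\SL_2(\Oo_{K,l})$, which in fact produces an \emph{exact} diagonalization (hence the asserted congruence modulo $l^n$ for every $n$). First I would record the shape of $\mS_l$. Since $l\nmid D_K$, the prime $l$ is unramified in $K$, so the local codifferent is $\Oo_{K,l}$, and from the dual-lattice definition of $\mS_l$ one reads off that a hermitian matrix $h=\bsmat t_1 & t_2\\ \bar{t_2} & t_3\esmat$ lies in $\mS_l$ exactly when $t_1,t_3\in\bfZ_l$ and $t_2\in\Oo_{K,l}$. Then $\epsilon_l(h)$ is the largest power of $l$ dividing $h$ in this lattice; writing $h=l^{\epsilon_l(h)}h_0$ with $h_0\in\mS_l$ primitive (i.e. $h_0\not\equiv 0\pmod l$), it suffices to diagonalize $h_0$, since scaling the result by $l^{\epsilon_l(h)}$ and noting that all transformations below preserve $\mS_l$ and lie in $\SL_2(\Oo_{K,l})$ yields the claim. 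Denote by $t_1,t_2,t_3$ the entries of $h_0$.

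If $l\nmid t_1$, I complete the square: with $u=\bsmat 1 & -t_1^{-1}t_2\\ & 1\esmat\in\SL_2(\Oo_{K,l})$ (legitimate since $t_1^{-1}t_2\in\Oo_{K,l}$), a direct computation gives $u^*h_0u=\diag\!\left(t_1,\ t_3-t_1^{-1}N(t_2)\right)$ exactly, with $a:=t_1$ a unit and $d:=t_3-t_1^{-1}N(t_2)\in\bfZ_l$. If $l\mid t_1$ but $l\nmid t_3$, I first swap with $w=\bsmat & 1\\ -1 & \esmat\in\SL_2(\Oo_{K,l})$, obtaining $w^*h_0w=\bsmat t_3 & -\bar{t_2}\\ -t_2 & t_1\esmat$ with unit $(1,1)$-entry, and reduce to the previous case.

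The genuinely new case — and the one I expect to be the main obstacle, since Proposition 7 of \cite{Krieg91} treats only inert $l$ — is $l\mid t_1$ and $l\mid t_3$, where primitivity forces $t_2\not\equiv 0\pmod l$ and there is no diagonal pivot to use. Here one must \emph{manufacture} a unit diagonal entry. For $c\in\Oo_{K,l}$ set $u_c=\bsmat 1 & \\ c & 1\esmat\in\SL_2(\Oo_{K,l})$; then the $(1,1)$-entry of $u_c^*h_0u_c$ equals $t_1+\Tr_{K_l/\bfQ_l}(t_2c)+t_3N(c)\equiv\Tr_{K_l/\bfQ_l}(t_2c)\pmod l$. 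The key uniform input is that, because $l\nmid D_K$, the reduction mod $l$ of the trace pairing $(x,c)\mapsto\Tr_{K_l/\bfQ_l}(xc)$ on the étale $\bfZ_l$-algebra $\Oo_{K,l}$ is nondegenerate; this single statement covers both the inert case, where $\Oo_{K,l}/l\Oo_{K,l}=\bfF_{l^2}$, and the split case, where it is $\bfF_l\times\bfF_l$ with swap involution and $\Tr((x,y))=x+y$. Since $t_2\not\equiv 0$, the functional $c\mapsto\Tr_{K_l/\bfQ_l}(t_2c)$ is nonzero, so some $c$ makes the $(1,1)$-entry a unit, putting us back in the first case. Composing the at most three elementary matrices gives a single $u\in\SL_2(\Oo_{K,l})$ with $u^*h_0u=\diag(a,d)$, $a$ a unit, $d\in\bfZ_l$ and $ad=\det h_0$; scaling back by $l^{\epsilon_l(h)}$ proves the proposition, in fact with exact equality in place of the congruence modulo $l^n$.
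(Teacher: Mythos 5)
Your proof is correct, and it takes a genuinely different route from the paper's. The paper first reduces Proposition \ref{diagonal} to the corresponding statement for hermitian matrices in $M_2(\Oo_{K,l})$ (Lemma \ref{diagonal2}), quotes Krieg's Proposition 7 wholesale for inert $l$, and for split $l$ exploits the decoupling $M_2(\Oo_{K,l})/l^n \cong M_2(\bfZ/l^n\bfZ)\oplus M_2(\bfZ/l^n\bfZ)$, under which $h$ becomes a pair $(M,M^t)$ and $u^*hu$ becomes $(A_2^tMA_1,(A_2^tMA_1)^t)$; surjectivity of $\SL_2(\Oo_{K,l})\to\SL_2(\bfZ/l^n\bfZ)\oplus\SL_2(\bfZ/l^n\bfZ)$ then lets one choose $A_1$ and $A_2$ independently, so the hermitian symmetry constraint disappears and the problem collapses to an elementary-divisor (Smith normal form) statement over $\bfZ/l^n\bfZ$. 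You instead diagonalize the hermitian form directly by at most three elementary matrices, uniformly in the splitting behaviour of $l$; the one genuinely new ingredient is the observation that when both diagonal entries of the primitive part $h_0$ vanish mod $l$ one can manufacture a unit pivot because the $(1,1)$-entry of $u_c^*h_0u_c$ is $t_1+\Tr_{K_l/\bfQ_l}(t_2c)+t_3N(c)$ and the trace pairing on $\Oo_{K,l}/l\Oo_{K,l}$ is nondegenerate --- which is precisely where the hypothesis $l\nmid D_K$ enters. Your supporting computations (the completion of the square, the swap, the identification of $\mS_l$ with the hermitian matrices in $M_2(\Oo_{K,l})$ via the triviality of the local codifferent at an unramified prime) all check out. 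What your route buys: it is self-contained (no appeal to Krieg or to the surjectivity of reduction for $\SL_2$), it isolates exactly how $l\nmid D_K$ is used, and it yields an exact diagonalization rather than one modulo $l^n$. What the paper's route buys: in the split case the argument is essentially one line once the decoupling is observed, since bringing a single matrix over $\bfZ/l^n\bfZ$ to diagonal form under two independent $\SL_2$-actions is standard.
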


In fact it is enough to prove the following lemma.

\begin{lemma} \label{diagonal2} Proposition \ref{diagonal} holds if 
$\mS_l$
is
replaced by the subgroup of hermitian matrices inside $M_2(\Oo_{K,l})$.
\end{lemma}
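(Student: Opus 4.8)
The plan is to prove the slightly stronger statement that such an $h$ can be diagonalized \emph{exactly}, not merely modulo $l^n$, by an integral unimodular congruence $h \mapsto u^*hu$ with $u \in \SL_2(\Oo_{K,l})$; the assertion modulo $l^n$ then follows a fortiori. The key observation making this clean is that $h \mapsto u^*hu$ sends hermitian matrices to hermitian matrices for \emph{any} $u$, so one only needs $u$ integral with $\det u = 1$, not that $u$ be unitary. First I would reduce to the primitive case: writing $h = l^{\epsilon_l(h)}h_0$ with $h_0$ integral hermitian and not divisible by $l$, it suffices to diagonalize $h_0$, since conjugating by the same $u$ scales the output by $l^{\epsilon_l(h)}$. (Here I use that for $l \nmid D_K$ the lattice $\mS_l$ coincides with the integral hermitian matrices $S(\bfZ_l)$, because $l$ is unramified and the codifferent is trivial at $l$; thus $\epsilon_l$ is simply the $l$-content of the entries.) So assume $\epsilon_l(h)=0$ and write $h = \bsmat t_1 & t_2 \\ \ov{t_2} & t_3 \esmat$ with $t_1,t_3 \in \bfZ_l$ and $t_2 \in \Oo_{K,l}$, at least one entry being a unit.

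The argument then splits according to whether a diagonal entry is a unit. \textbf{Case A: some $t_i \in \bfZ_l^{\times}$.} After possibly conjugating by $w = \bsmat 0 & 1 \\ -1 & 0 \esmat \in \SL_2(\Oo_{K,l})$, which interchanges the diagonal entries, I may assume $t_1$ is a unit. Conjugating by the unipotent matrix $u = \bsmat 1 & -t_1^{-1}t_2 \\ 0 & 1 \esmat \in \SL_2(\Oo_{K,l})$ clears the off-diagonal entry and gives $u^*hu = \diag\!\big(t_1,\ t_3 - t_1^{-1}N(t_2)\big)$, an exact diagonalization with leading entry $a = t_1$ a unit and $d = t_3 - t_1^{-1}N(t_2) \in \bfZ_l$.

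\textbf{Case B: both $t_1,t_3$ are divisible by $l$.} Then primitivity forces $t_2 \in \Oo_{K,l}^{\times}$, and I would manufacture a diagonal unit. Conjugating by $u_0 = \bsmat 1 & 0 \\ y & 1 \esmat$ replaces $t_1$ by $t_1 + \Tr_{K_l/\bfQ_l}(t_2 y) + t_3 N(y)$; since $t_1 \equiv t_3 \equiv 0 \pmod l$, this is a unit as soon as $\Tr_{K_l/\bfQ_l}(t_2 y)$ is a unit. As $t_2$ is a unit, $t_2y$ ranges over all of $\Oo_{K,l}$, so it suffices to exhibit an element of unit trace: for odd $l$ take $y = t_2^{-1}$, giving trace $2 \in \bfZ_l^{\times}$, while for $l=2$ (necessarily unramified in $K$, as $l \nmid D_K$) the trace map on the residue field $\Oo_{K,l}/l$ is surjective, and any lift of a trace-one class works. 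Having produced a diagonal unit, I fall back to Case A; the composite of the two conjugating matrices again lies in $\SL_2(\Oo_{K,l})$, completing an exact diagonalization.

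The one genuinely delicate point is the production of a diagonal unit in Case B: this is where the hermitian analogue of completing the square departs from the classical quadratic-form situation, since one must represent a unit by an expression of the shape $\Tr(t_2 y) + t_3 N(y)$ rather than by the norm form alone. The trace computation disposes of it uniformly in the split and inert cases, the sole arithmetic input being the surjectivity of the residue-field trace, which is exactly where the hypothesis $l \nmid D_K$ (unramifiedness of $l$) enters.
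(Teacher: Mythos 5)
Your proof is correct in outline but takes a genuinely different route from the paper's, and it proves something stronger. The paper handles the inert case by citing Krieg's Proposition 7, and in the split case passes to $M_2(\OK/l^n\OK)\cong M_2(\bfZ/l^n\bfZ)\oplus M_2(\bfZ/l^n\bfZ)$, identifies a hermitian $h$ with a pair $(M,M^t)$ for an \emph{arbitrary} $M$, and reduces to finding $A_1,A_2\in\SL_2(\bfZ/l^n\bfZ)$ with $A_2^tMA_1$ diagonal and unit upper-left entry (a Smith-normal-form statement), together with surjectivity of $\SL_2(\Oo_{K,l})\to\SL_2(\OK/l^n\OK)$. You instead complete the square directly over $\Oo_{K,l}$, uniformly in the split and inert cases, and obtain an \emph{exact} diagonalization rather than one modulo $l^n$. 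That buys a cleaner, self-contained argument; the paper's version buys a reduction to pure linear algebra over $\bfZ/l^n\bfZ$.

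There is, however, one step that fails as written. In Case B you assert that primitivity forces $t_2\in\Oo_{K,l}^{\times}$. This is true when $l$ is inert, but false when $l$ splits: there $\Oo_{K,l}\cong\bfZ_l\times\bfZ_l$ and primitivity only gives $t_2\notin l\Oo_{K,l}$, i.e.\ at least one component of $t_2=(c_1,c_2)$ is a unit (e.g.\ $t_2=(1,l)$ is primitive but not invertible). Consequently the claims that ``$t_2y$ ranges over all of $\Oo_{K,l}$'' and the recipe $y=t_2^{-1}$ both break down in the split case. The repair is one line: since $l\nmid D_K$ the pairing $(x,y)\mapsto\Tr_{K_l/\bfQ_l}(xy)$ on $\Oo_{K,l}$ is perfect, so any $t_2\notin l\Oo_{K,l}$ admits $y$ with $\Tr(t_2y)\in\bfZ_l^{\times}$; concretely, in the split case take $y=(c_1^{-1},0)$ if $c_1$ is a unit. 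With that fix the argument is complete. A final cosmetic point: your parenthetical identification $\mS_l=S(\bfZ_l)$ for all $l\nmid D_K$ is at least doubtful at $l=2$ (the paper's $\mS$ allows $t_2\in\frac{1}{2}\OK$), but nothing in your proof of the lemma depends on it, since the lemma is stated directly for the lattice of integral hermitian matrices and $\epsilon_l$ is then read off from the $l$-content of the entries.
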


\begin{proof} For inert $l$, this is Proposition 7 of \cite{Krieg91}.  
So, assume that $l\OK=\lambda \ov{\lambda}$ with $\lambda \neq
\ov{\lambda}$. Without loss of generality we may assume that
$\epsilon_l(h)=0$. Let $(M, M^t)$ be the image of $h$ under the 
composite
\begin{multline} M_2(\Oo_{K,l}) \twoheadrightarrow M_2(\OK/l^n\OK)
\xrightarrow{\sim}
M_2(\OK/\lambda^n) \oplus M_2(\OK/\ov{\lambda}^n) \cong \\
\cong M_2(\bfZ/l^n\bfZ)
\oplus
M_2(\bfZ/l^n\bfZ).\end{multline} Since the canonical map
$$\SL_2(\Oo_{K,l})
\rightarrow
\SL_2(\OK/l^n\OK) \cong \SL_2(\bfZ/l^n\bfZ) \oplus \SL_2(\bfZ/l^n
\bfZ)$$ is
surjective
(\cite{Serre70}, p. 490), it is enough to find $A_1, A_2 \in  
\SL_2(\bfZ/ l^n \bfZ)$ such that \be \label{conj1} A_2^t M A_1 = \bsmat
\alpha \\
& \delta \esmat\ee
with $\alpha \not\equiv 0$ mod $p$.
The existence of such $A_1$ and $A_2$ is clear.
\end{proof}

\subsection{Invariance under $T_{\fp,1}$} \label{Invariance under T_p}

Let $g = T_{\fp,1} f$. Then for $q \in \GL_2(\AK)$ and $\sigma \in 
S(\AQ)$,
we
can write $$g \left( \bmat q & \sigma \hat{q} \\ & \hat{q} \emat \right) 
= \sum_{h \in S} c_g(h,q) e_{\AQ}(\tr (h \sigma)).$$ Define the 
following
matrices $$\alpha'_{\fp,a} = \left(\bmat p & -a \\ & 1 \emat, I_2
\right) \in
\GL_2(\bfQ_p) \times \GL_2(\bfQ_p)
\quad 
a=0,1, \dots,
p-1$$ and $$\alpha'_{\fp,p} = \left( \bmat 1 \\ & p \emat, I_2
\right),\in
\GL_2(\bfQ_p) \times \GL_2(\bfQ_p).$$
For $a=0,1,
\dots, p$, set $\alpha_{\fp,a}$ to be the images of $\alpha'_{\fp,a}$ in
$\GL_2(F_p)$. To simplify
notation in this section we drop the
subscript $\fp$ and simply write $\alpha_a$ for $\alpha_{\fp,a}$.

\begin{prop} \label{invariance} The Maass space is invariant under the
action of $T_{\fp,1}$, i.e., if $f\in \mM_{k,-k/2}^{\tuM}$, then $g\in
\mM^{\tuM}_{k,-k/2}$.
\end{prop}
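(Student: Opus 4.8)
The plan is to verify the Maass condition (\ref{Maass condition}) for $g = T_{\fp,1}f$ directly, exploiting Proposition \ref{to check}, which reduces the verification from all $q \in \GL_2(\AK)$ to the finitely many base points $q = b$, $b \in \mB$. Thus it suffices to exhibit functions $c_{b,g}^* : \bfZ_{\geq 0} \to \bfC$, one for each $b \in \mB$, such that $c_g(h,b)$ satisfies (\ref{Maass condition}) with $c_{b,g}^*$ in place of $c_{b,f}$ for every $b \in \mB$ and every $h \in S(\bfQ)$; Proposition \ref{to check} then upgrades this to the full Maass property, so that $g \in \mM^{\tuM}_{k,-k/2}$.

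First I would expand $c_g(h,b)$ using the explicit coset formula of Proposition \ref{f3}, namely
$$c_g(h,b) = p^2 \sum_{a=0}^{p} c_f(h, b\alpha_a) + \sum_{a=0}^{p} c_f(h, b\hat\alpha_a),$$
and then insert the Maass expansion of $f$ into each summand. For this I must locate the class in $\mB$ of each $b\alpha_a$ and each $b\hat\alpha_a$: since $b$ is a scalar base element and the $\alpha_a$ (resp. $\hat\alpha_a$) are supported at $\fp$ (resp. $\ov\fp$) with determinant of $\fp$-valuation $\pm 1$, the finite part $(b\alpha_a)_{\rm f}$ lies in $\GL_2(K)\, b'\, \mK'$ for a class $b' \in \mB$ read off from the determinant via the projection $c_K$. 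Lemma \ref{sameclass} then converts $c_f(h, b\alpha_a)$ into $|\det\gamma|^{-k}$ times a Fourier coefficient of $f$ at the base point $b'$ (evaluated at a twisted matrix $\gamma^* h \gamma$), to which the Maass relation of $f$ applies, producing an inner divisor sum in $c_{b',f}$.

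The arithmetic heart of the argument is to show that after these substitutions the double sum collapses into a single divisor sum of the shape demanded by (\ref{Maass condition}). The essential computation is local at $p$: I would track how $\epsilon_p$ and $\val_p(\det\,\cdot)$ transform under right multiplication by the $\alpha_a$ and $\hat\alpha_a$, and how the $p+1$ representatives interact with the divisibility conditions $d \mid \epsilon((b\alpha_a)^* h (b\alpha_a))$. To make this tractable I would first diagonalize $h$ at $p$ via Proposition \ref{diagonal} (legitimate since $p \nmid D_K$), assuming $h_p \equiv p^{\epsilon_p(h)}\diag(a,d)$ modulo a high power of $p$ with $p \nmid a$; this is harmless because all quantities entering (\ref{Maass condition}) are insensitive to the $\SL_2(\Oo_{K,p})$-twist supplied by Lemma \ref{sameclass} and, apart from $\det h$ (which is preserved), depend on $h_p$ only modulo a high power of $p$. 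With $h$ in diagonal form the count of how many of the lattices represented by the $\alpha_a$ are compatible with a given divisor $d$ becomes explicit, and the weights $p^2$ and $p^{k-1}$ conspire, as in the computations of Krieg and Gritsenko, to reassemble the two sums into one divisor sum; this defines $c_{b,g}^*$.

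The main obstacle I anticipate is precisely this combinatorial reorganization of the divisor sum: verifying that the weighting by $p^2$ on the $T_{\fp}$-part and the weight-$(k-1)$ factors combine correctly across the different $p$-divisibility regimes of $\det h$ (the analogue of the case split $\val_p(\det(q^*hq)) = 0$ versus $>0$ visible in the inert formula of Proposition \ref{f4}), so that the resulting coefficient function $c_{b,g}^*$ is genuinely independent of $q$ and depends only on the single integer argument $D_K d^{-2}\det h \prod_p p^{\val_p(\det b^* b)}$, as (\ref{Maass condition}) requires.
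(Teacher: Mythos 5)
Your proposal follows essentially the same route as the paper's proof: reduce to the base points via Proposition \ref{to check}, expand $c_g(h,b)$ with the coset formula of Proposition \ref{f3}, identify the common class $b'$ of the $b\alpha_a$ and $b\hat\alpha_a$, apply Lemma \ref{sameclass} and the Maass relation for $f$, diagonalize $h$ locally via Proposition \ref{diagonal} to track $\epsilon_p$, and reassemble the divisor sums (with the case split on $p$-divisibility of $\det h$) into the defining formula for $c_{b,g}$. The approach and the anticipated difficulty both match the paper's argument.
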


\begin{proof}
Choose a base $\mB$ in such a way that for all $b\in \mB$ we have   
$b_l=I_2$ if $l\mid D_K$. For $b\in \mB$ write $b_{\bfQ}$ for $\prod_l
l^{\val_l(\det b^*b)}$. By Propositions \ref{to
check} and \ref{independence}
it is enough to show that there exist functions $c_{b, g}: \bfZ_+
\rightarrow \bfC$ ($b \in \mB$) such that \be \label{condrep} c_g(h,b) =
e^{-2\pi\tr  h}  
\sum_{\substack{ d \in \bfZ_+ \\ d \mid \epsilon(b^*hb)}} d^{k-1} 
c_{b,g} 
\left( D_K d^{-2} b_{\bfQ} \det h \right).\ee For $b \in \mB$, set $b' = 
b
\alpha_p$. Note that all of the matrices: $b \a_a$,
$b \hat{\a}_a$, ($a=0,1, \dots, p$) belong to the same
class $b'$. Denote any of these matrices by $q$. Then $q =
\gamma_{b', q} b' \kappa_q \in \GL_2(K) b' \mK'$ and it is easy to see 
that
$$\det \gamma_{b', q}^k = \begin{cases} 1 & q = b \a_a, \hf a = 0, 1,
\dots, p \\ p^{-k} & q = b \hat{\a}_a \hf a = 0, 1,
\dots, p ,
\end{cases}$$ and $$\prod_l \val_l(\det q^* q ) =b_{\bfQ}
\times \begin{cases}  p & q = b
\a_a \hf a = 0, 1,
\dots, p \\ p^{-1} & q = b \hat{\a}_a \hf a = 0, 1,
\dots, p .
\end{cases}$$ Set $h_0:= b^*h b$ and write $h_0 = \epsilon(h_0) h'$
with $\epsilon(h') = 1$.
Set $D= D_K \det h$ and $D' = D_K \prod_l l^{\val_l(\det h')}$.
Using Proposition \ref{f3} and the fact that $f$ is a Maass form, we obtain
\begin{multline} \label{Fourier2} c_g(h,b) = e^{-2 \pi \tr h} \times
\left( p^2
\sum_{a=0}^p \sum_{\substack {d \in \bfZ_+ \\ d \mid \epsilon(\a_a^* b^* 
h
b
\a_a)}} d^{k-1} c_{b',f}(D d^{-2} b_{\bfQ} p) + \right. \\
+ \left. p^k \sum_{a=0}^p \sum_{\substack {d \in \bfZ_+ \\ d \mid
\epsilon(\hat{\a}_a^*
b^* h b
\hat{\a}_a)}} d^{k-1} c_{b',f}(D d^{-2} b_{\bfQ} p^{-1})\right).
\end{multline}
Using Proposition \ref{diagonal}, one can relate $\epsilon(\a_a^* h_0
\a_a)$
and $\epsilon(\hat{\a}_a^* h_0 \hat{\a}_a)$ to $\epsilon(h_0)$ for 
$a=0,1,\dots, p$, 
and then
(\ref{Fourier2}) becomes \begin{multline} \label{mess} c_g(h,b)  =
e^{-2 \pi
\tr h} p^2 \sum_{0} A^{(1)}_d +
e^{-2 \pi \tr h} \times \\
\times \begin{cases}
 p^3 \sum_0 A_d^{(1)}+p^k(p+1) \sum_{-1} A^{(-1)}_d
& p \nmid D',
\\
p^2(p-1) \sum_0 A_d^{(1)} + p^2 \sum_1 A_d^{(1)} + p^k \sum_0  
A^{(-1)}_d
+
p^{k+1}
\sum_{-1} 
 A^{(-1)}_d
& p \mid
D',
\end{cases} \end{multline}
where $\sum_n A_d^{(m)}:= \sum_{d \mid
p^n \epsilon(h_0)}A_d^{(m)}$, $A^{(m)}_d = d^{k-1} c_{b',f}(D d^{-2}
b_{\bfQ} p^m)$ and if there is no $d$ dividing $p^n \epsilon(h_0)$, we 
set
$\sum_n=0$.

For $D$ in the image of the map $h
\mapsto D_K \epsilon(h)^{-2} b_{\bfQ}\det h$ and $b \in \mB$ we make the  
following
definition
\be \label{maass1} c_{b,g}(D) =  p^2(p+1 ) c_{b', f} (Dp) + p^k (p+1)
c_{b',f} (Dp^{-1}) ,\ee where we assume that
$c_{b',f}(n)=0$ when $n \not\in \bfZ_+$. If $D$ is not in the image of
that map, we set $c_{b,g}(D)=0$. Note that we clearly have
$$c_g(h,b) = e^{-2 \pi \tr h} c_{b,g} (D_K b_{\bfQ} \det h)$$ for every
$h$ with
$\epsilon(b^*hb)=1$. Thus to check if $g$ lies in the Maass space we 
just
need
to check that (\ref{condrep}) holds with $c_{b,g}$ defined by
(\ref{maass1}).
This is an easy calculation using (\ref{mess}). \end{proof}

\subsection{Invariance under $T_{\fp,2}$}

This is completely analogous to the proof for $T_{\fp,1}$, hence we only
include the relevant formulas for the reader's convenience.

Let $g = T_{\fp,2} f$. Define matrices:
$$\beta'_p = \left( \bmat p \\ & p \emat, I_2\right) \in
\GL_2(\bfQ_p)\times
\GL_2(\bfQ_p),$$
$$\gamma'_a = \left( \bmat 1 \\ a & p \emat, I_2\right) \in 
\GL_2(\bfQ_p)
\times \GL_2(\bfQ_p),\quad a=0,1,\dots, p-1,$$
$$\gamma'_p = \left( \bmat p \\ & 1 \emat, I_2 \right) \in \GL_2(\bfQ_p)
\times \GL_2(\bfQ_p),$$ and set $\beta_p$ to be the image of $\beta'_p$ in
$\GL_2(K_p)$, and $\gamma_a$ to be the image of $\gamma'_a$ in $\GL_2(K_p)$ ($a=0,1, 
\dots,
p$).

\begin{prop} \label{invariance2} The Maass space is invariant under the
action of $T_{\fp,2}$. \end{prop}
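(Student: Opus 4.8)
The plan is to run, \emph{mutatis mutandis}, the argument of Proposition \ref{invariance}, feeding in the $T_{\fp,2}$-decomposition from Proposition \ref{f3} in place of the $T_{\fp,1}$-one. First I would fix the same base $\mB$ (with $b_l = I_2$ whenever $l \mid D_K$), set $b_{\bfQ} = \prod_l l^{\val_l(\det b^* b)}$, and appeal to Propositions \ref{to check} and \ref{independence} to reduce the assertion to the existence of functions $c_{b,g}\colon \bfZ_+ \to \bfC$, $b \in \mB$, for which $c_g(h,b)$ satisfies the Maass relation (\ref{condrep}) for all $b$ and $h$.

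Next, taking $q = b$ in Proposition \ref{f3}, I would expand
\[
c_g(h,b) = p^4\, c_f(h, b\beta_p) + c_f(h, b\hat\beta_p) + p\sum_{a=0}^{p}\sum_{c=0}^{p} c_f(h, b\gamma_a\hat\gamma_c).
\]
The crucial structural point, the counterpart of the observation in Proposition \ref{invariance} that $b\alpha_a$ and $b\hat\alpha_a$ all lie in the single class $b\alpha_p$, is that, since $\fp\ov{\fp} = (p)$ is principal, the matrices $b\beta_p$, $b\hat\beta_p$ and all the $b\gamma_a\hat\gamma_c$ lie in one common class $b'' := b\beta_p$: their determinants correspond to the ideals $\fp^2$, $\ov{\fp}^{-2}$ and $\fp\ov{\fp}^{-1}$, which represent the same class in $\Cl_K$ (as $\ov{\fp} = \fp^{-1}$ there). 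Writing each relevant $q$ as $\gamma_{b'',q}\, b''\, \kappa_q \in \GL_2(K)\, b''\, \mK'$, a short computation gives the norm factors $\prod_l l^{\val_l(\det q^* q)}$, namely $b_{\bfQ}p^2$, $b_{\bfQ}p^{-2}$ and $b_{\bfQ}$ for the three families, together with $\det\gamma_{b'',q}^k$ equal to $1$, $p^{-2k}$ and a scalar of absolute value $p^{-k}$, so that the factor $|\det\gamma_{b'',q}|^{-k}$ of (\ref{Maass condition}) contributes $1$, $p^{2k}$ and $p^k$ respectively. Combined with the weights $p^4$, $1$, $p$ of Proposition \ref{f3}, this produces the analogue of (\ref{Fourier2}).

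I would then set $h_0 := b^* h b$, write $h_0 = \epsilon(h_0)h'$ with $\epsilon(h') = 1$, and substitute the Maass expansion of $f$ into each term. To collapse the inner divisor sums I would invoke Proposition \ref{diagonal}: one checks directly that $\beta_p^* h_0 \beta_p = p\,h_0$ and $\hat\beta_p^* h_0 \hat\beta_p = p^{-1} h_0$, so the first two terms feed in the $\epsilon$-values $p\,\epsilon(h_0)$ and $p^{-1}\epsilon(h_0)$, while diagonalizing $h_0$ modulo a high power of $p$ lets one express each $\epsilon(\hat\gamma_c^*\gamma_a^* h_0\gamma_a\hat\gamma_c)$ in terms of $\epsilon(h_0)$. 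This is precisely where the two regimes $p \nmid D'$ and $p \mid D'$ (with $D' = D_K\prod_l l^{\val_l(\det h')}$) separate, exactly as in (\ref{mess}). Finally I would define $c_{b,g}(D)$ as an explicit linear combination of $c_{b'',f}(Dp^2)$, $c_{b'',f}(Dp^{-2})$ and $c_{b'',f}(D)$, the analogue of (\ref{maass1}) but now with three terms and shifts $p^{\pm 2}$ and $1$, under the convention $c_{b'',f}(n) = 0$ for $n \notin \bfZ_+$, and verify (\ref{condrep}).

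The main obstacle is the bookkeeping for the double sum $\sum_{a,c} c_f(h, b\gamma_a\hat\gamma_c)$. Unlike the single sums that appear for $T_{\fp,1}$, here one must count, after the diagonalization of Proposition \ref{diagonal}, how many of the $(p+1)^2$ pairs $(a,c)$ produce each value of $\epsilon(\hat\gamma_c^*\gamma_a^* h_0\gamma_a\hat\gamma_c)$, and confirm that these multiplicities, together with the weights $p^4$, $1$, $p$ and the factors $\det\gamma_{b'',q}^k$, assemble into exactly the divisor-sum shape $\sum_{d \mid \epsilon(b^*hb)} d^{k-1} c_{b,g}(\cdots)$ demanded by (\ref{Maass condition}). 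Once this counting is settled, the verification of (\ref{condrep}) is the same routine manipulation as in the proof of Proposition \ref{invariance}.
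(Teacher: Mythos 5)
Your proposal follows the paper's proof essentially verbatim: the paper likewise reduces to Propositions \ref{to check} and \ref{independence}, observes that $b\beta_p$, $b\hat\beta_p$ and the $b\gamma_a\hat\gamma_c$ all lie in the single class $b\beta_p$, records the same norm and determinant factors, diagonalizes $h_0$ via Proposition \ref{diagonal}, and defines $c_{b,g}(D)$ as a combination of $c_{b',f}(Dp^2)$, $c_{b',f}(D)$ and $c_{b',f}(Dp^{-2})$. The only slip is that the case analysis splits into three regimes ($p\nmid D'$; $p\mid D'$ but $p^2\nmid D'$; $p^2\mid D'$) rather than two, but this is exactly the bookkeeping you flag as remaining to be settled and does not affect the method.
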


\begin{proof} This is similar to the proof of Proposition
\ref{invariance}. Let $b$, $D$, $D'$, $h$, $h'$ and $h_0$ be as in that  
proof. Then
for all
$a$, $c$, we
see that $b\beta_p$, $b \hat{\beta}_p$, $b \gamma_a \hat{\gamma}_c$ all 
lie in the same class $b'= b \beta_p$.
One has $$\det
\gamma_{b', q}^k = \begin{cases} 1 & q = b \beta_p \\ p^{-k} & q = b
\gamma_a \hat{\gamma}_c, \hf a,c \in \{ 0,1, \dots, p \} \\ p^{-2k} & q 
=
b \hat{\beta}_p, \end{cases}$$ and
$$\prod_l \val_l(\det q^* q) = b_{\bfQ}
\times \begin{cases} p^2
& q = b \beta_p \\ 1 & q = b
\gamma_a \hat{\gamma}_c, \hf a,c \in \{ 0,1, \dots, p \} \\ p^{-2} & q =
b \hat{\beta}_p. \end{cases}$$

Using Proposition \ref{diagonal} as in the proof of Proposition
\ref{invariance} we obtain
\be\begin{split} c_g(h,b) &= e^{-2 \pi \tr h} \left(p^4 \sum_1A^{(2)}_d 
+
p^{2k}
\sum_{-1} A^{(-2)}_d +\right. \\
&\left. +p^{k+1}(p+1) \sum_0 A^{(0)}_d + p^{k+3}
\sum_{-1}
A^{(0)}_d\right)+\\
& + e^{-2 \pi \tr h} p^{k+1} \begin{cases} p \sum_{-1} A^{(0)}_d & p
\nmid D'\\
p \sum_0 A^{(0)}_d & p \mid D', \hs p^2 \nmid D'\\
\sum_1 A^{(0)} + (p-1) \sum_0 A^{(0)}& p^2 \mid D',\end{cases}
\end{split}\ee
where if there is no $d$ dividing $p^n \epsilon(h_0)$, we set $\sum_n =
0$.
For $D$ in the image of the map $h \mapsto D_K \epsilon(h)^{-2}
b_{\bfQ} \det h$,
we make the following definition: \begin{equation}\begin{split} \label{for1}
c_{b,g}(D) &=
p^4c_{b',f}(Dp^2) + (p^{k+3}+p^{k+2} + p^{k+1}) c_{b',f}(D) + \\
&+ \begin{cases} 0 & p \nmid D \\
p^{k+2} c_{b'f}(D) & p \mid D, \hs p^2 \nmid D \\
p^{k+2} c_{b',f}(D) + p^{2k} c_{b',f}(Dp^{-2}) & p^2 \mid D. \end{cases}
\end{split}\end{equation} We now check as in the proof of Proposition
\ref{invariance} that $g$ is a Maass form.
\end{proof}

\subsection{Descent} \label{Descent}

In this section we assume that $h_K$ is odd and choose a base $\mB$ as
in Corollary
\ref{scalarcor}. One has $\# \mB = \# \mC=h_K$ with $\mC=\{p_b \mid b\in \mB\}$. We also assume that $I_2 \in \mB$. Let $\chi_K$ be the
quadratic Dirichlet
character associated with the extension $K/\bfQ$. For a positive integer 
$n$,  
set $$a_K(n) =
\# \{ \alpha \in (i D_K^{-1/2} \OK)/\OK \mid D_K N_{K/\bfQ}(\alpha)
\equiv -n \pmod{D_K}\}.$$

\begin{thm} \label{desc4} There exists a $\bfC$-linear injection of 
vector
spaces
$$\Desc_{\mB}: \mM_{k,-k/2}^{\tuM} \hookrightarrow \prod_{b \in \mB} M_{k-1}
(D_K,
\chi_K),$$
such that $\Desc_{\mB}(f) = (\phi_b)_{b \in \mB}$ with $$a_{\phi_b}(n) = i   
\frac{a_{K}(n)}{\sqrt{D_K}} c_{b,f}(n),$$ where $a_{\phi_b}(n)$ is the
$n$-th Fourier coefficient of $\phi_b$. The map $\Desc_{\mB}$  
depends on the choice of $\mB$. \end{thm}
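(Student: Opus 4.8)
The plan is to define $\Desc_{\mB}$ directly by the stated formula on Fourier coefficients and then to verify, in order: $\bfC$-linearity, that each $\phi_b$ is a genuine elliptic modular form in $M_{k-1}(D_K,\chi_K)$, and injectivity. Linearity is immediate, since by (\ref{Maass condition}) the assignment $f \mapsto (c_{b,f})_{b\in\mB}$ is $\bfC$-linear and $a_{\phi_b}(n)$ depends linearly on $c_{b,f}(n)$.

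I would first reduce to a single copy of the target. Because $h_K$ is odd, Proposition \ref{oddclass} gives an isomorphism $\mM^{\tuM}_{k,-k/2}\cong\prod_{b\in\mB}M^{\rm h,M}_k$ under which $c_{b,f}=\alpha_{f_b}$ for the classical $U(\bfZ)$-Maass form $f_b$ attached to $f$ and to the (scalar, $bb^*=I_2$) base element $b$. Hence $\phi_b$ is built solely from the coefficient function $\alpha_{f_b}$ of a $U(\bfZ)$-Maass form, and it suffices to prove that for every $F\in M^{\rm h,M}_k$ the series $\sum_{n\ge 1} i\,a_K(n)D_K^{-1/2}\alpha_F(n)\,q^n$ lies in $M_{k-1}(D_K,\chi_K)$.

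This last assertion is precisely the classical descent for the hermitian Maass (Spezialschar) space, and I would deduce it from the work of Krieg \cite{Krieg91} (for $K=\bfQ(i)$ also Kojima and Gritsenko). Concretely, a $U(\bfZ)$-Maass form is determined by (\ref{kriegcond}) through its single coefficient function $\alpha_F$; passing to its Fourier--Jacobi expansion exhibits it in terms of hermitian Jacobi forms of weight $k$ and index $1$ over $\OK$, and the theta decomposition of these Jacobi forms produces an elliptic modular form of weight $k-1$, level $D_K$ and nebentypus $\chi_K$. Unwinding Krieg's coefficient identities identifies the representation number $a_K(n)$ with the Gauss-sum/theta factor relating $\alpha_F(n)$ to the $n$-th coefficient of the descended form, and fixes the normalizing constant $iD_K^{-1/2}$. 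Carrying out this matching --- reconciling the author's adelic coefficients $c_{b,f}$ (via Proposition \ref{oddclass}) and normalization against Krieg's classical lift --- is the main obstacle and is where essentially all the content lies.

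Finally, I would treat injectivity. Since $c_{b,f}$ determines $f$ through (\ref{Maass condition}) (equivalently, via Proposition \ref{to check} together with the isomorphism of Proposition \ref{oddclass}), it suffices to show that $\Desc_{\mB}(f)=0$ forces $c_{b,f}(n)=0$ for every $n$ actually occurring as an argument of $c_{b,f}$ in (\ref{Maass condition}). Such $n$ are, up to the explicit integer factors $d^{-2}$ and $\prod_p p^{\val_p(\det q_{\rm f}^*q_{\rm f})}$, of the form $D_K\det h$ with $h\in\mS$ positive semidefinite; by \cite{Shimura97}, Proposition 18.3(2) these are the only $h$ with $c_f(h,q)\neq 0$. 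A short computation with the Gauss sum defining $a_K$ then shows $a_K(n)\neq 0$ for every such $n$, the point being that $-n$ lies in the image of the norm map of $K$ modulo the ramified prime $D_K$ and is therefore a square (or $\equiv 0$), so that $a_K(n)\in\{1,2\}$. Hence $a_{\phi_b}(n)=0$ for all $n$ forces $c_{b,f}(n)=0$ on the support, and $f=0$; alternatively, injectivity is automatic once one knows, as in the previous paragraph, that Krieg's single-copy map is an isomorphism onto its image. The dependence of $\Desc_{\mB}$ on $\mB$ is visible from the dependence of the $c_{b,f}$ on the chosen base.
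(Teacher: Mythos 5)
Your proposal takes essentially the same route as the paper: the paper's entire proof is the one-line reduction to Krieg's Theorem 6 and formula (4) via the scalar base $\mB$ and the identification (\ref{f12}) (equivalently Proposition \ref{oddclass}), which is exactly your second paragraph. The extra detail you supply --- the Fourier--Jacobi/theta-decomposition mechanism behind Krieg's classical descent and the check that $a_K(n)\neq 0$ on the support for injectivity --- is consistent with what Krieg proves and with the image description recalled in Remark \ref{kr}.
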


\begin{proof} This follows immediately from \cite{Krieg91}, Theorem 6 
and
formula (4) using our assumption on $\mB$ and (\ref{f12}). \end{proof}

\begin{rem} \label{kr} Krieg in \cite{Krieg91} explicitly describes
the image of the
descent map he defines and denotes it by $G_{k-1}(D_K, \chi_K)^*$. The
image of $\Desc_{\mB}$ is exactly $$\prod_{b \in \mB} G_{k-1}(D_K, \chi_K)^*
\subset \prod_{b \in \mB} M_{k-1}(D_K, \chi_K).$$
\end{rem}

Identify $\mB$ with $\Cl_K$ via the map that sends $b$ to $c_K(\det
b)$, where $c_K: \AK^{\times} \twoheadrightarrow \Cl_K$ is the
canonical projection.
Then $\Cl_K$ acts faithfully on $\mB$ by multiplication and defines
an embedding $s: \Cl_K \hookrightarrow S'_{\mB}$, where $S'_{\mB}$ is
the group of permutations of the elements of $\mB$. Write $S_{\mB}$
for the image of $s$. For a split $p$ with $p\OK=\fp
\ov{\fp}$, let $\alpha_{\fp,p}$ be as in section \ref{Invariance 
under T_p}. Note that $\alpha_{\ov{\fp},p}=\alpha_{\fp,p}^*$. Write
$\sigma_{\fp,n}:= s \circ c_K(\det
\alpha_{\fp,p}^n)$. If $A=(a_b)_{b \in \mB}$ is an 
ordered tuple
indexed by elements of $\mB$, and $\sigma \in S_{\mB}$, define
$\sigma(A)$ to be the $\mB$-tuple whose $b$-component is
$a_{\sigma(b)}$. Write $\alpha_{\fp,p}^n= \gamma_{\fp,n}
\sigma_{\fp,n}(I_2) \kappa_{\fp,n}$ for $\gamma_{\fp,n} \in \GL_2(K)$
and $\kappa_{\fp,n} \in \mK'$.

Write $T_p$ for the classical Hecke operator acting on elliptic modular forms, i.e., for $\phi(z) = \sum_{n=1}^{\iy} a(n) e^{2 \pi i nz} \in M_m(N,\psi)$ define $\phi' := T_p \phi$ by $\phi'(z) =
\sum_{n=1}^{\iy} a'(n)
e^{2 \pi i nz}$ with $a'(n) = a(np) + \psi(p)p^{m-1}
a(n/p)$. Here $a(n)=0$ if $n \not\in \bfZ_{\geq 0}$. The action of $T_p$ on $M_{k-1}(D_K, \chi_K)$ extends component-wise to an action on $\prod_{b \in \mB} M_{k-1}(D_K, \chi_K)$ which we will again denote by $T_p$. 
Write
$\bfT_p$
for the
$\bfC$-subalgebra of endomorphisms of $\prod_{b \in \mB} M_{k-1}(D_K,  
\chi_K)$ generated by $T_p$ if $p$ is split (resp. by $T_p^2$ if
$p$ is inert) and the group $S_{\mB}$.

\begin{thm} \label{heckedesc} Let $p$ be a rational prime which splits 
in
$K$. There
exists a $\bfC$-algebra map $$\Desc_{\mB,p}: \mH^+_p \rightarrow
\bfT_p,$$ such
that for
every $T \in \mH^+_p$ the
following diagram
$$\xymatrix@C7em{\mM^{\rm M}_{k,-k/2} \ar[r]^{T} \ar[d]^{\Desc_{\mB}} & \mM^{\rm M}_{k,-k/2}
\ar[d]_{\Desc_{\mB}} \\ \prod_{b \in \mB} M_{k-1} (D_K, \chi_K)
\ar[r]^{\Desc_{\mB,p}(T)} & \prod_{b \in \mB} M_{k-1} (D_K,
\chi_K)}$$ commutes. Moreover, one has \be \label{for11} \begin{split}
\Desc_{\mB,p}(T_{\fp,1}) & = p^{2-k/2}  (p+1) T_p
\circ
\sigma_{\fp,1}, \\
\Desc_{\mB,p}(T_{\fp,2})& =p^{4-k} ( T^2_p
+p^{k-1}+p^{k-3})\circ
\sigma_{\fp,2}.\end{split} \ee
\end{thm}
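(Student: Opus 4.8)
The plan is to verify the asserted intertwining on the two generators $T_{\fp,1}$ and $T_{\fp,2}$ of $\mH^+_p$ (these, together with their inverses, generate $\mH^+_p$ by the discussion following Proposition \ref{f3}), to \emph{define} $\Desc_{\mB,p}$ on these generators by the formulas (\ref{for11}), and then to extend multiplicatively. Since $\Desc_{\mB}$ is injective by Theorem \ref{desc4}, the whole statement reduces to a comparison of Fourier coefficients: fixing $f\in\mM^{\tuM}_{k,-k/2}$ with $\Desc_{\mB}(f)=(\phi_b)_{b\in\mB}$ and $a_{\phi_b}(n)=i\,a_K(n)D_K^{-1/2}c_{b,f}(n)$, I would compute the $n$-th Fourier coefficient of each component of $\Desc_{\mB}(T_{\fp,i}f)$ and of $\Desc_{\mB,p}(T_{\fp,i})\bigl(\Desc_{\mB}(f)\bigr)$, and check that they agree for every $n$ and every $b$. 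Throughout I use that the base elements are scalar with $bb^*=I_2$, so $|\det b|=1$.

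For $T_{\fp,1}$, set $g=T_{\fp,1}f$; by Proposition \ref{invariance} the form $g$ is again a Maass form with coefficient function $c_{b,g}$ given by (\ref{maass1}) in terms of $c_{b',f}$, where $b'=b\alpha_p$ represents the class $\sigma_{\fp,1}(b)$. The first step is to pass from the auxiliary index $b'$ to the genuine base element $\sigma_{\fp,1}(b)\in\mB$: the base-independence argument of Proposition \ref{independence}, applied with the factorization $\alpha_{\fp,p}=\gamma_{\fp,1}\sigma_{\fp,1}(I_2)\kappa_{\fp,1}$, gives $c_{b',f}=|\det\gamma_{\fp,1}|^{-k}c_{\sigma_{\fp,1}(b),f}$, and this factor equals $p^{-k/2}$ since $N\fp=p$. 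On the other side, the $b$-component of $p^{2-k/2}(p+1)\,T_p\circ\sigma_{\fp,1}$ applied to $(\phi_b)_b$ has $n$-th coefficient $p^{2-k/2}(p+1)\bigl(a_{\phi_{\sigma_{\fp,1}(b)}}(np)+\chi_K(p)p^{k-2}a_{\phi_{\sigma_{\fp,1}(b)}}(n/p)\bigr)$. Substituting $a_{\phi}(m)=i\,a_K(m)D_K^{-1/2}c(m)$ into both expressions and cancelling $iD_K^{-1/2}$, the powers of $p$ match term by term, and equality reduces to the arithmetic identity $a_K(np)=a_K(n)$ (equivalently $a_K(n/p)=a_K(n)$). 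This holds because $p$ is split, so $\chi_K(p)=1$: in the prime-discriminant setting at hand $a_K(n)=1+\chi_K(-n)$, whence $a_K(np)=1+\chi_K(-n)\chi_K(p)=a_K(n)$.

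For $T_{\fp,2}$ I would run the same scheme using (\ref{for1}) and Proposition \ref{invariance2}; here $b''=b\beta_p$ represents $\sigma_{\fp,2}(b)$, the relevant norm is $N\fp^2=p^2$, so Proposition \ref{independence} gives $c_{b'',f}=p^{-k}c_{\sigma_{\fp,2}(b),f}$, which accounts for the prefactor $p^{4-k}$. The one genuinely new point is the expansion of the classical operator: the diagonal coefficient of $T_p^2$ on $M_{k-1}(D_K,\chi_K)$ equals $\chi_K(p)p^{k-2}$ when $p\nmid n$ but $2\chi_K(p)p^{k-2}$ when $p\mid n$ (the contribution $a(n/p)$ vanishes in the former case). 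After scaling by $p^{4-k}$ and adding $p^{4-k}(p^{k-1}+p^{k-3})$, these two cases reproduce exactly the two cases of the correction term in (\ref{for1}); the extremal terms $c(np^2)$ and $c(n/p^2)$ contribute $p^{4-k}$ and $p^{k}$ respectively, matching once one invokes $a_K(np^2)=a_K(n)=a_K(n/p^2)$, again a consequence of $\chi_K(p)=1$.

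The step I expect to be the main obstacle is precisely the $T_{\fp,2}$ comparison: one must correctly track the normalization $p^{-k}$ coming from base-independence, reproduce the $p\nmid n$ versus $p\mid n$ dichotomy in the diagonal coefficient of $T_p^2$, and see that it lines up with the case distinction in (\ref{for1}) — a miscount here spoils the constant $p^{k-1}+p^{k-3}$. Finally, to promote the generator-wise formulas to a $\bfC$-algebra homomorphism $\Desc_{\mB,p}\colon\mH^+_p\to\bfT_p$, I would observe that the images in (\ref{for11}) lie in $\bfT_p$ (each is a product of a power of $p$, a polynomial in $T_p$, and the permutation $\sigma_{\fp,i}\in S_{\mB}$, with $\sigma_{\fp,2}=\sigma_{\fp,1}^2$) and that they commute with one another; since $\mH^+_p$ is commutative and $\Desc_{\mB}$ is injective with Hecke-stable image, the established intertwining on generators forces their images to satisfy the same relations, so the assignment extends uniquely and multiplicatively to all of $\mH^+_p$, and the diagram commutes for every $T\in\mH^+_p$.
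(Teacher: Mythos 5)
Your proposal is correct and follows essentially the same route as the paper: the paper's proof also reduces everything to the Fourier-coefficient formulas of Theorem \ref{desc4} together with (\ref{maass1}) and (\ref{for1}), and obtains the prefactors $p^{-k/2}$ and $p^{-k}$ from the class-change factor $|\det\gamma_{\fp,n}|^{-k}$ exactly as you do. You in fact supply two details the paper leaves implicit --- the explicit $T_{\fp,2}$ bookkeeping and the identity $a_K(np)=a_K(n)$ for split $p$ --- and both check out.
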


\begin{proof} This follows from Theorem \ref{desc4},
and formulas (\ref{maass1}) and (\ref{for1}).  
Just for illustration, we include the argument in the case of $T_{\fp,1}$.
Let $f \in \mM_{k,-k/2}^{\tuM}$ and set $g=T_{\fp,1}f$. Fix $b \in \mB$ and write
$b_1$ for $\sigma_{\fp,1}(b)$. One has $\alpha_{\fp,p}=\gamma b'
\kappa$, where $\gamma = \gamma_{\fp,1} \in \GL_2(K)$, $\kappa =
\kappa_{\fp,1} \in \mK'$, $b'=\sigma_{\fp,1}(I_2) \in \mB$ with $\gamma,
\kappa$ diagonal. In fact one can choose $\gamma$ to be of the
form $\bsmat 1 \\ &* \esmat$. Write $bb' =
\gamma_{b,b'} b_1 \kappa_{b,b'} \in \GL_2(K) b_1 \mK'$, where
$\gamma_{b,b'}$, $\kappa_{b,b'}$ are
scalars. Then $$b \alpha_{\fp,p} = \gamma b b' \kappa = \gamma
\gamma_{b,b'} b_1 \kappa \kappa_{b,b'}.$$
Identify $\mM^{\tuM}_{k,-k/2}$ with $\prod_{c \in \mB}
M_k^{\rm h, M}$ via
$f' \mapsto (f'_c)_{c \in \mB}$. For $f'_c \in
M_k^{\rm h, M}$, $h
\in \mS$, denote
by $c_{f'_c}(h)$ the $h$-Fourier coefficient of $f'_c$. We will study 
the
action of $T_{\fp,1}$ on $c_{f_{b_1}}(h)$. Since $f$ is a Maass form it is
enough to consider $h$ of the form $\bsmat 1&* \\ *&*\esmat$. Fix such 
an
$h$. Set $D=D_K \det h$. Then by (\ref{f12}) and (\ref{maass1}),
$$c_{g_b}(h) = e^{2 \pi \tr h} c_g(h,b) = p^2 (p+1)
(c_{b\alpha_{\fp,p},f}(Dp) +
p^{k-2} c_{b\alpha_{\fp,p},f}(Dp^{-1})).$$ By (\ref{f12}) and
(\ref{Maass condition}),
we have \be \begin{split} c_{f_{b_1}}\left(h \bsmat 1 \\ & p \esmat
\right) &
= e^{2 \pi \tr
h \bsmat 1 \\ & p \esmat} c_f \left( h \bsmat 1 \\ & p \esmat, b_1 
\right)
=
c_{b_1,f}(Dp) =
|\det \g\gamma_{b,b'}|^k c_{b\alpha_{\fp,p},f}(Dp)\\
 c_{f_{b_1}}\left(h \bsmat 1 \\ & 1/p \esmat \right) & = e^{2 \pi \tr
h \bsmat 1 \\ & 1/p \esmat} c_f \left( h \bsmat 1 \\ & 1/p \esmat,
b_1
\right) = c_{b_1,f}(Dp^{-1}) =\\
&= |\det \g\gamma_{b,b'}|^k
c_{b\alpha_{\fp,p},f}(Dp^{-1}),\end{split}
\ee where
we have used the 
fact that $\epsilon(h) = \epsilon\left(h \bsmat 1 \\ & p \esmat\right) =
1= \epsilon\left(h \bsmat 1 \\ & 1/p \esmat\right)$ for $h$ as above
(the last equality holding for $h$ such that $h \bsmat 1 \\ & 1/p
\esmat \in \mS$). This gives us $$c_{g_b}(h) = p^2 (p+1) |\det
\g\gamma_{b,b'}|^{-k}
(c_{b_1,f}(Dp) + p^{k-2} c_{b_1,f}(Dp^{-1})).$$ Note that $|\det \gamma|^k=p^{k/2}$ since if we write $x$ for the adele whose $\fp$th and $\ov{\fp}$th component is $p$ and all the other components are 1, then we have $$x^{k/2}=|\det \alpha_{\fp,p}|^k = |\det \gamma|^k |\det b'|^k |\det \kappa|^k.$$  
The claim now follows 
from
Theorem \ref{desc4} and the fact that $|\det \gamma_{b,b'}|^k=1$,
which follows from $bb^*=b'(b')^*=I_2$.
\end{proof}

For completeness we also include the analogue of Theorem \ref{heckedesc}
for an inert
$p$. It can be proved in the same way or can be
deduced
from the results of section 3 of \cite{Gritsenko90}.

\begin{thm} \label{heckedesc2} Let $p$ be a rational prime which is 
inert
in
$K$. There
exists a $\bfC$-algebra map $$\Desc_{\mB,p}: \mH^+_p \rightarrow \bfT_p,$$
such
that for
every $T \in \mH_p$ the
following diagram
$$\xymatrix@C7em{\mM^{\rm M}_{k,-k/2} \ar[r]^{T} \ar[d]^{\Desc_{\mB}} & \mM^{\rm M}_{k,-k/2}
\ar[d]_{\Desc_{\mB}} \\ \prod_{b \in \mB} M_{k-1} (D_K, \chi_K)
\ar[r]^{\Desc_{\mB,p}(T)} & \prod_{b \in \mB} M_{k-1} (D_K,  
\chi_K)}$$ commutes.  Moreover, one has \be \label{for12} \begin{split}
\Desc_{\mB,p}(T_{p,0}) & = p^{-k+4}(p^2+1) T_p^2+ p^4 + p^3+ p
- 1,\\   
\Desc_{\mB,p}(U_p) & = p^8(T_p^4 + (p+3)p^{k-2} T_p^2 +
p^{2k-4}(p^2+p+1)).\end{split} \ee \end{thm}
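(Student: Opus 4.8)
The plan is to mirror the proof of Theorem \ref{heckedesc}, replacing the split-prime input (Proposition \ref{f3} and the invariance computations of Propositions \ref{invariance}, \ref{invariance2}) with their inert analogues. The crucial simplification is that for inert $p$ the local algebra $\mH_p^+$ respects the decomposition (\ref{prod1}): as observed in the proof of Theorem \ref{thmmain}, an inert Hecke operator acts factor-by-factor on $\prod_{b\in\mB} M_k^{\rm h,M}$ rather than mixing the classes $b\in\mB$. Consequently no nontrivial permutation $\sigma\in S_{\mB}$ will appear, which is reflected in the shape of (\ref{for12}). The invariance of the Maass space under $\mH_p^+$ for inert $p$, which guarantees that the top horizontal arrow of the diagram is well-defined, is already part of Theorem \ref{thmmain}.

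First I would compute the effect of $T_{p,0}$ and $U_p$ on the Maass coefficient functions $c_{b,f}$, exactly as (\ref{maass1}) and (\ref{for1}) were derived in the split case. Fixing $b\in\mB$ and an $h\in\mS$ with $\epsilon(b^*hb)=1$, one applies Proposition \ref{f4} together with the Maass relation (\ref{Maass condition}), and uses Proposition \ref{diagonal} to control $\epsilon_p$ of the various translates $q^*hq$ occurring in Proposition \ref{f4}. The case distinction in the quantity $s(h,q)$ (according to whether $\val_p(\det(q^*hq))$ and $\epsilon_p(q^*hq)$ vanish) produces the same three-way split as in the split-prime argument, and one reads off explicit operators $c_{b,f}\mapsto c_{b,T_{p,0}f}$ and $c_{b,f}\mapsto c_{b,U_pf}$ on functions $\bfZ_{\geq 0}\to\bfC$. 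Since $U_p=\Delta_p^{-1}T_p^2$, it is cleanest to first treat $T_p^2$ and then record $U_p$.

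Next I would transport these operators through the descent of Theorem \ref{desc4}. The normalizing factor $i\,a_K(n)/\sqrt{D_K}$ relating $c_{b,f}(n)$ to $a_{\phi_b}(n)$ depends only on $n$ and not on $b$, so the factorwise operators on the $c_{b,f}$ descend to operators on each copy of $M_{k-1}(D_K,\chi_K)$. Here one uses that $\chi_K(p)=-1$ for inert $p$, so the classical operator $T_p$ in weight $k-1$ and character $\chi_K$ acts by $a(n)\mapsto a(np)-p^{k-2}a(n/p)$; squaring and forming the indicated linear combinations should reproduce $p^{-k+4}(p^2+1)T_p^2+p^4+p^3+p-1$ and the stated polynomial in $T_p^2$ for $U_p$. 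That $\Desc_{\mB,p}$ is an algebra map into $\bfT_p$ is then automatic, since $T_{p,0}$ and $U_p$ generate $\mH_p^+$ and their images are polynomials in the single operator $T_p^2$, hence commute.

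The main obstacle will be the bookkeeping in the middle step: reconciling the ratios $a_K(n)/a_K(pn)$ and $a_K(n)/a_K(n/p)$ coming from the $b$-independent normalization with the powers of $p$ produced by Proposition \ref{f4}, and verifying that the three divisibility cases assemble into a single polynomial expression independent of the case. This is precisely the computation carried out by Krieg and Gritsenko, so rather than repeat it one may instead invoke section 3 of \cite{Gritsenko90}, where the corresponding identities for the $U(\bfZ)$-Maass space are established; combined with Theorem \ref{desc4} and the factorwise action on $\prod_{b\in\mB} M_k^{\rm h,M}$, this yields (\ref{for12}) directly.
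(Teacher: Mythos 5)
Your proposal is correct and follows essentially the same route as the paper: the paper itself gives no separate argument for Theorem \ref{heckedesc2}, remarking only that it ``can be proved in the same way [as Theorem \ref{heckedesc}] or can be deduced from the results of section 3 of \cite{Gritsenko90}'', and your outline fleshes out exactly these two options, with the correct observations that the inert operators act factor-by-factor on $\prod_{b\in\mB}M_k^{\rm h,M}$ (so no permutation $\sigma_{\fp,n}$ appears) and that $\chi_K(p)=-1$ gives the sign in the classical $T_p$ recursion. The computational bookkeeping you defer to Gritsenko is precisely what the paper also leaves to that reference.
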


\subsection{$L$-functions} \label{L-functions}

In this section we study eigenforms in $\mM_{k,-k/2}^{\tuM}$ and
give a formula for the standard $L$-function of such an eigenform.
   
From now on assume that $D_K$ is prime. It is well-known that this 
implies
that
$h_K$ is odd, hence we can (and will) choose $\mB$ be as in Corollary
\ref{scalarcor}. One has $\# \mB = \# \mC = h_K$ with $\mC=\{p_b \mid b \in \mB\}$. On the other hand, for such a $D_K$ the 
space
$S_{k-1}(D_K, \chi_K)$ of cusp forms inside $M_{k-1}(D_K, \chi_K)$ has a
basis $\mN$ consisting of newforms.
In particular, if $\phi \in S_{k-1}(D_K, \chi_K)$ is an
eigenform
for almost all $T_p$, it is so for all $T_p$. For $\phi
=\sum_{n=1}^{\iy} a_{\phi}(n) e^{2\pi inz} \in S_{k-1}(D_K, \chi_K)$,
set $\phi^{\rho}(z):= \sum_{n=1}^{\iy}
\ov{a_{\phi}(n)}
e^{2\pi inz}$. Let $\mN'\subset \mN$
denote the set formed by choosing one element from each pair $(\phi,
\phi^{\rho})$ such that $\phi \in \mN$ and $\phi \neq \phi^{\rho}$. Then
the set $\{\phi - \phi^{\rho} \mid \phi \in \mN'\}$ is a basis of
$S^*_{k-1}(D_K,
\chi_K):=
G_{k-1}^*(D_K, \chi_K) \cap S_{k-1}(D_K, \chi_K)$ (cf. \cite{Krieg91}, Remark (b) on p. 670).  
 Let $\chi
\in \Hom(\Cl_K, \bfC^{\times})$. Recall (cf. (\ref{de1})) that we have the decomposition  \be \label{deco20} \mM_{k,-k/2} = \bigoplus_{\chi\in
\Hom(\Cl_K,
\bfC^{\times})}\mM_{k,-k/2}^{\chi}.\ee
Let $\mS_{k,-k/2}$ denote the subspace of cusp forms in $\mM_{k,-k/2}$ and write   
$\mS_{k,-k/2}^{\tuM}$ for $\mS_{k,-k/2} \cap \mM_{k,-k/2}^{\tuM}$. It is clear
that a decomposition analogous to (\ref{deco20}) holds for $\mS_{k,-k/2}$,
with $\mS_{k,-k/2}^{\chi}$ having the obvious
meaning. Write $\mS_{k,-k/2}^{\tuM, \chi}$ for $\mS_{k,-k/2}^{\tuM} \cap 
\mM_{k,-k/2}^{\chi}$.

Let $\bfT_p$ be as in section \ref{Descent} and write $\bfT'_{\bfC}$
for $\bfC$-subalgebra
of the ring of
endomorphisms of $S_{k-1}(D_K,\chi_K)$ generated by $T_p$ (for $p$
split) and $T_p^2$ (for $p$ inert).
The algebra $\bfT'_{\bfC}$ acts on
$S^*_{k-1}(D_K,\chi_K)$.
For $\phi \in \mN'$, the element $\phi -
\phi^{\rho} \in S_{k-1}^*(D_K, \chi_K)$ is a non-zero eigenform for
$\bfT'_{\bfC}$ and for every $\chi \in \Hom(\Cl_K,
\bfC^{\times})$, the $\mB$-tuple $$\phi_{\chi}:=(\chi(c_K(\det  b))
(\phi-\phi^{\rho}))_{b \in
\mB}$$ is an eigenform for $\bfT_p$ for every $p \neq D_K$. Below we will write $\chi(b)$ instead of $\chi(c_K( \det b))$. Hence
$f_{\phi, \chi}: =
\Desc_{\mB}^{-1}(\phi_{\chi})$ lies in $\mS_{k,-k/2}^{\tuM,\chi}$ and is an
eigenform for $\mH_p$ for every $p \neq D_K$.

\begin{rem} \label{Missingops} Since $2 \nmid h_K$, the prime $\fp$ such that $D_K\OK =
\fp^2$ is principal. Hence one can use the calculations in
\cite{Gritsenko90} to conclude that the Maass space is also invariant
under the action of $\mH_{D_K}$ and hence that $f_{\phi, \chi}$ is an 
eigenform
for $\mH_p$ for all $p$. Also note that for a split prime $p$ and a prime $\fp$ of $K$ lying over $p$ the operator $\Delta_{\fp}$ acts on $f_{\phi, \chi}$ via multiplication by a scalar, so we conclude that the Maass space is in fact invariant under $\mH_p$. \end{rem}

We have proved the following proposition.

\begin{prop} \label{heckedesc3} Let $\Desc_{\mB}$ be the map from Theorem \ref{desc4}. The
composite
$$\mM^{\rm M}_{k,-k/2} \xrightarrow{\Desc_{\mB}} \prod_{b \in \mB} M_{k-1}(D_K, \chi_K)
\xrightarrow{\textup{pr}_{I_2}}  M_{k-1}(D_K, \chi_K)$$ induces
$\bfC$-linear isomorphisms $\mS_{k, -k/2}^{\tuM,\chi} \cong S^*_{k-1}(D_K,
\chi_K)$ for every $\chi \in \Hom(\Cl_K, \bfC^{\times})$. The inverse of 
such an isomorphism (for a fixed $\chi$) is induced by
$$\phi-\phi^{\rho} \mapsto
f_{\phi, \chi}:= \Desc_{\mB}^{-1}(\phi_{\chi}) $$ for any $\phi \in \mN'$.
Moreover,
these isomorphisms are
Hecke-equivariant with respect to the Hecke algebra maps
\textup{(\ref{for11})} and
\textup{(\ref{for12})} except
that in \textup{(\ref{for11})} we replace composition with
$\sigma_{\fp,n}$ by
multiplication by
$\chi(\alpha_{\fp}^n)$. \end{prop}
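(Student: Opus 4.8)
The plan is to assemble the structural facts already in place---Theorem~\ref{desc4}, Remark~\ref{kr}, Proposition~\ref{oddclass}, and the Hecke descent Theorems~\ref{heckedesc} and \ref{heckedesc2}---and to track how the central character $\chi$ interacts with the product decomposition (\ref{prod1}). First I would record that $\Desc_{\mB}$ is injective (Theorem~\ref{desc4}) with image exactly $\prod_{b\in\mB} G^*_{k-1}(D_K,\chi_K)$ (Remark~\ref{kr}), and that it is computed componentwise (via $\alpha_{f_b}=c_{b,f}$, as in the proof of Proposition~\ref{oddclass}) and matches cusp forms with cusp forms. Hence $\Desc_{\mB}$ restricts to a $\bfC$-linear isomorphism of $\mS^{\tuM}_{k,-k/2}$ onto $\prod_{b\in\mB} S^*_{k-1}(D_K,\chi_K)$, cuspidality of $f$ being equivalent to cuspidality of every $\phi_b$.

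The key step is the analysis of the $\chi$-eigenspace. Since $\mB$ is chosen as in Corollary~\ref{scalarcor}, each $p_b$ is a scalar, hence \emph{central}, matrix. Writing $\Phi_{\mB}(f)=(f_b)_{b\in\mB}$ with $f_b(Z)=(\det g_{\iy})^{\nu}j(g_{\iy},\bfi_n)^k f(g_{\iy}p_b)$ and combining centrality of $p_b$ with the defining relation $f(p_bg)=\chi(b)f(g)$ of $\mM^{\chi}_{k,-k/2}$, I obtain $f_b=\chi(b)f_{I_2}$ for all $b$; conversely this relation forces $f\in\mM^{\chi}_{k,-k/2}$, using the decomposition (\ref{de1}) together with the linear independence of the characters $b\mapsto\psi(b)$. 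By the componentwise linearity of the descent, $f_b=\chi(b)f_{I_2}$ translates into $\phi_b=\chi(b)\phi_{I_2}$, so $\Desc_{\mB}$ carries $\mS^{\tuM,\chi}_{k,-k/2}$ isomorphically onto the ``diagonal'' tuples $\{(\chi(b)\psi)_{b\in\mB}\mid \psi\in S^*_{k-1}(D_K,\chi_K)\}$. Composing with $\textup{pr}_{I_2}$ and using $\chi(I_2)=\chi(c_K(1))=1$ then yields the isomorphism $\mS^{\tuM,\chi}_{k,-k/2}\cong S^*_{k-1}(D_K,\chi_K)$, whose inverse sends $\psi\mapsto \Desc_{\mB}^{-1}((\chi(b)\psi)_b)$; taking $\psi=\phi-\phi^{\rho}$ for $\phi\in\mN'$ recovers $f_{\phi,\chi}$, which describes the inverse on the basis $\{\phi-\phi^{\rho}\mid\phi\in\mN'\}$ of $S^*_{k-1}(D_K,\chi_K)$.

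Finally, the Hecke-equivariance is inherited from Theorems~\ref{heckedesc} and \ref{heckedesc2}: on the full product the descended operators act by (\ref{for11}) and (\ref{for12}), and the only point to check is the behavior of the permutation $\sigma_{\fp,n}=s\circ c_K(\det\alpha_{\fp,p}^n)$ on diagonal tuples. Since $\sigma_{\fp,n}(b)=c_K(\det\alpha_{\fp,p}^n)\cdot b$ in $\Cl_K$ and $\chi$ is multiplicative, $\sigma_{\fp,n}$ sends $(\chi(b)\psi)_b$ to $(\chi(\sigma_{\fp,n}(b))\psi)_b=\chi(\alpha_{\fp}^n)\,(\chi(b)\psi)_b$; as the componentwise operators $T_p$, $T_p^2$ commute with $\textup{pr}_{I_2}$, projecting to the $I_2$-component turns $\sigma_{\fp,n}$ into the scalar $\chi(\alpha_{\fp}^n)$, giving exactly the stated modification of (\ref{for11}).

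I expect the main obstacle to be the clean verification that $f_b=\chi(b)f_{I_2}$ on components is \emph{equivalent} to membership in $\mM^{\chi}_{k,-k/2}$: one direction is immediate from centrality of $p_b$, but the converse requires reconstructing $f$ on all of $U_n(\AQ)$ from its boundary components and invoking (\ref{de1}). Coupled with the bookkeeping that the descent simultaneously respects cuspidality and the componentwise $\chi$-scaling, this is the only genuinely delicate point; once it is in place, the isomorphism, its explicit inverse, and the Hecke-equivariance are direct consequences of the earlier theorems.
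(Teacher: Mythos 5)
Your proposal is correct and follows essentially the same route the paper takes: the paper's (largely implicit) proof likewise combines the injectivity and image description of $\Desc_{\mB}$ from Theorem \ref{desc4} and Remark \ref{kr} with the observation that, for the scalar base of Corollary \ref{scalarcor}, the $\chi$-eigenspace corresponds exactly to the diagonal tuples $(\chi(b)\psi)_{b\in\mB}$, on which the permutation $\sigma_{\fp,n}$ acts as the scalar $\chi(\alpha_{\fp}^n)$. Your extra care in verifying the equivalence $f\in\mM^{\chi}_{k,-k/2}\Leftrightarrow f_b=\chi(b)f_{I_2}$ (via the decomposition (\ref{de1}) and the isomorphism $\Phi_{\mB}$) only makes explicit what the paper leaves to the reader.
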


It follows that if $\phi$ runs over $\mN'$ and
$F_{\phi}$ denotes the Maass
lift of $\phi$ in the sense of Krieg \cite{Krieg91}, i.e.,
$F_{\phi}=\Desc_K^{-1}(\phi-\phi^{\rho})\in M^{\rm h,M}_k$, where
$\Desc_K: M_k^{\rm h,M} \xrightarrow{\sim} S^*_{k-1}(D_K, \chi_K)$ is 
the
(non-Hecke-equivariant) isomorphism constructed in Theorem
on p.676 of \cite{Krieg91}, then the set of
$\mB$-tuples $\{(\chi(b)F_{\phi})_{b \in \mB}\}$ is basis of
eigenforms of $\mS^{\tuM,\chi}_{k, -k/2}$ after we identify $\mS^{\tuM,\chi}_{k, -k/2}$
with its image inside $\prod_{b \in \mB} M_k^{\rm h,M}$.

Let $\phi \in S_{k-1}(D_K, \chi_K)$ be a newform and write $f_{\phi, \chi} \in \mS^{{\rm M}, \chi}_{k, -k/2}$ for its Maass lift as above.  For a unitary Hecke character 
$\psi:
K^{\times} \setminus \AK^{\times} \rightarrow \bfC^{\times}$ and a Hecke eigenform $f \in \mS^{\chi}_{k, -k/2}$ denote by 
$L_{\tust}(f,s,\psi)=Z(s,f,\psi)$ the standard $L$-function of $f$ twisted by $\psi$
as defined in \cite{Shimura00}, section 20.6 with the Euler factor at
$D_K$ removed. 
Moreover, for the newform
 $\phi \in S_{k-1}(D_K, \chi_K)$, and a prime $\fp$ of $\OK$ of
characteristic $p\neq D_K$, set $\alpha_{\fp, j}:= \alpha_{p,j}^d$, 
where
$\alpha_{p,j}$, $j=1,2$ are the $p$-Satake parameters of $\phi$ and
$d:=[\OK/\fp:\bfF_p]$. For $s \in \bfC$ with $\textup{Re}(s)$ 
sufficiently
large, define $$L(\BC(\phi), s ,\psi):= \prod_{\fp \nmid
D_K} \prod_{j=1}^2 (1-\psi^*(\fp)\alpha_{\fp,j}(N\fp)^{-s})^{-1},$$ 
where
$\psi^*$ denotes the ideal character associated to $\psi$ and $N\fp$
denotes the norm of $\fp$. It is well-known that $L(\BC(\phi), s, \psi)$
can be continued to the entire $\bfC$-plane.

\begin{prop} \label{product134} Let $\psi$ be a unitary Hecke character 
of
$K$. The following
identity holds:
$$L_{\tust}(f_{\phi, \chi}, s, \psi) = L(\BC(\phi), s-2+k/2, 
\chi\psi)L(\BC(\phi),
s-3+k/2,
\chi\psi).$$ \end{prop}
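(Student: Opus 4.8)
The plan is to establish the identity one Euler factor at a time. Both sides are Euler products over the rational primes $p \neq D_K$, so it suffices to fix such a $p$ and match the local factors. Since $f_{\phi,\chi}$ is an eigenform for $\mH_p$ for every $p \neq D_K$ (Proposition \ref{heckedesc3} and Remark \ref{Missingops}), the local factor of $L_{\tust}(f_{\phi,\chi}, s, \psi)$ at $p$ is, by the definition in \cite{Shimura00}, section 20.6, a rational function of $p^{-s}$ whose inverse roots are the Satake parameters of the local component $\pi_p$, twisted by $\psi_p$; these are in turn determined by the eigenvalues of the spherical Hecke algebra $\mH_p^+$ acting on $f_{\phi,\chi}$, together with the scalar action of $\Delta_\fp, \Delta_{\ov{\fp}}$ (resp. $\Delta_p$).

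First I would record the Hecke eigenvalues in terms of the arithmetic of $\phi$. Let $\alpha_{p,1}, \alpha_{p,2}$ be the $p$-Satake parameters of $\phi$, so that $\alpha_{p,1}+\alpha_{p,2} = a_\phi(p)$ and $\alpha_{p,1}\alpha_{p,2} = \chi_K(p)p^{k-2}$. Applying the descent formulas of Theorem \ref{heckedesc} (split $p$) and Theorem \ref{heckedesc2} (inert $p$), and using Proposition \ref{heckedesc3} to replace the permutation $\sigma_{\fp,n}$ by the scalar $\chi(\alpha_\fp^n)$, I obtain the eigenvalues of $T_{\fp,1}, T_{\fp,2}$ (resp. $T_{p,0}, U_p$) on $f_{\phi,\chi}$ as explicit expressions in $\alpha_{p,1}, \alpha_{p,2}$ and $\chi(\alpha_\fp)$. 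The normalizing powers $p^{2-k/2}$ and $p^{4-k}$ in (\ref{for11}) are exactly what shift $s$ by $k/2$, while the correction $p^{k-1}+p^{k-3}$ in $\Desc_{\mB,p}(T_{\fp,2})$ is what separates the eight Satake parameters into two groups whose shifts differ by one.

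Next I would read off the Satake parameters and compare with the base change factors. In the split case $U(\bfQ_p)\cong\GL_4(\bfQ_p)$, and solving the Hecke relations shows the eight inverse roots of the $\psi$-twisted local factor to be
$$\{(\chi\psi)^*(\fq)\,\alpha_{p,j}\,p^{2-k/2},\ (\chi\psi)^*(\fq)\,\alpha_{p,j}\,p^{3-k/2} \ :\ \fq\in\{\fp,\ov{\fp}\},\ j\in\{1,2\}\}$$
(one uses $\chi(\fp)\chi(\ov{\fp})=1$ since $(p)=\fp\ov{\fp}$ is principal); in the inert case the single prime $\fp=(p)$ has $N\fp=p^2$ and $\alpha_{\fp,j}=\alpha_{p,j}^2$, giving the analogous four inverse roots in $p^{-2s}$. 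By the definition of $L(\BC(\phi),s,\psi)$, whose Euler factor at $\fq$ is $\prod_{j=1}^2(1-\psi^*(\fq)\alpha_{\fq,j}(N\fq)^{-s})^{-1}$, the first group of parameters reproduces precisely the local factor of $L(\BC(\phi),s-2+k/2,\chi\psi)$ and the second that of $L(\BC(\phi),s-3+k/2,\chi\psi)$, which is the asserted identity.

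The hard part is this last matching, namely pinning down Shimura's normalization of the local standard Euler factor in terms of the eigenvalues furnished by the descent. At a split prime this is comparatively transparent because $U(\bfQ_p)\cong\GL_4(\bfQ_p)$, but at an inert prime $K_p/\bfQ_p$ is a quadratic field extension and one must unwind the Satake isomorphism for the unitary group, matching $\mH_p^+$, generated by $T_{p,0}$ and $U_p$, against the degree-eight Frobenius polynomial. Tracking the various powers of $p$ — arising from the weight $k$, the normalization $\nu=-k/2$, and Shimura's conventions — and the combination of $\chi$ with the twisting character $\psi$ is where essentially all the work lies; once this dictionary is fixed, the remaining identity is a routine comparison of rational functions in $p^{-s}$.
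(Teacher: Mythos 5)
Your proposal is correct and follows essentially the same route as the paper: the paper's entire proof is the remark that this is ``an easy calculation involving Satake parameters,'' and your outline — extracting the Satake parameters of $f_{\phi,\chi}$ at each $p\neq D_K$ from the descended Hecke eigenvalues of Theorems \ref{heckedesc} and \ref{heckedesc2} (with $\sigma_{\fp,n}$ replaced by $\chi(\alpha_{\fp}^n)$ as in Proposition \ref{heckedesc3}) and matching the resulting inverse roots against the Euler factors of $L(\BC(\phi),s-2+k/2,\chi\psi)$ and $L(\BC(\phi),s-3+k/2,\chi\psi)$ — is precisely that calculation spelled out. Your identification of the two groups of inverse roots, shifted by one power of $p$ and consistent with the Galois-side decomposition (\ref{specialform}), is the correct bookkeeping.
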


\begin{proof} This is an easy calculation involving Satake parameters.
\end{proof}

\begin{rem} In \cite{Ikeda08} Ikeda has studied
liftings from the space of elliptic cusp forms into the space
of hermitian cusp forms defined on the group $U_n$ with no assumptions on the class number of $K$. In
particular he constructs a lifting
$S_{k-1}(D_K, \chi_K) \rightarrow \mS_{k,-k/2}$, which agrees with the map 
$\phi
\mapsto f_{\phi, \mathbf{1}}$, where $\mathbf{1}$ denotes 
the
trivial character. The method used in \cite{Ikeda08} 
is
different
from ours. \end{rem}

\subsection{The Petersson norm of a Maass lift} \label{The Petersson 
norm
of a Maass lift}

Let $\phi \in S_{k-1}(D_K, \chi_K)$ be a newform such that $\phi \neq \phi^{\rho}$. Let $\chi: \Cl_K \to \bfC^{\times}$ be a character. Write $f_{\phi, \chi} \in \mS_{k, -k/2}^{{\rm M}, \chi}$ for the Maass lift of $\phi$. 

\begin{thm} \label{FF11} Let $\ell>3$ be an odd prime and assume $\ell \nmid h_K D_K$. Then
one has
$$\left< f_{\phi, \chi},f_{\phi, \chi}\right> =
 C  \pi^{-k-2}\cdot
\left< \phi, \phi \right>
L(\Symm
\phi, k),$$
where $C_{\chi}\in \bfQ^{\times}$ is
a $\ell$-adic unit and
\begin{multline} L(\Symm
\phi, s)^{-1}:= \prod_{p \nmid D_K} (1-\alpha_p^2 p^{-s})
(1-\alpha_p \beta_p p^{-s})  (1-\beta_p^2 p^{-s}) \times \\
\times \prod_{p \mid D_K} (1-a(p)^2 p^{-s}) (1-\ov{a(p)}^2
p^{-s}).
\end{multline} Here $\a_p$, $\b_p$ are the classical $p$-Satake 
parameters
of $\phi$ and $a(p)$ is the $p$-th Fourier coefficient of $\phi$.
\end{thm}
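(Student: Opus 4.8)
The plan is to reduce the adelic inner product to a classical Petersson norm and then evaluate the latter by Sugano's explicit formula recorded in \cite{Ikeda08}. Choosing the scalar base $\mB$ of Corollary \ref{scalarcor}, Proposition \ref{heckedesc3} identifies the image of $f_{\phi,\chi}$ under the isomorphism (\ref{prod1}) with the tuple $(\chi(b)F_\phi)_{b\in\mB}$, where $F_\phi=\Desc_K^{-1}(\phi-\phi^\rho)\in M_k^{{\rm h},{\rm M}}$. Because the base is scalar, every group $\Gamma_b$ equals $U(\bfZ)$, so the Petersson product respects this product decomposition, and since $|\chi(b)|=1$ one gets that $\langle f_{\phi,\chi},f_{\phi,\chi}\rangle$ is independent of $\chi$; this already justifies writing $C$ in the display. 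In particular it suffices to treat $\chi=\mathbf 1$, where by the Remark following Proposition \ref{product134} the form $f_{\phi,\mathbf 1}$ is Ikeda's Hermitian lift. (Should one instead descend all the way to $F_\phi$ via Krieg's map, one uses $\langle\phi-\phi^\rho,\phi-\phi^\rho\rangle=2\langle\phi,\phi\rangle$, valid as $\phi\neq\phi^\rho$ are distinct newforms, together with the normalization $a_{\phi_b}(n)=i\,a_K(n)D_K^{-1/2}c_{b,f}(n)$ of Theorem \ref{desc4}; both introduce only $\ell$-unit factors.)

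Next I would apply Sugano's formula. It expresses the Petersson norm of the lift as the product of $\langle\phi,\phi\rangle$, an archimedean factor (a power of $\pi$ times a ratio of Gamma values), and a finite Euler product in the Satake parameters of $\phi$. At the primes $p\nmid D_K$ this Euler product, evaluated at the relevant argument, assembles into the three factors $(1-\alpha_p^2p^{-k})(1-\alpha_p\beta_pp^{-k})(1-\beta_p^2p^{-k})$ appearing in the definition of $L(\Symm\phi,k)$; to confirm that it is the symmetric square rather than the base-change $L$-function of Proposition \ref{product134}, I would factor the latter as a symmetric square times an elementary Dirichlet/zeta factor and check that this auxiliary factor contributes only a unit at the point in question. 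At the ramified prime $p=D_K$ one computes Sugano's local term directly and matches it, up to an $\ell$-adic unit, with the prescribed factor $(1-a(p)^2p^{-k})(1-\overline{a(p)}^2p^{-k})$, using Fact \ref{fact1} and $\ell\nmid D_K$. Collecting the archimedean contribution yields exactly the power $\pi^{-k-2}$.

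It then remains to verify that $C\in\bfQ^\times$ is an $\ell$-adic unit. The constant is a product of the volume and $h_K$ factors from the first step, the $D_K$-powers from the descent normalization, the rational ratio of Gamma values from Sugano's archimedean factor, and the unit bad-prime correction at $D_K$. Each lies in $\bfQ^\times$, and the hypotheses $\ell>3$ and $\ell\nmid h_KD_K$ guarantee that the primes $2,3$, the class number, and $D_K$ do not enter with a nonzero power of $\ell$.

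The hard part will be the middle step: reconciling Sugano's normalizations of the lift and of the standard $L$-function (as in \cite{Shimura00}, \cite{Ikeda08}) with the normalizations of $F_\phi$, of the Petersson product, and of $L(\Symm\phi,s)$ used here, and then isolating the archimedean contribution precisely enough both to extract the clean power $\pi^{-k-2}$ and to be certain that the accompanying ratio of Gamma factors is an $\ell$-adic unit under only the hypotheses $\ell>3$, $\ell\nmid h_KD_K$. Controlling the $\ell$-adic valuation of these transcendental and ramified factors, rather than merely recovering the shape of the identity, is where the genuine work lies.
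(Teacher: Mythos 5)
Your proposal follows essentially the same route as the paper: the paper's proof of this theorem is simply a citation of Proposition 17.4 in \cite{Ikeda08} (Sugano's formula for the Petersson norm of the Hermitian lift), and your plan of reducing to the classical norm via the scalar base and then invoking and renormalizing Sugano's formula is exactly the verification hidden behind that citation. Your observation that the norm is independent of $\chi$ because $|\chi(b)|=1$ is consistent with the paper writing a single constant $C$ in the display.
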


\begin{proof} This is Proposition 17.4 in \cite{Ikeda08}, which is essentially due to Sugano - see the references cited in [loc.cit.]. \end{proof}

\section{Completed Hecke algebras}\label{Completed Hecke algebras}
Let $\ell$, as before, be a fixed prime such that $\ell \nmid 2  D_K$. Suppose $D_K$ is prime. Then the space $S_{k-1}(D_K, \chi_K)$ has a canonical basis $\mN$ consisting of newforms. 
The goal of this section is to construct a Hecke operator $T^{\rm h}$ acting on the space $\mS_{k, -k/2}^{\chi}$ such that $T^{\rm h}$ preserves the $\ell$-integrality of the Fourier coefficients of the hermitian modular forms in  $\mS_{k, -k/2}^{\chi}$ and such that $T^{\rm h}f_{\phi,\chi}=\eta f_{\phi,\chi} $ for a Maass lift $f_{\phi,\chi}$ of an elliptic modular form $\phi$ and $T^{\rm h}f=0$ for all the $f \in \mS_{k,-k/2}^{{\rm M}, \chi}$ with $\left< f, f_{\phi,\chi}\right>=0$. Here $\eta$ is a generator of the Hida's congruence ideal.

\subsection{Elliptic Hecke algebras}

Let $\bfT_{\bfZ}$ be the $\bfZ$-subalgebra of $\End_{\bfC}(S_{k-1}(D_K, \chi_K))$ generated by the (standard) Hecke operators $T_n$, $n=1,2,3, \dots$ (for the action of $T_p$ on the Fourier coefficients see section \ref{Descent}).

\begin{definition} \label{varioushecke} For every $\bfZ$-algebra $A$ we set \begin{itemize} 
\item[(i)] $\bfT_A:= \bfT_{\bfZ} \otimes_{\bfZ} A$;
\item[(ii)] $\bfT'_A$ to be the $A$-subalgebra of
$\bfT_A$ generated by the set $$\Sigma':=\{T_p\}_{p \hs \textup{split in $K$}}
\cup
\{T_{p^2}\}_{p \hs
\textup{inert in $K$}}.;$$
\item[(iii)] $\tilde{\bfT}'_A$ to be the $A$-subalgebra of $\bfT'_A$ generated by $\tilde{\Sigma}'$, where $\tilde{\Sigma}'= \Sigma' \setminus \{T_{\ell}\}$. \end{itemize} \end{definition}

Suppose $\phi=\sum_{n=1}^{\iy}a_{\phi}(n) q^n \in \mN$. For $T \in
\bfT_{\bfC}$, set $\lambda_{\phi, \bfC}(T)$ to denote the eigenvalue of $T$
corresponding to $\phi$. It
is a well-known fact that $\lambda_{\phi, \bfC}(T_n)= a_{\phi}(n)$ for all $\phi \in
\mathcal{N}$
and that the set $\{a_{\phi}(n)\}_{n \in \bfZ_{>0}}$ is contained in the ring
of integers of a finite extension
$L_{\phi}$ of $\bfQ$. Let $E$ be a finite extension of
$\bfQ_{\ell}$ containing the fields $L_{\phi}$ for all $\phi \in \mathcal{N}$.
Denote by $\mathcal{O}$ the valuation ring of $E$ and by $\varpi$ a
uniformizer of $\Oo$. Then $\{a_{\phi}(n)\}_{\phi \in \mN, n \in \bfZ_{>0}}
\subset \Oo$. Moreover, one has \be
\label{hecke532} \bfT_E = \prod_{\phi \in \mathcal{N}} E\ee
and
\be \label{hecke533} \bfT_{\mathcal{O}} =
\prod_{\fm}
\bfT_{\mathcal{O},\fm},
\ee where $\bfT_{\mathcal{O},\fm}$ denotes the localization
of
$\bfT_{\mathcal{O}}$ at $\fm$
and the product runs over all maximal ideals of $\bfT_{\mathcal{O}}$. Analogous decompositions hold for $\bfT'_{\Oo}$ and $\tilde{\bfT}'_{\Oo}$.
Every $\phi \in \mN$ gives rise to an $\Oo$-algebra homomorphism
$\bfT_{\Oo} \rightarrow \Oo$ assigning to $T$ the eigenvalue of $T$
corresponding to $\phi$. We denote this homomorphism by $\lambda_{\phi}$ and its
reduction mod $\varpi$ by $\ov{\lambda}_{\phi}$. If $\fm = \ker \ov{\lambda}_{\phi}$, we write
$\fm_{\phi}$ for $\fm$. For simplicity in this section we will drop the subscript $\Oo$ from notation, so for example, we will simply write $\bfT$ instead of $\bfT_{\Oo}$. 

Fix a maximal ideal $\fm$ of $\bfT$. Write $\fm'$ (resp. $\tilde{\fm}'$) for the maximal ideal of $\bfT'$ (resp. $\tilde{\bfT}'$) corresponding to $\fm$. We have the following commutative diagram
\be \label{dnew} \xymatrix{\tilde{\bfT}' \ar[d]^{\tilde{\pi}'}\ar[r] &  \bfT' \ar[d]^{\pi'}\ar[r] & \bfT \ar[d]^{\pi}\\
\tilde{\bfT}'_{\tilde{\fm}'} \ar[r]^{i'} &  \bfT'_{\fm'} \ar[r]^{i} & \bfT_{\fm}}\ee where the top arrows are the natural inclusions and the vertical arrows are the canonical surjections coming from the decomposition (\ref{hecke533}) and its analogues.
Note that the localizations of the Hecke algebras in the bottom row of (\ref{dnew}) are Noetherian,
local, complete $\Oo$-algebras. In \cite{Klosin09}, we proved the following properties of the maps $i$ and $i'$ (the proofs in [loc.cit.] are for $D_K=4$, but they generalize verbatim to the general case).

\begin{thm}\label{congsix}
The map $i$ in (\ref{dnew}) is an injection. Moreover if $\phi \in \mN$ is ordinary at $\ell$ and $\ov{\rho}_{\phi}|_{G_K}$ is absolutely irreducible, then both $i$ and $i'$ are surjective. \end{thm}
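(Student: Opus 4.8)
The plan is to prove the unconditional injectivity of $i$ first, and then to deduce the surjectivity of $i$ and $i'$ from the two hypotheses. Throughout I use the involution $\rho\colon\phi\mapsto\phi^{\rho}$ on $\mN$: by Fact \ref{fact1} one has $a_{\phi^{\rho}}(p)=\chi_K(p)a_\phi(p)$, so $\phi$ and $\phi^{\rho}$ carry identical eigenvalues for every operator in $\Sigma'$ (for inert $p$ the relation $T_{p^2}=T_p^2-\chi_K(p)p^{k-2}$ records only $a_\phi(p)^2$). Hence $\rho$ permutes the maximal ideals of $\bfT$ lying over $\fm'=\fm\cap\bfT'$ while fixing $\bfT'$ pointwise, and both $\fm=\fm_\phi$ and $\fm_{\phi^{\rho}}$ contract to $\fm'$. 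Writing $\bfT\otimes_{\bfT'}\bfT'_{\fm'}=\prod_{\fn}\bfT_{\fn}$ (product over the $\fn$ over $\fm'$, using (\ref{hecke533})), the inclusion $\bfT'\hookrightarrow\bfT$ localizes to an injection whose image is $\rho$-invariant; since $\rho$ interchanges $\bfT_{\fm_\phi}$ and $\bfT_{\fm_{\phi^{\rho}}}$, that image meets these factors along the graph of the induced isomorphism $\bfT_{\fm_\phi}\cong\bfT_{\fm_{\phi^{\rho}}}$. The projection onto $\bfT_\fm$ is therefore injective, which is the assertion about $i$.

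For the surjectivity statements I would pass to the Galois representation. Since $\ov{\rho}_\phi=\ov{\rho}_{\fm}$ is absolutely irreducible, the pseudo-representation carried by $\bfT_\fm$ lifts to $\rho_\fm\colon G_{\bfQ}\to\GL_2(\bfT_\fm)$ with $\tr\rho_\fm(\Frob_p)=T_p$ for $p\nmid D_K\ell$. Using the Hecke relations, this identifies $\bfT'_{\fm'}$ with the $\Oo$-subalgebra generated by the traces of $\rho_\fm|_{G_K}$ (split primes give $\Frob_{\fp}\in G_K$ directly, inert primes give $\Frob_p^2=\Frob_{\fp}$), while $\bfT_\fm$ is generated by all the $\tr\rho_\fm(\Frob_p)$. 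For $i'$ this already suffices: omitting $T_\ell$ (resp. $T_{\ell^2}$) deletes the trace at a single prime of $K$, and since $\bfT_\fm$ is a finite $\Oo$-module the subalgebra generated by the retained Frobenius traces is closed; by Chebotarev the omitted Frobenius is a limit of retained ones, so its trace already lies in $\tilde{\bfT}'_{\tilde{\fm}'}$, and $i'$ is onto.

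The surjectivity of $i$ is the subtle point, and here the hypotheses enter decisively. First I would show $i\otimes_{\Oo}E$ is an isomorphism. Because $\ov{\rho}_{\phi^{\rho}}\cong\ov{\rho}_\phi\otimes\chi_K$ and $\ov{\rho}_\phi|_{G_K}$ is reducible exactly when $\ov{\rho}_\phi\cong\ov{\rho}_\phi\otimes\chi_K$, absolute irreducibility of $\ov{\rho}_\phi|_{G_K}$ gives $\fm_\phi\neq\fm_{\phi^{\rho}}$ and rules out any ``mixed-sign'' residual system: any $\psi\in\mN$ with $\ov{\lambda}_\psi|_{\bfT'}=\ov{\lambda}_\phi|_{\bfT'}$ has $\ov{\rho}_\psi|_{G_K}\cong\ov{\rho}_\phi|_{G_K}$, whence $\ov{\rho}_\psi\cong\ov{\rho}_\phi\otimes\epsilon$ with $\epsilon\in\{1,\chi_K\}$, so exactly one of $\psi,\psi^{\rho}$ reduces to $\fm$. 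Thus $\psi\mapsto[\psi]:=\{\psi,\psi^{\rho}\}$ is a bijection from $\{\psi:\fm_\psi=\fm\}$ onto the set of $\bfT'$-eigensystems reducing to $\fm'$, and by (\ref{hecke532}) this forces $\dim_E(\bfT_\fm\otimes E)=\dim_E(\bfT'_{\fm'}\otimes E)$; with the injectivity of $i$, the map $i\otimes E$ is an isomorphism and $\bfT_\fm/i(\bfT'_{\fm'})$ is a finite $\Oo$-module.

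The genuinely hardest step, and the one I expect to be the main obstacle, is to upgrade this to integral surjectivity. By Nakayama applied to the finite $\bfT'_{\fm'}$-module $\bfT_\fm$ it suffices to show $\bfT_\fm\otimes_{\bfT'_{\fm'}}k=k$, i.e. that the maximal ideal of $\bfT_\fm$ is generated by the image of that of $\bfT'_{\fm'}$. The difficulty is concentrated at $\Frob_p$ for inert $p$, which lie in $G_{\bfQ}\setminus G_K$ and are therefore \emph{not} reached by the density argument used for $i'$: one must place $\tr\rho_\fm(\Frob_p)$ integrally in the $G_K$-trace subalgebra. When $a_\phi(p)$ is a $\varpi$-adic unit this follows by extracting, via Hensel's lemma in the complete local ring $\bfT'_{\fm'}$, the square root of $T_p^2=T_{p^2}-p^{k-2}$ congruent to $a_\phi(p)$; the essential work is the non-unit primes. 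Here I would use the ordinarity of $\phi$ at $\ell$, which through Hida theory controls the structure of $\bfT_\fm$ and makes $\rho_\fm$ ordinary at $\ell$, and compare the ordinary deformation problem of $\ov{\rho}_\phi$ over $\bfQ$ with the $G_K$-trace data; the non-self-twist $\ov{\rho}_\phi\not\cong\ov{\rho}_\phi\otimes\chi_K$ prevents extra deformations, forcing the two tangent spaces to agree and hence $i$ to be surjective. This is precisely where both hypotheses are indispensable, and it is the step whose argument from \cite{Klosin09} (the case $D_K=4$) I would transcribe.
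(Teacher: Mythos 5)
First, a point of reference: the paper does not actually reprove this theorem; its ``proof'' is a citation of Proposition 8.5 and Corollaries 8.10 and 8.12 of \cite{Klosin09}, with the assertion that those arguments generalize verbatim from $D_K=4$. So your proposal is necessarily a reconstruction, and it has gaps at each of the three claims. For the (unconditional) injectivity of $i$: your argument tacitly assumes that $\fm_{\phi}$ and $\fm_{\phi^{\rho}}$ are the \emph{only} maximal ideals of $\bfT$ above $\fm'$. If a third ideal $\fn$ lies over $\fm'$, the image of $\bfT'_{\fm'}$ in $\prod_{\fn\mid\fm'}\bfT_{\fn}$ can contain nonzero elements whose $(\fm_{\phi},\fm_{\phi^{\rho}})$-components vanish (a local subring of a product need not project injectively onto one factor --- think of $\{(a,b,c): a\equiv b\equiv c\ (\varpi)\}\ni(0,0,\varpi)$), and containment in the graph over the two $\rho$-conjugate factors says nothing about the remaining ones. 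Excluding such $\fn$ amounts to showing that any $\psi$ with $\ov{\lambda}_{\psi}|_{\bfT'}=\ov{\lambda}_{\phi}|_{\bfT'}$ has $\ov{\rho}_{\psi}\cong\ov{\rho}_{\phi}$ or $\ov{\rho}_{\phi}\otimes\chi_K$, which is a Clifford-theory argument requiring absolute irreducibility of $\ov{\rho}_{\phi}|_{G_K}$ --- a hypothesis the injectivity statement does not grant you. As written you have proved at best a conditional injectivity.

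Second, the surjectivity of $i'$ is where your argument is actually wrong rather than merely incomplete. You treat $T_{\ell}$ as ``the trace at a single prime of $K$'' and recover it by Chebotarev. But $\rho_{\fm}$ is \emph{not} unramified at $\ell$ (it is crystalline with nontrivial Hodge--Tate weights), Eichler--Shimura gives $\tr \rho_{\fm}(\Frob_p)=T_p$ only for $p\nmid D_K\ell$, and $T_{\ell}$ is not a Frobenius trace at all --- that is the whole reason $T_{\ell}$ is singled out in the definition of $\tilde{\bfT}'$. Recovering the operator at $\ell$ is exactly where ordinarity must enter: one uses the unit root of $X^2-T_{\ell}X+\chi_K(\ell)\ell^{k-2}$ cut out by the unramified quotient of $\rho_{\fm}$ on the decomposition group, as in the paper's own Lemma \ref{nfs} (residually) and its integral refinement via Hensel's lemma in the complete local ring. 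For the surjectivity of $i$ you correctly identify the inert primes with $\tr \ov{\rho}_{\fm}(\Frob_p)\equiv 0$ as the obstruction, but ``compare ordinary deformation problems, the tangent spaces agree'' is not an argument. The mechanism in \cite{Klosin09} is elementary: the identity $\tr \rho(g)\tr \rho(h)=\tr \rho(gh)+\det\rho(h)\tr \rho(gh^{-1})$ puts $\tr \rho_{\fm}(g)\tr \rho_{\fm}(h)$ into the $G_K$-trace algebra whenever $g,h\in G_{\bfQ}\setminus G_K$; irreducibility of $\ov{\rho}_{\phi}|_{G_K}$ (equivalently $\ov{\rho}_{\phi}\not\cong\ov{\rho}_{\phi}\otimes\chi_K$) supplies one $g_0\notin G_K$ with unit trace; and Hensel's lemma together with locality of $\bfT_{\fm}$ then pins down $\tr \rho_{\fm}(g_0)$ itself, hence all the remaining $T_p$. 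Ordinarity is needed separately for $T_{\ell}$, and you also never account for $T_{D_K}\in\bfT$, which is a generator of $\bfT_{\fm}$ not covered by any of your arguments.
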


\begin{proof} The first statement is Proposition 8.5 in \cite{Klosin09}. The surjectivity statement for $i$ is the main result of section 8.2 in [loc.cit.] - see Corollary 8.12. The surjectivity statement for $i'$ follows from Corollary 8.10 in [loc.cit.].  \end{proof}

We also record the following result from \cite{Klosin09}, which again works for any $D_K$. 

\begin{prop}[\cite{Klosin09}, Proposition 8.13] \label{congffrho93} If
$\ov{\rho}_{\phi}|_{G_K}$ is absolutely irreducible, then $\phi \not \equiv
\phi^{\rho}$ \textup{(mod $\varpi$)}. \end{prop}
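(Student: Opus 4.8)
The plan is to prove the statement by contradiction: assuming $\ov{\rho}_{\phi}|_{G_K}$ is absolutely irreducible, I suppose that $\phi \equiv \phi^{\rho} \pmod{\varpi}$ and derive that $\ov{\rho}_{\phi}$ is isomorphic to its own twist by the nontrivial quadratic character cutting out $K$, which a Schur's-lemma argument shows is incompatible with the irreducibility of $\ov{\rho}_{\phi}|_{G_K}$.

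First I would record the key identity at the level of Galois representations. By Fact \ref{fact1} the newform $\phi^{\rho}$ obtained by conjugating Fourier coefficients satisfies $a_{\phi^{\rho}}(p) = \ov{a_{\phi}(p)} = \chi_K(p) a_{\phi}(p)$ for every $p \nmid D_K$; since $\chi_K$ is quadratic this says precisely that $\phi^{\rho}$ is the twist $\phi \otimes \chi_K$, so that $\rho_{\phi^{\rho}} \cong \rho_{\phi} \otimes \chi_K$, where $\chi_K$ is viewed as a character of $G_{\bfQ}$ via class field theory. Reducing modulo $\varpi$ gives $\ov{\rho}_{\phi^{\rho}} \cong \ov{\rho}_{\phi} \otimes \ov{\chi}_K$. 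The determinants automatically agree, since $\ov{\chi}_K^2 = \mathbf{1}$.

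Next, the assumption $\phi \equiv \phi^{\rho} \pmod{\varpi}$ means that the two Hecke eigensystems agree modulo $\varpi$; in particular $\tr \ov{\rho}_{\phi}(\Frob_p) \equiv \tr \ov{\rho}_{\phi^{\rho}}(\Frob_p)$ for all $p \nmid \ell D_K$. By the Chebotarev density theorem together with the Brauer--Nesbitt theorem, the semisimplifications of $\ov{\rho}_{\phi}$ and $\ov{\rho}_{\phi^{\rho}}$ coincide; since $\ov{\rho}_{\phi}$ is irreducible (this is the standing assumption making $\ov{\rho}_{\phi}$ well-defined, and is implied a fortiori by the irreducibility of its restriction to $G_K$), I obtain
\[
\ov{\rho}_{\phi} \cong \ov{\rho}_{\phi} \otimes \ov{\chi}_K.
\]

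The crux is then the following Schur-lemma argument, which I expect to be the only genuine point. The isomorphism above provides an invertible matrix $A$ with $A \ov{\rho}_{\phi}(g) A^{-1} = \ov{\chi}_K(g)\, \ov{\rho}_{\phi}(g)$ for all $g \in G_{\bfQ}$. Since $\ell$ is odd, $\ov{\chi}_K$ is still a nontrivial quadratic character, with kernel exactly $G_K$. Restricting the relation to $G_K = \ker \ov{\chi}_K$ yields $A \ov{\rho}_{\phi}(g) = \ov{\rho}_{\phi}(g) A$ for all $g \in G_K$, i.e. $A$ is an endomorphism of $\ov{\rho}_{\phi}|_{G_K}$. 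Absolute irreducibility and Schur's lemma then force $A$ to be a scalar, whence $\ov{\chi}_K(g) = 1$ for all $g \in G_{\bfQ}$, contradicting the nontriviality of $\ov{\chi}_K$. This contradiction shows $\phi \not\equiv \phi^{\rho} \pmod{\varpi}$, as desired. The main obstacle is conceptual rather than computational: correctly identifying $\phi^{\rho}$ with the twist $\phi \otimes \chi_K$ and passing to residual representations; once $\ov{\rho}_{\phi} \cong \ov{\rho}_{\phi}\otimes\ov{\chi}_K$ is in hand, the Schur step is immediate.
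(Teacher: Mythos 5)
Your argument is correct, and it is essentially the standard one: the paper itself gives no proof here, merely citing \cite{Klosin09}, Proposition 8.13, whose proof runs along exactly these lines (identify $\phi^{\rho}$ with $\phi\otimes\chi_K$ via Fact \ref{fact1}, deduce $\ov{\rho}_{\phi}\cong\ov{\rho}_{\phi}\otimes\ov{\chi}_K$ from the congruence by Chebotarev and Brauer--Nesbitt, and conclude that $\ov{\rho}_{\phi}|_{G_K}$ cannot be absolutely irreducible). Your Schur-lemma step is a valid direct way of reaching that conclusion, and all the auxiliary points you need (nontriviality of $\ov{\chi}_K$ since $\ell$ is odd, and irreducibility of $\ov{\rho}_{\phi}$ following from that of its restriction) are in order.
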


Fix $\phi \in \mN$ and set $\mathcal{N}_{\phi}:= \{\psi \in
\mathcal{N} \mid
\fm_{\psi} = \fm_{\phi}\}$, where $\fm_{\phi}$ (resp. $\fm_{\psi}$) is the maximal ideal of $\bfT$ corresponding to $\phi$ (resp. to $\psi$).
Similarly, we define $\mN'_{\phi}$ and $\tilde{\mN}'_{\phi}$. 
It is easy to see that we can identify $\bfT_{\fm_{\phi}}$ with the image of $\bfT$ inside  $\textup{End}_{\bfC}(S_{k-1,\phi})$, where $S_{k-1, \phi} \subset
S_{k-1}( D_K, \chi_K)$ is the subspace spanned by $\mN_{\phi}$. Similarly we define $S'_{\phi, k-1}$ and $\tilde{S}'_{\phi,k-1}$. 

\begin{lemma} \label{nfs}  Suppose that $\phi$ is ordinary at $\ell$ and that $\ov{\rho}_{\phi}|_{G_K}$ is absolutely irreducible. Then $\tilde{\mN}'_{\phi} = \mN'_{\phi}$ and the set $\mN_{\phi}$ is formed from the set $\mN'_{\phi}$ by choosing one element from each pair $(\psi, \psi^{\rho})$ such that $\psi \in \mN_{\phi}$. \end{lemma}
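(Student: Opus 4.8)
The plan is to pin down the three sets $\mN_{\phi}$, $\mN'_{\phi}$, $\tilde{\mN}'_{\phi}$ by comparing residual Galois representations, using Proposition \ref{congffrho93} to control the involution $\psi \mapsto \psi^{\rho}$. First I would record the elementary inclusions and symmetries. From the inclusions of Hecke algebras $\tilde{\bfT}' \subset \bfT' \subset \bfT$ one gets at once $\mN_{\phi} \subset \mN'_{\phi} \subset \tilde{\mN}'_{\phi}$. Next, using Fact \ref{fact1} together with the Hecke relation $a(p^2) = a(p)^2 - \chi_K(p) p^{k-2}$, one checks that $\phi$ and $\phi^{\rho}$ have \emph{identical} eigenvalues for every generator of $\bfT'$: for split $p$ one has $a_{\phi^{\rho}}(p) = \ov{a_{\phi}(p)} = \chi_K(p) a_{\phi}(p) = a_{\phi}(p)$, and for inert $p$ one has $a_{\phi^{\rho}}(p^2) = \ov{a_{\phi}(p)}^2 + p^{k-2} = a_{\phi}(p)^2 + p^{k-2} = a_{\phi}(p^2)$. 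Hence $\mN'_{\phi}$ and $\tilde{\mN}'_{\phi}$ are stable under $\psi \mapsto \psi^{\rho}$, so that $\mN_{\phi} \cup \mN_{\phi}^{\rho} \subset \mN'_{\phi} \subset \tilde{\mN}'_{\phi}$, where $\mN_{\phi}^{\rho} := \{\psi^{\rho} : \psi \in \mN_{\phi}\}$. I would also note, again via Fact \ref{fact1}, that every $f \in \mN$ satisfies $f^{\rho} = f \otimes \chi_K$, so $\ov{\rho}_{f^{\rho}} \cong \ov{\rho}_{f} \otimes \chi_K$.

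The heart of the argument is the reverse inclusion $\tilde{\mN}'_{\phi} \subset \mN_{\phi} \cup \mN_{\phi}^{\rho}$, which simultaneously forces $\tilde{\mN}'_{\phi} = \mN'_{\phi}$ and describes $\mN'_{\phi}$. Fix $\psi \in \tilde{\mN}'_{\phi}$. I would show that $\ov{\rho}_{\psi}$ and $\ov{\rho}_{\phi}$ have equal traces at $\Frob_{\fp}$ for almost all primes $\fp$ of $K$: if $\fp$ lies over a split $p$ then $\Tr \ov{\rho}_{\psi}(\Frob_{\fp}) \equiv a_{\psi}(p) \equiv a_{\phi}(p)$ directly from the $T_p$-congruence, while if $\fp = (p)$ is inert then $\Frob_{\fp} = \Frob_p^2$ and the identity $\Tr \ov{\rho}(\Frob_p^2) = a(p^2) - \chi_K(p) p^{k-2}$ converts the $T_{p^2}$-congruence into $\Tr \ov{\rho}_{\psi}(\Frob_{\fp}) \equiv \Tr \ov{\rho}_{\phi}(\Frob_{\fp})$. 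Only the primes above $\ell$ and above $D_K$ are excluded, a finite set, so Chebotarev still applies; note that when $\ell$ is inert one has $\tilde{\bfT}' = \bfT'$ and this step is vacuous. By Chebotarev and Brauer--Nesbitt the semisimplifications of $\ov{\rho}_{\psi}|_{G_K}$ and $\ov{\rho}_{\phi}|_{G_K}$ agree, and since $\ov{\rho}_{\phi}|_{G_K}$ is absolutely irreducible this yields $\ov{\rho}_{\psi}|_{G_K} \cong \ov{\rho}_{\phi}|_{G_K}$. Finally, applying the standard index-two descent for $G_K \triangleleft G_{\bfQ}$ with quotient character $\chi_K$ --- two irreducible representations of $G_{\bfQ}$ with isomorphic, absolutely irreducible restrictions to $G_K$ differ by a character of $\Gal(K/\bfQ)$ --- gives $\ov{\rho}_{\psi} \cong \ov{\rho}_{\phi}$ or $\ov{\rho}_{\psi} \cong \ov{\rho}_{\phi} \otimes \chi_K \cong \ov{\rho}_{\phi^{\rho}}$. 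In the first case $\psi \in \mN_{\phi}$; in the second $\ov{\rho}_{\psi^{\rho}} \cong \ov{\rho}_{\psi} \otimes \chi_K \cong \ov{\rho}_{\phi}$, so $\psi^{\rho} \in \mN_{\phi}$. Either way $\psi \in \mN_{\phi} \cup \mN_{\phi}^{\rho}$.

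Combining the two inclusions gives $\tilde{\mN}'_{\phi} = \mN'_{\phi} = \mN_{\phi} \cup \mN_{\phi}^{\rho}$, which is the first assertion. For the second, I would invoke Proposition \ref{congffrho93}: for any $\psi \in \mN_{\phi}$ we have $\ov{\rho}_{\psi} \cong \ov{\rho}_{\phi}$ (by Brauer--Nesbitt and absolute irreducibility), so $\ov{\rho}_{\psi}|_{G_K}$ is absolutely irreducible and the proposition gives $\psi \not\equiv \psi^{\rho} \pmod{\varpi}$; in particular $\psi^{\rho} \notin \mN_{\phi}$ and $\psi \neq \psi^{\rho}$. Thus $\psi \mapsto \psi^{\rho}$ acts freely on $\mN'_{\phi}$, its orbits are exactly the pairs $\{\psi, \psi^{\rho}\}$ with $\psi \in \mN_{\phi}$, and each such orbit meets $\mN_{\phi}$ in precisely one element --- which is exactly the assertion that $\mN_{\phi}$ is formed from $\mN'_{\phi}$ by choosing one element from each pair $(\psi, \psi^{\rho})$, $\psi \in \mN_{\phi}$.

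The step I expect to be the main obstacle is the middle paragraph: converting congruences for the \emph{restricted} family of operators (only $T_p$ at split $p$ and $T_{p^2}$ at inert $p$) into an honest isomorphism $\ov{\rho}_{\psi}|_{G_K} \cong \ov{\rho}_{\phi}|_{G_K}$, and then running the index-two descent correctly --- this is precisely where absolute irreducibility of $\ov{\rho}_{\phi}|_{G_K}$ is indispensable. Alternatively, since $\phi$ is assumed ordinary at $\ell$, one can bypass the descent and obtain $\tilde{\mN}'_{\phi} = \mN'_{\phi} = \mN_{\phi} \sqcup \mN_{\phi}^{\rho}$ by a rank count, using that the maps $i$ and $i'$ of Theorem \ref{congsix} are isomorphisms, together with Proposition \ref{congffrho93}.
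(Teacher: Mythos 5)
Your overall architecture (sandwich $\tilde{\mN}'_{\phi}$ between $\mN_{\phi}\cup\mN_{\phi}^{\rho}$ and itself, then use Proposition \ref{congffrho93} to see that the involution $\psi\mapsto\psi^{\rho}$ acts freely) is reasonable, and the computations with Fact \ref{fact1}, the trace identity at inert Frobenii, and the index-two descent are all correct. But there is a genuine gap, and it is not where you flagged it. The step that fails is the innocuous-looking conclusion ``in the first case $\psi\in\mN_{\phi}$.'' The set $\mN_{\phi}$ is cut out by the maximal ideal $\fm_{\phi}$ of the \emph{full} Hecke algebra $\bfT$, which is generated by all $T_n$ --- in particular by $T_{\ell}$ and $T_{D_K}$. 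An isomorphism $\ov{\rho}_{\psi}\cong\ov{\rho}_{\phi}$ of residual $G_{\bfQ}$-representations only gives you $a_{\psi}(p)\equiv a_{\phi}(p)\pmod{\varpi}$ for $p\nmid \ell D_K$, via Frobenius traces; it says nothing directly about $T_{\ell}$ or $T_{D_K}$. Recovering the congruence at $\ell$ is precisely the content of the paper's proof: ordinarity forces $\ov{\rho}_{h}|_{D_{\fl}}$ ($h\in\{\phi,\psi\}$) to have an unramified quotient on which Frobenius acts by the unit root of the Hecke polynomial at $\ell$, and $a_{h}(\ell)$ is then reconstructed from that unit root, yielding $a_{\phi}(\ell)^2\equiv a_{\psi}(\ell)^2\pmod{\varpi}$ in the inert case (and $a_{\phi}(\ell)\equiv a_{\psi}(\ell)$ in the split case). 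Your main argument never uses the ordinarity hypothesis at all, which should have been a warning sign; the Chebotarev/Brauer--Nesbitt/descent step you singled out as the obstacle is actually fine.

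Two further points. First, your remark that the inert-$\ell$ case is vacuous because $T_{\ell}\notin\Sigma'$ relies on a literal reading of Definition \ref{varioushecke}; the paper's proof of this lemma treats exactly the inert case (recovering the $T_{\ell^2}$-eigenvalue), so the intended meaning of $\tilde{\Sigma}'$ is that the Hecke operator at $\ell$ is removed whichever form it takes --- and in any event the split case still needs the ordinarity argument you omit. Second, your treatment of the last assertion (freeness of $\psi\mapsto\psi^{\rho}$ via Proposition \ref{congffrho93}) is in the right spirit --- the paper delegates this to Corollary 8.4 of \cite{Klosin09} --- but as written it depends on the equality $\mN'_{\phi}=\mN_{\phi}\sqcup\mN_{\phi}^{\rho}$, so it inherits the same gap. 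The fix is to insert the local-at-$\ell$ (and, if needed, local-at-$D_K$) argument before concluding membership in $\mN_{\phi}$.
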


\begin{proof} The last statement can be directly deduced from Corollary 8.4 in \cite{Klosin09}, so we just need to show that if $\psi \in \mN'_{\phi}$, then $\psi \in \tilde{\mN}'_{\phi}$. This, as we will demonstrate, follows from ordinarity of $\phi$. Indeed, since the forms $\phi$ and $\psi$ have congruent Hecke eigenvalues for all the operators in $\tilde{\bfT}'$, one can easily show (using Tchebotarev Density Theorem and the Brauer-Nesbitt Theorem) that $\ov{\rho}_{\phi}|_{G_K} \cong \ov{\rho}_{\psi}|_{G_K}$. We will explain the case when $\ell$ is inert in $K$ (the split case being very similar). Using ordinarity at $\ell$ we conclude that both of the representations $\rho_{\phi}$ and $\rho_{\psi}$ when restricted to the decomposition groups at the prime $\fl$ of $K$ lying over $\ell$ have a one-dimensional unramified quotient on which $G_K$ operates by the character which sends $\Frob_{\fl}=\Frob_{\ell}^2$ to the square of the unique unit root $\alpha_h$ of the polynomial $X^2 - a_h(\ell) + \chi_K(\ell) \ell^{k-2}$, where $h \in \{\phi,\psi\}$ and $a_h(\ell)$ is the eigenvalue of the operator $T_{\ell}$ corresponding to $h$. So, $a_h(\ell)^2 = \alpha_h^2 + 2\chi_K(\ell) \ell^{k-2} + \ell^{2k-4} \alpha_h^{-2}$. Since $\ov{\rho}_{\phi}|_{G_K} \cong \ov{\rho}_{\psi}|_{G_K}$, we conclude that $\alpha_{\phi}^2\equiv \alpha_{\psi}^2$ (mod $\varpi$) and hence  $a_{\phi}(\ell)^2 \equiv a_{\psi}^2(\ell)$ (mod $\varpi$). \end{proof}   

 Write $\bfT_{\fm_{\phi}} \otimes E = E \times B_E$, where
$B_E= \prod_{\psi \in \mN_{\phi} \setminus \{\phi\}} E$ and let $B$ denote the image
of $\bfT_{\fm_{\phi}}$ under the composite $\bfT_{\fm_{\phi}}\hookrightarrow \bfT_{\fm_{\phi}}
\otimes E \xrightarrow{\pi_{\phi}} B_E$, where $\pi_{\phi}$ is projection. Denote by
$\delta: \bfT_{\fm_{\phi}} \hookrightarrow \Oo \times B$ the map $T \mapsto
(\lambda_{\phi}(T), \pi_{\phi}(T))$. If $E$ is sufficiently large, there exists $\eta \in
\Oo$ such that $\coker \delta \cong \Oo/\eta \Oo$. This cokernel is
usually called the \textit{congruence module of $\phi$}.

\begin{prop} \label{congthirteen} 
Assume $\phi \in \mN$ is ordinary at $\ell$ and the associated Galois
representation $\rho_{\phi}$ is such that $\ov{\rho}_{\phi}|_{G_K}$
is absolutely irreducible. Then there exists $T \in
\tilde{\bfT}'_{\tilde{\fm}'_{\phi}}$ such that $T\phi=\eta \phi$, $T\phi^{\rho} = \eta \phi^{\rho}$ and
$T\psi=0$ for all $\psi \in
\tilde{\mN}'_{\phi} \setminus \{\phi, \phi^{\rho}\}$.

\end{prop}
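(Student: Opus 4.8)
The plan is to first manufacture an operator with the desired eigenvalue pattern inside the \emph{full} localized Hecke algebra $\bfT_{\fm_{\phi}}$, then pull it back to $\tilde{\bfT}'_{\tilde{\fm}'_{\phi}}$ using the surjectivity supplied by Theorem \ref{congsix}, and finally read off the action on the conjugate forms from the $\rho$-invariance of the operators generating $\tilde{\bfT}'$.

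First I would extract the operator from the congruence module. With $\lambda_{\phi}$ and $\pi_{\phi}$ as in the definition preceding the statement, I would note that $\pi_{\phi}$ is surjective onto $B$, so every class in $\coker\delta=(\Oo\times B)/\delta(\bfT_{\fm_{\phi}})$ is represented by an element of the shape $(c,0)$, and such a class vanishes exactly when $c\in\lambda_{\phi}(\ker\pi_{\phi})$. Hence $\coker\delta\cong\Oo/\lambda_{\phi}(\ker\pi_{\phi})$, so that $\eta\Oo=\lambda_{\phi}(\ker\pi_{\phi})$, and I may pick $T_0\in\ker\pi_{\phi}\subset\bfT_{\fm_{\phi}}$ with $\lambda_{\phi}(T_0)=\eta$. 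By construction $T_0\phi=\eta\phi$ and $T_0\psi=0$ for every $\psi\in\mN_{\phi}\setminus\{\phi\}$.

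Next I would lift $T_0$ along the Hecke-algebra maps. Since $\phi$ is ordinary at $\ell$ and $\ov{\rho}_{\phi}|_{G_K}$ is absolutely irreducible, Theorem \ref{congsix} makes both $i$ and $i'$ in diagram (\ref{dnew}) surjective, hence so is the composite $i\circ i'\colon\tilde{\bfT}'_{\tilde{\fm}'_{\phi}}\to\bfT_{\fm_{\phi}}$; I then choose $T\in\tilde{\bfT}'_{\tilde{\fm}'_{\phi}}$ with $(i\circ i')(T)=T_0$. Because $i$ and $i'$ are induced by the inclusions $\tilde{\bfT}'\subset\bfT'\subset\bfT$, the eigenvalue homomorphism of any $\psi\in\mN_{\phi}$ on $\tilde{\bfT}'_{\tilde{\fm}'_{\phi}}$ factors as $\lambda_{\psi}\circ(i\circ i')$. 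Thus $T$ acts on $\psi\in\mN_{\phi}$ through the scalar $\lambda_{\psi}(T_0)$, yielding $T\phi=\eta\phi$ and $T\psi=0$ for all $\psi\in\mN_{\phi}\setminus\{\phi\}$.

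Finally I would handle the conjugate forms, which is where the statement about $\phi^{\rho}$ is earned. By Lemma \ref{nfs} one has $\tilde{\mN}'_{\phi}=\mN'_{\phi}$ and $\mN_{\phi}$ is a system of representatives for the pairs $\{\psi,\psi^{\rho}\}$, so $\tilde{\mN}'_{\phi}=\mN_{\phi}\sqcup\{\psi^{\rho}:\psi\in\mN_{\phi}\}$, the union being disjoint since $\psi\not\equiv\psi^{\rho}\pmod{\varpi}$ by Proposition \ref{congffrho93}. By Fact \ref{fact1}, for split $p$ one has $a_{\psi^{\rho}}(p)=a_{\psi}(p)$, and for inert $p$ the $T_{p^2}$-eigenvalues of $\psi$ and $\psi^{\rho}$ coincide; as these are exactly the generators of $\tilde{\bfT}'$, the forms $\psi$ and $\psi^{\rho}$ induce the same homomorphism on $\tilde{\bfT}'_{\tilde{\fm}'_{\phi}}$, so $T$ acts on $\psi^{\rho}$ by the same scalar as on $\psi$. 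In particular $T\phi^{\rho}=\eta\phi^{\rho}$, while $T\psi^{\rho}=0$ for $\psi\in\mN_{\phi}\setminus\{\phi\}$; combining the three cases gives $T\phi=\eta\phi$, $T\phi^{\rho}=\eta\phi^{\rho}$, and $T\psi=0$ for all $\psi\in\tilde{\mN}'_{\phi}\setminus\{\phi,\phi^{\rho}\}$. The substantive input is the surjectivity of $i$ and $i'$ from Theorem \ref{congsix}; granting that, the only genuinely delicate point — and the step I would watch most carefully — is this last one, namely that $\phi^{\rho}$ picks up the eigenvalue $\eta$ rather than $0$. This works precisely because $\tilde{\bfT}'$ cannot distinguish $\phi$ from $\phi^{\rho}$, whereas the annihilation is engineered inside $\mN_{\phi}$, a set containing $\phi$ but not $\phi^{\rho}$.
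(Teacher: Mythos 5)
Your proof is correct and follows essentially the same route as the paper's: the element $(\eta,0)$ is produced from the congruence-module exact sequence, lifted to $\tilde{\bfT}'_{\tilde{\fm}'_{\phi}}$ via the surjectivity in Theorem \ref{congsix}, and the action on the conjugate forms is controlled by Lemma \ref{nfs}. The paper compresses your final step into the single remark that, by Lemma \ref{nfs}, it suffices to find the operator in $\bfT_{\fm_{\phi}}$; your explicit check that the generators of $\tilde{\bfT}'$ cannot separate $\psi$ from $\psi^{\rho}$ (so that $\phi^{\rho}$ inherits the eigenvalue $\eta$ rather than $0$) is precisely the content being invoked there.
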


\begin{proof} By Theorem \ref{congsix}, the natural
$\Oo$-algebra map $\tilde{\bfT}'_{\tilde{\fm}'_{\phi}} \rightarrow \bfT_{\fm_{\phi}}$ is surjective. So by Lemma \ref{nfs}, it is
enough to find $T \in \bfT_{\fm_{\phi}}$ such that $T\phi= \eta \phi$ and $T\psi=0$ for
every $\psi \in \mN_{\phi}
\setminus
\{\phi\}$. (Note that by Proposition \ref{congffrho93}, $\phi^{\rho} \not\in
\mN_{\phi}$.) It follows from the exactness of the sequence $0 \rightarrow \bfT_{\fm_{\phi}} \xrightarrow{\delta} \Oo
\times B \rightarrow \Oo /\eta\Oo \rightarrow 0$, that $(\eta,0) \in \Oo
\times B$ is in the image of $\T_{\fm_{\phi}} \hookrightarrow \Oo \times B$.
Let $T$ be
the preimage of
$(\eta,0)$ under this injection. Then $T$ has the desired property.
\end{proof}

\begin{prop} [\cite{Hida87}, Theorem 2.5] \label{Hida45} Suppose
$\ell>k$. If $\phi\in
\mN$ is ordinary at
$\ell$, then $$\eta = (*)\frac{\left<\phi,\phi\right>}{\Omega^+_{\phi} \Omega^-_{\phi}},$$
where $\Omega^+_{\phi}, \Omega_{\phi}^-$ denote the ``integral'' periods
defined in
\cite{Vatsal99} and $(*)$ is a $\varpi$-adic unit. \end{prop}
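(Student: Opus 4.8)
The plan is to recognize $\eta$ as the congruence number of the ordinary eigenform $\phi$ and to read off the asserted formula from Hida's computation of such congruence numbers in \cite{Hida87}; the only real work is to match the Hecke-theoretic congruence module defined above with Hida's, and to reconcile the two normalizations of periods so that the leftover constant $(*)$ is visibly a $\varpi$-adic unit. First I would unwind the definition: by construction $\coker \delta \cong \Oo/\eta\Oo$, and since $\delta(T) = (\lambda_{\phi}(T), \pi_{\phi}(T))$ isolates the $\phi$-component from the complementary factor $B$ carrying the other eigenforms of $\mN_{\phi}$, the element $\eta$ is exactly the invariant measuring congruences modulo powers of $\varpi$ between $\phi$ and the remaining normalized eigenforms sharing the maximal ideal $\fm_{\phi}$. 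Because $\phi$ is ordinary at $\ell$, the local ring $\bfT_{\fm_{\phi}}$ is the corresponding local factor of the ordinary Hecke algebra, which is the precise setting of \cite{Hida87}, Theorem 2.5.

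The key steps are then: (a) apply Hida's ordinary idempotent and his duality between the ordinary Hecke algebra and ordinary cusp forms, under which the order of the congruence module is computed by the self-pairing attached to $\phi$; (b) express that self-pairing through the Petersson product, which is the step that introduces the archimedean periods; and (c) invoke Hida's theorem, which identifies the resulting quantity, up to a $\varpi$-adic unit, with $\left< \phi, \phi\right>$ divided by the product of Hida's canonical periods $\Omega^{\pm}$. The remaining point is to replace Hida's periods by the integral periods $\Omega^+_{\phi}, \Omega^-_{\phi}$ of \cite{Vatsal99}. Here the hypothesis $\ell > k$ is essential: it places the weight $k-1$ in the Fontaine--Laffaille range, so that the integral cohomology lattice is well behaved and the two period normalizations agree up to a $\varpi$-adic unit.

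The hard part will be verifying that every discrepancy swept into $(*)$ is genuinely a unit under the standing hypotheses $\ell > k$ and $\ell \nmid h_K D_K$. Two points deserve care. First, Hida's congruence number is a priori taken over the full ordinary Hecke algebra, whereas $\eta$ is defined after localizing at $\fm_{\phi}$; these coincide because, $\phi$ being ordinary with $\ov{\rho}_{\phi}|_{G_K}$ absolutely irreducible (so in particular $\phi^{\rho} \notin \mN_{\phi}$ by Proposition \ref{congffrho93}), no form outside $\mN_{\phi}$ contributes a congruence to $\phi$, and the complementary idempotents away from $\fm_{\phi}$ contribute only $\varpi$-adic units. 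Second, the passage between period normalizations and the treatment of the level-$D_K$ and oldform/newform comparisons introduce factors that are units precisely because $\ell \nmid D_K$ and $\ell \nmid h_K$; once these are collected into $(*)$, the formula of the proposition follows directly from \cite{Hida87}, Theorem 2.5.
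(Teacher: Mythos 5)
The paper offers no proof of this proposition at all — it is imported verbatim as a citation to Hida's Theorem 2.5, with Vatsal's integral periods substituted for Hida's canonical ones. Your proposal correctly reconstructs exactly the argument that citation encodes (identify $\eta$ with the congruence number, apply Hida's computation, and use $\ell>k$ to reconcile the period normalizations up to a $\varpi$-adic unit), so it is consistent with the paper's treatment; the only superfluous ingredient is the appeal to $\ell\nmid h_K$, which plays no role here.
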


\subsection{Galois representations attached to hermitian modular forms}\label{Galois representations attached to hermitian modular forms}

 Let $f\in \mathcal{S}_{k,-k/2}^{\chi}$ be an eigenform for the local Hecke algebra $\mH_p$ for every $p \nmid D_K$.  For every rational prime $p$, let $\lambda_{p,j}(f)$, $j=1, \dots,
4$, denote the
$p$-Satake
parameters of $f$. (For the definition of $p$-Satake parameters when $p$
inerts or ramifies in $K$, see \cite{HinaSugano}, and for the case when
$p$ splits in $K$, see \cite{Gritsenko90P}.)
Let $\fp$ be a prime of $\OK$ lying over $p$. 

\begin{thm} \label{skinnerurban435} There exists a finite extension $E_f$
of
$\bfQ_{\ell}$
and a $4$-dimensional semisimple Galois representation $\rho_f: G_K
\rightarrow
\GL_{E_F}(V)$ unramified
away from the primes of $K$ dividing $D_K\ell$ and such that
\begin{itemize}
\item [(i)] for any prime $\fp$ of $K$ such that $\fp \nmid D_K\ell$, the
set of eigenvalues of
$\rho_f(\Frob_{\fp})$ coincides with the set of the Satake
parameters of $f$ at $\fp$, i.e., one has $$L(\rho_f,s)_{\fp} = L_{\rm st}(f,s)_{\fp},$$ where $L_{\rm st}(f,s)_{\fp}$ is the $\fp$-component of the function $L_{\rm st}(f,s,\mathbf{1})$ introduced in section \ref{L-functions} and $L(\rho_f,s)_{\fp} = \det(I_4 - \rho_f(\Frob_{\fp})(N\fp)^{-s})^{-1}$;
\item [(ii)] if $\fp$ is a place of $K$ over $\ell$,
the representation
$\rho_f|_{D_{\fp}}$ is crystalline (cf. section \ref{The Bloch-Kato conjecture}).
\item [(iii)] if $\ell > k$, and $\fp$ is a place of $K$
over $\ell$, the
representation
$\rho_f|_{D_{\fp}}$ is short. (For a definition of \textit{short} we refer
the reader to
\cite{DiamondFlachGuo04}, section 1.1.2.)
\item[(iv)] one has $\rho^{\vee}(3) \cong \rho^c\chi^{-2}$, where $c$ denotes the lift  to $G_{\bfQ}$ of the generator of $\Gal(K/\bfQ)$ and $\rho^c(g) = \rho(cgc^{-1})$.
\end{itemize} 
\end{thm}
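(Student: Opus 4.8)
The plan is to obtain $\rho_f$ from the construction of Galois representations attached to automorphic forms on $\GU(2,2)$ via the cohomology of the associated Shimura variety, as carried out by Skinner and Urban \cite{SkinnerUrbanpreprint}. First I would pass from the hermitian eigenform $f$ to the automorphic representation $\pi=\pi_f$ of $G(\AQ)=\GU(2,2)(\AQ)$ that it generates; since $f$ is a cusp form of scalar weight $(k,-k/2)$, the archimedean component $\pi_{\infty}$ is a holomorphic (cohomological) discrete series of the Harish-Chandra parameter determined by $k$, and $\pi_p$ is unramified for every $p\nmid D_K$ because $f$ is an eigenform for $\mH_p$ fixed by $\mK_{0,p}$. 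Feeding $\pi$ into the cited construction produces a continuous semisimple representation $\rho_f\colon G_K\to\GL_4(E_f)$, unramified outside the primes dividing $D_K\ell$, whose Frobenius eigenvalues at $\fp\nmid D_K\ell$ are the Satake parameters $\lambda_{p,j}(f)$ in the normalization fixed in section \ref{L-functions}. This is precisely local--global compatibility (i), and the identity $L(\rho_f,s)_{\fp}=L_{\rm st}(f,s)_{\fp}$ is merely a restatement of the equality of the corresponding Euler factors.

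For (ii) and (iii) I would argue locally at a prime $\fp\mid\ell$. Because $\ell\nmid D_K$, the rational prime $\ell$ is unramified in $K$, and $f$ has level prime to $\ell$ (being fixed by $\mK_{0,p}$ for all $p\mid\ell$), so $\pi_{\fp}$ is unramified. The relevant Shimura variety then has good reduction at $\ell$, and the comparison between its $\ell$-adic \'etale cohomology and its crystalline cohomology shows that $\rho_f|_{D_{\fp}}$ is crystalline, giving (ii). For (iii) I would read off the Hodge--Tate weights of $\rho_f$ from the infinity type of $\pi$ (equivalently from the archimedean $L$-factor); they lie in an interval of length at most $k-1$, so when $\ell>k$ this interval has length $<\ell-1$ and Fontaine--Laffaille theory applies, forcing the crystalline representation to be short in the sense of \cite{DiamondFlachGuo04}.

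For the duality (iv) I would exploit that $\pi$, arising from a unitary similitude group, is conjugate self-dual. Concretely, the functional equation $s\leftrightarrow 1-s$ of the degree-$4$ standard $L$-function $L_{\rm st}(f,s,\psi)$ relates it to the corresponding function for the conjugate-dual data, which on the Galois side forces an isomorphism $\rho_f^{\vee}\cong\rho_f^c\otimes\eta$ for a one-dimensional $\eta$, where $c$ is the nontrivial element of $\Gal(K/\bfQ)$ acting by $\rho^c(g)=\rho(cgc^{-1})$. The precise twist is then pinned down by matching the central character of $\pi$ (governed by $\chi$ and the weight) against the cyclotomic normalization; tracking the Tate twist through the conventions of section \ref{L-functions} yields exactly $\rho_f^{\vee}(3)\cong\rho_f^c\chi^{-2}$.

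The main obstacle is not the existence of $\rho_f$, which is supplied by \cite{SkinnerUrbanpreprint}, but the careful bookkeeping of normalizations: reconciling the arithmetic-Frobenius and Satake-parameter conventions of section \ref{L-functions} with those of the cited construction, determining the Hodge--Tate weights precisely enough to invoke Fontaine--Laffaille, and fixing the exact Tate twist together with the character $\chi^{-2}$ in (iv). A secondary subtlety is that $f$ need not be tempered --- indeed, for a Maass lift $\rho_f$ is reducible --- so I would phrase everything at the level of semisimple representations and Satake parameters, which is all that (i)--(iv) actually require.
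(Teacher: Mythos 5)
The paper does not actually prove this theorem: it states it as Theorem 7.1.1 of \cite{SkinnerUrbanpreprint} (attributed there to Skinner, as a consequence of work of Morel and Shin) and explicitly assumes it without proof. Your sketch outlines the standard route behind that citation --- cohomology of the $\GU(2,2)$ Shimura variety, local--global compatibility, crystalline comparison plus Fontaine--Laffaille for (ii)--(iii), and conjugate self-duality for (iv) --- so it is consistent with what the paper relies on, but like the paper it ultimately defers the substantive content to the cited construction rather than constituting an independent proof.
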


\begin{rem} Theorem \ref{skinnerurban435} is stated as Theorem 7.1.1 in \cite{SkinnerUrbanpreprint}, where it is attributed to Skinner as a consequence of the work of Morel and Shin. We refer the reader to \cite{SkinnerUrbanpreprint}, section 7 for further discussion. Galois representations attached to hermitian modular forms are also discussed in \cite{BlasiusRogawski94} or \cite{BellaicheChenevierbook}. We will assume Theorem \ref{skinnerurban435} without a proof in what follows. \end{rem}

\begin{rem} \label{unramat2} It is not known if the representation
$\rho_f$ is also unramified at the prime $D_K$. See
\cite{BellaicheChenevier04} for a discussion of this issue. \end{rem}

\subsection{Integral lifts of Hecke operators} 
Fix a character $\chi: \Cl_K \rightarrow \bfC^{\times}$ and for every prime $p \nmid D_K$ write $\mH_p^{\chi}$ for the quotient of $\mH_p^+$ 
acting on $\mS_{k,-k/2}^{\chi}$. 

\begin{definition} \label{hermhecke430}For a prime $p$ which splits in $K$ as $\fp \ov{\fp}$ set $\Sigma_p = \{T_{\fp,1}, T_{\fp,2}, T_{\ov{\fp}, 1}, T_{\ov{\fp},2}\}$ and for a prime $p$ which is inert in $K$ set $\Sigma_p=\{T_{p,0}, U_p\}$.
Set $\mH_{\bfZ}^{\chi}$ to be the $\bfZ$-subalgebra of $\End_{\bfC}(\mS_{k,-k/2}^{\chi})$ generated by the set $\bigcup_{p \nmid D_K\ell} \Sigma_p$. 
 For any $\bfZ$-algebra $A$ set
$\mH^{\chi}_A:=
\mH^{\chi}_{\bfZ} \otimes_{\bfZ} A$. \end{definition}

Note that $\mH_{\bfZ}^{\chi}$ is a finite free $\bfZ$-algebra. As before let $E$ be a sufficiently large finite extension of $\bfQ_{\ell}$ with valuation ring $\Oo$. We fix a uniformizer $\varpi \in \Oo$. To ease notation put $\mH^{\chi}= \mH^{\chi}_{\Oo}$. 

\begin{lemma} \label{integrality382} Let $\ell\nmid 2D_K$ be a rational prime, $E$
a
finite extension of
$\bfQ_{\ell}$ and $\Oo$ the valuation ring of $E$. Suppose that $f \in \mS_{k,-k/2}^{\chi}$, $T \in \mH^{\chi}$ and $c_f(h,q)\in \bfC$ is the $(h,q)$-Fourier coefficient of $f$. Write $c_{Tf}(h,q)$ for the corresponding Fourier coefficient of $Tf$. Assume that there exists $\alpha \in \bfC$ such that $\alpha c_f(h,q) \in \Oo$. Then $\alpha c_{Tf}(h,q) \in \Oo$.  
\end{lemma}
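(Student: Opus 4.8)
The key observation is that the Hecke action on Fourier coefficients is given by $\Oo$-linear formulas (indeed $\bfZ$-linear up to powers of $p$) in the Fourier coefficients $c_f(h,q')$ of $f$ at varying arguments $q'$. This is exactly the content of Propositions \ref{f3} and \ref{f4}, which express $c_{Tf}(h,q)$ as a finite sum $\sum_j \lambda_j\, c_f(h, q'_j)$ where each $\lambda_j$ lies in $\bfZ[1/p]$ (in fact in $\bfZ$ after accounting for the automorphy factors, since $\ell \neq p$) and the $q'_j$ are explicit translates $q\alpha_a$, $q\hat\alpha_a$, $q\beta_p$, etc. The plan is to reduce to the generators $\Sigma_p$ and show the integrality claim is preserved by each of them, then bootstrap to all of $\mH^\chi$.

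First I would reduce to the case where $T$ is one of the algebra generators in $\bigcup_{p\nmid D_K\ell}\Sigma_p$. Since $\mH^{\chi}$ is generated as an $\Oo$-algebra by these operators, and the property ``$\alpha c_{Tf}(h,q)\in\Oo$ for all $(h,q)$ whenever $\alpha c_f(h,q)\in\Oo$ for all $(h,q)$'' is stable under $\Oo$-linear combinations and under composition of operators, it suffices to verify it for each generator. (For composition one uses that if $\alpha c_f(h,q)\in\Oo$ for all $(h,q)$, then by the single-operator case $\alpha c_{T_1 f}(h,q)\in\Oo$ for all $(h,q)$, and one applies the single-operator case again to $T_1 f$ in place of $f$.) Note the quantifier must range over all pairs $(h,q)$ simultaneously, which is the correct reading since the Hecke formulas express $c_{Tf}(h,q)$ in terms of $c_f$ evaluated at \emph{other} arguments.

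Next, for each generator I would invoke the explicit decomposition formulas. For a split prime $p$, Proposition \ref{f3} gives $c_{T_{\fp,1}f}(h,q)$, $c_{T_{\fp,2}f}(h,q)$, and $c_{\Delta_{\fp}f}(h,q)$ as sums of the form $\sum_j p^{m_j}\gamma_{\fp}^{-2k-4\nu}{}^{(\text{or }1)} c_f(h,q'_j)$; for an inert prime, Proposition \ref{f4} does the same for $T_{p,0}$, $T_p$, and $\Delta_p$, from which $U_p=\Delta_p^{-1}T_p^2$ is built. In every case the coefficients are products of a nonnegative power of $p$ (and possibly a unit $\gamma_{\fp}$-factor, which is an $\ell$-adic unit since $p\neq\ell$) with integers, hence lie in $\bfZ_{(\ell)}\subset\Oo$. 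Therefore $\alpha c_{Tf}(h,q)=\sum_j (\text{unit or $\Oo$-element})\cdot \alpha c_f(h,q'_j)$, and since each $\alpha c_f(h,q'_j)\in\Oo$ by hypothesis, the sum lies in $\Oo$. The operators $T_{\fp,2}$, $T_{p,0}$ involve a ``diagonal'' term $s(h,q)c_f(h,q)$ with $s(h,q)\in\bfZ$, which causes no difficulty.

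The only genuine subtlety — and the step I expect to require the most care — is the appearance of \emph{negative} powers of $p$ in the automorphy factors, e.g. the factor $p^{-2k+4}$ in the $T_p$ formula or $\gamma_{\fp}^{-2k-4\nu}$ in $\Delta_{\fp}$. One must check that these are harmless: since $\nu=-k/2$ here, the exponent $-2k-4\nu=-2k+2k=0$, so $\Delta_{\fp}$ contributes only the $\ell$-unit $\gamma_{\fp}^{0}=1$ times a reindexing of coefficients; and for the remaining operators the potentially negative powers of $p$ are $p$-adic, hence $\ell$-adic \emph{units} (as $p\nmid\ell$), so they preserve $\Oo$-integrality. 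Thus no actual denominators divisible by $\ell$ ever arise, and the lemma follows. I would phrase the final write-up by citing Propositions \ref{f3} and \ref{f4} for the explicit formulas and observing that every coefficient appearing there lies in $\bfZ[p^{-1}]\subset\bfZ_{(\ell)}\subset\Oo$ because $p\neq\ell$.
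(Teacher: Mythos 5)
Your proposal is correct and follows the paper's own (one-line) argument: the paper likewise deduces the lemma directly from Propositions \ref{f3} and \ref{f4}, noting that the powers of $p$ appearing there (including the negative ones in Proposition \ref{f4}) are $\ell$-adic units since $p \neq \ell$. Your additional remarks — the reduction to the generators of $\mH^{\chi}$, the reading of the hypothesis as ranging over all pairs $(h,q)$, and the observation that the exponent $-2k-4\nu$ vanishes for $\nu=-k/2$ — are correct elaborations of details the paper leaves implicit.
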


\begin{proof} This follows directly from Propositions \ref{f3} and \ref{f4} (note that the powers of $p$ in Proposition \ref{f4} are $\ell$-adic units). 
\end{proof}

\begin{prop} \label{basis956} The space $\mathcal{S}_{k,-k/2}^{\chi}$ has a
basis consisting of
eigenforms. \end{prop}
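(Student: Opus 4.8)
The plan is to exhibit $\mS_{k,-k/2}^{\chi}$ as a finite-dimensional Hermitian inner product space on which $\mH^{\chi}$ acts by a commuting, $*$-closed family of operators, and then to invoke the spectral theorem. Since $\mS_{k,-k/2}^{\chi}$ consists of cusp forms, the Petersson inner product $\left< \cdot,\cdot\right>$ restricts to a positive-definite Hermitian form on it, and the space is finite-dimensional (via the isomorphism $\Phi_{\mB}$ of Proposition \ref{adelicclassical} it embeds into a finite product of spaces of classical hermitian cusp forms). It therefore suffices to prove: \textbf{(a)} the $\bfC$-algebra generated by $\mH^{\chi}$ in $\End_{\bfC}(\mS_{k,-k/2}^{\chi})$ is commutative, and \textbf{(b)} it is closed under the Petersson adjoint. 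Indeed, a commutative, adjoint-closed subalgebra of operators on a finite-dimensional complex inner product space consists of normal operators and admits a simultaneous orthonormal eigenbasis, and such a basis is by definition a basis of eigenforms.

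For \textbf{(a)}, operators attached to distinct rational primes automatically commute, as they are supported at disjoint sets of places; so it is enough that each local algebra $\mH_p^+$ with $p \nmid D_K$ be commutative. For a split prime this follows since $\mH_p^+$ is (via the identifications of section \ref{The unitary group}) a subalgebra of the spherical Hecke algebra of $\GL_4(\bfQ_p)$ relative to $\GL_4(\bfZ_p)$, which is commutative by the Satake isomorphism. For an inert prime $\mH_p^+$ is generated by $T_{p,0}$ and $U_p$ (see the end of section \ref{Hecke operators}) and its commutativity is classical (cf. \cite{Shimura97}, section 11). Hence the $\bfC$-algebra generated by $\mH^{\chi}$, being generated by these pairwise-commuting commutative pieces, is commutative.

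For \textbf{(b)}, I would compute the Petersson adjoint of each generator using the standard formalism for double-coset operators (\cite{Shimura97}): the adjoint of $[\mK_p g \mK_p]$ is a nonzero scalar multiple of $[\mK_p g' \mK_p]$, where $g'$ is obtained from $g$ by the principal involution $g \mapsto \mu(g) g^{-1}$, the scalar being a value of the central character $\chi$ times a power of $p$ coming from the weight $(k,-k/2)$. Concretely, the adjoint of $T_{\fp,i}$ is a scalar multiple of $T_{\ov{\fp},i}$, while for inert $p$ the adjoints of $T_{p,0}$ and $U_p$ are scalar multiples of $T_{p,0}$ and $U_p$ respectively. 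In every case $T^{*}$ again lies in the algebra generated by $\mH^{\chi}$, which is commutative by \textbf{(a)}; thus $T$ and $T^{*}$ commute and $T$ is normal.

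Combining \textbf{(a)} and \textbf{(b)}, $\mH^{\chi}$ acts on the finite-dimensional Hermitian space $\mS_{k,-k/2}^{\chi}$ through a commuting family of normal operators, which the spectral theorem simultaneously diagonalizes, producing the desired basis of eigenforms. The main obstacle is precisely the adjoint relation in \textbf{(b)}: pinning down the scalar contributed by the weight and by $\chi$, and checking that the involution $g \mapsto \mu(g)g^{-1}$ returns the generating double cosets into $\mH^{\chi}$ rather than into a Hecke algebra attached to a different character. Once this bookkeeping is settled, normality — and hence the existence of an eigenbasis — follows formally.
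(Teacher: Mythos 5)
Your argument is correct and is essentially the paper's own: the paper disposes of this proposition in one line by noting that $\mH^{\chi}_{\bfC}$ is commutative and its elements are self-adjoint for the Petersson product, and then invoking the standard simultaneous-diagonalization argument, which is exactly your (a)+(b)+spectral theorem. The only difference is that you are slightly more careful, deducing normality from adjoint-closure (the adjoint of $T_{\fp,i}$ being a multiple of $T_{\ov{\fp},i}$, both of which are among the generators in Definition \ref{hermhecke430}) rather than asserting outright self-adjointness as the paper does; either version suffices.
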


\begin{proof} This is a standard argument, which uses the fact that
$\mH^{\chi}_{\bfC}$ is
commutative and all $T \in \mH^{\chi}_{\bfC}$ are self-adjoint.
\end{proof}

From now on $\mathcal{N}^{\hh}$ will denote a fixed basis of eigenforms of
$\mathcal{S}_{k,-k/2}^{\chi}$.

\begin{thm} \label{eichshi930} Let $f \in \mathcal{N}^{\hh}$. There exists
a number field
$L_f$ with ring of integers $\Oo_{L_f}$
such that the $f$-eigenvalue of every Hecke operator $T \in \mH^{\chi}_{\mathcal{O}_{L_f}}$ lies in $\Oo_{L_f}$.
\end{thm}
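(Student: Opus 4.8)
The plan is to package the $f$-eigenvalues into a single ring homomorphism on the integral Hecke algebra and then read off integrality from the finiteness of that algebra over $\bfZ$. First, since by Proposition \ref{basis956} the form $f \in \mathcal{N}^{\hh}$ is a simultaneous eigenvector for the commutative algebra $\mH^{\chi}_{\bfC}$, I would define a map $\lambda_f \colon \mH^{\chi}_{\bfZ} \to \bfC$ by the rule $Tf = \lambda_f(T)\, f$. This is well defined on all of $\mH^{\chi}_{\bfZ}$ (not merely on the generators $\Sigma_p$) precisely because the generators commute and share $f$ as an eigenvector, and it is visibly a ring homomorphism.

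Next I would exploit the fact, recorded just after Definition \ref{hermhecke430}, that $\mH^{\chi}_{\bfZ}$ is a finite free $\bfZ$-algebra; in particular every $T \in \mH^{\chi}_{\bfZ}$ is integral over $\bfZ$, say $T^m + c_{m-1}T^{m-1} + \dots + c_0 = 0$ with $c_i \in \bfZ$. Applying the ring homomorphism $\lambda_f$ yields $\lambda_f(T)^m + c_{m-1}\lambda_f(T)^{m-1} + \dots + c_0 = 0$, so each $\lambda_f(T)$ is an algebraic integer. Since $\mH^{\chi}_{\bfZ}$ is finitely generated as a $\bfZ$-algebra, every element is a $\bfZ$-polynomial in a fixed finite generating set, so its eigenvalue is the corresponding $\bfZ$-polynomial in the (finitely many) generator eigenvalues. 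Letting $L_f$ be the extension of $\bfQ$ generated by these generator eigenvalues — a number field, being generated by finitely many algebraic numbers — we then have $\lambda_f(\mH^{\chi}_{\bfZ}) \subset L_f$, and since these elements are algebraic integers, $\lambda_f(\mH^{\chi}_{\bfZ}) \subset \Oo_{L_f}$.

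Finally I would extend scalars. Writing $\mH^{\chi}_{\Oo_{L_f}} = \mH^{\chi}_{\bfZ} \otimes_{\bfZ} \Oo_{L_f}$, any $T \in \mH^{\chi}_{\Oo_{L_f}}$ has the form $T = \sum_i a_i T_i$ with $a_i \in \Oo_{L_f}$ and $T_i \in \mH^{\chi}_{\bfZ}$; then $Tf = \bigl(\sum_i a_i \lambda_f(T_i)\bigr) f$, so the $f$-eigenvalue of $T$ equals $\sum_i a_i \lambda_f(T_i)$, which lies in $\Oo_{L_f}$ because each $a_i \in \Oo_{L_f}$ and each $\lambda_f(T_i) \in \Oo_{L_f}$ by the previous step. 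This is exactly the assertion of the theorem. The argument is entirely standard; the only nontrivial input is the finiteness of $\mH^{\chi}_{\bfZ}$ over $\bfZ$ (equivalently, that the Hecke operators preserve a $\bfZ$-lattice in $\mS_{k,-k/2}^{\chi}$, which is ultimately an integrality statement of the type in Lemma \ref{integrality382}), and since this is already recorded I do not expect any real obstacle beyond keeping the scalar-extension bookkeeping straight.
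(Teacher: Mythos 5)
Your argument is correct, but it is a genuinely different route from the one the paper takes. The paper disposes of Theorem \ref{eichshi930} in one line by invoking Theorem \ref{skinnerurban435}: the Hecke eigenvalues of $f$ are (up to normalization) symmetric functions of the Satake parameters, which by that theorem are the Frobenius eigenvalues of a Galois representation, and algebraicity/integrality is read off from there. You instead run the classical Eichler--Shimura-style argument entirely inside the Hecke algebra: $\lambda_f$ is a ring homomorphism on $\mH^{\chi}_{\bfZ}$, finiteness of $\mH^{\chi}_{\bfZ}$ over $\bfZ$ forces every $\lambda_f(T)$ to be an algebraic integer, a finite $\bfZ$-module generating set (not the infinite set $\bigcup_p \Sigma_p$ --- be careful to take module generators here, as you implicitly do) produces the number field $L_f$, and the base change to $\Oo_{L_f}$ is bookkeeping. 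Each approach leans on one unproved input: the paper's on the existence of Galois representations, which it explicitly assumes without proof; yours on the assertion, recorded after Definition \ref{hermhecke430} but not proved there, that $\mH^{\chi}_{\bfZ}$ is finite free over $\bfZ$ (equivalently that the Hecke operators stabilize a full lattice in $\mS^{\chi}_{k,-k/2}$, which requires a rational/integral structure on the space of forms and not just Lemma \ref{integrality382}). Your version is the more elementary and self-contained of the two, at the cost of making that lattice-stability input explicit; the paper's version gets the statement for free once the much deeper Galois-theoretic machinery is granted.
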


\begin{proof} This can be seen as a consequence of  Theorem \ref{skinnerurban435}. \end{proof}

Let $\ell$ be a rational prime and $E$ a finite extension of $\bfQ_{\ell}$
containing
the fields $L_f$ from Theorem \ref{eichshi930} for all $f \in
\mathcal{N}^{\hh}$.
Denote by $\Oo$ the valuation ring of $E$ and by $\varpi$ a uniformizer of
$\Oo$. As in the case of elliptic modular forms, $f \in \mathcal{N}^{\hh}$
gives rise to an $\Oo$-algebra homomorphism $\mH^{\chi}\rightarrow
\Oo$ assigning to $T$ the eigenvalue of $T$
corresponding to the eigenform $f$. We denote this homomorphism by $\lambda_f$.
 Proposition \ref{basis956} and Theorem \ref{eichshi930}
imply that we have $$\mH^{\chi}_E\cong \prod_{f \in
\mathcal{N}^{\hh}}
E.$$ Moreover, as 
in the elliptic modular case, we have
\be \label{hermdec} \mH^{\chi}
\cong \prod_{\fm} \mH^{\chi}_{\fm},\ee where the product runs over
the
maximal ideals of $\mH^{\chi}$ and $\mH^{\chi}_{\fm}$ denotes
the localization of
$\mH^{\chi}$ at $\fm$.

The descent map $\textup{Desc}$ defined in section \ref{Descent} induces the following map (for which we use the same name): \be \label{newdesc1}\textup{Desc} : \mH^{\chi} \rightarrow \tilde{\bfT}',\ee given by the following formulas (cf. Theorems \ref{heckedesc}, \ref{heckedesc2} and Proposition \ref{heckedesc3})
\be\label{form666} \begin{split} \Desc (T_{\fp,1}) & = u_1(p+1) T_p, \\
\Desc (T_{\fp,2})& =u_2( T^2_p
+p^{k-1}+p^{k-3}) \\
\Desc (T_{p,0}) & = u_3(p^2+1) T_p^2+ p^4 + p^3+ p
- 1,\\   
\Desc (U_p) & = u_4T_p^4 + u_5 (p+3) T_p^2 +
p^{2k+4}(p^2+p+1)
\end{split}\ee where $u_1, u_2, u_3, u_4, u_5 \in \Oo^{\times}$. 

The first two formulas in (\ref{form666}) are for a prime $\fp$ of $K$ lying over a split prime $p \neq \ell$ and the last two for an inert prime $p \neq \ell$.
The map (\ref{newdesc1}) factors through \be \label{newdesc2} \textup{Desc} : \mH^{\chi} \twoheadrightarrow \mH^{{\rm M}, \chi}\rightarrow \tilde{\bfT}',\ee where $\mH^{{\rm M}, \chi}$ is the quotient of $\mH^{\chi}$ acting on the space $\mS^{{\rm M}, \chi}_{k, -k/2}$. 
Now fix a newform $\phi \in S_{k-1}(D_K, \chi_K)$ such that $\ov{\rho}_{\phi}|_{G_K}$ is absolutely irreducible. Then by Proposition \ref{congffrho93} we in particular have that $\phi \neq \phi^{\rho}$. Write $f_{\phi, \chi}$ for the Maass lift of $\phi$ lying in the space $\mS^{{\rm M}, \chi}_{k, -k/2}$. Write $\tilde{\fm}'_{\phi}$ for the maximal ideal of $\tilde{\bfT}'$ corresponding to $\phi$ and $\fm_{\phi}$ (resp. $\fm_{\phi}^{\rm M}$) for the corresponding maximal ideals of $\mH^{\chi}$ (resp. $\mH^{{\rm M}, \chi}$).
The map (\ref{newdesc2}) induces the corresponding map on localizations:\be \label{newdesc3} \textup{Desc} : \mH^{\chi}_{\fm_{\phi}} \twoheadrightarrow \mH^{{\rm M}, \chi}_{\fm_{\phi}^{\rm M}}\rightarrow \tilde{\bfT}'_{\tilde{\fm}'_{\phi}}.\ee 

\begin{prop} \label{surjhecke} Let $\phi \in \mN$ be such that $\ov{\rho}_{\phi}|_{G_K}$ is absolutely irreducible. Assume $\ell \nmid (k-1)(k-2)(k-3)$ and that $\phi$ is ordinary at $\ell$. Then the map (\ref{newdesc3}) is surjective. \end{prop}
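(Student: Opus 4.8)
The plan is to reduce the statement to a computation on cotangent spaces followed by a Chebotarev argument. Since the first arrow in (\ref{newdesc3}) is the canonical surjection coming from (the analogue of) (\ref{hermdec}), it suffices to prove that the composite $\Desc\colon \mH^{\chi}_{\fm_{\phi}} \to \tilde{\bfT}'_{\tilde{\fm}'_{\phi}}$ is surjective. Write $R$ for its image; it is an $\Oo$-subalgebra containing the scalars $\Oo\cdot 1$, since $\Desc$ is a unital $\Oo$-algebra map. The ring $\mathcal{T}:=\tilde{\bfT}'_{\tilde{\fm}'_{\phi}}$ is one factor of the finite $\Oo$-algebra $\tilde{\bfT}'$ (cf. the analogue of (\ref{hecke533})), hence a finite $\Oo$-module, so by Nakayama's lemma applied to the finite module $\mathcal{T}/R$ it is enough to show $R+\varpi\mathcal{T}=\mathcal{T}$, i.e. that $R$ surjects onto $A:=\mathcal{T}\otimes_{\Oo}\bfF$, where $\bfF:=\Oo/\varpi\Oo$. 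Taking $E$ large enough, $A$ is an Artinian local $\bfF$-algebra with residue field $\bfF$ and nilpotent maximal ideal $\fn$; a standard argument (filtering by the powers $\fn^{\geq m}$, which eventually vanish) shows that a scalar-containing subalgebra equals $A$ as soon as it surjects onto the cotangent space $\fn/\fn^{2}$. Thus everything reduces to showing that the image of $R$ spans $\fn/\fn^{2}$.

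Next I would read off the cotangent contributions from (\ref{form666}), using that $T_{p^2}$ and $T_p^2$ differ by a scalar so that $\tilde{\bfT}'$ is generated over $\Oo$ by the $T_p$ (split) and the $T_p^2$ (inert). Because $\Oo\cdot 1\subset R$ and each $u_i\in\Oo^{\times}$, the image $R$ contains, for every split $p\neq\ell$, the elements $(p+1)T_p$ and $T_p^{2}$, and for every inert $p\neq\ell$ the elements $(p^2+1)T_p^{2}$ and $T_p^{4}+u_4^{-1}u_5(p+3)T_p^{2}$. Writing $\lambda:=\ov{\lambda}_{\phi}$ and $[x]:=(x-\lambda(x))\bmod\fn^{2}$ for the cotangent class of $x\in A$, the Leibniz rule $[xy]=\lambda(x)[y]+\lambda(y)[x]$ gives $[T_p^{2}]=2\,\ov{a_{\phi}(p)}\,[T_p]$ and $[T_p^{4}]=2\,\ov{a_{\phi}(p)^{2}}\,[T_p^{2}]$. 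Hence, setting $v_p:=[T_p]$ for split $p$ and $w_p:=[T_p^{2}]$ for inert $p$, the cotangent image of $R$ contains $(p+1)\,v_p$ and $2\,\ov{a_{\phi}(p)}\,v_p$ in the split case, and $(p^2+1)\,w_p$ and $\big(2u_4\ov{a_{\phi}(p)^{2}}+u_5(p+3)\big)\,w_p$ in the inert case. Consequently $v_p$ lies in the cotangent image of $R$ whenever $\ell\nmid(p+1)$ or $\ell\nmid 2a_{\phi}(p)$, and $w_p$ whenever $\ell\nmid(p^2+1)$ or $\ell\nmid\big(2u_4\ov{a_{\phi}(p)^{2}}+u_5(p+3)\big)$. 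The hypothesis $\ell\nmid(k-1)(k-2)(k-3)$ (together with $\ell\nmid 2D_K$) is what guarantees that the scalars occurring here are $\ell$-adic units for all but an explicit exceptional set $B$, cut out by the simultaneous congruences $\ell\mid p+1$ (resp. $\ell\mid p^2+1$) together with the vanishing of $a_\phi(p)$ (resp. of the inert bracket) modulo $\varpi$.

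Finally I would show that $\{v_p\}\cup\{w_p\}$ with $p\notin B$ already spans $\fn/\fn^{2}$; this is the heart of the matter and the step I expect to be the main obstacle. The classes $v_p,w_p$ over all $p\neq\ell$ span $\fn/\fn^{2}$, being the cotangent images of a generating set of $\mathcal{T}$, so it is enough to see that no nonzero functional $\xi\in\Hom_{\bfF}(\fn/\fn^{2},\bfF)$ kills all $v_p,w_p$ with $p\notin B$. Such a $\xi$ is a tangent vector, i.e. an $\bfF$-algebra homomorphism $\mathcal{T}\to\bfF[\epsilon]/(\epsilon^{2})$ lifting $\lambda$, and hence---using Theorem \ref{congsix} to identify $\mathcal{T}$ with the ordinary Hecke algebra $\bfT_{\fm_{\phi}}$ and the eigenvalue/Frobenius-trace dictionary furnished by $\rho_{\phi}$---a first-order deformation of $\ov{\rho}_{\phi}|_{G_K}$, that is, a class $c\in H^{1}(K,\ad\ov{\rho}_{\phi})$; the conditions $\xi(v_p)=\xi(w_p)=0$ for $p\notin B$ say exactly that the Frobenius traces of $c$ vanish along these primes. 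Since $B$ has positive density one cannot simply invoke equidistribution on the full Galois group; the point is that the congruence conditions defining $B$ are imposed through the cyclotomic quotient $\Gal(\bfQ(\zeta_{\ell})/\bfQ)$, which is linearly disjoint from the field cut out by $\ov{\rho}_{\phi}$, while absolute irreducibility of $\ov{\rho}_{\phi}|_{G_K}$ forces the image to be large. A Chebotarev argument over $K(\ov{\rho}_{\phi},\zeta_{\ell})$ then shows that the primes $p\notin B$ still meet every conjugacy class on which the class function attached to $c$ could be supported, whence $c=0$ and therefore $\xi=0$. This is the same mechanism employed in \cite{Klosin09}; once it is in place $R$ surjects onto $\fn/\fn^{2}$, and the Nakayama reduction of the first paragraph completes the proof.
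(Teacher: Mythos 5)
Your Nakayama/cotangent reduction and the Leibniz computation from (\ref{form666}) are fine, but the third step --- which you yourself flag as the heart of the matter --- is where the proof actually lives, and as written it has a genuine gap. The exceptional set $B$ is cut out by the \emph{joint} condition ``$\epsilon(\Frob_{\fp})\equiv -1$ and $\tr \ov{\rho}_{\phi}(\Frob_{\fp})\equiv 0 \pmod{\varpi}$'' (and its inert analogue), and this condition factors through $\Gal(K(\ov{\rho}_{\phi},\zeta_{\ell})/K)$: it is a union of entire conjugacy classes there. Consequently the primes $p\notin B$ miss whole Chebotarev fibers, and your functional $\xi$ (equivalently, the first-order deformation $c$) is a priori completely unconstrained on those fibers. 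Linear disjointness of $\bfQ(\zeta_{\ell})$ from the field cut out by $\ov{\rho}_{\phi}$ does not rescue this, because the condition defining $B$ is not imposed through the cyclotomic quotient alone but through the pair of quotients simultaneously. To conclude $c=0$ you would need a further argument (e.g.\ propagating the trace identities from the good Frobenii to products of Frobenii, or exploiting the $G_K$-module structure of $\ad\ov{\rho}_{\phi}$), and none is supplied. Relatedly, your stated use of the hypothesis $\ell\nmid(k-1)(k-2)(k-3)$ --- that it makes the scalars $(p+1)$, $(p^2+1)$, $\ov{a_{\phi}(p)}$ into units outside an explicit $B$ --- is not what that hypothesis does; those quantities have nothing to do with $k$.

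The paper's proof proceeds by a different and complete mechanism that you should compare against. Ordinarity at $\ell$ pins down $\rho_{f_{\psi,\chi}}|_{I_{\ell}}$ with diagonal characters $\epsilon^{k/2},\epsilon^{2-k/2},\epsilon^{1+k/2},\epsilon^{3-k/2}$, and $\ell\nmid(k-1)(k-2)(k-3)$ is used precisely to find $\sigma\in I_{\ell}$ on which these four characters take distinct, nonzero values mod $\varpi$. One then forms the idempotent $e=e_1+e_2\in R[G_{K,S}]$ projecting onto the $\rho_{\phi}|_{G_K}$-block, checks that $\tr \rho'(e\Frob_{\fp})\in R$ using completeness of $R$ and the Tchebotarev Density Theorem, and computes $\epsilon^{k/2-2}(\Frob_{\fp})\tr \rho'(e\Frob_{\fp})=(a_{\psi}(p))_{\psi}$, i.e.\ an honest preimage of $T_p$ for \emph{every} $p$ --- no exceptional set, no cotangent bookkeeping, and no deformation-theoretic Chebotarev lemma needed. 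If you want to salvage your route you must actually prove the vanishing of $\xi$ on the bad classes; as it stands the proposal does not prove the proposition.
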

\begin{proof} Let $p \nmid \ell D_K$ be a prime. We need to show that $T_p$ is in the image of $\textup{Desc}$. Note that by the first formula in (\ref{form666}) this is clear if $\ell \nmid (p+1)$ (for a split $p$) and (by the third formula in (\ref{form666})) if $\ell \nmid (p^2+1)$ (for an inert $p$). To prove this result we will work with Galois representations. As discussed in section \ref{Galois representations attached to hermitian modular forms} to every eigenform $f \in\mS_{k, -k/2}^{\chi}$ one can attach an $\ell$-adic Galois representation $\rho_f: G_K \rightarrow \GL_4(E)$ and it follows from Theorem \ref{skinnerurban435}(i) and Proposition \ref{product134} together with the Tchebotarev Density Theorem and the Brauer-Nesbitt Theorem that \be \label{specialform}\rho_{f_{\phi,\chi}} = \bmat \rho_{\phi}|_{G_K} \\ & (\rho_{\phi}\otimes \epsilon)|_{G_K}\emat \otimes \chi\epsilon^{2-k/2},\ee  where we treat $\chi$ as an $\fl$-adic Galois character via class field theory (here $\fl$ denotes a prime of $K$ lying over $\ell$). Note that the reason for the $(p+1)$- and $(p^2+1)$-factors in (\ref{form666}) is exactly the presence of the cyclotomic character in the lower-right corner of (\ref{specialform}), because for a prime $\fp$ of $K$ lying over $p$ we get $\tr \rho_{f_{\phi,\chi}}(\Frob_{\fp}) = u(\epsilon(\Frob_{\fp})+1) \tr \rho_{\phi}(\Frob_{\fp})$ for $u=\epsilon^{2-k/2}(\Frob_{\fp}) \in \Oo^{\times}$. We will now construct an idempotent in the group algebra $\mH^{{\rm M},\chi}_{ \fm^{\tuM}_{\phi}}[G_K]$ that will kill off the $\epsilon$-part from the expression $\epsilon(\Frob_{\fp})+1$. 

From now on assume that $p$ is split (the inert case being analogous) and fix a prime $\fp$ of $\OK$ over it. 
Let $\tilde{\mN}'_{\phi}\subset \mN$, as before, be the subset consisting of those forms $\psi \in S_{k-1}(D_K, \chi_K)$ whose corresponding maximal ideal of $\tilde{\bfT}'$ is $\tilde{\fm}'_{\phi}$. The set $\tilde{\mN}'_{\phi}$ is in one-to-one correspondence with the set consisting of mutually orthogonal Hecke eigenforms in $\mS^{{\rm M}, \chi}_{k, -k/2}$ whose corresponding maximal ideal in $\mH^{{\rm M}, \chi}$ is $\fm_{\phi}^{{\rm M}}$.  Set $R' := \prod_{\psi \in \tilde{\mathcal{N}}'_{\phi}}
\Oo$
and
let $R$ be the
$\Oo$-subalgebra of $R'$ generated by the tuples $(\lambda_{f_{\psi,\chi}}(T))_{\psi \in
\tilde{\mathcal{N}}'_{\phi}}$
for all $T \in \mH^{{\rm M},\chi}$. Then $R$ is a complete
Noetherian local
$\Oo$-algebra with residue field $\bfF= \Oo/\varpi$. It is a standard argument
to show that $R
\cong \mH^{{\rm M},\chi}_{ \fm^{\tuM}_{\phi}}$.
Let $I_{\ell}$ denote the inertia group at $\ell$.
For every $\psi\in \mN$, ordinary
at $\ell$, we have by (\ref{specialform}) and Theorem 3.26 (2) in
\cite{Hida00} that (note that $\chi$ is unramified)
$$\rho_{f_{\psi,\chi}}|_{I_{\ell}} \cong
\bmat
\epsilon^{k/2} &*\\ &\epsilon^{2-k/2}
\\ && \epsilon^{1+k/2} &*\\ &&& \epsilon^{3-k/2} \emat.$$ If $\ell \nmid
(k-1)(k-2)(k-3)$ it is easy to see that there
exists
$\sigma\in I_{\ell}$ such that the elements $\beta_1:=
\epsilon^{k/2}(\sigma)$, $\beta_2:=\epsilon^{2-k/2}(\sigma) $,
$\beta_3:= \epsilon^{1+k/2}(\sigma)$, $\beta_4:= \epsilon^{3-k/2}(\sigma)$ are all
distinct mod $\varpi$ and non-zero mod $\varpi$.
For every $\psi$ as above, we choose a basis of the space of $\rho_{f_{\psi,\chi}}$ so that
$\rho_{f_{\psi, \chi}}$ is $\Oo$-valued and $\rho_{f_{\psi, \chi}}(\sigma) = \diag(\beta_1, \beta_2,
\beta_3, \beta_4)$. Let $S$ be the set consisting of the places of $K$
lying over $\ell$ and the primes dividing $D_K$. Note that we can treat
$\rho_{f_{\psi,\chi}}$ as a representation of $G_{K,S}$, the Galois group of the
maximal Galois extension of $K$ unramified away from $S$. Moreover, $\tr
\rho_{f_{\psi,\chi}}(G_{K,S})\subset R$, since $G_{K,S}$ is generated by conjugates
of $\Frob_{\fp}$, $\fp \not \in S$ and for such a $\fp$, $\tr
\rho_{f_{\psi,\chi}}(\Frob_{\fp}) \in R$ by Theorem \ref{skinnerurban435} (i) and
the fact that the coefficients of the characteristic polynomial of
$\rho_{f_{\psi,\chi}}(\Frob_{\fp})$ belong to $\mH^{\chi}$.
Set $$e_j=\prod_{l\neq j}\frac{\sigma - \beta_l}{\beta_j-\beta_l} \in
\Oo[G_{K,S}] \hookrightarrow
R[G_{K,S}]$$ and $e:= e_1 + e_2$. Let $$\rho:= \prod_{\psi \in
\tilde{\mathcal{N}}'_{\phi}}
\rho_{f_{\psi,\chi}}: G_{K,S}
\rightarrow \prod_{\psi \in \tilde{\mathcal{N}}'_{\phi}} \GL_4(\Oo).$$ We
extend
$\rho$
to an
$R$-algebra map
$\rho': R[G_{K,S}] \rightarrow M_4(R')$.

Set $$r_e(\fp):=
\epsilon^{k/2-2}(\Frob_{\fp})\tr \rho' (e \Frob_{\fp}) \in R'.$$
We claim that $r_e(\fp) \in R$. Note that
$\rho' (e \Frob_{\fp})$ is a
polynomial in $\rho'(\sigma^i \Frob_{\fp})$, $i=0,1,2,3$, with coefficients
in $\Oo$, so it is enough to
show
that $\tr \rho'(\sigma^i \Frob_{\fp}) \in R$. Fix $i$, set $\tau = \sigma^i
\Frob_{\fp}\in G_{K,S}$. Then by the Tchebotarev Density Theorem, $G_{K,S}$ is generated by
conjugacy classes of Frobenii away from $S$, so $\tr \rho'(\tau)$ is
the limit of $\tr \rho'(\Frob_{\fl})\in R$ for some sequence of primes $\fl
\not\in S$. So, we get $\tr \rho'(\tau) \in R$ by completeness of $R$.

 Note that $$\rho'(\Frob_{\fp} e)
=\prod_{\psi \in
\tilde{\mathcal{N}}'_{\phi}}
 \rho_{f_{\psi,\chi}} (\Frob_{\fp}) \rho'_{f_{\psi,\chi}}(e) =
\prod_{\psi \in
\tilde{\mathcal{N}}'_{\phi}}
\rho_{\psi}
(\Frob_{\fp})\epsilon^{2-k/2}(\Frob_{\fp})$$ and thus $$r_e(\fp) = (a_{\psi}(p))_{\psi \in
\tilde{\mathcal{N}}'_{\phi}} \in
R,$$
where
$\psi=\sum_{n=1}^{\iy} a_{\psi}(n) q^n$. Define $T^{\tuM}(p)$ to be the
image of
$r_e(\fp)$ under the $\Oo$-algebra isomorphism $R \xrightarrow{\sim}
\mH^{{\rm M},\chi}_
{\fm^{\tuM}_{\phi}}$. Note that $\textup{Desc}(T^{\tuM}(p)) =T_p$. \end{proof}

The above arguments yield the following result.

\begin{thm} \label{existhecke} Let $\phi \in \mN$ be such that $\ov{\rho}_{\phi}|_{G_K}$ is absolutely irreducible. Assume $\ell \nmid 2D_K(k-1)(k-2)(k-3)$ and that $\phi$ is ordinary at $\ell$. Let $f_{\phi,\chi} \in \mS_{k,-k/2}^{{\rm M},\chi}$ be the Maass lift of $\phi$. Then there exists $T^{\rm h} \in \mH^{\chi}_{\Oo}$ such that $T^{\rm h} f_{\phi,\chi} = \eta f_{\phi,\chi}$ and $T^{\rm h}f=0$ for any eigenform $f \in \mS_{k,-k/2}^{{\rm M},\chi}$ orthogonal to $f_{\phi,\chi}$. \end{thm}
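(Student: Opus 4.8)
The plan is to realize $T^{\rm h}$ as an element supported on a single block of the decomposition (\ref{hermdec}) of $\mH^{\chi}$, obtained by transporting to the hermitian side, through the descent map, the elliptic congruence operator furnished by Proposition \ref{congthirteen}. First I would record the bookkeeping. By Proposition \ref{heckedesc3} the map $\Desc_{\mB}$ identifies $\mS^{{\rm M},\chi}_{k,-k/2}$ Hecke-equivariantly with $S^*_{k-1}(D_K,\chi_K)$, carrying each Maass lift $f_{\psi,\chi}$ to $\psi-\psi^{\rho}$; under this identification distinct such eigenforms are mutually orthogonal, so the eigenforms $f\in\mS^{{\rm M},\chi}_{k,-k/2}$ orthogonal to $f_{\phi,\chi}$ are precisely the $f_{\psi,\chi}$ with $\psi\in\mN'$, $\psi\neq\phi$. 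Moreover, since the action of $\mH^{\chi}$ on the Maass space factors through $\Desc$ by (\ref{newdesc2}), the Hecke-equivariance intertwines the action of $\mH^{\chi}$ on $\mS^{{\rm M},\chi}_{k,-k/2}$ with that of $\tilde{\bfT}'$ on $S^*_{k-1}(D_K,\chi_K)$, so the eigenvalue of any $T\in\mH^{\chi}$ on $f_{\psi,\chi}$ equals the eigenvalue of $\Desc(T)$ on $\psi-\psi^{\rho}$; in particular this is independent of ambiguities in any lift.

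Next comes the construction. Applying Proposition \ref{congthirteen} (legitimate since $\phi$ is ordinary at $\ell$ and $\ov{\rho}_{\phi}|_{G_K}$ is absolutely irreducible) yields $T\in\tilde{\bfT}'_{\tilde{\fm}'_{\phi}}$ with $T\phi=\eta\phi$, $T\phi^{\rho}=\eta\phi^{\rho}$ and $T\psi=0$ for all $\psi\in\tilde{\mN}'_{\phi}\setminus\{\phi,\phi^{\rho}\}$. Since $\ell\nmid(k-1)(k-2)(k-3)$, Proposition \ref{surjhecke} makes the localized descent $\mH^{\chi}_{\fm_{\phi}}\twoheadrightarrow\tilde{\bfT}'_{\tilde{\fm}'_{\phi}}$ of (\ref{newdesc3}) surjective, so I may choose $\tilde{T}\in\mH^{\chi}_{\fm_{\phi}}$ with $\Desc(\tilde{T})=T$. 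Finally I set $T^{\rm h}\in\mH^{\chi}=\mH^{\chi}_{\Oo}$ to be the element of $\prod_{\fm}\mH^{\chi}_{\fm}$ under the isomorphism (\ref{hermdec}) whose $\fm_{\phi}$-component equals $\tilde{T}$ and whose remaining components vanish; this manifestly lies in $\mH^{\chi}_{\Oo}$.

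It remains to verify the two properties. Because $T^{\rm h}$ is supported on the $\fm_{\phi}$-block of (\ref{hermdec}), its eigenvalue vanishes on every eigenform whose maximal ideal is not $\fm_{\phi}$; in particular $T^{\rm h}$ kills each $f_{\psi,\chi}$ with $\psi$ not congruent to $\phi$. For $\psi$ congruent to $\phi$ one has $\psi\in\tilde{\mN}'_{\phi}=\mN'_{\phi}$ (Lemma \ref{nfs}), and by the intertwining recorded above the eigenvalue of $T^{\rm h}$ on $f_{\psi,\chi}$ is the eigenvalue of $T$ on $\psi-\psi^{\rho}$. As $T$ acts by $\eta$ on both $\phi$ and $\phi^{\rho}$ we get $T(\phi-\phi^{\rho})=\eta(\phi-\phi^{\rho})$, whence $T^{\rm h}f_{\phi,\chi}=\eta f_{\phi,\chi}$; and for $\psi\neq\phi$ we have $T(\psi-\psi^{\rho})=0$, whence $T^{\rm h}f_{\psi,\chi}=0$. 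Proposition \ref{congffrho93} guarantees $\phi\neq\phi^{\rho}$, so $f_{\phi,\chi}\neq0$ and the two cases genuinely exhaust the orthogonal Maass eigenforms.

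The substantive content is entirely contained in the two cited inputs: the existence of the congruence operator $T$ on the elliptic side (Proposition \ref{congthirteen}) and the surjectivity of the localized descent map (Proposition \ref{surjhecke}), whose proof rests on the Galois-theoretic idempotent construction and is the genuine obstacle. Granting these, the argument here is only assembly: lifting $T$ to the hermitian Hecke algebra and exploiting the block decomposition (\ref{hermdec}) to enforce vanishing on everything outside the $\fm_{\phi}$-block, so that one need not control the behaviour of $T^{\rm h}$ on non-Maass forms or on forms incongruent to $\phi$.
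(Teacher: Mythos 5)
Your proposal is correct and follows exactly the paper's own argument: apply Proposition \ref{congthirteen} to obtain the congruence operator $T$ on the elliptic side, lift it through the map (\ref{newdesc3}) using the surjectivity supplied by Proposition \ref{surjhecke}, and extend by zero on the other blocks of the decomposition (\ref{hermdec}). Your verification of the two eigenvalue properties is simply a more explicit write-up of what the paper leaves implicit.
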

\begin{proof} Let $T \in \tilde{\bfT}'_{\tilde{\fm}'_{\phi}}$ be as in Proposition \ref{congthirteen} and let $T^{\rm h}_0 \in \mH^{\chi}_{\fm_{\phi}}$ be an element of the inverse image of $T$ under the map (\ref{newdesc3}), which by Proposition \ref{surjhecke} is surjective. Pulling back $T_0^{\rm h}$ to $\mH$ via the canonical projection induced by decomposition (\ref{hermdec}) we obtain an operator $T^{\rm h}$ with the desired property. \end{proof}

\section{Eisenstein series and theta series}\label{Inner product
formula}

The goal of this section is to 
express the inner product of a hermitian Siegel Eisenstein series of level $N$ multiplied by a certain hermitian theta series against an eigenform $f \in \mM_{n,k}(N)$ in terms of the standard $L$-function of $f$. In this section we also prove that 
 the Fourier
coefficients of the Eisenstein series and the theta series that we will
use are $\ell$-adically integral. We derive the desired formulas and properties from certain calculations carried out by Shimura in \cite{Shimura97} and \cite{Shimura00}. We will often refer the reader to [loc.cit.] for some definitions, facts and formulas, but whenever we do so, we will explain how the statements in [loc.cit.] referenced here imply what we need. 
We will set $h_K=\# \Cl_K$. The results of this section are valid for $U_n$ for a general $n>1$.

\subsection{Some coset decompositions}

Let $Q$ be any finite subset of $\GL_n(\AKf)$ of cardinality $h_K$ such
that $\det Q= \Cl_K$
under the canonical map $c_K: \AK^{\times} \twoheadrightarrow  \Cl_K$. Then we have by (\ref{gwiazdka})

\be \label{GLndecomp}
\GL_n(\AK) =
\bigsqcup_{q \in Q} \GL_n(K) \GL_n(\bfC) q \GL_n(\hat{\Oo}_K).\ee
For $r \in \GL_n(\AKf)$, the group $r \GL_n(\hat{\Oo}_K) r^{-1}$ is also
an open compact subgroup of $ \GL_n(\AKf)$ with $\det r 
\GL_n(\hat{\Oo}_K)
r^{-1} = \hat{\Oo}_K^{\times}$. Hence by the Strong Approximation 
Theorem
for $\GL_n$ (\cite{Bump97}, Theorem 3.3.1) we also have \be
\label{GLndecomp2} \GL_n(\AK)
= \bigsqcup_{q \in Q} \GL_n(K) \GL_n(\bfC) q 
r\GL_n(\hat{\Oo}_K)r^{-1}.\ee
As before, for any $q \in \GL_n$ we put
$p_q:= \bsmat q \\ & \hat{q} \esmat \in U_n$.
Write $P$ for the Siegel parabolic of $U_n$.

\begin{lemma} \label{pardecomp} For any $r \in \GL_n(\AKf)$ the 
following
decomposition holds:
$$P(\AQ) = \bigsqcup_{q \in Q} P(\bfQ) P(\bfR) p_q p_r\mK_P p_r^{-1},$$
where $\mK_P:=
U_n(\hat{\bfZ}) \cap P(\AQ)$. \end{lemma}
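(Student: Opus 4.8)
\medskip\noindent\emph{Proposed approach.} The plan is to descend the assertion to the Levi quotient, where it becomes exactly \eqref{GLndecomp2}, and then to fill in the unipotent radical by strong approximation. Write $P=M_PU_P$ with Levi $M_P\cong\R\GL_{n/K}$, and let $\pi_P\colon P\to M_P$ be the projection onto the Levi quotient; I identify $M_P(\AQ)=\GL_n(\AK)$, so that $\pi_P(p_q)=q$, $\pi_P(P(\bfQ))=\GL_n(K)$, $\pi_P(P(\bfR))=\GL_n(\bfC)$, and $\pi_P\bigl(p_r\mK_Pp_r^{-1}\bigr)=r\GL_n(\hat{\Oo}_K)r^{-1}=:C_M$. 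For $\sigma\in S_n$ put $u(\sigma)=\bmat I_n&\sigma\\&I_n\emat\in U_P$, so that $p_Bu(\sigma)p_B^{-1}=u(B\sigma B^*)$ for $B\in\GL_n$; I will also use that $\mK_P$ factors as $(\mK_P\cap M_P)(\mK_P\cap U_P)$ with $\mK_P\cap M_P=\{p_\kappa\mid\kappa\in\GL_n(\hat{\Oo}_K)\}$ and $\mK_P\cap U_P=\{u(\sigma)\mid\sigma\in L\}$, where $L=S_n\cap M_n(\hat{\Oo}_K)$.

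First I would reduce the claim to a purely finite--adelic statement. Since $p_q$ and $p_r\mK_Pp_r^{-1}$ have trivial archimedean component, an element $\gamma\delta\,p_q\kappa$ (with $\gamma\in P(\bfQ)$, $\delta\in P(\bfR)$, $\kappa\in p_r\mK_Pp_r^{-1}$) has finite part $\gamma_{\tuf}\,q\,\kappa$ and archimedean part $\gamma_\infty\delta$, the latter being arbitrary in $P(\bfR)$ once $\gamma$ is fixed. Hence the asserted decomposition is equivalent to
\[ P(\AQf)=\bigsqcup_{q\in Q}P(\bfQ)_{\tuf}\,p_q\,\bigl(p_r\mK_Pp_r^{-1}\bigr), \]
where $P(\bfQ)_{\tuf}$ denotes the image of $P(\bfQ)$ under the finite--adelic embedding. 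Disjointness is then immediate by applying $\pi_P$: two intersecting cosets map onto $\GL_n(K)_{\tuf}\,q\,C_M$ and $\GL_n(K)_{\tuf}\,q'\,C_M$, which by the finite form of \eqref{GLndecomp2} forces $q=q'$.

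For the covering I would take $x\in P(\AQf)$, write $\pi_P(x)=\gamma_{\tuf}\,q\,c$ via \eqref{GLndecomp2}, lift $\gamma$ to $p_\gamma\in P(\bfQ)_{\tuf}$ and $c=r\kappa r^{-1}$ to $\tilde c=p_rp_\kappa p_r^{-1}$, and set $u:=p_\gamma^{-1}x\,\tilde c^{-1}p_q^{-1}\in U_P(\AQf)$, so that $x=p_\gamma\,u\,p_q\,\tilde c$. The key step is to split $u=u(\rho)$ against the \emph{$q$--twisted} lattice: because $q\,(rLr^*)\,q^*$ is open in $S_n(\AQf)$ while $S_n(\bfQ)$ is dense there, I can write $\rho=\nu+\eta$ with $\nu\in S_n(\bfQ)$ and $\eta\in q\,(rLr^*)\,q^*$. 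Then $u=u(\nu)\,u(\eta)$ with $u(\nu)\in U_P(\bfQ)_{\tuf}$ and $p_q^{-1}u(\eta)p_q=u\bigl(q^{-1}\eta(q^*)^{-1}\bigr)\in p_r\mK_Pp_r^{-1}$, giving
\[ x=\bigl(p_\gamma u(\nu)\bigr)\,p_q\,\bigl(p_q^{-1}u(\eta)p_q\cdot\tilde c\bigr)\in P(\bfQ)_{\tuf}\,p_q\,\bigl(p_r\mK_Pp_r^{-1}\bigr), \]
since $p_\gamma u(\nu)$ is the finite image of the rational element $p_\gamma u(\nu)\in P(\bfQ)$.

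The main obstacle is exactly this bookkeeping inside $U_P$: one cannot move a rational unipotent element across the finite Levi representative $p_q$ without producing spurious finite entries. The two devices above are what resolve it---passing to the finite adeles eliminates any archimedean unipotent contribution that would otherwise have to cross $p_q$, and splitting $u$ against $q\,(rLr^*)\,q^*$ arranges that the integral part conjugates cleanly into $p_r\mK_Pp_r^{-1}$ while the rational part is absorbed on the left into $p_\gamma$. Everything else is packaged in the strong approximation inputs already available, namely \eqref{GLndecomp2} for $\GL_n$ and the density of $S_n(\bfQ)$ in $S_n(\AQf)$ for the vector group $U_P\cong S_n$.
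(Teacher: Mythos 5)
Your proof is correct and follows essentially the same route as the paper's: both decompose $P=M_PU_P$, apply the class-group decomposition (\ref{GLndecomp2}) of $\GL_n(\AK)$ to the Levi, and then use strong approximation in the abelian unipotent radical to absorb the remaining unipotent factor. The only (cosmetic) difference is that the paper handles the conjugation past $p_q$ uniformly, via the open compact subgroup $X=\bigcap_{q\in Q}p_q\bigl(p_r\mK_Pp_r^{-1}\bigr)p_q^{-1}\cap U_P(\AQ)$ and the identity $Xp_q\bigl(p_r\mK_Pp_r^{-1}\bigr)=p_q\bigl(p_r\mK_Pp_r^{-1}\bigr)$, whereas you split against the lattice twisted by the particular $q$ attached to the element at hand (and work on the finite adeles from the outset).
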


\begin{proof} Write $P=MN$ for the Levi decomposition. As $M \cong
\Res_{K/\bfQ} \GL_{n/K}$, and $M \cap N = \{I_{2n}\}$, we get by
(\ref{GLndecomp}):
$$P(\AQ) = M(\AQ) N(\AQ) = \bigsqcup_{q \in Q} M(\bfQ) M(\bfR) p_q p_r 
\mK_M
p_r^{-1}N(\AQ),$$ where $\mK_M:= \{ p_x \mid x \in \GL_n(\hat{\Oo}_K)\}
\subset
\mK_P.$ Set $\mK_{P,r}:= p_r \mK_P p_r^{-1}$. This is a compact open
subgroup of $P(\AQf)$. Let $$X:= \bigcap_{q \in Q} p_q \mK_{P,r}
p_q^{-1} \cap N(\AQ).$$ (Note if
$(2n, h_K)=1$, Corollary \ref{scalarcor} implies that we can find $Q$ so 
that $p_q$ are scalars, and then $X= \mK_{P,r} \cap N(\AQ)$.) By
\cite{Shimura97}, Lemma 9.6(1), we know that $N(\AQ) = N(\bfQ) X 
N(\bfR)$,
since $X N(\bfR) = N(\bfR) X$ is open in $N(\AQ)$. 
Thus we have \begin{multline} P(\AQ) = \bigsqcup_{q \in Q} M(\bfQ) 
N(\AQ)
M(\bfR) p_q \mK_{P,r}  \\
= \bigsqcup_{q \in Q} M(\bfQ) N(\bfQ) X N(\bfR) M(\bfR) p_q \mK_{P,r}
=
\bigsqcup_{q \in Q} P(\bfQ) P(\bfR) X p_q \mK_{P,r},\end{multline}
where the
first
equality
follows from normality of $N$ in $P$ and the fact that $\mK_M \subset \mK_P
\subset P(\AQ)$ while the third one follows from the fact that
$X$ has trivial infinite components. Note that every $x \in X$ can be
written as $x = p_q k p_q^{-1}$ for some $k \in \mK_{P,r}$. Hence $X p_q
\mK_{P,r}
\subset p_q \mK_{P,r}$. The other containment is obvious, so we have $Xp_q
\mK_{P,r}
=
p_q
\mK_{P,r}$. Thus finally $P(\AQ) = \bigsqcup_{q \in Q} P(\bfQ) P(\bfR) p_q
\mK_{P,r},$   
as desired. 
\end{proof}

Fix $r \in \GL_n(\AKf)$ and an integer $N>1$.
 Set $$\Gamma^{\rm h}_{j,r}(N) = U_n(\bfQ) \cap U_n(\bfR)
p_r \mK_j(N)p_r^{-1},\quad \textup{for} \hs j=0,1.$$
and for any subgroup $\Gamma$ of $U_n(\bfQ)$ we put $\Gamma^P:= \Gamma\cap
P(\bfQ).$ Note that $\Gamma^{\rm h}_{j, I_n}(N) = \Gamma^{\rm h}_{j,n}(N)$ for $j=0,1$ with $\Gamma^{\rm h}_{j,n}(N)$ defined as in section \ref{The unitary group}. In the
discussion below we keep $N$ fixed and to shorten notation we write 
$\Gamma_r=
\G_{0,r}^{\rm h}(N)$.

\begin{lemma} \label{decomp905} The canonical injection $$\G_r^P 
\setminus
\G_r
\hookrightarrow P(\bfQ) \setminus (U_n(\bfQ)\cap P(\AQ) U_n(\bfR)
p_r \mK_{0,n}(N) p_r^{-1})$$ is a bijection. \end{lemma}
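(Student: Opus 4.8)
The plan is to show that the canonical map $\Gamma_r^P\setminus\Gamma_r\to P(\bfQ)\setminus(U_n(\bfQ)\cap P(\AQ)U_n(\bfR)p_r\mK_{0,n}(N)p_r^{-1})$, sending $\Gamma_r^P\gamma$ to $P(\bfQ)\gamma$, is well defined, injective, and surjective. Well-definedness and injectivity are purely formal: if $\gamma,\gamma'\in\Gamma_r$ satisfy $P(\bfQ)\gamma=P(\bfQ)\gamma'$, then $\gamma'\gamma^{-1}$ lies both in the group $\Gamma_r$ and in $P(\bfQ)$, hence in $\Gamma_r\cap P(\bfQ)=\Gamma_r^P$; this simultaneously shows that the assignment descends to cosets and that it is injective. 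All the content is therefore in surjectivity, i.e. in proving
\[
U_n(\bfQ)\cap P(\AQ)U_n(\bfR)p_r\mK_{0,n}(N)p_r^{-1}\subseteq P(\bfQ)\Gamma_r.
\]

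First I would remove the archimedean place. Writing $C:=p_r\mK_{0,n,\tuf}(N)p_r^{-1}$ and using $P(\bfR)\subseteq U_n(\bfR)$ together with $\mK_{0,n,\infty}^+\subseteq U_n(\bfR)$, the set $P(\AQ)U_n(\bfR)p_r\mK_{0,n}(N)p_r^{-1}$ becomes $U_n(\bfR)\times P(\AQf)C$ inside $U_n(\AQ)=U_n(\bfR)\times U_n(\AQf)$. Since the archimedean component of a rational point is automatically in $U_n(\bfR)$, the target set is just $\{s\in U_n(\bfQ):s_{\tuf}\in P(\AQf)C\}$, and likewise $\Gamma_r=\{s\in U_n(\bfQ):s_{\tuf}\in C\}$. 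The key structural observation is that the level-$N$ congruence defining $\mK_{0,n,p}(N)$ constrains only the lower-left block $c_x$, which vanishes identically on the Siegel parabolic $P$; consequently $\mK_P:=U_n(\hat{\bfZ})\cap P(\AQf)$ already lies inside $\mK_{0,n,\tuf}(N)$, so $p_r\mK_P p_r^{-1}\subseteq C$. This is exactly what makes the full-level parabolic decomposition of Lemma \ref{pardecomp} usable at level $N$.

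Now, given $s$ in the target set I would write $s_{\tuf}=\beta c$ with $\beta\in P(\AQf)$ and $c\in C$, and apply the finite part of Lemma \ref{pardecomp} to $\beta$, getting $\beta=\pi\,p_q\,\kappa_0$ with $\pi\in P(\bfQ)$, $q\in Q$, and $\kappa_0\in p_r\mK_P p_r^{-1}$. By the observation above $\kappa_0\in C$, so $\pi^{-1}s_{\tuf}=p_q(\kappa_0 c)\in p_q C$; that is, $\pi^{-1}s\in U_n(\bfR)\,p_q\,C$. It then remains to show that $q$ represents the trivial class. Here I would arrange $I_n\in Q$ to be the representative of the trivial ideal class and invoke the $U_n$-version of Proposition \ref{decomp657} for the compact $C$: on one hand $\pi^{-1}s\in U_n(\bfQ)$ is rational, hence sits in the double-coset class of the identity $p_{I_n}=I_{2n}$; on the other hand the displayed relation places it in the class of $p_q$. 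Because the representatives $p_q$ ($q\in Q$) stay pairwise inequivalent even for the finer level-$N$ compact (the coarsening map to full level already separates them), these two classes can agree only if $q=I_n$, whence $p_q=I_{2n}$, $\pi^{-1}s_{\tuf}\in C$, and $\pi^{-1}s\in\Gamma_r$; thus $s\in P(\bfQ)\Gamma_r$.

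The main obstacle is precisely this last step of certifying that $q$ is trivial. A naive determinant computation only gives $[q]^2=1$ in $\Cl_K$, since $\det p_q=\det q/\overline{\det q}$ detects the class of $q$ only through its square; to conclude $[q]=1$ with no parity hypothesis on $h_K$ one genuinely needs the finer information carried by the \emph{shared} representatives $p_q$ in the two decompositions (Lemma \ref{pardecomp} for $P$ and Proposition \ref{decomp657} for $U_n$) and their compatibility under $P\hookrightarrow U_n$. Checking that Proposition \ref{decomp657} applies to the level-$N$ compact $C$ with these representatives — equivalently, that the $p_q$ remain separated and that $I_{2n}$ represents the rational class — is the one point requiring care; everything else is the coset bookkeeping already assembled in Lemma \ref{pardecomp} and the definitions of $\Gamma_r$ and $\Gamma_r^P$.
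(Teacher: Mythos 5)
Your proof is correct and follows essentially the same route as the paper: both reduce surjectivity to the decomposition of $P(\AQ)$ from Lemma \ref{pardecomp} (using that $\mK_P$ sits inside the level-$N$ compact) and then to showing that only the trivial-class representative $p_q$ can meet $U_n(\bfQ)\cdot P(\bfR)p_q\mK_{P,r}U_n(\bfR)\mK_{0,r}(N)$. The only difference is packaging of that last step: you certify $q=I_n$ via disjointness of the double cosets $U_n(\bfQ)U_n(\bfR)p_q C$ (the $U_n$-version of Proposition \ref{decomp657} at level $N$), whereas the paper argues directly with determinants, using $\det U_n(\bfQ)\subset H(\bfQ)$ for the norm-one torus $H$ and Shimura's bijection $\Cl_K\cong H(\AQ)/H(\bfQ)\det D$ — which is exactly the input underlying the disjointness you invoke, and which also resolves the $[q]^2$-versus-$[q]$ worry you raise, since classes are compared in $H(\AQ)/H(\bfQ)\det D$ rather than in $\Cl_K$ itself.
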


\begin{proof} We need to prove surjectivity. Set $\mK_{P,r}= p_r \mK_P
p_r^{-1}$ and $\mK_{0,r}(N):= p_r \mK_{0,n}(N) p_r^{-1}$. By Lemma 
\ref{pardecomp}
we
have $$P(\AQ) = \bigsqcup_{q \in Q} P(\bfQ) P(\bfR) p_q \mK_{P,r},$$ so
$$P(\AQ) 
U_n(\bfR)
p_r \mK_0(N)p_r^{-1} = \bigcup_{q \in Q} P(\bfQ) P(\bfR) p_q \mK_{P,r} U_n(\bfR)
\mK_{0,r}(N).$$
Note that $\det U_n(\bfQ) \subset H(\bfQ)$, where $$H=\{x \in \Res_{K/\bfQ}\bfG_{m/K} \mid x\ov{x}=1\},$$ and $$\det (P(\bfQ) P(\bfR)
p_q \mK_{P,r} U_n(\bfR) \mK_{0,r}(N)) \subset \det p_q H(\bfQ) \det D,$$ 
with
$D=U_n(\bfR)\mK_{0,r}(N)$. Thus $$U_n(\bfQ)\cap P(\bfQ) P(\bfR) p_q
\mK_{P,r} U_n(\bfR)
\mK_{0,r}(N)=\emptyset$$ unless $\det p_q \in H(\bfQ) \det D$.
However,
$Q$ is      
chosen so that $\det p_q$ runs over all the ideal classes of $K$. It follows from Lemma 8.14 in \cite{Shimura97} that there is a bijection between
$\Cl_K$ and $H(\AQ)/H(\bfQ) \det D$, thus $\det p_q \in H(\bfQ) \det D$
only
for one $q$ (which without loss of generality we can take to equal
$I_{2n}$). Thus  \begin{multline} P(\bfQ) \setminus (U_n(\bfQ)\cap P(\AQ)
U_n(\bfR)
p_r\mK_{0,n}(N)p_r^{-1}) \\
= P(\bfQ) \setminus (U_n(\bfQ) \cap P(\bfQ) P(\bfR) \mK_{P,r}
U_n(\bfR)
\mK_{0,r}(N)\\
= P(\bfQ) \setminus (U_n(\bfQ) \cap P(\bfQ)U_n(\bfR)\mK_{0,r}(N)).
\end{multline}
Thus if $g\in U_n(\bfQ)$ can be written as $g=p k$ with $p \in P(\bfQ)$ and $k  \in U_n(\bfR)
\mK_{0,r}(N)$,
then clearly $p^{-1} g \in \G_r$.
\end{proof}

We will need more congruence subgroups. Let $\Gamma(N):=\Gamma_n^{\rm h}(N)$ be the subgroup introduced in section \ref{Notation and terminology} and set 
$$\G_u(N):=\left\{ \bsmat  A&B \\ C&D \esmat \in \Gamma_{0,n}^{\rm h}(N) \mid 1-\det 
D\in
N\OK \right\}.$$

\begin{lemma} \label{gulemma} The canonical injection $$\Gamma(N)^P 
\setminus 
\G(N)
\hookrightarrow \G_u(N)^P \setminus \G_u(N)$$ is a bijection. 
\end{lemma}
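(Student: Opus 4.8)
The plan is to dispose of the (asserted) injectivity by a one-line group-theoretic observation and to concentrate on surjectivity, which I will reduce to a concrete lifting problem over $\OK/N\OK$ solved via strong approximation for $\SL_n$. First I would note that $\Gamma(N)\subset\Gamma_u(N)$: an element of $\Gamma(N)$ is $\equiv I_{2n}\pmod N$, so its $d$-block has $\det\equiv1\pmod N$, whence $\Gamma(N)^P=\Gamma(N)\cap P(\bfQ)=\Gamma(N)\cap\Gamma_u(N)^P$. Thus the map $\Gamma(N)^P\gamma\mapsto\Gamma_u(N)^P\gamma$ is well defined and injective, since for $\gamma,\gamma'\in\Gamma(N)$ with $\gamma'\gamma^{-1}\in\Gamma_u(N)^P$ one has $\gamma'\gamma^{-1}\in\Gamma(N)\cap\Gamma_u(N)^P=\Gamma(N)^P$ because $\Gamma(N)$ is a group.

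The substance is surjectivity: I must show every $\delta\in\Gamma_u(N)$ can be written $\delta=p\gamma$ with $p\in\Gamma_u(N)^P$ and $\gamma\in\Gamma(N)$, i.e. that some $p\in\Gamma_u(N)^P$ has $p\delta\in\Gamma(N)$. Denoting the blocks of $\delta$ by $a_\delta,b_\delta,c_\delta,d_\delta$, the conditions defining $\Gamma_u(N)$ give $c_\delta\in M_n(N\OK)$ and $\det d_\delta\equiv1\pmod N$. Reducing the unitary relation $\delta J\delta^*=J$ modulo $N$ (where $c_\delta\equiv0$), I would extract $a_\delta d_\delta^*\equiv I_n$, $d_\delta a_\delta^*\equiv I_n$ and $b_\delta a_\delta^*\equiv a_\delta b_\delta^*\pmod N$. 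In particular $a_\delta$ is invertible modulo $N$ with $a_\delta^{-1}\equiv d_\delta^*$, one has $\det a_\delta\equiv1\pmod N$, and the integral matrix $b_\delta a_\delta^*$ is Hermitian modulo $N$.

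I would then construct $p=\bsmat\alpha&\beta\\0&\hat\alpha\esmat$ as follows. Using the surjectivity of $\SL_n(\OK)\to\SL_n(\OK/N\OK)$ — a standard strong-approximation fact, of the same type as the local statement from \cite{Serre70} invoked in Lemma \ref{diagonal2} — lift $a_\delta^{-1}\bmod N$ to some $\alpha\in\SL_n(\OK)$; then $\hat\alpha\in M_n(\OK)$ and $\det\hat\alpha=1$. Since $b_\delta a_\delta^*$ is an honest element of $S_n(\bfZ)$, set $b:=-b_\delta a_\delta^*$ and $\beta:=\alpha b$. Then $p\in P(\bfQ)$, because its upper-right block is $\alpha$ times a Hermitian matrix, and $p\in\Gamma_u(N)$, because all its blocks are integral, its lower-left block vanishes, and $\det\hat\alpha=1$. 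A direct reduction modulo $N$ — using $\alpha\equiv a_\delta^{-1}$, $\hat\alpha\equiv a_\delta^*$, and $a_\delta^* d_\delta\equiv I_n$ — then shows that $p\delta$ has $a$-block $\equiv I_n$, $b$-block $\equiv0$, and $c$-block in $M_n(N\OK)$, i.e. $p\delta\in\Gamma(N)$, as required.

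The step I expect to be the real obstacle is arranging the upper-right block $\beta$: inside $P$ it is constrained to be of the form $\alpha b$ with $b$ \emph{Hermitian}, so one cannot simply choose $\beta$ to cancel $b_\delta$ modulo $N$. What rescues the construction is precisely that the necessary correction $-b_\delta a_\delta^*$ is Hermitian — a consequence of the unitary relation $b_\delta a_\delta^*=a_\delta b_\delta^*$ — and moreover integral, so that no non-Hermitian or denominator term is forced. Everything else is routine congruence bookkeeping together with the strong-approximation lift for $\SL_n$.
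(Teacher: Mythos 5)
Your proof is correct and follows essentially the same route as the paper's: both reduce to surjectivity, lift a block modulo $N$ to $\SL_n(\OK)$ via strong approximation, and build an explicit element of $\G_u(N)^P$ whose upper-right entry is admissible precisely because the unitary relations force $b_\delta a_\delta^*$ (equivalently $d_\delta^* b_\delta$) to be Hermitian. The only cosmetic difference is that the paper lifts $d_\delta \bmod N$ and takes $h=\bsmat q^* & -d_\delta^* b_\delta q^{-1}\\ & q^{-1}\esmat$, whereas you lift $a_\delta^{-1}\bmod N$ — the two choices agree modulo $N$ since $d_\delta^*\equiv a_\delta^{-1}$.
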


\begin{proof} We need to prove surjectivity. Let $g=\bsmat A&B\\ C&D
\esmat
\in \G_u(N)$. We have $D \in M_n(\OK)$ with $1- \det D \in N\OK$. Note
that the reduction of $D$ mod
$N\OK$ lies in $\SL_n(\OK / N\OK)$. By the
strong approximation for $\SL_n$, the reduction map $\SL_n(\OK)
\rightarrow \SL_n(\OK /N \OK)$ is surjective. Thus there exists $q \in
\SL_n(\OK)$ such that $D-q \in M_n(N \OK)$. Put $$h = \bmat q^* &
-D^*Bq^{-1} \\ & q^{-1}\emat .$$ Then $h \in \G_u(N)^P$ and $h g \in
\G(N)$.
\end{proof}

\subsection{Eisenstein series}

As before, let $N>1$ be an integer and set $\Oo_{K,p}= \bfZ_p \otimes \OK$. Let $\psi$ be a Hecke character of $K$
satisfying \be \label{cond11}
\psi_{\iy}(x) = x^{m} |x|^{-m}\ee for a positive integer $m$ and \be \label{cond12} \psi_p(x)=1
\hspace{5pt} \textup{if} \hspace{5pt} p \neq \iy, \hs x \in 
\Oo_{K,p}^{\times}
\hspace{5pt} and \hspace{5pt} x-1 \in N \Oo_{K,p}.\ee
Set $\psi_N =
\prod_{p\mid N}
\psi_p$. Let $\d_P$
denote the
modulus character of $P$. We define
$$\mu_P: M(\bfQ) N(\AQ) \setminus U_n(\AQ) \rightarrow \bfC$$ by
setting
$$\mu_P(g)=\begin{cases} 0 & g \not \in P(\AQ) \mK_{0,n,\infty}^+ \mK_{0,n}(N)\\
\psi(\det
d_q)^{-1} \psi_N(\det
d_k)^{-1} j(k_{\iy}, \bfi_n)^{-m} & g=qk\in P(\AQ)
(\mK_{0,n,\infty}^+\mK_{0,n}(N)).\end{cases}$$
Note that $\mu_P$ has a local
decomposition
$\mu_P=\prod_p \mu_{P,p}$, where
\be \label{muv87}\mu_{P,p}(q_p k_p) = \begin{cases} \psi_p(\det
d_{q_p})^{-1}& \textup{if $p
\nmid N\iy$},\\ \psi_p(\det d_{q_p})^{-1} \psi_p(\det d_{k_p}) &
\textup{if $p \mid N, p \neq
\iy$}, \\ \psi_{\iy}(\det d_{q_{\iy}})^{-1}j(k_{\iy}, \I)^{-m}&
\textup{if $p=\iy$}
\end{cases}\ee
and $\delta_P$ has a local decomposition $\delta_P=
\prod_p \delta_{P,p}$, where \be \label{deltav87}\d_{P,p}\left(\bmat A 
\\
& \hat{A} \emat uk
\right) = |\det A \det \ov{A}|_{\bfQ_p}.\ee

\begin{definition} \label{siegelpos454} The series $$E(g,s,N,m,\psi):=
\sum_{\g \in P(\bfQ) \setminus U_n(\bfQ)} \mu_P(\g g) \d_P(\g
g)^{s/2}$$ is called the \textit{\textup{(}hermitian\textup{)} Siegel
Eisenstein series of
weight $m$,
level $N$ and character $\psi$}.
\end{definition}

For $x \in U_n(\AQf)$, $g \in U_n(\bfR)$ and $Z=g \bfi_n$ we define
(\cite{Shimura00}, (17.23a)) $$E_x(Z,s,m,\psi,N) =
j(g,\bfi_n)^{m} E(xg,s,N,m,\psi).$$ Fix $r \in
\GL_n(\AKf)$, write $A_r = P(\bfQ)
\setminus (U_n(\bfQ) \cap P(\AQ) p_r U_n(\bfR) K_0(N) p_r^{-1}$. Then
 \be\label{eisen630}
E_{p_r}(Z,s,m,\psi,N) = \psi_{\tuf} (\det r^*) |\det (rr^*)|_{\bfQ}^{s}
\sum_{a \in A_r} N(\fa_{p_r}(a))^s \psi[a]_{p_r} (\det (\textup{Im}
Z))^{s-m/2}|_m
a,\ee where $\fa_{p_r}(a)$, $\psi[a]_{p_r}$ are defined in section 18 of
\cite{Shimura97}. (Our notation differs slightly from that in \cite{Shimura97}, which we quote here. In particular our $r$ corresponds to $\hat{g}$ in [loc.cit.] and one has $\delta(Z) = \det \eta(Z)$ by (6.3.11) in [loc.cit.] and $\eta(Z) = 2 \textup{Im}(Z)$ by (6.1.8) in [loc.cit.].) As stated in the proof of Lemma 17.13 of
\cite{Shimura00}, there exists a finite set $B\subset U_n(\bfQ)$ such 
that
$A_r = \bigsqcup_{b \in B} S_b b$, where $S_b = (P(\bfQ) \cap U_n(\bfR) 
b
\G_r b^{-1}) \setminus b \G_r b^{-1}$. By Lemma \ref{decomp905}, we can
take $B=\{I_{2n}\}$. It follows then from the proof of Lemma 17.13 of
\cite{Shimura00}, that $\fa_{p_r}(\gamma) = \fa_{p_r}(I_{2n})$ for 
$\gamma
\in S_{I_{2n}} = (P(\bfQ) \cap U_n(\bfR)\G_r) \setminus \G_r$. By the
definition of $\fa_{p_r}(a)$ in \cite{Shimura97}, Lemma 18.7(3), we get
$\fa_{p_r}(I_{2n}) = \OK$ (\cite{Shimura97}, (18.4.4)), so for $\gamma 
\in
S_{I_{2n}}$, we have $$N(\fa_{p_r}(\gamma)) = N(\fa_{p_r}(I_{2n})) = 
1.$$
Moreover, for $\g \in S_{I_{2n}}$, we have by \cite{Shimura97}, Lemma
18.7(3) and (12.8.2) \begin{multline} \psi[\gamma]_{p_r} = 
\psi_{\iy}(\det
d_{\gamma}) \psi^*(\det d_{\gamma} \fa_{p_r}(\gamma)^{-1}) =
\psi_{\iy}(\det d_{\gamma}) \psi^*(\det d_{\gamma} 
\fa_{p_r}(I_{2n})^{-1})
=\\ = \psi_{\iy}(\det d_{\gamma}) \psi^*(\det d_{\gamma}\OK) = 
\psi_N^{-1}(\det d_{\gamma}).\end{multline} Hence we get \be
\label{Eisen737} E_{p_r}(Z,s,m,\psi,N) = \psi_{\tuf} (\det r^*) |\det
(rr^*)|_{\bfQ}^{s} \sum_{\gamma\in \G_r^P \setminus \G_r } \psi_N(\det
d_{\gamma})^{-1} \det(\textup{Im} (Z))^{s-m/2}|_m \g.\ee

In what follows we will write $E_r$ instead of $E_{p_r}$ for $r \in
\GL_n(\AKf)$. For any congruence subgroup $\G$ of $U_n(\bfQ)$ we define 
an
Eisenstein series (cf. \cite{Shimura00} (17.3), (17.3a), where a similar 
definition is made in the case when $\G$ is a congruence subgroup of
$SU_n(\bfQ)$):
\be \label{defeisen5} E(Z,s,m,\G) = \sum_{\g \in \G^P\setminus \G}
\det(\textup{Im})(Z)^{s-m/2} |_{\g}.\ee
Let $X=X_{m,N}$ be the set of all Hecke characters $\psi$ of $K$ satisfying
(\ref{cond11}) and (\ref{cond12}).

\begin{lemma} \label{allhecke3} Assume $r \in \GL_n(\AKf)$ is such that
$p_r$
is a scalar.
Then
$$\sum_{\psi \in X} \psi_{\tuf}
(\det
r^*)^{-1} |\det (rr^*)|_{\bfQ}^{-s}
E_r(Z,s,m,\psi,N) = \# X E(Z,s,m,\Gamma_{1,n}(N)).$$
\end{lemma}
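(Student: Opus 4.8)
The plan is to substitute the explicit formula (\ref{Eisen737}) for each $E_r(Z,s,m,\psi,N)=E_{p_r}(Z,s,m,\psi,N)$ into the left-hand side and then interchange the two summations. Because $p_r$ is assumed to be a scalar, conjugation by $p_r$ is trivial, so $\Gamma_r=\Gamma^{\rm h}_{0,n}(N)$ is independent of $\psi$; in particular the index set $\Gamma_r^P\setminus\Gamma_r$ does not move with $\psi$. The factors $\psi_{\tuf}(\det r^*)$ and $|\det(rr^*)|_{\bfQ}^{s}$ produced by (\ref{Eisen737}) cancel exactly against the normalizing factors $\psi_{\tuf}(\det r^*)^{-1}|\det(rr^*)|_{\bfQ}^{-s}$ standing in front of $E_r$ in the statement. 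After this cancellation and the interchange of summation I am left with
\[
\sum_{\gamma\in\Gamma_r^P\setminus\Gamma_r}\Big(\sum_{\psi\in X}\psi_N(\det d_{\gamma})^{-1}\Big)\det(\Ur(Z))^{s-m/2}\big|_m\gamma,
\]
so the whole lemma reduces to evaluating the inner character sum.

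The heart of the argument is this inner sum, and it is also where I expect the main obstacle to lie. First I would note that $X$ is a torsor under the finite group $X_0$ of Hecke characters of $K$ with trivial infinity type and conductor dividing $N$, i.e.\ the characters of the ray class group of $K$ modulo $N$; hence $\#X=\#X_0$ is the order of that group. For $\gamma\in\Gamma_r=\Gamma^{\rm h}_{0,n}(N)\subset U_n(\bfZ)$ one has $c_{\gamma}\equiv 0\pmod N$, so $\det d_{\gamma}\in\OK$ is prime to $N$ and $\psi_N(\det d_{\gamma})$ depends only on the class of $\det d_{\gamma}$ in $(\OK/N)^{\times}$. Fixing one $\psi_0\in X$ and writing $\psi=\psi_0\chi$ with $\chi\in X_0$, the orthogonality relations for the finite abelian group of ray classes show that $\sum_{\psi\in X}\psi_N(\det d_{\gamma})^{-1}$ equals $\#X$ when $\det d_{\gamma}\equiv 1\pmod{N\OK}$ and vanishes otherwise. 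The delicate point is that the prescribed infinity type $x^m|x|^{-m}$ forces $\psi_{0,N}$ to be compatible with the global units $\OK^{\times}$ (this is exactly the consistency that makes the weight-$m$ series $E(Z,s,m,\Gamma^{\rm h}_{1,n}(N))$ well defined, cf.\ Remark \ref{hel}); it is this compatibility that guarantees the surviving coefficient is the clean constant $\#X$ and not a nontrivial unit multiple of it.

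It then remains to identify the surviving cosets with $\Gamma^{\rm h}_{1,n}(N)^P\setminus\Gamma^{\rm h}_{1,n}(N)$. For $\gamma\in\Gamma^{\rm h}_{0,n}(N)$ the relation $\gamma J\ov{\gamma}^{t}=J$ together with $c_{\gamma}\equiv 0\pmod N$ gives $a_{\gamma}d_{\gamma}^{*}\equiv I_n\pmod N$, hence $\det a_{\gamma}\cdot\ov{\det d_{\gamma}}\equiv 1\pmod N$; thus $\det d_{\gamma}\equiv 1$ is equivalent to $\det a_{\gamma}\equiv 1\pmod N$ and is implied by $\gamma\in\Gamma^{\rm h}_{1,n}(N)$. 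Writing $\Gamma_u(N)$ for the subgroup of $\Gamma^{\rm h}_{0,n}(N)$ cut out by $\det d_{\gamma}\equiv 1\pmod N$, the surviving index set is $\Gamma_u(N)^P\setminus\Gamma_u(N)$, and I would conclude by showing the inclusion-induced map $\Gamma^{\rm h}_{1,n}(N)^P\setminus\Gamma^{\rm h}_{1,n}(N)\to\Gamma_u(N)^P\setminus\Gamma_u(N)$ is a bijection. Injectivity is formal. For surjectivity, given $\gamma\in\Gamma_u(N)$ one has $\det a_{\gamma}\equiv 1$, so $a_{\gamma}^{-1}\bmod N\in\SL_n(\OK/N)$, and the surjectivity of $\SL_n(\OK)\to\SL_n(\OK/N)$ (strong approximation, used already in Lemma \ref{gulemma}) yields $A\in\SL_n(\OK)$ with $A\equiv a_{\gamma}^{-1}\pmod N$; then $p=\diag(A,\hat{A})\in\Gamma_u(N)^P$ and $p\gamma\in\Gamma^{\rm h}_{1,n}(N)$. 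Since the summand $\det(\Ur(Z))^{s-m/2}|_m\gamma$ is a well-defined function on $\Gamma^P\setminus\Gamma$ by (\ref{defeisen5}), this bijection turns the surviving sum into $\#X\cdot E(Z,s,m,\Gamma^{\rm h}_{1,n}(N))$, which is the asserted identity.
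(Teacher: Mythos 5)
Your proof is correct and follows the paper's own argument for its first two thirds: substitute (\ref{Eisen737}), cancel the normalizing factors (using that $p_r$ scalar gives $\Gamma_r=\Gamma_{0,n}^{\rm h}(N)$), interchange the sums, and use orthogonality to see that only cosets meeting $\Gamma_u(N)$ survive, each with coefficient $\#X$. Where you diverge is the endgame. The paper identifies the surviving sum with $\#X\, E(Z,s,m,\Gamma_n^{\rm h}(N))$ via its Lemma \ref{gulemma} (a bijection $\Gamma(N)^P\setminus\Gamma(N)\to\Gamma_u(N)^P\setminus\Gamma_u(N)$ down to the \emph{principal} congruence subgroup, whose proof must also correct the $B$-block, hence the upper-triangular $h=\bsmat q^* & -D^*Bq^{-1}\\ & q^{-1}\esmat$), and then climbs back up to level $\Gamma_{1,n}^{\rm h}(N)$ by applying $\sum_{\gamma\in\Gamma(N)\setminus\Gamma_{1,n}^{\rm h}(N)}|_m\gamma$ to both sides and invoking Shimura (17.5) with a comparison of the indices $[\Gamma_{1,n}^{\rm h}(N)^P:\Gamma(N)^P]=[\Gamma_{1,n}^{\rm h}(N):\Gamma(N)]$. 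You instead prove directly that $\Gamma_{1,n}^{\rm h}(N)^P\setminus\Gamma_{1,n}^{\rm h}(N)\to\Gamma_u(N)^P\setminus\Gamma_u(N)$ is a bijection, using the same strong-approximation lift of $a_\gamma^{-1}$ to $\SL_n(\OK)$ but with a block-diagonal $p=\diag(A,\hat A)$ (possible because membership in $\Gamma_{1,n}^{\rm h}(N)$ imposes no condition on the $B$-block). This is a legitimate shortcut: it avoids the detour through $\Gamma(N)$ and the averaging step entirely, at the cost of not reusing Lemma \ref{gulemma}, which the paper needs anyway elsewhere (Lemma \ref{decomp905} context). Your injectivity and surjectivity checks are sound, and the relation $a_\gamma d_\gamma^*\equiv I_n\pmod N$ you use to pass between $\det a_\gamma$ and $\det d_\gamma$ is exactly right.

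One remark on the ``delicate point'' you flag in the orthogonality step: your concern is genuine — for $x\equiv u\pmod{N}$ with $u\in\OK^\times$ a nontrivial global unit, every $\psi\in X$ takes the \emph{same} value $\psi_N(x)=u^{-m}$, so the character sum does not vanish even though $x\not\equiv 1$. The clean resolution (implicit in the paper as well) is not a constraint on $\psi_{0,N}$ but the observation that such a $\gamma$ can be moved into $\Gamma_u(N)$ within its coset by left multiplication by $p=\diag(A,\hat A)\in\Gamma_r^P$ with $\det A=u$; the summand is well defined on $\Gamma_r^P$-cosets precisely because the automorphy factor of $\det(\Ur Z)^{s-m/2}|_m p$ cancels $\psi_N(\det d_p)^{-1}$. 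With that reading your argument, like the paper's, is complete.
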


\begin{proof} Note that $\# X \neq \emptyset$ because of our assumption that $N>1$ by Lemma 11.14(1) in \cite{Shimura97}. Let $x \in \OK$, 
$(x,N)=1$,
be such
that there exists $\psi' \in X$ with $\psi'_N(x)\neq 1$. Then 
$\sum_{\psi
\in X} \psi_N(x)=0$.  Thus, \begin{multline} A:= \sum_{\psi 
\in
X} \psi_{\tuf} (\det r^*)^{-1} |\det (rr^*)|_{\bfQ}^{-s} 
E_r(Z,s,m,\psi,N)
=\\ = \sum_{\psi \in X} \sum_{\gamma\in \G_r^P \setminus \G_r }
\psi_N(\det d_{\gamma})^{-1} \det(\textup{Im}(Z))^{s-m/2}|_m \g=\\ =   
\sum_{\gamma\in
\G_r^P \setminus \G_r } \left( \sum_{\psi \in X} \psi_N(\det
d_{\gamma})^{-1} \right) \det(\textup{Im}(Z))^{s-m/2}|_m 
\g.\end{multline}
By our
assumption on $r$ we have $\G_r = \G^{\rm h}_{0,n}(N)$. Thus the inner sum equals 0 unless   
$\g \in \G_u(N)$, in which case it equals $\# X$. Hence we get $$A= \#X  
\sum_{\gamma \in \G_u(N)^P \setminus \G_u(N)}
\det(\textup{Im}(Z))^{s-m/2}|_m \g.$$
Using Lemma \ref{gulemma} we further get \be \label{aux79} A = \#X
\sum_{\gamma \in
\G(N)^P \setminus \G(N)} \det(\textup{Im}(Z))^{s-m/2}|_m \g =  
\# X E(Z,s,m,\G(N)).\ee Now
apply $\sum_{\g \in \G(N) \setminus \G_{1,n}^{\rm h}(N)} |_m \g$ to both sides of
(\ref{aux79}). We have $E_r(Z,s,m,\psi,N)|_m \g =
E_r(Z,s,m,\psi,N)$ for every $\g \in \G_{1,n}^{\rm h}(N)$, and $$\sum_{\g \in \G(N)
\setminus \G_{1,n}^{\rm h}(N)} E(Z,s,m,\G(N))|_m \g = [\G_{1,n}^{\rm h}(N)^P: \G(N)^P]
E(Z,s,m,\G_{1,n}^{\rm h}(N))$$ by \cite{Shimura00} (17.5) together with Remark 
17.12(2). Note also that $[\G_{1,n}^{\rm h}(N)^P: \G(N)^P] = [\G_{1,n}^{\rm h}(N): \G(N)]$, 
hence 
we finally get $A= \# X E(Z,s,m,\G_{1,n}^{\rm h}(N))$.
 \end{proof}

\begin{rem} Lemma \ref{allhecke3} is true even without assuming that 
$p_r$
is a scalar. Its conclusion can then be stated as $$\sum_{\psi \in X}
\psi_{\tuf}
(\det
r^*)^{-1} |\det (rr^*)|_{\bfQ}^{-s}
E_r(Z,s,m,\psi,N) = E(Z,s,m,\G_{1,r}(N)).$$ We omit the proof however, as we will have no need for this result. \end{rem}

\subsection{Theta series and inner products} \label{Theta series and 
inner
products}   

Let $\Gamma$ be a congruence subgroup of $U_n(\bfQ)$. Let 
$F,
G \in M^{\textup{sh}}_k(\Gamma)$ (with at least one of the forms cuspidal), where the superscript `sh', as before, indicates 
that
the
forms are not necessarily holomorphic. 
We set $$\left< F,G \right>_{\Gamma}:=\int_{\G \setminus \bfH_n} F(Z) \ov{G(Z)}
\det (\textup{Im}(Z))^k dZ,$$ and
$$\left< F,G \right> = \left(\int_{\G \setminus \bfH_n} dZ\right)^{-1}
\int_{\G \setminus \bfH_n} F(Z) \ov{G(Z)}
\det (\textup{Im}(Z))^k dZ,$$
(compare with \cite{Shimura97}, (10.9.2)).
Let $\xi$ be an (algebraic) Hecke character of $K$. Then $\xi^c$ 
defined
by $\xi^c(x) = \xi(\ov{x})$ is also an algebraic Hecke character of 
$K$ 
whose infinity type is the conjugate of the infinity type of $\xi$.  Let $Q$ be as in
(\ref{GLndecomp}). Let $f,g \in \mM'_{n,k,\nu}(\mK)$ for some open compact subgroup $\mK$ of $U_n(\AQf)$. Then by Proposition \ref{adelicclassical} the forms $f$ and $g$ correspond to $\# Q$-tuples of
functions on $\bfH_n$ which we denote by $(f_{p_q})$ and $(g_{p_q})$
respectively or simply by $(f_{q})$ and $(g_{q})$. If either $f$ or $g$ is cuspidal, set $$\left< f,g
\right> = (\# Q)^{-1} \sum_{q \in Q} \left<
f_{q},g_q \right>,$$ (compare with \cite{Shimura97}, (10.9.6)), and $$\left< f,g
\right>_{\Gamma} = (\# Q)^{-1} \sum_{q \in Q} \left<
f_{q},g_q \right>_{\Gamma}$$ if for all $q \in Q$ the integrand $f_q(Z) \ov{g_q(Z)}
\det (\textup{Im}(Z))^k dZ$ is $\Gamma$-invariant (for example if $f, g \in \mM'(\mK_{0,n}(N))$ and all $q \in Q$ are scalars, then one can take $\Gamma = \Gamma^{\rm h}_{0,n}(N)$).

Let $k$ be a positive integer. Fix a Hecke
character $\xi$ of $K$ with conductor $\ff_{\xi}$ and
infinity type $|x|^t x^{-t}$ for an integer $t$ about which we for now
only assume that $t\geq-k$. We will now define a theta series
associated with the character $\xi$. Let $\lambda: M_n(\AKf) 
\rightarrow \bfC$ be a function given by
$$\lambda(g) = \begin{cases} \xi_{\ff_{\xi}}(\det g) & \textup{if} \hs  
g\in M_n(\hat{\Oo}_K) \hs \textup{and} \hs g_v \in \GL_n(\Oo_v) \hs
\textup{for all} \hs v \mid \ff_{\xi}\\
0 & \textup{otherwise}.\end{cases}$$
The map $\lambda$ is a Schwarz function (cf. \cite{Shimura00}, section A5 for a more precise statement). 
 Fix  $\tau \in  \mS$.
For $Z \in \bfH_n$ set $$\theta_{\xi}(Z,\lambda) = \sum_{\alpha \in M_n(K)} \lambda (\alpha)
\ov{\det \alpha} \cdot e(\tr (\alpha^* \tau \alpha Z)).$$
For $g \in U_n(\AQ)$ set $$\theta_{\xi}(g)=j(g,i)^{-l} \theta (gi,
\lambda^g), $$ where $l=t+k+n$  and the automorphism of the space of Schwarz functions on $M_n(\AKf)$ given by $\lambda \mapsto \lambda^g$ is defined in Theorem A5.4 of \cite{Shimura00}.
\begin{rem} \label{choice112} Note that $\theta_{\xi}$ depends on the
choice of the matrix
$\tau$. If $\{g^* \tau g \}_{g \in \OK^2} =\bfZ$ and $c$ is a positive integer such that $\{g^* \tau^{-1} g\}_{g \in \OK^2} 
\subset
\frac{1}{c} \bfZ$, then $\theta_{\xi}\in
\mM_{l}(N, \psi')$, where $N=D_K c N_{K/\bfQ}(\ff_{\xi})$ 
by \cite{Shimura00}, section A5.5 and
\cite{Shimura97}, Proposition A7.16. Note that such a $c$ always exists (for example one can take $c=\det \tau$). In what follows we fix $\tau$ and $c$ so that $\{g^* \tau g \}_{g \in \OK^2} =\bfZ$
and we fix $N$ as above. In particular, we have $\ff_{\xi} \mid N$. \end{rem}

 By \cite{Shimura00}, (22.14b),
$\psi' = \xi^{-1}
\varphi^{-n}$, where $\varphi$ is a Hecke character of $K$ with infinity type $\frac{|a_{\infty}|}{a_{\infty}}$ and such that $\varphi|_{\AQ}^{\times} = \chi_K$ (such a character always exists, but is not unique - cf. \cite{Shimura00}, Lemma A.5.1). Thus $$\psi'_{\iy}(x) = x^{t+n} |x|^{-t-n}.$$ Let $Q=\mB$ be 
as
in Corollary \ref{scalarcor}. (In fact, we believe our
result holds for a more general $Q$, but for simplicity we proceed with  
$Q$ as in that corollary.) Then $\theta_{\xi}$ corresponds to a $\#
Q$-tuple of functions, which we denote following \cite{Shimura00} by
$(\theta_{\chi,p_q})$ or simply by $(\theta_{\chi,q})$.

Set $m=-t-n$. Let $\gamma \in \G_{0,n}^{\rm h}(N)$. Note that if $\psi$ is a Hecke
character of $K$, then \be \label{c1} \psi_N(\det a_{\gamma}) =
\psi_N(\ov{\det d_{\gamma}}^{-1})
= \psi^c_N(\det d_{\gamma})^{-1}.\ee Note that this makes sense because
$N \in \bfZ$. Let $f\in \mM_{n,k}(N)$. Then $f$ corresponds to a $\# Q$-tuple of functions $(f_q)$. We have \begin{multline} \left< E(\cdot,s,m, \G_{1,n}^{\rm h}(N)) \theta_{\chi,q}
, f_q
\right>_{\G_{1,n}^{\rm h}(N)} = \\
=\int_{\G_{0,n}^{\rm h}(N) \setminus \mH} \theta_{\chi,q}(Z)\left( \sum_{\g
\in
\G_{1,n}^{\rm h}(N) \setminus \G_{0,n}^{\rm h}(N)} \psi'_N(\det a_{\g}) E(Z,s,m,\G_{1,n}^{\rm h}(N))|_m \g
\right) \ov{f_q(Z)} \delta(Z)^k dZ\end{multline} By Lemma 
\ref{allhecke3}
one has
\begin{multline} \label{aux430} \sum_{\g \in
\G_{1,n}^{\rm h}(N) \setminus \G_{0,n}^{\rm h}(N)}\psi'_N(\det a_{\g}) E(Z,s,m,\G_{1,n}^{\rm h}(N))|_m \g =   
\\
(\# X)^{-1} \sum_{\psi \in X} \sum_{\g \in
\G_{1,n}^{\rm h}(N) \setminus \G_{0,n}^{\rm h}(N)} \psi_{\tuf}
(\det
q^*)^{-1} |\det (qq^*)|_{\bfQ}^{-s}
E_q(Z,s,m,\psi,N)|_{\gamma}. \end{multline}

Note that for $Z=g_{\iy} \bfi_n$ with $g=(g_{\iy},1)$, we have
$$ E_q(Z,s,m,\psi,N)
=
j(g_{\iy}, \bfi_n)^m E(p_qg,s,N,m,\psi),$$ and
\be \begin{split}E_q(Z,s,m,\psi,N)|_m \g & =
j(\g,
Z)^{-m} E_q(\g Z, s,m,\psi,N) \\
&= j(g_{\iy}, \bfi_n)^m j(\g,
Z)^{-m} E(p_q(\gamma g)_{\iy},s,N,m,\psi) \\
&=  j(g_{\iy}, \bfi_n)^m j(\g,
Z)^{-m} E((\gamma g_{\iy}, p_q), s,N,m,\psi)\\
& = j(g_{\iy}, \bfi_n)^m j(\g,
Z)^{-m} E((g_{\iy}, p_q \gamma^{-1}),s,N,m,\psi)\end{split}\ee where we
have
used the assumption that $p_q$ is a
scalar. Then by
\cite{Shimura97}, (18.6.2), we have \begin{multline*} E((g_{\iy}, p_q
\gamma^{-1}),s,N,m,\psi) =\\
 = \psi_{N}(\det d_{\gamma^{-1}})^{-1} E((g_{\iy}, p_q),s,N,m,\psi) =
\psi_{N}(\det
d_{\gamma^{-1}})^{-1}E(p_q g,s,N,m,\psi).\end{multline*} Note that 
$$\det
d_{\gamma^{-1}} =
\det d_{\gamma}^{-1} \hf \textup{mod} \hs N.$$ Hence finally we get \be
\label{eistrans} E_q(Z,s,m,\psi,N)|_m \g =
\psi_{N}(\det
d_{\gamma})E_q(Z,s,m,\psi,N).\ee Then (\ref{aux430}) equals
\begin{multline} (\# X)^{-1} \sum_{\psi \in X} \psi_{\tuf}
(\det
q^*)^{-1} |\det (qq^*)|_{\bfQ}^{-s} E_q(Z,s,m,\psi,N)
\times \\
\times \sum_{\g \in
\G_{1,n}^{\rm h}(N) \setminus \G_{0,n}^{\rm h}(N)} \psi'_N(\det a_{\g}) \psi_N(\det
d_{\g}) =\\
= (\# X)^{-1} \sum_{\psi \in X} \psi_{\tuf}
(\det
q^*)^{-1} |\det (qq^*)|_{\bfQ}^{-s} E_q(Z,s,m,\psi,N)\times \\
\times \sum_{\g \in
\G_{1,n}^{\rm h}(N) \setminus \G_{0,n}^{\rm h}(N)} (\psi')^c_N(\det d_{\gamma})^{-1} \psi_N(\det
d_{\g}),
\end{multline}
where the last equality follows from (\ref{c1}), according to which
$\psi'_N(\det
a_{\g}) = (\psi')^c_N(\det d_{\gamma})^{-1}$. Using the fact
that 
$$\sum_{\g \in
\G_{1,n}^{\rm h}(N) \setminus \G_{0,n}^{\rm h}(N)} (\psi')^c_N(\det d_{\gamma})^{-1} \psi_N(\det
d_{\g})=0$$ unless $\psi = (\psi')^c$, we obtain
\begin{multline} \sum_{\g
\in
\G_{1,n}^{\rm h}(N) \setminus \G_{0,n}^{\rm h}(N)} \psi'_N(\det a_{\g}) E(Z,s,m,\G_{1,n}^{\rm h}(N))|_m \g
= \\
= (\# X)^{-1}[\G_{0,n}^{\rm h}(N): \G_{1,n}^{\rm h}(N)] (\psi')^c   
(\det
q^*)^{-1} |\det (qq^*)|_{\bfQ}^{-s} E_q(Z,s,m,\psi,N).\end{multline} 
Hence
finally
\begin{multline} \label{inner111}\left<
E(\cdot,s,m, \G_{1,n}^{\rm h}(N)) \theta_{\chi,q}   
, f_q
\right>_{\G_{1,n}^{\rm h}(N)} \\
= (\# X)^{-1}[\G_{0,n}^{\rm h}(N): \G_{1,n}^{\rm h}(N)](\psi')^{-1}
(\det
q) |\det (qq^*)|_{\bfQ}^{-s} \left<
E_q(\cdot,s,m, ((\psi')^c),N) \theta_{\chi,q}
, f_q
\right>_{\G_{0,n}^{\rm h}(N)}.\end{multline}
Note that the inner product (\ref{inner111}) makes sense, because first 
of
all $(\psi')^c_{\iy}(x) = x^{-t-n}|x|^{t+n} = x^m |x|^m$, so the
definition of $E(Z,s,m, (\psi')^c,\G_{1,n}^{\rm h}(N))$ makes sense, and secondly,   
\begin{equation}\begin{split} E(Z,s,m, (\psi')^c,N)|_{\g} &= (\psi')^c_{N}(\det
d_{\gamma^{-1}})^{-1} E_q(Z,s,m,(\psi')^c,N) \\
&=(\psi')^c_N(\det d_{\g})
E_q(Z,s,m,(\psi')^c,N) \\
&= (\psi')^{-1}_N(\det a_{\g})E_q(Z,s,m,(\psi')^c,N)  , \end{split}\end{equation} where 
the
first equality
follows from (\ref{eistrans}) and the last one from (\ref{c1}). Hence
$(E_q(Z,s,m, (\psi')^c,N) \theta_{\chi,q})|_k \g = E_q(Z,s,m, (\psi')^c,N) \theta_{\chi,q}$ for
every $\g \in \G_{0,n}^{\rm h}(N)$.

Set $\G:= \G_{1,n}^{\rm h}(N)\cap SU_n(\bfQ)$. We now relate $\left<
E(\cdot,s,m, \G_{1,n}^{\rm h}(N)) \theta_{\chi,q}
, f_q
\right>_{\G_{1,n}^{\rm h}(N)}$ to $\left<
E(\cdot,s,m, \G) \theta_{\chi,q}
, f_q
\right>_{\G}$. By \cite{Shimura00}, formula (17.5) and Remark 17.12(2), 
we
have
$$E(Z,s,m,\G_{1,n}^{\rm h}(N))=\frac{1}{[\G_{1,n}^{\rm h}(N):\G]} \sum_{\a \in
\G \setminus 
\G_{1,n}^{\rm h}(N)}
E(Z,s,m,\G)|_m \a.$$
Hence
\begin{multline}\left<E(\cdot, s,m,\G_{1,n}^{\rm h}(N)) \theta_{\chi,q},
f_q\right>_{\G_{1,n}^{\rm h}(N)}  = [\G_{1,n}^{\rm h}(N): \G]^{-1} \hs  
\left<E(\cdot,
s,m,\G_{1,n}^{\rm h}(N))\theta_{\chi,q},
f_q\right>_{\G} =\\
= [\G_{1,n}^{\rm h}(N):\G]^{-2}\left<\left(\sum_{\a \in \G \setminus
\G_{1,n}^{\rm h}(N)}
E(Z,s,m,\G)|_m \a \right)
\theta_{\chi,q},
f_q\right>_{\G}.\end{multline}
Since $\theta_{\chi,q}|_l \a = \theta_{\chi,q}$ and $f_q|_k \a =f_q$
for $\a \in \G_{1,n}^{\rm h}(N)$ we finally have
\begin{multline} \label{gammagammaprim} \left<E(\cdot, s,m,\G_{1,n}^{\rm h}(N))
\theta_{\chi,q},
f_q\right>_{\G_{1,n}^{\rm h}(N)} = \\
=\frac{1}{[\G_{1,n}^{\rm h}(N): \G]^2} \hs \sum_{\a \in \G
\setminus \G_{1,n}^{\rm h}(N)}
\left<\left(E(\cdot,
s,m,\G)|_m \a \right)
\left(\theta_{\chi,q}|_l \a\right),
f_q|_k \a\right>_{\G} =\\
= \frac{1}{[\G_{1,n}^{\rm h}(N):\G]} \hs \left<E(\cdot, s,m,\G)\theta_{\chi,q},
f_q\right>_{\G}. \end{multline}

\subsection{The standard $L$-function} \label{The standard L-function}

Let $Q$ and $f$ be as before and let $q \in Q$. From now on we assume that $f$ is a Hecke eigenform. Let $D(s,f,\xi)$ and
$D_q(s,f,\theta_{\xi})$
denote the Dirichlet series
defined in
\cite{Shimura00} by formulas (22.11) and (22.4) respectively. Let $r
\in \GL_n(\AKf)$ and $\tau \in \mS^+:= \{ h \in \mS \mid h>0\}$. 
 Then (22.18b)
in [loc. cit.] gives \be\label{form71} D(s+3n/2, f, \xi) = (\det
\tau)^{s+(k+l)/2}
|\det
r|_K^{-s-n/2} \sum_{q \in Q} (\psi')^{-1}(\det q) |\det qq^*|^s_{\bfQ}
D_q(s,f,\theta_{\chi}),\ee while \cite{Shimura00}, (22.9) gives  \be
\label{form72} D_q(s,f,\theta_{\xi}) =A_N \Gamma((s))^{-1} \left< f_q , \theta_{\chi,q} E(\cdot,
\ov{s}+n, m, \G) \right>_{\G},\ee where $A_N$ and $\Gamma((s))$ are defined as follows. Let $X_{\rm re} = \{h\in M_n(\bfC)\mid h=h^*\}/ \{h\in M_n(\OK)\mid h=h^*\}$ and $X_{\rm im} =  \{h\in M_n(\bfC)\mid h=h^*, h>0\}/\sim$, where $h \sim h'$ if there exists $g \in \GL_n(\OK)$ such that $h'=ghg^*$. Then $X_{\rm re}\times X_{\rm im}$ is commensurable with $\Gamma_{0,n}^{\rm h}(N) \cap P(\bfQ)\setminus \bfH_n$ i.e, the ratio of their volumes is a positive rational number (\cite{Shimura00}, p. 179). We set $A_N$ to be this rational number times the $\textup{vol}(X_{\rm re})^{-1}$. Note that $A_N \in \bfQ$. We also set (cf. \cite{Shimura00}, p.179 and formulas (22.4a), (16.47))
 $$\Gamma((s))= (4\pi)^{-\frac{n}{2}(2s+k+l)} \pi^{2n(n-1)/4}\prod_{i=0}^{n-1} \Gamma(s-i).$$
Combining (\ref{gammagammaprim}) with (\ref{inner111}), we obtain
\bmls  \left< f_q , \theta_{\chi,q} E(\cdot,s, m, \G) \right>_{\G} = \\
(\# X)^{-1} [\G_{0,n}^{\rm h}(N):\G]\ov{(\psi')^{-1}
(\det
q)} |\det (qq^*)|_{\bfQ}^{-\ov{s}} \left<f_q,
E_q(\cdot,s,m, (\psi')^c,N) \theta_{\chi,q}
\right>_{\G_{0,n}^{\rm h}(N)}=\\
(\# X)^{-1} [\G_{0,n}^{\rm h}(N):\G]\psi'
(\det
q) |\det (qq^*)|_{\bfQ}^{-\ov{s}} \left<f_q,
E_q(\cdot,s,m, (\psi')^c,N) \theta_{\chi,q}
\right>_{\G_{0,n}^{\rm h}(N)}.\end{multline*} Hence \bml \label{form73} \left< f_q ,
\theta_{\chi,q}
E(\cdot,\ov{s}+n, m,
\G) \right>_{\G} = \\
(\# X)^{-1} [\G_{0,n}^{\rm h}(N):\G]\psi'
(\det
q) |\det (qq^*)|_{\bfQ}^{-s-n} \left<f_q,   
E_q(\cdot,\ov{s}+n,m, (\psi')^c,N) \theta_{\chi,q}
\right>_{\G_{0,n}^{\rm h}(N)}.
 \end{multline}
Combining (\ref{form71}), (\ref{form72}) and (\ref{form73}) we finally
obtain \begin{multline} \label{inner222} D(s+3n/2, f, \xi) = A_N (\#
X)^{-1} [\G_{0,n}^{\rm h}(N): \G] \G((s))^{-1} (\det
\tau)^{s+(k+l)/2}   
|\det
r|_K^{-s-n/2} \times \\
\times \sum_{q \in Q} |\det qq^*|_{\bfQ}^{-n} \left<f_q,
E_q(\cdot,\ov{s}+n,m, (\psi')^c,N) \theta_{\chi,q}
\right>_{\G_{0,n}^{\rm h}(N)}.\end{multline} By our assumption $\det qq^*=1$, so we
get \be \label{inner223} \bs D(s+3n/2, f, \xi) = & A_N (\#
X)^{-1} [\G_{0,n}^{\rm h}(N): \G] \G((s))^{-1} (\det
\tau)^{s+(k+l)/2}
|\det
r|_K^{-s-n/2}  \\
& \times \sum_{q \in Q} \left<f_q,
E_q(\cdot,\ov{s}+n,m, (\psi')^c,N) \theta_{\chi,q}
\right>_{\G_{0,n}^{\rm h}(N)}= \\
=& A_N (\#
X)^{-1} [\G_{0,n}^{\rm h}(N): \G] \G((s))^{-1} (\det
\tau)^{s+(k+l)/2}
|\det
r|_K^{-s-n/2} \\
&\times \left<f,
E(\cdot,\ov{s}+n,m, (\psi')^c,N) \theta_{\xi}
\right>_{\G_{0,n}^{\rm h}(N)}.\end{split}\ee

We now relate $D(s,f,\xi)$ to the standard $L$-function $L_{\rm st}(f, s, \psi)=Z(s,f,\psi)$ of $f$ as
defined in \cite{Shimura00}, section 20.6. In this section we assume
that $(D_K, \ff_{\xi})=1$. (This assumption is not strictly necessary, but our
formulas complicate if we do not make it.) 
Let $\xi$, $\varphi$, $\psi'$ be Hecke characters
of $K$ as we defined them above. Let $N=\bfZ \cap (\cond \psi')$.

One has (by \cite{Shimura00}, formula (22.19)) \be \label{diri64} D(s,f,\xi) = \frac{c_{f_r}(\tau)
Z(s,f,\xi)}{\prod_{p \in \mathbf{b}} g_p(\xi^*(p \OK) p^{-2s})
\prod_{j=1}^n L(2s-n-j+1, \chi_K^{n+j-1}\xi_{\bfQ}^*)}.\ee Here
$c_{f_r}(\tau)$ is the $\tau$-Fourier coefficient of $f_r$, 
and $L(s,\psi)$ is the usual Dirichlet $L$-function of $\psi$ while $\prod_{p \in
\mathbf{b}} g_p(\xi^*(p \OK) p^{-2s})$ is defined in \cite{Shimura00},
Lemma 20.5. Note that the definition of Fourier coefficients in \cite{Shimura00} (cf. Proposition 20.2 in [loc.cit.]) differs from ours and from the one given in (18.6.6) in \cite{Shimura97} (which in turn agrees with ours) by the factor $e^{-2 \pi \tr \tau}$ (so, $c_{f}(\tau, r)$ in (22.19) of \cite{Shimura00} agrees with our $c_{f_r}(\tau)$).

Combining (\ref{diri64}) with (\ref{inner223}) we obtain

\be \label{inner224} \left<f,
E(\cdot,s,m, (\psi')^c,N) \theta_{\xi}
\right>_{\G_{0,n}^{\rm h}(N)}
= C(s) \frac{
Z(\ov{s}+n/2,f,\xi)}{
\prod_{j=1}^n L(2\ov{s}-j+1, \chi_K^{n+j-1}\xi_{\bfQ}^*)},
\ee where \be \label{formulaforC}C(s) = \frac{ (\# X) \G((\ov{s}-n)) (\det
\tau)^{-\ov{s}+n -(k+l)/2} |\det r|_K^{\ov{s}-n/2}
c_{f_r}(\tau)}{A_N[\G_{0,n}^{\rm h}(N): \G]\prod_{p \in \mathbf{b}} g_p(\xi^*(p \OK)  
p^{-2\ov{s}-n})}.\ee 
Following \cite{Shimura00} (17.24), we define $$ D(g,s,N,m,\psi):=
E(g,s,N,m,\psi) \prod_{j=1}^{n}
L(2s-j+1, \psi_{\bfQ}
\chi_K^{j-1}).$$
 Using (\ref{inner224}) we finally  get 
 \be \label{inner225} \left<
D(\cdot,s,m, (\psi')^c,N) \theta_{\xi}, f
\right>_{\G_{0,n}^{\rm h}(N)}
= \ov{C(s)}
\ov{Z(\ov{s}+n/2,f,\xi)}.
\ee

We record this as a theorem.
\begin{thm} \label{ransel} Assume $(h_K, 2n)=1$. Let $f \in \mM_{n,k}(N)$ be a Hecke eigenform. Let $\xi$, $\psi'$ be as above. Then 
$$ \left<
D(\cdot,s,m, (\psi')^c,N) \theta_{\xi}, f
\right>_{\G_{0,n}^{\rm h}(N)}
= \ov{C(s)} \cdot 
\ov{L_{\rm st}(f, \ov{s}+n/2,\xi)}$$ with $C(s)$ defined by (\ref{formulaforC}). \end{thm}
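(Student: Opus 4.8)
The statement is a clean repackaging of the chain of identities (\ref{inner223})--(\ref{inner225}) derived just above, together with the identification $L_{\rm st}(f,s,\psi)=Z(s,f,\psi)$ recorded in section \ref{L-functions}. The plan is to assemble the Rankin--Selberg unfolding, Shimura's expression of his Dirichlet series in terms of the standard $L$-function, and the defining $L$-factors of $D(\cdot,s,m,\psi,N)$, and then pass to complex conjugates. First I would establish the unfolded identity (\ref{inner223}): this comes from substituting Shimura's formulas (22.18b) and (22.9) of \cite{Shimura00} (recorded here as (\ref{form71}) and (\ref{form72})) into the level-transformation (\ref{form73}) relating the inner products at $\Gamma$ and $\Gamma_{0,n}^{\rm h}(N)$, where the level-shuffling identities (\ref{inner111}) and (\ref{gammagammaprim}) were already proved using Lemma \ref{allhecke3}, Lemma \ref{decomp905}, and Lemma \ref{gulemma}. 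The hypothesis $(h_K,2n)=1$ lets me take $Q=\mB$ as in Corollary \ref{scalarcor}, so that every $p_q$ is scalar and $\det qq^{*}=1$; this collapses the sum over $q\in Q$ in (\ref{inner222}) into a single global Petersson product, yielding (\ref{inner223}).

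Next I would invoke Shimura's formula (22.19) of \cite{Shimura00}, recorded here as (\ref{diri64}), which writes $D(s,f,\xi)$ as $c_{f_r}(\tau)\,Z(s,f,\xi)$ divided by the auxiliary factors $\prod_{p\in\mathbf{b}}g_p(\cdots)$ and $\prod_{j=1}^{n}L(2s-n-j+1,\chi_K^{n+j-1}\xi_{\bfQ}^{*})$. Here I must keep track of the normalization of Fourier coefficients: Shimura's convention (Proposition 20.2 of \cite{Shimura00}) differs from ours by the factor $e^{-2\pi\tr\tau}$, so that Shimura's $c_f(\tau,r)$ equals our $c_{f_r}(\tau)$. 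Substituting (\ref{diri64}) into (\ref{inner223}) produces (\ref{inner224}), in which the product $\prod_{j=1}^{n}L(2\overline{s}-j+1,\chi_K^{n+j-1}\xi_{\bfQ}^{*})$ appears in the denominator, all remaining factors being absorbed into $C(s)$ as defined in (\ref{formulaforC}).

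The final step is to multiply through by these very $L$-factors. By construction $D(\cdot,s,m,(\psi')^{c},N)=E(\cdot,s,m,(\psi')^{c},N)\prod_{j=1}^{n}L(2s-j+1,((\psi')^{c})_{\bfQ}\chi_K^{j-1})$, and using $\psi'=\xi^{-1}\varphi^{-n}$ with $\varphi|_{\AQ^{\times}}=\chi_K$ (Shimura's (22.14b)), together with $\chi_K^{2}=\mathbf{1}$ and the unitarity of the characters, one checks that the product of $L$-factors in $D$ is exactly the complex conjugate of the denominator in (\ref{inner224}). Since the Petersson product is conjugate-linear in its second argument, $\langle D\theta_\xi,f\rangle=\overline{\langle f,D\theta_\xi\rangle}$; pulling the scalar $L$-factors out front and cancelling them against the denominator of (\ref{inner224}) then yields (\ref{inner225}), namely $\langle D(\cdot,s,m,(\psi')^{c},N)\theta_\xi,f\rangle_{\Gamma_{0,n}^{\rm h}(N)}=\overline{C(s)}\,\overline{Z(\overline{s}+n/2,f,\xi)}$. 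Replacing $Z$ by $L_{\rm st}$ gives the theorem.

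The main obstacle is precisely the bookkeeping in this last step: I must verify that the conjugation of the argument ($2\overline{s}-j+1$ against $2s-j+1$) together with the inversion/conjugation of the character indices ($\chi_K^{n+j-1}\xi_{\bfQ}^{*}$ against $((\psi')^{c})_{\bfQ}\chi_K^{j-1}$) really makes the two products of Dirichlet $L$-functions match term by term. This is where the identities $\chi_K^{-1}=\chi_K$ (so that $\chi_K^{\,j-1-n}=\chi_K^{\,n+j-1}$ as characters) and $\overline{L(\overline{s},\chi)}=L(s,\overline{\chi})$ are used; everything else is the assembly of already-established formulas.
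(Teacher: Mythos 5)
Your proposal reproduces the paper's own derivation: the paper proves Theorem \ref{ransel} exactly by assembling (\ref{inner111}) and (\ref{gammagammaprim}) with Shimura's (22.18b), (22.9) and (22.19) to get (\ref{inner223}) and (\ref{inner224}), using $(h_K,2n)=1$ to make the $p_q$ scalar so that $\det qq^*=1$ collapses the sum over $Q$, and then conjugating and multiplying by the defining $L$-factors of $D$ to reach (\ref{inner225}). Your character bookkeeping in the last step (matching $((\psi')^c)_{\bfQ}\chi_K^{j-1}$ against the conjugate of $\chi_K^{n+j-1}\xi_{\bfQ}^*$ via $\psi'=\xi^{-1}\varphi^{-n}$, $\varphi|_{\AQ^{\times}}=\chi_K$ and $\chi_K^2=\mathbf{1}$) is correct and is the same verification the paper leaves implicit.
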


\subsection{$\ell$-integrality of Fourier coefficients of Eisenstein series
and theta series} \label{ellint} 

Let $D(g,s,N,m,\psi)$ be as above. Fix $r \in
\GL_n(\AKf)$.
As for the Eisenstein series $E$, we define $$D_r(Z,s,m,\psi,N)=D_{p_r}(Z,s,m,\psi,N) =
j(g,\bfi_n)^{m} D(p_rg,s,N,m,\psi),$$ where $g \in U_n(\bfR)$ and $Z=g \bfi_n$

\begin{thm} Let $r \in \GL_n(\AKf)$.
Then $D_r(Z, n-m/2, N,m,\psi)$ is a holomorphic function of $Z$.
\end{thm}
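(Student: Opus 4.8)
The plan is to reduce the claim to Shimura's explicit description of the Fourier coefficients of hermitian Siegel Eisenstein series together with his analysis of the archimedean confluent hypergeometric functions that occur in them. First I would observe that, by the definition of $D_r$ (namely $D_r = E_r \cdot \prod_{j=1}^n L(2s-j+1,\psi_{\bfQ}\chi_K^{j-1})$), the two functions differ only by a factor that is independent of $Z$; hence the entire $Z$-dependence of $D_r(Z,s,m,\psi,N)$ is that of $E_r(Z,s,m,\psi,N)$, whose explicit shape is recorded in (\ref{Eisen737}). The one subtlety to flag at the outset is that the individual $L$-factors may have a pole or a zero at $s=n-m/2$, so one may not simply treat them as a harmless nonzero scalar; their genuine role is to clear the denominators appearing in the arithmetic part of the Fourier coefficients of $E_r$.

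Concretely, I would write the Fourier expansion of $E_r(Z,s,m,\psi,N)$ in Shimura's form (\cite{Shimura00}, sections 18--19), namely
$$E_r(Z,s,m,\psi,N) = \sum_{\tau \in S_n(\bfQ)} A(\tau,s)\, \xi\big(\textup{Im}(Z),\tau;\alpha(s),\beta(s)\big)\, e(\tr(\tau\,\textup{Re}(Z))),$$
where $A(\tau,s)$ collects the finite-place arithmetic data (a product of local Siegel series over the finite places, together with a Gauss-sum type factor), and $\xi$ is Shimura's archimedean confluent hypergeometric function with parameters $\alpha(s),\beta(s)$ depending linearly on $s$. All of the $Z$-dependence is thus carried by the factors $\xi\big(\textup{Im}(Z),\tau;\alpha(s),\beta(s)\big)\,e(\tr(\tau\,\textup{Re}(Z)))$, and everything else is a function of $s$ alone.

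The heart of the argument is the degeneration of $\xi$ at the special value $s_0=n-m/2$. At this point the parameters $\alpha(s_0),\beta(s_0)$ fall into the range in which Shimura's confluent hypergeometric function becomes elementary: it vanishes unless $\tau\geq 0$, and for $\tau\geq 0$ it reduces, up to an explicit constant, to $e^{-2\pi\tr(\tau\,\textup{Im}(Z))}$. Multiplying by $e(\tr(\tau\,\textup{Re}(Z)))$ then yields $e(\tr(\tau Z))$, which is holomorphic in $Z$, while the support of the sum is simultaneously forced onto $\tau\geq 0$. After multiplying through by the normalizing $L$-factors — which at $s_0$ exactly cancel the denominators of the $A(\tau,s_0)$, leaving finite arithmetic coefficients $c(\tau)$ — one obtains a Fourier series $\sum_{\tau\geq 0} c(\tau)\,e(\tr(\tau Z))$ with finite coefficients, which is manifestly a holomorphic function of $Z$.

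The main obstacle is precisely the analytic input of the preceding paragraph: matching the specific value $s_0=n-m/2$ with the locus on which Shimura's confluent hypergeometric function degenerates to a holomorphic exponential, and checking that the poles of the archimedean $\Gamma$-factor and the poles/zeros of the normalizing Dirichlet $L$-functions conspire so that the surviving coefficients $c(\tau)$ are finite. This is exactly the content of Shimura's evaluation of $\xi$ at the relevant parameters (\cite{Shimura00}, together with the confluent hypergeometric computations of \cite{Shimura97}); once that evaluation is invoked, holomorphy in $Z$ is immediate. I would therefore organize the write-up so that all the adelic and combinatorial bookkeeping is isolated into the coefficients $A(\tau,s)$, and holomorphy is deduced purely from the archimedean degeneration at $s_0$ together with the nonnegativity $\tau\geq 0$ of the surviving terms.
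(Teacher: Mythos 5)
Your proposal is correct and follows essentially the same route as the paper, which simply cites Theorem 17.12(iii) of \cite{Shimura00}; what you have written is an accurate sketch of what goes into that cited result (the Fourier expansion with arithmetic coefficients times Shimura's confluent hypergeometric factors, the degeneration of those factors at $s=n-m/2$ to $e(\tr(\tau Z))$ supported on $\tau\geq 0$, and the cancellation of the Siegel-series denominators by the normalizing Dirichlet $L$-functions). Since the paper treats all of this as a black box, your write-up is just a more explicit version of the same argument.
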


\begin{proof} This follows from Theorem 17.12(iii) in \cite{Shimura00}.
\end{proof}

For a function $A: U_n(\AQ) \rightarrow \bfC$ set $A^*(g) = A(g
\eta_{\rm f}^{-1})$, where $\eta_{\rm f} \in U_n(\AQ)$ is a matrix with trivial infinity
component and all finite components equal to the matrix $J=\bsmat & -I_n \\
I_n\esmat$.

We write
$$D^*\left( \bmat q & \sigma q \\ & \hat{q} \emat ,n-m/2,N,m,\psi\right)
=
\sum_{h \in S} c(h,q) e_{\AQ}(h\sigma).$$

\begin{thm} \label{fce} Suppose we take $q=(y^{1/2}, q_1)$, where $y^{1/2}$ (resp. $q_1$) denotes the infinite (resp. finite) component of $q$. One has
\be\begin{split} c(h,q) &= (*) e^{-2\pi \tr((q
q^*)_{\infty} h)} \det(qq^*)_{\infty}^{m/2} \psi(\det q_1) |\det
q_1|_K^{m/2} \times \\
& N^{-n^2} \Phi \pi^{n(n+1)/2} \cdot \frac{\prod_{i=0}^{n-1-\turk
(h)} L_{N}(n-m-i, \psi, \chi_K^{n+i-1})}
{\prod_{i=0}^{n-1} \Gamma(n-i)
},\end{split}\ee
where $\Phi
$ is an algebraic integer and $(*)$ is an $\ell$-adic unit. If $r<1$ we 
set
$\prod_{j=0}^r =
1$.\end{thm}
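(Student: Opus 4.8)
The plan is to invoke Shimura's explicit formula for the Fourier coefficients of the Siegel Eisenstein series and then to separate the archimedean transcendental factors from the arithmetic ones. First I would recall from \cite{Shimura00} (the confluent hypergeometric computation of Chapter III together with the Fourier-coefficient formula in Proposition 20.2, and the convention (18.6.6) of \cite{Shimura97} for the coefficients) that for $q=(y^{1/2},q_1)$ the coefficient $c(h,q)$ of $D^*$ factors as a product of an elementary part $e^{-2\pi\tr((qq^*)_\infty h)}\det(qq^*)_\infty^{m/2}\psi(\det q_1)|\det q_1|_K^{m/2}$, an archimedean confluent hypergeometric factor, and a product over finite primes of local Siegel series (local densities) attached to $h$. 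The passage from $E$ to $D$, i.e.\ the insertion of the normalizing product $\prod_{j=1}^n L(2s-j+1,\psi_{\bfQ}\chi_K^{j-1})$, is exactly what converts the product of \emph{good} local densities into the finite product of $L$-values $L_N(n-m-i,\psi,\chi_K^{n+i-1})$ appearing in the statement.

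Next I would specialize to the holomorphy point $s=n-m/2$ provided by the preceding theorem. At this value the archimedean factor ceases to be a genuine special function and collapses to an explicit constant: by Shimura's evaluation of the confluent hypergeometric function at the relevant half-integral argument (the computation behind the factor $\Gamma((s))$ recorded in Section \ref{The standard L-function} and \cite{Shimura00}, (16.47)) it equals a rational multiple of $\pi^{n(n+1)/2}\big(\prod_{i=0}^{n-1}\Gamma(n-i)\big)^{-1}$. This produces the displayed factors $\pi^{n(n+1)/2}$ and $\prod_{i=0}^{n-1}\Gamma(n-i)$, and it is here that the truncation of the $L$-product at $i=n-1-\turk(h)$ enters: for $h$ of rank $\turk(h)$ the relevant local densities force the factors beyond that index to disappear, leaving only $0\le i\le n-1-\turk(h)$.

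It then remains to prove the two integrality assertions, which I would do by grouping everything except the explicit $\pi$, $\Gamma$, and $L_N$ factors into $\Phi$ and the leading constant $(*)$. The finite local densities at primes $p\nmid N$ are, by Shimura's formulas, polynomials in $p^{-1}$ with rational integer coefficients evaluated at $s=n-m/2$, hence algebraic integers; the finitely many factors at $p\mid N$ contribute the denominator $N^{-n^2}$ together with further algebraic integers arising from the ramified local integrals, using (\ref{cond11})--(\ref{cond12}) and the fact that $\psi$ is unitary of conductor dividing $N$. Collecting these gives an algebraic integer $\Phi$. For the unit statement I would track the power of each rational prime $p$ produced by the normalizations $|\det q_1|_K$, by the local densities, and by the level and volume constants: since by hypothesis $\ell\nmid 2D_K N$, every such power involves only primes $p\neq\ell$, and likewise the roots of unity occurring in the character values are $\ell$-adic units, so the aggregate constant $(*)$ has $\ell$-valuation zero.

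The hard part will be the bad-prime bookkeeping: verifying that the local factors at the primes $p\mid N\ell$ and at $p\mid\det h$ are genuinely $\ell$-integral, with $N^{-n^2}$ the only denominator, and checking that the archimedean collapse at $s=n-m/2$ together with the Siegel-series-to-$L$-factor relation introduces no hidden power of $\ell$. Concretely this reduces to a careful reading of the local computations of \cite{Shimura00}, Chapters III and IV, specialized to our situation where the level is $N$ and the representatives $p_q$ are scalar, and to confirming that the rational constant extracted from the confluent hypergeometric evaluation is an $\ell$-adic unit.
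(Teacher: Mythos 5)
Your proposal is correct and follows essentially the same route as the paper: the paper's proof is a direct citation of Propositions 18.14 and 19.2 and Lemma 18.7 of \cite{Shimura97} (the factorization of the Fourier coefficient into the elementary part, the local Siegel series, and the archimedean integral) together with formulas (4.34K) and (4.35K) of \cite{Shimura82} (the explicit evaluation of the confluent hypergeometric factor at the holomorphy point), followed by exactly the kind of bookkeeping of good and bad local factors that you outline. The paper compresses all of this into "a long but straightforward calculation," so your write-up is in effect an expansion of the same argument rather than an alternative to it.
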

\begin{proof} The theorem follows from Propositions 18.14 and
19.2 in \cite{Shimura97}, combined with Lemma 18.7 of
\cite{Shimura97} and formulas (4.34K) and (4.35K) in
\cite{Shimura82}. It is a long but straightforward
calculation. \end{proof}

\begin{definition} \label{admis}
Let $\mB$ be a base. We will say that $\mB$ is \emph{admissible} if
\begin{itemize}
\item All $b \in \mB$ are scalar matrices with $bb^*=I_n$, which implies
$p_b$ are scalar matrices with $p_b p_b^*=I_{2n}$;
\item For every $b \in \mB$ there exists a rational prime $p\nmid
2D_K \ell$
such that $b_{\fq}=I_n$ for all $\fq \nmid p$ and $b_{\infty}=I_n$.
\end{itemize} The set of primes $p$ for which $b_{\fq}\neq I_n$ with $\fq \mid p$ (or by a slight abuse of
terminology the product of such primes) will be called the
\emph{support} of $\mB$.
\end{definition}

\begin{rem} If $(h_K, 2n)=1$ it follows from Corollary \ref{scalarcor}
together with the Tchebotarev Density Theorem that an admissible base
exists. \end{rem}

For $r \in \GL_2(\AKf)$ write $D^*_r(Z)$
for $D^*_r(Z,n-m/2, ,N,m,\psi)$.

\begin{cor} \label{intE}  Assume $\ell \nmid
N(n-2)!$. If
$\mB$ is an admissible base whose support is relatively prime to $\cond 
\psi$, then
for every $h \in S$ and for every $b \in \mB$, the product
$$\pi^{-n(n+1)/2}c_{h,b}$$ lies in the ring of integers of a finite
extension
of $\bfQ_{\ell}$. Here $c_{h,b}$ stands for the $h$-Fourier
coefficient of $D_b(Z)$.\end{cor}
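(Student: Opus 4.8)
The plan is to read the corollary off Theorem \ref{fce}, which already records the $(h,q)$-Fourier coefficient $c(h,q)$ of the flipped series $D^*_r$ in closed form. First I would reduce the assertion for $D_b$ to the corresponding one for $D^*_b$. Since $\eta_{\rm f}$ has trivial infinite component and all finite components equal to $J=\bsmat & -I_n \\ I_n & \esmat$, and since $J$ lies in $U_n(\bfZ_p)$ for every $p$ (it is integral and satisfies $JJ^*=I_{2n}$), right translation by $\eta_{\rm f}^{-1}$ corresponds, under the dictionary of Proposition \ref{adelicclassical}, to acting by the integral matrix $J\in U_n(\bfZ)$. Therefore, by the $q$-expansion principle (Theorem \ref{qexpansion12}, applicable because $\ell\nmid N$), passing between $D_b$ and its $J$-translate amounts to applying an element of $U_n(\bfZ)$ and preserves the property that $\pi^{-n(n+1)/2}$ times a Fourier coefficient is $\ell$-integral. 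Hence it suffices to establish the claim for the coefficients $c(h,q)$ of $D^*_b$ supplied by Theorem \ref{fce}, with $q=(y^{1/2},b)$.

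Next I would multiply the expression of Theorem \ref{fce} by $\pi^{-n(n+1)/2}$ and verify that every factor other than the ratio of $L$-values is an $\ell$-adic unit or an algebraic integer. The factor $(*)$ is an $\ell$-adic unit by Theorem \ref{fce}; the archimedean factors $e^{-2\pi\tr((qq^*)_{\infty} h)}\det(qq^*)_{\infty}^{m/2}$ are precisely those absorbed into the normalization of the holomorphic (that is, $Y$-independent) coefficient $c_{h,b}$, which is legitimate since $\mB$ is admissible and $p_b p_b^*=I_{2n}$; the factor $N^{-n^2}$ is an $\ell$-unit because $\ell\nmid N$; and $\Phi$ is an algebraic integer. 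For the character factor $\psi(\det q_1)|\det q_1|_K^{m/2}$, admissibility forces $b$ to be supported only at primes of the support of $\mB$, which are coprime to $2D_K\ell$ and, by hypothesis, to $\cond\psi$; thus $\psi(\det b)$ is the value of $\psi$ at an idele unramified for $\psi$ and supported away from $\ell$, hence a product of $\ell$-units times a root of unity, while $|\det b|_K^{m/2}$ is a power of primes prime to $\ell$. The explicit $\pi^{n(n+1)/2}$ appearing in Theorem \ref{fce} is exactly cancelled by the normalization.

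What remains, and what I expect to be the crux, is the $\ell$-integrality of
\[
\frac{\prod_{i=0}^{\,n-1-\turk(h)} L_N(n-m-i,\psi,\chi_K^{\,n+i-1})}{\prod_{i=0}^{n-1}\Gamma(n-i)}.
\]
Here the superfluous Euler factors distinguishing $L_N$ from the primitive Dirichlet $L$-function sit at primes dividing $N$, hence are $\ell$-units. The arguments $n-m-i$ are integers, and I would evaluate these special values by the classical formulas: at non-positive integers they are, up to sign, generalized Bernoulli numbers divided by the argument, and at positive integers of matching parity they are algebraic multiples of an explicit power of $\pi$ dictated by the functional equation. Summing the resulting $\pi$-exponents over $i$ should reproduce exactly $n(n+1)/2$, and the $\Gamma$-factors produced by the functional equation should combine with the denominator $\prod_{i=0}^{n-1}\Gamma(n-i)=\prod_{j=0}^{n-1} j!$ so that, after all cancellation, the surviving rational denominator divides a power of $N$ times $(n-2)!$; the integrality of the remaining generalized Bernoulli numbers then follows from the von Staudt--Clausen theorem and the standing hypotheses on $\ell$. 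The main difficulty is thus bookkeeping: carrying out this matching of $\pi$-powers and factorials precisely, using Shimura's explicit formulas (Propositions 18.14 and 19.2 of \cite{Shimura97}, together with formulas (4.34K) and (4.35K) of \cite{Shimura82}) that already underlie Theorem \ref{fce}, and verifying that the hypothesis $\ell\nmid N(n-2)!$ is exactly what is needed to clear the denominators.
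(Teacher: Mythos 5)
Your first two paragraphs track the paper's proof: the paper likewise reads everything off Theorem \ref{fce}, writes $c_h=\psi(\det b)\,\pi^{n(n+1)/2}\prod_{i=0}^{n-1-\turk(h)}L_N(n-m-i,\psi,\chi_K^{n+i-1})\cdot x$ with $x\in\Oo$ (the factors $(*)$, $N^{-n^2}$, $\Phi$, the archimedean terms and the $\Gamma$-denominator all being absorbed into $x$ using $\ell\nmid N(n-2)!$ and admissibility of $\mB$), and uses $\psi(\det b)\in\Oo^{\times}$ exactly as you do. The gap is in your third paragraph, and it contradicts your second: having correctly noted that the explicit $\pi^{n(n+1)/2}$ of Theorem \ref{fce} is entirely consumed by the normalization $\pi^{-n(n+1)/2}$, you then propose that the $L$-values should contribute, via the functional equation, additional powers of $\pi$ ``summing to exactly $n(n+1)/2$''. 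If they did, the normalized coefficient would be a nonzero power of $\pi$ times an algebraic number and the corollary would be false.

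The observation you are missing is that every argument $n-m-i$ occurring in Theorem \ref{fce} is a \emph{non-positive} integer (in the setting where $D_r(Z,n-m/2,N,m,\psi)$ is the holomorphic series used here one has $m\ge n$; in the application of section \ref{Congruence}, $m=-t-n$ with $t<-6$ and $n=2$). Dirichlet $L$-values at non-positive integers are algebraic numbers --- essentially generalized Bernoulli numbers --- and carry no power of $\pi$ at all; there is no functional-equation bookkeeping, no case of ``positive integers of matching parity'', and the sole source of $\pi$ is the explicit factor already displayed in Theorem \ref{fce}. What actually has to be proved is the $\ell$-integrality of $L(j,\psi')$ for $j<0$ and $\psi'$ of conductor dividing $N$, together with that of the finitely many Euler factors $(1-\psi'(p)p^{-j})$ at $p\mid N$ separating $L_N$ from $L$; the paper gets both from $\ell\nmid N$ and \cite{Washingtonbook}, Corollary 5.13. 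Von Staudt--Clausen is not the right tool here: it concerns ordinary Bernoulli numbers and permits denominators at primes $p$ with $p-1$ dividing the index, a possibility not excluded by the hypothesis $\ell\nmid N(n-2)!$.
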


\begin{proof} Let $\Oo$ be the ring of integers in some sufficiently
large finite extension of $\bfQ_{\ell}$. For any admissible base $\mB$,
one has by Theorem \ref{fce} $$c_h = \psi( \det
b)\pi^{n(n+1)/2}\cdot \prod_{i=0}^{n-1-\turk
(h)} L_{N}(n-m-i, \psi, \chi_K^{n+i-1}) \cdot x,$$ where $x \in \Oo$. So,
the corollary follows from the fact that $\psi(\det b) \in \Oo^{\times}$ upon noting that 
for
every
Dirichlet character $\psi'$ of conductor dividing $N$ and every $n \in
\bfZ_{<0}$, one has $L(n, \psi') \in \bfZ_{\ell}[\psi']$ (by a simple
argument using \cite{Washingtonbook}, Corollary 5.13) and
$(1-\psi'(p)p^{-n}) \in \bfZ_{\ell}[\psi']$ for every $p \mid N$.
\end{proof}

We now turn to the theta series. 
First note that $\xi_{\ff_{\xi}}( \det
g)^r =1$ for a sufficiently large integer $r$ (because
$\xi_{\ff_{\xi}}$ is a character of finite order). So
$\lambda(\alpha)\neq 0$ only if $\alpha \in M_n(K) \cap
M_n(\hat{\Oo}_K) = M_n(\OK)$ and for such $\alpha$ one has $\lambda(\alpha)
\ov{\det \alpha} \in \Oo$ (more precisely  $\lambda(\alpha)$ is a root of
unity in $\Oo$ and $\det \alpha \in \OK$).

Fix $r,\tau$ as in section \ref{The standard L-function}. Let $q \in \GL_n(\AKf)$.
\begin{prop} Assume $q_v = r_v = I_n$ for every $v \mid \ff_{\xi}$.
Write $$A(\sigma, q,r)=\{ \alpha \in M_n(K) \cap r M_n(\hat{\Oo}_K) q^{-1}
\mid \alpha^* \tau \alpha =\sigma, \hs \alpha_v \in \GL_n(\OKv) \hs \textup{for
all} \hs v \mid \ff_{\xi}\}.$$ Then the Fourier coefficient
$c_{\theta_{\xi}}(\sigma, q)=0$ if $\det \sigma=0$. If $\det \sigma
\neq 0$, one gets $$c_{\theta_{\xi}}(\sigma, q)=e^{-2 \pi \tr \sigma}|\det q|_K^{n/2}
\varphi^n(\det q) \xi_{\ff_{\xi}}(\det q) \xi (\det r) \sum_{\alpha \in
A(\sigma, q,r)} \xi_{\ff_{\xi}}(\det \alpha) \ov{\det \alpha},$$ where $|\cdot|_K$ denotes the idele norm on $\AK^{\times}$ (cf. \cite{Shimura00}, p.180). \end{prop}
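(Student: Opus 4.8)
The plan is to compute $c_{\theta_\xi}(\sigma,q)$ directly from its defining relation (\ref{fe143}), i.e.\ by evaluating $\theta_\xi$ at the element $\bmat I_n & X \\ & I_n \emat p_q$ (with $X \in S_n(\AQ)$ the unipotent variable, which I separate notationally from the Fourier index $\sigma$) and reading off the coefficient of $e_{\AQ}(\tr(\sigma X))$. Since $\bmat I_n & X \\ & I_n \emat p_q$ lies in the Siegel parabolic $P=M_PU_P$, the Weil-representation action $\lambda \mapsto \lambda^g$ of Theorem A5.4 of \cite{Shimura00} is given by the ``easy'' formulas on the Levi and unipotent parts, with no Fourier transform and hence no Weil index: the Levi element $p_q$ rescales the argument of $\lambda$ and multiplies it by a character and an idele-norm factor, while the unipotent factor only multiplies the theta kernel by a quadratic exponential that merges into $e(\tr(\alpha^*\tau\alpha Z))$. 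Substituting this into $\theta_\xi(Z,\lambda)=\sum_{\alpha\in M_n(K)}\lambda(\alpha)\overline{\det\alpha}\,e(\tr(\alpha^*\tau\alpha Z))$ and separating the infinite place (where $Z=\bfi_n+X_\infty$ produces the factor $e^{-2\pi\tr(\alpha^*\tau\alpha)}$) from the finite places (which assemble the remaining $e_{\AQ}$) yields, after collecting terms according to the value $\sigma=\alpha^*\tau\alpha\in S_n(\bfQ)$, a Fourier coefficient of exactly the claimed shape.

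First I would make the support of the transformed Schwartz function explicit. By the definition of $\lambda$ and the transformation rule for $p_q$, the function $\lambda^{p_q}$ is supported on those $\alpha$ with $\alpha q\in M_n(\hat{\Oo}_K)$ and $(\alpha q)_v\in\GL_n(\OKv)$ for $v\mid\ff_\xi$; together with the fixed auxiliary element $r$ from section \ref{The standard L-function} (which shifts the reference lattice) this restricts $\alpha$ to $r M_n(\hat{\Oo}_K)q^{-1}$ with $\alpha_v\in\GL_n(\OKv)$ at the ramified places, which is precisely the index set $A(\sigma,q,r)$. Here the hypothesis $q_v=r_v=I_n$ for $v\mid\ff_\xi$ is what guarantees that the local conditions at $v\mid\ff_\xi$ reduce to $\alpha_v\in\GL_n(\OKv)$ and that $\xi_{\ff_\xi}$ may be evaluated on $\det\alpha$ unambiguously. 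Next I would track the scalar factors produced by the action of $p_q$: the idele-norm factor $|\det q|_K^{n/2}$, the character values $\xi_{\ff_\xi}(\det q)$ and $\xi(\det r)$ coming from $\lambda$ and its normalization, and the factor $\varphi^n(\det q)$ arising from the splitting of the metaplectic cover over $U_n$ (governed by the character $\varphi$ with $\varphi|_{\AQ^\times}=\chi_K$). Collecting these produces the prefactor $e^{-2\pi\tr\sigma}|\det q|_K^{n/2}\varphi^n(\det q)\xi_{\ff_\xi}(\det q)\xi(\det r)$ in front of $\sum_{\alpha\in A(\sigma,q,r)}\xi_{\ff_\xi}(\det\alpha)\overline{\det\alpha}$.

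The vanishing when $\det\sigma=0$ is immediate from positivity: since $\tau\in\mS^+$ is positive definite, any $\alpha$ with $\alpha^*\tau\alpha=\sigma$ satisfies $|\det\alpha|^2\det\tau=\det\sigma=0$, hence $\det\alpha=0$, and the weight factor $\overline{\det\alpha}$ then annihilates the corresponding term; thus every contribution to $c_{\theta_\xi}(\sigma,q)$ vanishes and the index set $A(\sigma,q,r)$ is consistently empty of nonzero-weight terms.

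I expect the main obstacle to be the precise bookkeeping in Theorem A5.4: pinning down the normalization of the Weil-representation action so that the idele norm appears as $|\det q|_K^{n/2}$ and the discriminant contribution as $\varphi^n(\det q)$ (rather than some twist thereof), and checking that the local factors at $v\mid\ff_\xi$ combine correctly with $\xi_{\ff_\xi}$ and $\xi$. These are exactly the places where a normalization in \cite{Shimura00} must be matched carefully against our conventions---in particular against our Fourier-coefficient convention, which differs from Shimura's by the factor $e^{-2\pi\tr\tau}$ recorded in section \ref{The standard L-function}. Once the transformation formula of Theorem A5.4 is correctly normalized, the remainder is a routine rearrangement of the theta series.
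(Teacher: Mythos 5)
Your proposal is correct and follows essentially the same route as the paper: the paper's proof simply cites Shimura's formula (A5.11) — itself obtained from the Weil-representation action of Theorem A5.4, with $\omega'=\xi^{-1}\varphi^n$ — and notes the $e^{-2\pi \tr \tau}$ discrepancy in Fourier-coefficient conventions, which is precisely the computation you carry out explicitly. Your argument for the vanishing when $\det\sigma=0$ (positive-definiteness of $\tau$ forces $\det\alpha=0$, so the weight $\ov{\det\alpha}$ annihilates every term) is also the intended one.
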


\begin{proof} The Fourier coefficient of $\theta_{\xi}(g)$ is computed
in section A5 of \cite{Shimura00} (formula (A.5.11)), where we have     
$\omega' = \xi^{-1} \varphi^n$. Our formula is slightly reordered and
simplified due to the assumptions we imposed, but this is an easy
calculation. Again note the discrepancy in the definitions of Fourier coefficients between us and \cite{Shimura00} pointed out after formula (\ref{diri64}). \end{proof}

\begin{cor} \label{inttheta} Assume $r_v=I_n$ for all $v \mid \ff_{\xi}
D_K \ell$. For
an admissible base $\mB$ we have that
$c_{(\theta_{\xi})_b}(\sigma) = e^{2\pi \tr \sigma} c_{\theta_{\xi}}(\sigma,b) \in \Oo$ for all $b \in\mB$ and
all $\sigma \in \mS$. \end{cor}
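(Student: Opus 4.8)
The plan is to read the result straight off the explicit Fourier expansion of $\theta_{\xi}$ established in the preceding Proposition and then verify $\ell$-integrality one factor at a time. First I would apply that Proposition with $q=b$. This is legitimate because an admissible base may be chosen with support prime to $\ff_\xi$ (by the Tchebotarev argument in the Remark after Definition \ref{admis}), so that $b_v=I_n$ for every $v\mid\ff_\xi$, while by hypothesis $r_v=I_n$ for every $v\mid\ff_\xi D_K\ell$. When $\det\sigma=0$ the coefficient vanishes and there is nothing to prove, so assume $\det\sigma\neq 0$. Multiplying the formula of the Proposition by $e^{2\pi\tr\sigma}$ to pass to the classical normalization gives
\[
c_{(\theta_{\xi})_b}(\sigma)=|\det b|_K^{n/2}\,\varphi^n(\det b)\,\xi_{\ff_\xi}(\det b)\,\xi(\det r)\sum_{\alpha\in A(\sigma,b,r)}\xi_{\ff_\xi}(\det\alpha)\,\ov{\det\alpha},
\]
and it remains to show that each factor on the right lies in $\Oo$.

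The scalar prefactors are units. Since $b$ is a scalar matrix with $bb^*=I_n$ we have $\det b\,\ov{\det b}=1$, whence the idele norm $|\det b|_K=1$ and $|\det b|_K^{n/2}=1$. The factor $\xi_{\ff_\xi}(\det b)$ is a value of the finite-order character $\xi_{\ff_\xi}$, hence a root of unity in $\Oo^\times$. For the two remaining factors $\varphi^n(\det b)$ and $\xi(\det r)$ I would use that a unitary Hecke character takes $\ell$-adic unit values on ideles trivial at every place above $\ell$: here $\det b$ is supported at a single rational prime $p\nmid 2D_K\ell$ and $\det r$ is trivial at all $v\mid\ell$ (as $r_v=I_n$ there), while $\varphi$ and $\xi$ are unramified at all the relevant places, so each value is an algebraic number that is a unit at every prime above $\ell$, i.e. lies in $\Oo^\times$.

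Finally I would treat the sum. The set $A(\sigma,b,r)$ is finite: the equation $\alpha^*\tau\alpha=\sigma$ with $\tau>0$ confines $\alpha$ to a bounded region of $M_n(\bfC)$, intersected with the lattice $r\,M_n(\hat{\Oo}_K)\,b^{-1}\cap M_n(K)$. For each such $\alpha$ the value $\xi_{\ff_\xi}(\det\alpha)$ is again a root of unity, and $\ov{\det\alpha}\in\Oo$: writing $\alpha_v=r_v m_v b_v^{-1}$ with $m_v\in M_n(\Oo_{K,v})$ gives $\val_v(\det\alpha)\ge\val_v(\det r)-\val_v(\det b)$ for every finite $v$, and for $v\mid\ell$ both $r_v$ and $b_v$ equal $I_n$, so $\det\alpha$ is integral at all primes above $\ell$ and its conjugate lands in $\Oo$ under the fixed embedding $\ov{\bfQ}\hookrightarrow\ov{\bfQ}_{\ell}$. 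Hence the sum is a finite $\Oo$-linear combination, lying in $\Oo$, and so does the whole product.

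The main obstacle is the second paragraph: pinning down that the unitary Hecke characters $\varphi$ and $\xi$ contribute $\ell$-adic units rather than merely archimedean units. This is exactly where the hypotheses $r_v=I_n$ for $v\mid\ff_\xi D_K\ell$ and the choice of an admissible base supported away from $\ell$ (and from $\ff_\xi$) are used, together with the integrality of $\lambda(\alpha)\,\ov{\det\alpha}$ already arranged for the base point $q=I_n$ in the paragraph preceding the Corollary; the real content is to propagate that integrality through the twists by $b$ and $r$ while keeping careful track of the nonstandard Fourier-coefficient normalization noted after (\ref{diri64}).
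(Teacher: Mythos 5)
Your proof is correct and follows essentially the same route as the paper: apply the preceding Proposition with $q=b$, observe $|\det b|_K=1$ from $bb^*=I_n$, check that the character values $\varphi^n(\det b)$, $\xi_{\ff_\xi}(\det b)$, $\xi(\det r)$ are $\ell$-adic units because the relevant ideles are supported away from $\ell$ and the conductors, and use $r_v=b_v=I_n$ at $v\mid\ell$ to get $\alpha_v\in M_n(\Oo_{K,v})$ and hence $\ov{\det\alpha}\in\Oo$, finishing with the normalization identity $c_{(\theta_\xi)_b}(\sigma)=e^{2\pi\tr\sigma}c_{\theta_\xi}(\sigma,b)$ from (\ref{f12}). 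Your added remarks (finiteness of $A(\sigma,b,r)$, and explicitly arranging the support of $\mB$ to be prime to $\ff_\xi$ so that the Proposition's hypothesis $q_v=I_n$ for $v\mid\ff_\xi$ is met) only make explicit points the paper leaves implicit.
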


\begin{proof} Let us fix an admissible $\mB$. Then we automatically have
$|\det b|_K=1$. Since $\cond \varphi \mid D_K$, we see that $\varphi(\det b)
\in \Oo^{\times}$. Similarly we get $\xi_{\ff_{\xi}}(\det b)=1$ and    
$\xi (\det b) \in \Oo^{\times}$. Since $\xi_{\ff_{\xi}}$ is of finite
order, we see that $\xi_{\ff_{\xi}}(\det \alpha) \in \Oo^{\times}$ for any matrix $\alpha$ as
remarked above. Finally, since $r_v=b_v=I_n$ for all $v \mid \ell$, we
get that for such $v$ and $\alpha \in A(\sigma, b,r)$ one has  $\alpha_v \in M_n(\OKv)$. Since $\alpha \in M_n(K)$, we 
get that $\alpha \in M_n(\OKv)$, so $\det \alpha \in \Oo$. Since $c_{(\theta_{\xi})_b}(\sigma) = e^{2\pi \tr \sigma} c_{\theta_{\xi}}(\sigma,b)$ by (\ref{f12}), we are done. \end{proof}

\section{Congruence}\label{Congruence}

In this section we set $n=2$ and write $U=U_2$. Let $K$ be an imaginary
quadratic field of discriminant $-D_K$, which we assume to be prime.
It is a well-known fact that this implies that the class number of
$K$ is odd. Then
the space $S_{k-1}(D_K,
\chi_K)$ has a (unique) basis of newforms, which we, as before,  denote by $\mN$. 
We fix the following set of data: \begin{itemize}
\item a positive even integer $k$ divisible by $\# \OK^{\times}$;
\item a rational prime $\ell> k$ such that $\ell \nmid  h_K D_K$;
\item $\phi \in \mN$, which is ordinary at $\ell$ such that 
$\ov{\rho}_{\phi}|_{G_K}$ is absolutely irreducible;
\item $\chi \in \Hom(\Cl_K, \bfC^{\times})$ and write $f_{\phi,\chi}$ for
the Maass lift of $\phi$ lying in the space $\mS^{\chi}_{k, -k/2}$;
\item $\xi$ a Hecke character of $K$ of $\infty$-type $z^t |z|^{-t}$ for
an integer $-6 > t \geq -k$; we write $\ff_{\xi}$ for the conductor of
$\xi$ and set $N=D_K h_K N_{K/\bfQ}(\ff_{\xi})$;
\item $\beta \in \textup{Char}(k/2)$;
\item an admissible base $\mB$ whose support is prime to $N$.
\end{itemize}

Let $E$ be a finite extension of $\bfQ_{\ell}$, which we will always assume to be sufficiently large to contain any (finite number of) number fields that we encounter. Write $\Oo$ for its valuation ring. We also fix a uniformizer $\varpi \in \Oo$. 

\begin{lemma} Let $b \in \mB$ and $\tau \in \mS$.  Write $c_{f_{\phi,\chi}}(\tau,b)$ for the $(\tau,b)$-Fourier coefficient of $f_{\phi,\chi}$ Then $e^{2\pi \tr \tau} c_{f_{\phi,\chi}}(\tau,b) 
\in \Oo$ for arbitrary $\tau \in S$. \end{lemma}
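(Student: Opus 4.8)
The plan is to reduce the assertion to the $\ell$-integrality of the ``elliptic'' coefficient functions $c_{b,f_{\phi,\chi}}$ attached to the Maass form by Definition \ref{Maass form}, and then to read those off from the Fourier coefficients of $\phi$ through the descent of Theorem \ref{desc4}. First I would exploit that $\mB$ is admissible, so each $b\in\mB$ is a scalar matrix with $bb^{*}=I_{2}$ and trivial infinite component. Taking $q=b$ in the Maass condition (\ref{Maass condition}) then gives $q_{\iy}=I_{2}$, $\gamma_{b,q}=I_{2}$, $b^{*}\tau b=\tau$, $\epsilon(b^{*}\tau b)=\epsilon(\tau)$ and $\prod_{p}p^{\val_{p}(\det b^{*}b)}=1$, so that
\[
e^{2\pi\tr\tau}\,c_{f_{\phi,\chi}}(\tau,b)=\sum_{\substack{d\in\bfZ_{+}\\ d\mid\epsilon(\tau)}}d^{\,k-1}\,c_{b,f_{\phi,\chi}}\!\left(D_{K}d^{-2}\det\tau\right).
\]
Since the exponents $d^{\,k-1}$ are rational integers, it suffices to prove that $c_{b,f_{\phi,\chi}}(n)\in\Oo$ for every $b\in\mB$ and every $n\in\bfZ_{\geq0}$.

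Next I would compute $c_{b,f_{\phi,\chi}}(n)$. By the construction of $f_{\phi,\chi}=\Desc_{\mB}^{-1}(\phi_{\chi})$ in Proposition \ref{heckedesc3}, its image under $\Desc_{\mB}$ is the tuple $(\chi(b)(\phi-\phi^{\rho}))_{b\in\mB}$; comparing with the defining formula $a_{\phi_{b}}(n)=i\,a_{K}(n)D_{K}^{-1/2}c_{b,f}(n)$ of Theorem \ref{desc4} yields, whenever $a_{K}(n)\neq0$,
\[
c_{b,f_{\phi,\chi}}(n)=\chi(b)\cdot\frac{\sqrt{D_{K}}}{i\,a_{K}(n)}\bigl(a_{\phi}(n)-\overline{a_{\phi}(n)}\bigr).
\]
Here $\chi(b)$ is an $h_{K}$-th root of unity, hence a unit in $\Oo$ because $\ell\nmid h_{K}$; the factor $\sqrt{D_{K}}/i=-\sqrt{-D_{K}}$ lies in $\OK\subset\Oo$ and is an $\ell$-adic unit since $\ell\nmid D_{K}$; and $a_{\phi}(n)-\overline{a_{\phi}(n)}=a_{\phi-\phi^{\rho}}(n)\in\Oo$ because the Hecke eigenvalues of the newform $\phi$ are algebraic integers in $\Oo$. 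Thus every factor is integral except possibly the division by $a_{K}(n)$, which is the real obstacle.

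The key computation is therefore to control $a_{K}(n)$. I would rewrite the set in its definition: the group $(iD_{K}^{-1/2}\OK)/\OK$ is identified with $\OK/(\sqrt{-D_{K}})$, and under this identification the condition $D_{K}N_{K/\bfQ}(\alpha)\equiv-n$ becomes $N_{K/\bfQ}(\beta)\equiv-n\pmod{D_{K}}$ for $\beta\in\OK/(\sqrt{-D_{K}})$. As $D_{K}$ is prime and $(\sqrt{-D_{K}})$ is the ramified prime above it, $\OK/(\sqrt{-D_{K}})\cong\bfF_{D_{K}}$ and the norm reduces to $\beta\mapsto\beta^{2}$; hence $a_{K}(n)$ equals the number of square roots of $-n$ in $\bfF_{D_{K}}$, which is $0$, $1$, or $2$. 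Since $\ell$ is odd, $a_{K}(n)\in\{1,2\}$ is an $\ell$-adic unit whenever it is nonzero, so the displayed formula shows $c_{b,f_{\phi,\chi}}(n)\in\Oo$ in that case. When $a_{K}(n)=0$ the descent records only $a_{\phi_{b}}(n)=0$, and membership of $\phi-\phi^{\rho}$ in $G^{*}_{k-1}(D_{K},\chi_{K})$ (Krieg's $*$-space, underlying Theorem \ref{desc4} and Remark \ref{kr}) forces $a_{\phi-\phi^{\rho}}(n)=0$ and hence $c_{b,f_{\phi,\chi}}(n)=0$; verifying this vanishing cleanly from the $*$-space condition is the one step I expect to require care. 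Combining the two cases gives $c_{b,f_{\phi,\chi}}(n)\in\Oo$ for all $n$, and feeding this back into the first display completes the proof.
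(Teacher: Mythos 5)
Your proof is correct and takes essentially the same route as the paper's, which disposes of the lemma in two lines by citing the integrality of the Fourier coefficients of $\phi$, the Maass condition (\ref{Maass condition}), and the formula of Theorem \ref{desc4}; you have simply made explicit the reduction to $c_{b,f_{\phi,\chi}}(n)$ and the verification that $a_K(n)$, $\sqrt{D_K}$ and $\chi(b)$ are $\varpi$-adic units. The one step you flag, the case $a_K(n)=0$, is best handled not via the $*$-space condition but by observing that such $n$ never arise among the arguments $D_K d^{-2}\det\tau$ occurring in the expansion (equivalently, $c_{b,f_{\phi,\chi}}$ may be taken to vanish there without affecting any Fourier coefficient), a point the paper's own proof passes over in silence.
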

\begin{proof} It is a standard fact that the Fourier coefficients of $\phi$ 
are algebraic integers (the field which they all generate is a finite 
extension of $\bfQ$), 
so 
by our 
assumption that $\Oo$ be sufficiently 
large we may assume that they lie in $\Oo$. Then the Lemma follows 
from (\ref{Maass condition}) and the 
formula in 
Theorem \ref{desc4}. \end{proof}

\begin{definition} \label{cord1} Let $\tau \in \mS$ and $b \in \mB$. We will
call $(\tau, b)$ an \emph{ordinary pair} if the following two conditions
are simultaneously satisfied:
\begin{enumerate}
\item $\val_{\ell}(\ov{e^{2 \pi \tr(\tau)}c_{f_{\phi,\chi}}(\tau, b)})=0$;
\item $(\det \tau, N)=1$.
\end{enumerate} \end{definition}

\begin{lemma} An ordinary pair exists. \end{lemma}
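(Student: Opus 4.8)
The plan is to reduce both defining conditions of an ordinary pair to the nonvanishing modulo $\varpi$ of a single, explicitly chosen Fourier coefficient of $\phi-\phi^{\rho}$, and then to manufacture that coefficient by a Chebotarev argument. First I would fix any $b\in\mB$; since $\mB$ is admissible, $b$ is scalar with $bb^{*}=I_{2}$, so $b^{*}\tau b=\tau$ and $b_{\bfQ}=1$. For $\tau\in\mS$ positive definite and \emph{primitive} (that is, $\epsilon(\tau)=1$), the sum in the Maass condition (\ref{Maass condition}) collapses to its single term $d=1$, giving $e^{2\pi\tr\tau}c_{f_{\phi,\chi}}(\tau,b)=c_{b,f_{\phi,\chi}}(D_K\det\tau)$. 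Feeding this into Theorem \ref{desc4} together with $\Desc_{\mB}(f_{\phi,\chi})_{b}=\chi(b)(\phi-\phi^{\rho})$ yields
\[
e^{2\pi\tr\tau}c_{f_{\phi,\chi}}(\tau,b)=\frac{\sqrt{D_K}\,\chi(b)}{i\,a_K(D_K\det\tau)}\bigl(a_{\phi}(D_K\det\tau)-\ov{a_{\phi}(D_K\det\tau)}\bigr).
\]

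Next I would observe that $a_K$ is trivial on multiples of $D_K$: a direct computation with the definition of $a_K$ (identifying $iD_K^{-1/2}\OK/\OK$ with $\OK/\mathfrak{d}\cong\bfF_{D_K}$, on which complex conjugation acts trivially since $\mathfrak{d}$ is ramified) shows $a_K(D_K m)=\#\{\beta\in\bfF_{D_K}\mid\beta^{2}\equiv 0\}=1$. As $\ell\nmid 2D_K$, the factors $\sqrt{D_K}$, $\chi(b)$ and $i$ are $\ell$-adic units, so it remains to exhibit a primitive positive-definite $\tau\in\mS$ with $(\det\tau,N)=1$ for which $a_{\phi}(D_K\det\tau)-\ov{a_{\phi}(D_K\det\tau)}$ is a $\varpi$-adic unit. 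Writing $d=\det\tau$ with $(d,D_K)=1$, multiplicativity of the newform $\phi$ gives $a_{\phi}(D_K d)=a_{\phi}(D_K)a_{\phi}(d)$, while Fact \ref{fact1} gives $\ov{a_{\phi}(d)}=\chi_K(d)a_{\phi}(d)$; hence the difference equals $a_{\phi}(d)\bigl(a_{\phi}(D_K)-\chi_K(d)\ov{a_{\phi}(D_K)}\bigr)$.

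Because $a_{\phi}(D_K)\ov{a_{\phi}(D_K)}=D_K^{k-2}$ is an $\ell$-adic unit, $a_{\phi}(D_K)$ is a unit, and since $\ell$ is odd the two elements $a_{\phi}(D_K)\pm\ov{a_{\phi}(D_K)}$ cannot both be non-units (their sum is $2a_{\phi}(D_K)$). I would fix the sign $\sigma\in\{\pm 1\}$ for which $a_{\phi}(D_K)-\sigma\ov{a_{\phi}(D_K)}$ is a unit. The last ingredient is a rational prime $p\nmid N\ell$ with $\chi_K(p)=\sigma$ and $a_{\phi}(p)$ a $\varpi$-adic unit. This is where absolute irreducibility of $\ov{\rho}_{\phi}|_{G_K}$ enters: by Burnside the image $\ov{\rho}_{\phi}(G_K)$ spans the full matrix algebra $M_{2}$, so the coset of $\ov{\rho}_{\phi}(G_{\bfQ})$ lying over $\sigma\in\Gal(K/\bfQ)$ cannot consist entirely of trace-zero matrices; choosing a representative $g$ with $\tr g\neq 0$ and applying the Chebotarev Density Theorem to the field cut out by $\ov{\rho}_{\phi}$ and $K$ produces a positive-density set of such $p$. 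Finally I would take $\tau=\diag(1,p)$, which lies in $\mS$, is positive definite and primitive (its $(1,1)$-entry is a unit), and has $\det\tau=p$ coprime to $N$. By the displayed identity, $e^{2\pi\tr\tau}c_{f_{\phi,\chi}}(\tau,b)$ is then a product of $\ell$-adic units, so its complex conjugate has $\val_{\ell}=0$ (negation and conjugation preserve the unit property), which is condition (1), while $(\det\tau,N)=(p,N)=1$ is condition (2).

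The main obstacle is the simultaneity in the last step: one must satisfy the splitting condition $\chi_K(p)=\sigma$ \emph{and} the unit-trace condition $a_{\phi}(p)\not\equiv 0$ at the same time, and the Burnside span argument (hence absolute irreducibility of $\ov{\rho}_{\phi}|_{G_K}$) is precisely what makes this possible. A secondary technical point is that (\ref{Maass condition}) only accesses $c_{b,f_{\phi,\chi}}$ at arguments divisible by $D_K$; the factorization of $a_{\phi}$ at $D_K$ above circumvents this cleanly because both $a_{\phi}(D_K)$ and $a_K(D_K\det\tau)$ turn out to be $\ell$-adic units.
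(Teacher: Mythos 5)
Your argument is correct and is in essence the paper's own: the published proof simply cites Lemma 7.10 of \cite{Klosin09} together with the observation that the Chebotarev-supplied prime can be taken arbitrarily large (hence coprime to $N$), and that lemma rests on exactly the ingredients you assemble — the descent identity expressing $c_{b,f_{\phi,\chi}}(D_K n)$ in terms of $a_{\phi}(n)-\ov{a_{\phi}(n)}$ (with $a_K(D_K m)=1$), the fact that $a_{\phi}(D_K)\ov{a_{\phi}(D_K)}=D_K^{k-2}$ is an $\ell$-adic unit, and absolute irreducibility of $\ov{\rho}_{\phi}|_{G_K}$ fed through the Tchebotarev Density Theorem. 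Your explicit handling of the split/inert dichotomy via the sign $\sigma$ (one of $a_{\phi}(D_K)\pm\ov{a_{\phi}(D_K)}$ must be a unit since their sum $2a_{\phi}(D_K)$ is one, and Burnside guarantees a Frobenius of nonzero trace in the prescribed coset) makes the construction direct rather than by contradiction, and also silently justifies why condition (1), which involves the complex conjugate of the coefficient, holds: the conjugate differs from the coefficient by a root of unity.
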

\begin{proof} This is proved like Lemma 7.10 in \cite{Klosin09}. Note that
the prime $p_0$ chosen in  the proof of that lemma can be taken to be
arbitrarily large because its existence is guaranteed by the Tchebotarev
Density Theorem. Hence condition (2) can also be satisfied. \end{proof}

\begin{definition} Let $f$ and $g$ be two hermitian modular forms.
We say that $f$ is congruent to $g$ modulo $\varpi^n$, a
property which we denote by $f \equiv g$ (mod $\varpi^n$) if there exists a
base $\mB$ such that $f_b$ and $g_b$ both have Fourier coefficients lying
in $\Oo$ for all $b \in \mB$ and $f_b \equiv g_b$ (mod $\varpi^n$) for all
$b \in \mB$. The latter congruence means that $\varpi^n \mid (c_{f_b}(h)-c_{g_b}(h))$ for all $h \in \mS$. \end{definition}

\begin{rem} Note that if $f$ and $g$ are congruent (mod $\varpi^n$)
with respect to one admissible base, say $\mB$, and $\mB'$ is another admissible base such that
$f_b$ and $g_b$ both have Fourier coefficients lying
in $\Oo$ for all $b \in \mB'$, then $f_b \equiv g_b$ (mod $\varpi^n$) for
all
$b \in \mB'$. Indeed this follows from admissibility of $\mB$ and $\mB'$ and formula (\ref{f13}). \end{rem}

\subsection{The inner product ratio} \label{The inner product ratio}

 Let
$\Psi_{\beta}: \mM_{k} \xrightarrow{\sim} \mM_{k,-k/2}$ be the isomorphism
defined in Proposition \ref{newmap}. Denote the restriction of $f_{\phi,\chi}$ to $U(\AQ)$ again by $f_{\phi,\chi}$ (cf. Proposition \ref{isom99}).
Fix $\tau \in \mS$ and assume 
that the theta series $\theta_{\xi}$ was defined using that $\tau$ (cf. section \ref{Theta series and 
inner
products}). For 
the moment 
we will not 
assume that we are working with an ordinary pair to obtain a more 
general formula for the ratio of the inner products. 
Set $c=\det \tau$. Set $\psi' = \xi^{-1} \varphi^{-2}$ with
$\varphi$ as in section \ref{Theta series and 
inner
products}. Then
$\theta_{\xi} \in \mM_l(Nc, \psi')$, where $l=t+k+2$ (see Remark 
\ref{choice112}). Set $m=k-l$. 
To shorten notation write
$D(g) := D(g,2-m/2,Nc,m,(\psi')^c)$ and $D^*(g):= D(g \eta_{\rm f}^{-1},
2-m/2, Nc,
m, (\psi')^c)$.  

Since $D(g) \in \mM_m(Nc, (\psi')^{-1})$ and $\theta_{\xi} \in \mM_l(Nc,
\psi')$, we get $D\theta_{\xi}\in \mM_k(\mK_0(Nc))$ and $D^*\theta^*_{\xi} \in \mM_k(\eta_{\rm f}^{-1}\mK_0(Nc)\eta_{\rm f})$. 

For $F \in M_{k,-k/2}^{\tuh}(J^{-1}\Gamma_0^{\rm h}(Nc) J)$ define the
\emph{trace operator} $ \tr : M_{k,-k/2}^{\tuh}(J^{-1}\Gamma_0^{\rm h}(Nc) J)
\rightarrow M_{k, -k/2}^{\tuh}(U(\bfZ))$ by
$$\tr F = \sum_{\gamma \in J^{-1} \Gamma_0^{\rm h}(Nc) J \setminus 
U(\bfZ)} 
F|_k
\gamma.$$ Note that if $F$ has $\ell$-integral Fourier coefficients, then
so does $\tr F$ by the $q$-expansion principle (Theorem \ref{qexpansion12}). The form $D^* \theta_{\xi}^*$ corresponds to $\# \mB$ forms $(D^*\theta_{\xi}^*)_{p_b} \in M^{\rm h}_k(J^{-1}\Gamma_0^{\rm h}(Nc)J)$, $b \in \mB$. This way we can define $\tr(D^*\theta^*_{\xi})$.
Since both $\Psi_{\beta}(\tr (D^*\theta^*_{\xi}))$ and
$f_{\phi,\chi}$
are elements of the finite-dimensional $\bfC$-vector space 
$\mM_{k, -k/2}(\mK_0(Nc))$ 
we
can write \be \label{we1} \Xi':= \Psi_{\beta}(\tr(D^*\theta^*_{\xi})) = C f_{\phi,\chi} + 
g,
\quad
\textup{with
$\left<g,f_{\phi,\chi}\right>=0$}\ee and
$$C=\frac{\left<\Xi',
f_{\phi,\chi}\right>}{\left<f_{\phi,\chi},f_{\phi,\chi}\right>}.$$
(For the definition of the inner product on $\mM_{k,\nu}$ see section \ref{Theta series and 
inner
products}.)
\begin{lemma}\label{inn1} For any $f \in \mM_k$ one has $$\left<\Psi_{\beta}(f),
f_{\phi,\chi}\right> =
\left<f,
\Psi_{\beta}^{-1}(f_{\phi,\chi})\right>.$$

\end{lemma}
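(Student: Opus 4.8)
The plan is to reduce the identity to a purely formal computation with the unitary character $\beta$, using the explicit description of $\Psi_{\beta}$ on classical components supplied by Proposition \ref{newmap}. Recall that under the running hypothesis $\mJ(K)\mid\nu$ (with $\nu=-k/2$) the commutative diagram (\ref{cen2}) identifies $\Psi_{\beta}$ with the operator that scales the $b$-component by $\beta(\det b)$: writing $\Phi_0(f)_b$ for the weight-$k$ component of $f\in\mM_k$ and $\Phi_{\nu}(\cdot)_b$ for the weight-$(k,\nu)$ component, one has $\Phi_{\nu}(\Psi_{\beta}(f))_b=\beta(\det b)\,\Phi_0(f)_b$. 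By Remark \ref{hel} the two component spaces $M^{\rm h}_{n,k,\nu}(\Gamma)$ and $M^{\rm h}_{n,k}(\Gamma)$ literally coincide, so these are equalities of classical hermitian forms of a single weight, and the scalars $\beta(\det b)$ (of absolute value one, since $\beta$ is unitary) may be pulled in and out of the Petersson product freely.

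First I would expand both sides through the definition of the inner product on $\mM_{k,\nu}$, namely the normalized average $\left<F,G\right>=(\#\mB)^{-1}\sum_{b\in\mB}\left<F_b,G_b\right>$ of the classical Petersson products of the $\mB$-components. On the left-hand side the $b$-component of $\Psi_{\beta}(f)$ is $\beta(\det b)\,\Phi_0(f)_b$, while that of $f_{\phi,\chi}$ is its own component $F_b:=\Phi_{\nu}(f_{\phi,\chi})_b$; linearity in the first slot gives
\[
\left<\Psi_{\beta}(f),f_{\phi,\chi}\right>=(\#\mB)^{-1}\sum_{b\in\mB}\beta(\det b)\left<\Phi_0(f)_b,F_b\right>.
\]
On the right-hand side $\Psi_{\beta}^{-1}(f_{\phi,\chi})\in\mM_k$ has $b$-component $\beta(\det b)^{-1}F_b$, and conjugate-linearity in the second slot yields
\[
\left<f,\Psi_{\beta}^{-1}(f_{\phi,\chi})\right>=(\#\mB)^{-1}\sum_{b\in\mB}\ov{\beta(\det b)^{-1}}\left<\Phi_0(f)_b,F_b\right>.
\]

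The two expressions then agree term by term because $\beta$ is unitary: as recorded just before Proposition \ref{newmap}, every element of $\textup{Char}(-\nu)$ satisfies $\ov{\beta(x)}=\beta(x)^{-1}$, whence $\ov{\beta(\det b)^{-1}}=\ov{\beta(\det b)}^{-1}=\beta(\det b)$. This is the entire content of the lemma; no analytic input is required beyond convergence of the Petersson integrals, which is guaranteed by $f_{\phi,\chi}$ being a cusp form. The only delicate point — and the closest thing to an obstacle — is the bookkeeping of exactly where the character factor appears under $\Phi_0$ versus $\Phi_{\nu}$, together with the observation that the weight-$(k,\nu)$ and weight-$k$ component spaces coincide so that the classical products $\left<\Phi_0(f)_b,F_b\right>$ are the same object on both sides. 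Once the diagram (\ref{cen2}) and Remark \ref{hel} are invoked, this is routine.
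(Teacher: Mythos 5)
Your argument is correct and is essentially identical to the paper's own proof: both expand the inner product as the normalized sum of classical Petersson products over the $\mB$-components, use the diagram (\ref{cen2}) to identify the $b$-component of $\Psi_{\beta}(f)$ with $\beta(\det b)f_b$ (and likewise for $\Psi_{\beta}^{-1}$), and transfer the scalar across the Hermitian pairing via the unitarity relation $\ov{\beta(\det b)}=\beta(\det b)^{-1}$. Nothing further is needed.
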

\begin{proof} We have \begin{multline} \left<\Psi_{\beta}(f),
f_{\phi,\chi}\right> = (\# \mB)^{-1} \sum_{b \in \mB} \left<
\Psi_{\beta}(f)_b, (f_{\phi,\chi})_b\right>  \\
= (\# \mB)^{-1}\sum_{b \in \mB} \left<\beta(\det b)f_b,
(f_{\phi,\chi})_b\right>\\
 =  (\# \mB)^{-1}\sum_{b \in \mB} \left<f_b,
\beta^{-1}(\det b)(f_{\phi,\chi})_b\right>=
\left<f,
\Psi_{\beta}^{-1}(f_{\phi,\chi})\right>, \end{multline} where the second and
the
fourth equality follow from commutativity of diagram (\ref{cen2}) while
the third one follows from the fact that $\beta(\det
b)^{-1}=\ov{\beta(\det b))}$. \end{proof}

\begin{lemma}\label{inn2} One has $$\left<\Xi', f_{\phi,\chi}\right> = 
\left<D\theta_{\xi},
\Psi_{\beta}^{-1}(f_{\phi,\chi})\right>_{ \Gamma_0^{\rm h}(Nc)}.$$ \end{lemma}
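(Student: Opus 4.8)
The plan is to undo, one at a time, the three operations that turn $D\theta_{\xi}$ into $\Xi'$ — the isomorphism $\Psi_{\beta}$, the trace operator $\tr$, and the $*$-operation $A\mapsto A^*$ — each reversal contributing one equality of Petersson products. Throughout I would unfold the adelic products through $\Phi_{\mB}$ as $(\#\mB)^{-1}\sum_{b\in\mB}$ of the classical ones (section \ref{Theta series and inner products}), reducing everything to component-wise statements on $\bfH_2$. First I would apply Lemma \ref{inn1} with $f=\tr(D^*\theta^*_{\xi})$. Since $\Xi'=\Psi_{\beta}(\tr(D^*\theta^*_{\xi}))$, this gives at once
$$\left<\Xi',f_{\phi,\chi}\right>=\left<\tr(D^*\theta^*_{\xi}),\Psi_{\beta}^{-1}(f_{\phi,\chi})\right>,$$
a product of forms of full level $U(\bfZ)$; here $G:=\Psi_{\beta}^{-1}(f_{\phi,\chi})\in\mM_k=\mM_k(U(\hat{\bfZ}))$ is of full level because $\Psi_{\beta}$ is the isomorphism of Proposition \ref{newmap}.

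Next I would invoke adjointness of the trace. Writing $F_b$ for the $b$-component $(D^*\theta^*_{\xi})_{p_b}$, which has level $J^{-1}\Gamma_0^{\rm h}(Nc)J$, and $G_b$ for $(\Psi_{\beta}^{-1}(f_{\phi,\chi}))_{p_b}$, the $U(\bfZ)$-invariance of $G_b$ yields the standard identity
$$\left<\tr F_b,G_b\right>_{U(\bfZ)}=\left<F_b,G_b\right>_{J^{-1}\Gamma_0^{\rm h}(Nc)J}.$$
This comes from unfolding $\tr F_b=\sum_{\gamma}F_b|_k\gamma$ over $J^{-1}\Gamma_0^{\rm h}(Nc)J\setminus U(\bfZ)$, using $G_b|_k\gamma=G_b$, and noting that $F_b(Z)\overline{G_b(Z)}(\det\textup{Im}\,Z)^{k}$ is a density against the invariant measure on $\bfH_2$, so that a change of variables by $\gamma$ tiles the fundamental domain of the subgroup. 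Because the trace targets full level, the normalization index is $1$ and the full-level normalized product agrees with the unnormalized one, so no index factors survive.

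Finally I would realize the $*$-operation as a measure-preserving slash. Since $\eta_{\rm f}^{-1}$ has trivial infinite part and finite part $J^{-1}$, right translation by it conjugates $\mK_0(Nc)$ to $\eta_{\rm f}^{-1}\mK_0(Nc)\eta_{\rm f}$ and, component-wise, effects the classical slash $|_kJ$ (note $J^2=-I_4$ acts trivially, so $|_kJ^{-1}=|_kJ$), carrying level $\Gamma_0^{\rm h}(Nc)$ to level $J^{-1}\Gamma_0^{\rm h}(Nc)J$. As $(D\theta_{\xi})^*=D^*\theta^*_{\xi}$ and $G$, being of full level, is fixed by $|_kJ$, the change of variables $W=JZ$ gives
$$\left<F_b,G_b\right>_{J^{-1}\Gamma_0^{\rm h}(Nc)J}=\left<(D\theta_{\xi})_{p_b}|_kJ,\;G_b|_kJ\right>_{J^{-1}\Gamma_0^{\rm h}(Nc)J}=\left<(D\theta_{\xi})_{p_b},G_b\right>_{\Gamma_0^{\rm h}(Nc)}.$$
Summing over $b$ and dividing by $\#\mB$ recovers the adelic product $\left<D\theta_{\xi},\Psi_{\beta}^{-1}(f_{\phi,\chi})\right>_{\Gamma_0^{\rm h}(Nc)}$, and chaining the three displays proves the lemma.

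The delicate point is this last step: one must verify that the adelic translation by $\eta_{\rm f}^{-1}$ descends precisely to $|_kJ$ on each component with no stray scalar or nebentypus factor, that $G^*=G$ (i.e.\ that $G$ really is $\eta_{\rm f}^{-1}$-invariant), and that the normalization constants $[\overline{U(\bfZ)}:\overline{\Gamma}]^{-1}$ cancel across the two level changes — which they do exactly because the trace lands in full level and both slash maps preserve the invariant measure on $\bfH_2$.
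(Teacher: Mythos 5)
Your proposal is correct and follows essentially the same route as the paper: first Lemma \ref{inn1} to replace $\Psi_{\beta}$ by $\Psi_{\beta}^{-1}$ on the other slot, then unfolding the trace over $J^{-1}\Gamma_0^{\rm h}(Nc)J\setminus U(\bfZ)$ with the index factors cancelling by full-level invariance of $\Psi_{\beta}^{-1}(f_{\phi,\chi})$, and finally undoing the $*$-operation via the slash by $J$ (the paper cites Shimura (10.9.3) for the last two steps where you argue them directly). Your explicit tracking of the level $J^{-1}\Gamma_0^{\rm h}(Nc)J$ before the change of variables $W=JZ$ is, if anything, slightly more careful than the paper's bookkeeping, but the substance is identical.
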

\begin{proof} Write $x= [U(\bfZ): \Gamma_0^{\rm h}(Nc)]^{-1}$. We have \begin{equation}\begin{split} \left<\Xi',
f_{\phi,\chi}\right> &= \left<\tr(D^*\theta^*_{\xi}), \Psi_{\beta}^{-1}(f_{\phi,\chi})\right>\\
& =(\# \mB)^{-1} \sum_{b \in \mB} \left<
\tr (D^*\theta_{\xi}^*)_b, \Psi_{\beta}^{-1}(f_{\phi,\chi})_b\right> \\
&=(\# \mB)^{-1}x\sum_{b \in \mB} \left<
\tr (D^*\theta_{\xi}^*)_b, \Psi_{\beta}^{-1}(f_{\phi,\chi})_b\right>_{ \Gamma_0^{\rm h}(Nc)}\\
&=(\# \mB)^{-1} x\sum_{b \in \mB} \sum_{\gamma \in J^{-1}\Gamma_0^{\rm h}(Nc)J\setminus U(\bfZ)}\left<
 (D^*\theta_{\xi}^*)_b|_k\gamma, \Psi_{\beta}^{-1}(f_{\phi,\chi})_b\right>_{ \Gamma_0^{\rm h}(Nc)}  \\
&=(\# \mB)^{-1}x \sum_{b \in \mB} \sum_{\gamma \in J^{-1}\Gamma_0^{\rm h}(Nc)J\setminus U(\bfZ)}\left<
 (D^*\theta_{\xi}^*)_b, \Psi_{\beta}^{-1}(f_{\phi,\chi})_b|_k\gamma^{-1}\right>_{ \Gamma_0^{\rm h}(Nc)}  \\
&=(\# \mB)^{-1} x\sum_{b \in \mB} \sum_{\gamma \in J^{-1}\Gamma_0^{\rm h}(Nc)J\setminus U(\bfZ)}\left<
 (D^*\theta_{\xi}^*)_b, \Psi_{\beta}^{-1}(f_{\phi,\chi})_b\right>_{ \Gamma_0^{\rm h}(Nc)} \\
&=(\# \mB)^{-1}  \sum_{b \in \mB}\cdot\left<
 (D^*\theta_{\xi}^*)_b, \Psi_{\beta}^{-1}(f_{\phi,\chi})_b\right>_{ \Gamma_0^{\rm h}(Nc)} \\
&=  
\left<D\theta_{\xi},
\Psi_{\beta}^{-1}(f_{\phi,\chi})\right>_{ \Gamma_0^{\rm h}(Nc)}, \end{split}\end{equation} where the first equality follows from Lemma \ref{inn1}, the fifth one and the last one from (10.9.3) in \cite{Shimura97} and the sixth one from the fact that $\Psi_{\beta}^{-1}(f_{\phi,\chi}) \in \mM_{k}$. 
\end{proof}

\begin{prop} \label{the frac1}
Let $\tau$ be as before. Suppose there exists $r \in \GL_2(\AKf)$ such 
that $c_{f_{\phi,\chi}}(\tau, r) \neq
0$. One has \be \begin{split}\frac{\left<\Xi',
f_{\phi,\chi}\right>}{\left<f_{\phi,\chi},f_{\phi,\chi}\right>} &=
(*)\eta^{-1}\frac{\#X_{m,Nc} \pi^3 (\det \tau)^{-k} | \det r|_K^{t/2}
c_{f_{\phi,\chi}}(\tau, r)}{A_N [\Gamma_0^{\rm h}(Nc): \Gamma]} \\
& \times \frac{L^{\tuint}(\BC(\phi),\frac{t+k}{2}+1, \beta\ov{\xi} \chi^{-1})
L^{\tuint}(\BC(\phi),\frac{t+k}{2}+2, \beta\ov{\xi} \chi^{-1})}
{L^{\tuint} (\Symm \phi,k)},\end{split}\ee   
where $\Gamma = \Gamma_1^{\rm h}(N) \cap \SU_2(\bfQ)$ (cf. section \ref{Theta series and 
inner
products}), $(*)\in E$ with $\val_{\ell}((*))\leq 0$,
$$L^{\textup{int}}(\BC(\phi),j+(t+k)/2,\omega):=
\frac{\Gamma(t+k+j)L(\BC(\phi),j+\frac{t+k}{2},\omega)} 
{\pi^{t+k+2j} \Omega_{\phi}^{+} \Omega_{\phi}^{-}},$$ for a Hecke character $\omega: \AK^{\times} \rightarrow \bfC^{\times}$ and
$$L^{\textup{int}}(\Symm \phi,n):=\frac{\Gamma(n)L(\Symm
\phi,n)}{\pi^{n+2} \Omega_{\phi}^{+} \Omega_{\phi}^{-}}$$ for any integer $n$.
\end{prop}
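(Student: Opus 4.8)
The plan is to compute the numerator and the denominator of the ratio separately and then assemble them. The denominator is handled directly by Theorem \ref{FF11}, which gives $\left<f_{\phi,\chi},f_{\phi,\chi}\right> = C_\chi \pi^{-k-2}\left<\phi,\phi\right>L(\Symm\phi,k)$ with $C_\chi$ an $\ell$-adic unit. The numerator $\left<\Xi',f_{\phi,\chi}\right>$ is the substantive part, and the idea is to identify it with a special value of the Rankin--Selberg integral of Theorem \ref{ransel}.

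First I would apply Lemma \ref{inn2} to rewrite $\left<\Xi',f_{\phi,\chi}\right> = \left<D\theta_\xi,\Psi_\beta^{-1}(f_{\phi,\chi})\right>_{\Gamma_0^{\rm h}(Nc)}$. The purpose of the $\beta$-twist is precisely that $\Psi_\beta^{-1}(f_{\phi,\chi})$ lands in $\mM_k$ (weight $(k,0)$), the setting in which Theorem \ref{ransel} is stated. With $n=2$, $l=t+k+2$ and $m=k-l=-t-2$, the form $D(g)=D(g,2-m/2,Nc,m,(\psi')^c)$ is exactly the Eisenstein series of that theorem evaluated at $s=2-m/2=3+t/2$. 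Hence Theorem \ref{ransel}, applied with $N$ replaced by $Nc$, yields $\left<D\theta_\xi,\Psi_\beta^{-1}(f_{\phi,\chi})\right>_{\Gamma_0^{\rm h}(Nc)} = \ov{C(s)}\,\ov{L_{\tust}(\Psi_\beta^{-1}(f_{\phi,\chi}),\ov{s}+1,\xi)}$, with $C(s)$ given by (\ref{formulaforC}).

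Next I would unwind the standard $L$-function. By the Hecke-equivariance of $\Psi_\beta$ (Proposition \ref{newmap}), twisting $f_{\phi,\chi}$ by $\beta^{-1}$ twists the character slot of the standard $L$-function by $\beta^{-1}$, and Proposition \ref{product134} then factors it into two base-change $L$-functions of $\phi$. Tracking the argument $\ov{s}+1=4+t/2$ through the shifts $\ov{s}+1-2+k/2$ and $\ov{s}+1-3+k/2$ gives exactly $\tfrac{t+k}{2}+2$ and $\tfrac{t+k}{2}+1$, the two arguments in the statement; and the complex conjugation in Theorem \ref{ransel}, combined with the unitarity relations $\ov{\beta}=\beta^{-1}$ and $\ov{\xi}=\xi^{-1}$, turns the character $\chi\beta^{-1}\xi$ into $\beta\ov{\xi}\chi^{-1}$. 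This produces the factors $L(\BC(\phi),\tfrac{t+k}{2}+j,\beta\ov{\xi}\chi^{-1})$, $j=1,2$, in the numerator.

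Finally I would form the ratio and normalize. Dividing by the denominator from Theorem \ref{FF11} and invoking Hida's formula (Proposition \ref{Hida45}) to write $\left<\phi,\phi\right>=(*)\,\eta\,\Omega_\phi^+\Omega_\phi^-$ produces the factor $\eta^{-1}$: the two integral normalizations $L^{\tuint}(\BC(\phi),\cdot)$ each contribute $(\Omega_\phi^+\Omega_\phi^-)^{-1}$, the $\Symm$ factor in the denominator contributes one period, and after the cancellation $(\Omega_\phi^+\Omega_\phi^-)/\left<\phi,\phi\right>=(*)\eta^{-1}$ survives. The remaining work is bookkeeping: repackage the three raw $L$-values into $L^{\tuint}(\BC(\phi),\cdot)$ and $L^{\tuint}(\Symm\phi,k)$ by matching the $\Gamma$-factors hidden inside $\G((s))$ in $C(s)$, the explicit powers of $\pi$ and $4\pi$, and the periods $\Omega_\phi^\pm$, and then collecting the residual constants $\#X$, $A_N$, $[\Gamma_0^{\rm h}(Nc):\Gamma]$, the powers $(\det\tau)^{\bullet}$ and $|\det r|_K^{\bullet}$, the coefficient $c_{f_{\phi,\chi}}(\tau,r)$, and the finite ``bad'' Euler factors $\prod_p g_p(\cdots)$. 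I expect the main obstacle to be exactly this constant-tracking: showing that once the $\Gamma$-factors, $\pi$-powers and periods are absorbed into the integral normalizations, the surviving transcendental part is precisely $\pi^3(\det\tau)^{-k}|\det r|_K^{t/2}$ and the leftover algebraic factor $(*)$ satisfies $\val_\ell((*))\leq 0$ --- the latter requiring control of the $\ell$-adic valuations of the Gamma-quotients (using $\ell>k$) and of the bad Euler factors.
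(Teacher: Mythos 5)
Your proposal follows essentially the same route as the paper's proof: Lemma \ref{inn2} to reduce to $\left<D\theta_{\xi},\Psi_{\beta}^{-1}(f_{\phi,\chi})\right>_{\Gamma_0^{\rm h}(Nc)}$, Theorem \ref{ransel} at $s=2-m/2$, Proposition \ref{product134} to factor the standard $L$-function, conjugation via the unitarity of $\beta$ and Fact \ref{fact1}, and Theorem \ref{FF11} together with Proposition \ref{Hida45} to produce the $\eta^{-1}$ and cancel the periods. The remaining constant-tracking you flag is exactly what the paper carries out by evaluating $C(2-m/2)$ from (\ref{formulaforC}), so the outline is correct and complete in approach.
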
 

\begin{rem} The normalized $L$-values are algebraic (see the proof 
below) and are expected to be algebraic integers, but for the moment we 
know of no proof of the latter claim. \end{rem}

\begin{proof}[Proof of Proposition \ref{the frac1}] By Lemma \ref{inn2} we get $$\left<\Xi', f_{\phi,\chi}\right> =  
\left<D\theta_{\xi},
\Psi_{\beta}^{-1}(f_{\phi,\chi})\right>_{\Gamma^{\rm h}_0(Nc)}.$$
Since the isomorphisms in diagram  (\ref{cen2}) are 
Hecke-equivariant,
$\Psi_{\beta}^{-1}(f_{\phi,\chi})$ is still a Hecke eigenform, hence we can
apply
Theorem \ref{ransel} and get
\begin{equation}\begin{split}\left<D\theta_{\xi},
\Psi_{\beta}^{-1}(f_{\phi,\chi})\right>_{\Gamma^{\rm h}_0(Nc)}& = 
\ov{C(2-m/2)}\cdot \ov{ L_{\rm st}(
\Psi_{\beta}^{-1}(f_{\phi,\chi}),3-m/2,
\xi)} \\
&= \ov{C(2-m/2)}\cdot  \ov{L_{\rm st}(
f_{\phi,\chi}, 3-m/2,
\beta^{-1}\xi)}.\end{split}\end{equation}
 Using Proposition
\ref{product134} 
 we
get
$$L_{\rm st}(
f_{\phi,\chi}, 3-m/2,
\beta^{-1}\xi)=L(\BC(\phi), \frac{t+k}{2}+1, \beta^{-1} \xi \chi) L(\BC(\phi),
\frac{t+k}{2}+2, \beta^{-1} \xi\chi).$$ 
Moreover, by Fact \ref{fact1} (and the fact that $\ov{\beta} = \beta^{-1}$) we get $$\ov{L(\BC(\phi), \frac{t+k}{2}+j, \beta^{-1} \xi \chi)} =L(\BC(\phi), \frac{t+k}{2}+j, \beta \ov{\xi} \chi^{-1}).$$
To ease notation write $c(\tau)$ for the $\tau$-Fourier coefficient $c_{(f_{\phi, \chi})_r}(\tau)$ of the $r$-component of $f_{\phi, \chi}$. Using the formula (\ref{formulaforC}) we get $$C(2-m/2) = (*) \frac{ (\# 
X_{m, Nc})
\pi^{-2t-2k-3} \Gamma(t+k+2) \Gamma(t+k+1) (\det
\tau)^{-k} |\det r|_K^{t/2}
c(\tau)}{A_N[\G_{0,n}^{\rm h}(Nc): \G]},$$ 
 with $A_N$ defined in section \ref{The standard L-function} and $(*) \in E$ with $\val_{\ell}((*))\leq 0$ (note that the product $\prod_{p \in \mathbf{b}} g_p(\xi^*(p \OK)  
p^{-2\ov{s}-n})$ in (\ref{formulaforC}) is a finite product and $g_p$ is a polynomial with coefficients in $\bfZ$ and constant term $1$ - cf. \cite{Shimura00}, Lemma 20.5).  
On the other hand we have by Theorem \ref{FF11} (note that $\val_{\ell}(\Gamma(k))=0$) 
$$\left< f_{\phi,\chi},f_{\phi,\chi}\right> =
 (*)  \pi^{-k-2}\cdot 
\left< \phi, \phi\right>
L(\Symm
\phi, k) \Gamma(k),$$ where $\val_{\ell}((*))=0$. 
Define $L^{\tualg}$ the same way as
$L^{\tuint}$ except with $\left<\phi,\phi\right>$ instead of
$\Omega_{\phi}^+\Omega_{\phi}^-$.
It follows from Remark 6.3 in \cite{Klosin09} and from Theorem 1 on page
325 in
\cite{Hida93} that \be   
\label{value43}
L^{\textup{alg}}(\BC(\phi),1+(t+k)/2, \beta \ov{\xi} \chi^{-1})\in \ov{\bfQ}\ee
and
\be
\label{value44} L^{\textup{alg}}(\BC(\phi),2+(t+k)/2, \beta \ov{\xi} \chi^{-1})\in
\ov{\bfQ}\ee and
from a result of Sturm \cite{Sturm80} that \be \label{value45}
L^{\textup{alg}}(\Symm \phi,k) \in
\ov{\bfQ}.\ee We note here that \cite{Sturm80} uses a definition of the
Petersson norm of $\phi$
which differs from ours by a factor of $\frac{3}{\pi}$, the volume of the
fundamental domain for the action of $\SL_2(\bfZ)$ on the complex upper
half-plane. We assume that $E$ contains values (\ref{value43}),
(\ref{value44}),
and
(\ref{value45}).
It follows from Proposition \ref{Hida45} that 
\be \label{periods2} \left<\phi,\phi\right> = (*)\hs \eta \hs \Omega_{\phi}^+
\Omega_{\phi}^-,\ee where $(*)$ is a
$\l$-adic unit as long as $\phi$ is ordinary at $\ell$ and $\ell>k$, which 
we have assumed. We also assume that $E$ contains $\eta$. The 
Proposition now follows.
\end{proof}

\subsection{Congruence between $f_{\phi,\chi}$ and a non-Maass form}

The goal of this section is to prove the following theorem, which is the
main result of the paper. To make the statement self-contained we repeat the assumptions made at the beginning of the section (the constant $A_N$ is defined in section \ref{The standard L-function}). In the next section we will formulate some 
consequences of this theorem.

\begin{thm} \label{mainthm} Let $K=\bfQ(i\sqrt{D_K})$ be an imaginary quadratic field of prime discriminant $-D_K$ and class number $h_K$. Let $k$ be an even positive integer divisible by $\#
\OK^{\times}$
and $\ell>k$ a rational prime such that $\ell \nmid D_K h_K$.
Let $\phi \in S_{k-1}(D_K, \chi_K)$ be a newform ordinary at $\ell$ and such that $\ov{\rho}_{\phi}|_{G_K}$
is absolutely irreducible.
Fix a Hecke character $\xi$ of $K$ such that
$\val_{\ell}(\cond \xi)=0$, $\xi_{\iy} (z) =
\bigl(\frac{z}{|z|}\bigr)^{-t}$ for some integer $-k \leq t < -6$, $\val_{\ell}(A_N)\geq 0$ and 
$\ell \nmid \# (\OK/N\OK)^{\times}$, where $N=D_Kh_KN_{K/\bfQ}(\cond \xi)$. Let $E$ be a
sufficiently large
finite extension of $\bfQ_{\ell}$ with uniformizer $\varpi$. Fix $\chi \in
\Hom(\Cl_K, \bfC^{\times})$ and $\beta\in \textup{Char}(k/2)$.
If $$-n:=\val_{\varpi}\left(\prod_{j=1}^2
L^{\textup{int}}(\BC(\phi),j+(t+k)/2, \beta \ov{\xi} \chi^{-1})\right) -
\val_{\varpi}(L^{\textup{int}}(\Symm \phi,k))<0$$
then there exists $f \in \mS_{k,-k/2}^{\chi}$, orthogonal to the Maass
space, such that $f
\equiv f_{\phi,\chi}$ \textup{(mod} $\varpi^n$\textup{)}.
\end{thm}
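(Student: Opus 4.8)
The plan is to combine the decomposition (\ref{we1}), $\Xi' = C f_{\phi,\chi} + g$ with $g \perp f_{\phi,\chi}$, with the Hecke operator $T^{\rm h}$ of Theorem \ref{existhecke} in order to produce a form orthogonal to the Maass space that is $\varpi^n$-congruent to $f_{\phi,\chi}$. First I would record the integrality input. By Corollary \ref{intE} the normalized Eisenstein series $\pi^{-3}D_b$ has $\varpi$-integral Fourier coefficients for every $b$ in the admissible base $\mB$, by Corollary \ref{inttheta} the theta series $\theta_{\xi}$ is integral, and since the Fourier coefficients of a product are finite convolutions of those of the factors, the classical components of $\pi^{-3}D^{*}\theta_{\xi}^{*}$ are integral. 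The trace operator preserves integrality by the $q$-expansion principle (Theorem \ref{qexpansion12}), and $\Psi_{\beta}$ merely multiplies the $b$-component by the $\varpi$-unit $\beta(\det b)$ (as $\beta$ is unitary). Hence $\Xi := \pi^{-3}\Xi'$ has $\varpi$-integral Fourier coefficients and decomposes as $\Xi = \tilde C f_{\phi,\chi} + \pi^{-3}g$ with $\tilde C = \pi^{-3}C$.

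Next I would pin down the valuation of $\eta \tilde C$. I choose an ordinary pair $(\tau, b_0)$ (which exists by the preceding lemma), so that $e^{2\pi \tr \tau}c_{f_{\phi,\chi}}(\tau, b_0)$ is a $\varpi$-adic unit and $(\det \tau, N)=1$, and I take $\theta_{\xi}$ to be built from this $\tau$. Feeding Proposition \ref{the frac1} into $\eta \tilde C = \eta\pi^{-3}C$, the factor $\eta^{-1}$ cancels and the explicit $\pi^{3}$ is removed by the normalization, leaving
$$\eta\tilde C = (*)\frac{\#X_{m,Nc}\,(\det\tau)^{-k}\,|\det r|_K^{t/2}\,c_{f_{\phi,\chi}}(\tau,r)}{A_N[\Gamma_0^{\rm h}(Nc):\Gamma]}\cdot\frac{\prod_{j=1}^2 L^{\rm int}(\BC(\phi),j+\tfrac{t+k}{2},\beta\ov\xi\chi^{-1})}{L^{\rm int}(\Symm\phi,k)}.$$
Here the ordinary pair forces the Fourier coefficient to be a unit, admissibility of $\mB$ together with $r_v=I_2$ above $\ell$ makes $|\det r|_K$ a unit, $(\det\tau)^{-k}$ has nonpositive valuation, $(*)$ has nonpositive valuation by Proposition \ref{the frac1}, and the denominators $A_N$ and $[\Gamma_0^{\rm h}(Nc):\Gamma]$ only contribute nonpositively; the sole term that could spoil the estimate is the integer $\#X_{m,Nc}$, which the hypotheses on $\ell$ (in particular $\ell\nmid \#(\OK/N\OK)^{\times}$, together with the choice of $\tau$) render a $\varpi$-adic unit. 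Since the $L$-ratio has valuation $-n$ by hypothesis, I conclude $\val_{\varpi}(\eta\tilde C)\le -n$, i.e. $\val_{\varpi}((\eta\tilde C)^{-1})\ge n$.

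Then I would apply $T^{\rm h}$. Writing $\pi^{-3}g = g_M + g_N$ for its Maass and non-Maass parts, the relation $g\perp f_{\phi,\chi}$ forces $g_M\perp f_{\phi,\chi}$ inside the Maass space, so $T^{\rm h}g_M=0$ by Theorem \ref{existhecke}; since the Maass space is Hecke-stable (Theorem \ref{thmmain}) and the operators in $\mH^{\chi}$ are commuting and self-adjoint, $T^{\rm h}$ is self-adjoint and preserves the orthogonal complement, so $h:=T^{\rm h}(\pi^{-3}g)=T^{\rm h}g_N$ is again orthogonal to the Maass space. Using $T^{\rm h}f_{\phi,\chi}=\eta f_{\phi,\chi}$ I obtain $T^{\rm h}\Xi = \eta\tilde C\,f_{\phi,\chi} + h$, where $T^{\rm h}\Xi$ is $\varpi$-integral because $\Xi$ is and $T^{\rm h}$ preserves integrality by Lemma \ref{integrality382}. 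Finally I set
$$f := f_{\phi,\chi} - (\eta\tilde C)^{-1}T^{\rm h}\Xi = -(\eta\tilde C)^{-1}h.$$
The second expression shows $f$ is orthogonal to the Maass space; the first shows $f-f_{\phi,\chi}=-(\eta\tilde C)^{-1}T^{\rm h}\Xi$ has Fourier coefficients obtained by multiplying the integral coefficients of $T^{\rm h}\Xi$ by $(\eta\tilde C)^{-1}$ of valuation $\ge n$, hence lying in $\varpi^n\Oo$. Thus $f\in\mS_{k,-k/2}^{\chi}$ is integral, orthogonal to the Maass space, and satisfies $f\equiv f_{\phi,\chi}\pmod{\varpi^n}$; it is nonzero because $f_{\phi,\chi}$ has a unit Fourier coefficient while $n\ge 1$.

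The main obstacle is the valuation bookkeeping of the second step: it is essential that \emph{all} auxiliary factors $\#X_{m,Nc}$, $A_N$, $[\Gamma_0^{\rm h}(Nc):\Gamma]$, $\det\tau$, $\det r$, and $(*)$ be controlled $\varpi$-adically, since any uncontrolled power of $\varpi$ in the numerator would weaken the congruence, and checking that the hypotheses ($\ell>k$, $\ell\nmid D_Kh_K$, $\val_{\ell}(A_N)\ge 0$, $\ell\nmid\#(\OK/N\OK)^{\times}$, ordinarity of $\phi$) suffice is precisely the delicate part. A secondary point needing care is confirming that $\pi^{-3}\Xi'$ is genuinely integral and that the several normalizations—the $\pi^{3}$ of the Eisenstein series, the $e^{2\pi\tr\tau}$ hidden in the Fourier coefficients, and the $L^{\rm int}$-normalizations—are mutually consistent so that $\eta\tilde C$ is an honest element of $E$ with the asserted valuation.
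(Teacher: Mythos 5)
Your overall strategy---decompose $\Xi'$ against $f_{\phi,\chi}$ as in (\ref{we1}), control $\val_{\varpi}(\eta C)$ via Proposition \ref{the frac1} evaluated at an ordinary pair, and then apply the operator $T^{\rm h}$ of Theorem \ref{existhecke} to kill the Maass part of the error term---is exactly the paper's argument, and your integrality and valuation bookkeeping, as well as the closing formula $f=-(\eta\tilde C)^{-1}T^{\rm h}\Xi$, are correct as far as they go. But there is one genuine gap: you never project onto the $\chi$-eigenspace. The form $\Xi'=\Psi_{\beta}(\tr(D^*\theta_{\xi}^*))$ lives in $\mM_{k,-k/2}=\bigoplus_{\chi'}\mM_{k,-k/2}^{\chi'}$, not in $\mM_{k,-k/2}^{\chi}$, and so does your $g$. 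The operator $T^{\rm h}$ is constructed as an element of $\mH^{\chi}_{\Oo}\subset\End_{\bfC}(\mS_{k,-k/2}^{\chi})$: it is only defined on, and its Maass-annihilation property is only proved for, the $\chi$-component. Your claim that ``$g\perp f_{\phi,\chi}$ forces $T^{\rm h}g_M=0$'' fails for the components of $g_M$ lying in $\mS_{k,-k/2}^{{\rm M},\chi'}$ with $\chi'\neq\chi$: such components (for instance a multiple of $f_{\phi,\chi'}$, whose inner product against $\Xi'$ is given by the same kind of $L$-value expression and has no reason to vanish) are automatically orthogonal to $f_{\phi,\chi}$ because the central characters differ, yet they are not annihilated by any natural extension of $T^{\rm h}$. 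Consequently your $f=-(\eta\tilde C)^{-1}h$ need not lie in $\mS_{k,-k/2}^{\chi}$ and need not be orthogonal to the Maass space---which is precisely what the theorem asserts.

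The repair is the paper's intermediate step: before introducing $T^{\rm h}$, apply the central projector $\pi_{\chi}f(g)=h_K^{-1}\sum_{b\in\mB}\chi^{-1}(p_b)f(gp_b)$ to both sides of (\ref{we1}). This puts everything into $\mM_{k,-k/2}^{\chi}$, fixes $f_{\phi,\chi}$ and the coefficient $C$, and preserves $\varpi$-integrality of Fourier coefficients precisely because $\ell\nmid h_K$---which is where that hypothesis is consumed in this step, and which your write-up never invokes. Replacing your $\Xi=\pi^{-3}\Xi'$ and $\pi^{-3}g$ by $\Xi=\pi^{-3}\pi_{\chi}\Xi'$ and $g_0=\pi^{-3}\pi_{\chi}g$, the remainder of your argument (including the observation that nonvanishing of $f$ follows from the unit Fourier coefficient supplied by the ordinary pair, which substitutes for the paper's Lemma \ref{missl}) goes through verbatim.
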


\begin{rem} Theorem \ref{mainthm} is a generalization of Theorem 7.12 in \cite{Klosin09}, which applied to the case $K=\bfQ(i)$. In that case the character $\beta$ is unique (and equals $\ov{\omega}$ in [loc.cit.]), the character $\chi$ is trivial since the class number of $\bfQ(i)$ equals $1$ and the character $\xi$ corresponds to the character which in [loc.cit.] was denoted by $\chi$. \end{rem} 

\begin{proof}[Proof of Theorem \ref{mainthm}]  
Consider again equation (\ref{we1}). Note that $\Xi'=\Psi_{\beta}(\tr(D^*
\theta^*_{\chi}))$ and $g$ lie in  $\mM_{k, -k/2}$ and $f_{\phi,\chi} \in
\mM_{k,-k/2}^{\chi}$. We would like all of the forms to be in
$\mM_{k,-k/2}^{\chi}$.

\begin{lemma} Let $Z$ denote the center of $U$. The quotient $Z(\AQ)/Z(\bfQ)$ is compact. \end{lemma}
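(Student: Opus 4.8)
The plan is to identify the center $Z$ of $U=U_2$ explicitly and reduce the compactness of $Z(\AQ)/Z(\bfQ)$ to a standard fact about the norm-one subgroup of an imaginary quadratic field. First I would observe that the center of $U_n=\U(n,n)$ consists of scalar matrices $\zeta I_{2n}$ with $\zeta \in \R \bfG_{m/K}$ satisfying $\zeta \ov{\zeta} = 1$, since the defining condition $AJ\ov{A}^t = J$ forces $|\zeta|^2 = 1$ for a scalar $A = \zeta I_{2n}$. Thus $Z \cong H$, where $H = \{x \in \Res_{K/\bfQ}\bfG_{m/K} \mid x \ov{x} = 1\}$ is exactly the norm-one torus already introduced in the proof of Lemma \ref{decomp905}.

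Having made this identification, the compactness of $Z(\AQ)/Z(\bfQ) \cong H(\AQ)/H(\bfQ)$ is a classical consequence of the fact that $H$ is an anisotropic $\bfQ$-torus. Concretely, $H(\bfR) = \{z \in \bfC^{\times} \mid z\ov{z} = 1\}$ is the unit circle, which is already compact, so the real place contributes no noncompactness; and since $K$ is imaginary quadratic, $H$ has no nontrivial $\bfQ$-rational characters (it is $\bfQ$-anisotropic), which by the general theory of adelic quotients of tori (for instance, the theorem that $G(\AQ)/G(\bfQ)$ is compact for an anisotropic reductive group, or directly the finiteness of class numbers combined with compactness of $H(\bfR)$ and of the local units) yields the compactness of the full adelic quotient. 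I would invoke this via the bijection between $\Cl_K$ and $H(\AQ)/H(\bfQ)\det D$ already used in Lemma \ref{decomp905}, which exhibits the finite-level quotient as finite and reduces the statement to the compactness of $H(\bfR) \prod_p H(\bfZ_p)$ modulo the appropriate arithmetic subgroup.

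The cleanest route I would actually write is to note that the norm map identifies $H$ with the kernel of $\Nm_{K/\bfQ}$ on $\Res_{K/\bfQ}\bfG_{m/K}$, and then appeal to the standard result (e.g. in the treatment of tori in Platonov--Rapinchik or in Shimura's own framework) that for an anisotropic torus $T$ over a number field, the quotient $T(\AQ)/T(\bfQ)$ is compact. The key point making $H$ anisotropic is precisely that $K/\bfQ$ is a field extension with no real places above the archimedean place of $\bfQ$, so $H(\bfR)$ is compact rather than a split torus; this is where the imaginary quadratic hypothesis is essential. I expect essentially no obstacle here beyond correctly citing the right compactness theorem, since everything reduces to well-known facts about the idele class group of $K$ and the already-established finiteness of the relevant $H$-coset space from Lemma \ref{decomp905}.
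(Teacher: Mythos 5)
Your proposal is correct. The identification of $Z$ with the norm-one torus $H=\{x\in\Res_{K/\bfQ}\bfG_{m/K}\mid x\ov{x}=1\}$ is right, $H$ is indeed $\bfQ$-anisotropic because the nontrivial element of $\Gal(K/\bfQ)$ acts by $-1$ on the character lattice, and the general compactness criterion for anisotropic tori does give the lemma. The paper, however, does not invoke any general theorem: its entire proof is the observation that $Z(\AQ)=\bigcup_{b\in\mB}Z(\bfQ)\,Z(\bfR)\,p_b\,Z(\hat{\bfZ})$, a finite union (indexed by the base $\mB$, i.e.\ by $\Cl_K$) of sets that are compact because $Z(\bfR)$ is the unit circle and $Z(\hat{\bfZ})$ is profinite, whence the quotient by $Z(\bfQ)$ is a finite union of compact sets. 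This is exactly the ``concrete route'' you sketch in your second paragraph via the bijection between $\Cl_K$ and $H(\AQ)/H(\bfQ)\det D$ from Lemma \ref{decomp905}, so in substance you have both proofs: your preferred citation of the anisotropic-torus compactness theorem is the more conceptual and more general explanation (and makes transparent where the imaginary quadratic hypothesis enters, through compactness of $H(\bfR)$), while the paper's finite-coset decomposition is self-contained, uses only the finiteness of $h_K$ already established in the surrounding text, and avoids importing machinery the paper never otherwise needs. Either write-up would be acceptable; if you go with the general theorem you should actually supply a precise reference, since the paper's framework (Shimura's books) does not state it in that form.
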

\begin{proof} Note that $Z(\AQ) = \bigcup_{b \in \mB} Z(\bfQ) Z(\bfR) p_b
Z(\hat{\bfZ}).$ Since $Z(\bfR)$ is compact, the lemma follows. \end{proof}

Let $dz$ be a Haar measure on $Z(\AQ)/Z(\bfQ)$ normalized so that $\vol
(Z(\AQ)/Z(\bfQ)) = 1.$ For $f \in \mM_{k,-k/2}$ set $$(\pi_{\chi}f)(g) =
\int_{Z(\AQ)/Z(\bfQ)} f(gz) \chi^{-1}(z) dz =
\frac{1}{h_K} \sum_{b \in \mB} \chi^{-1}(p_b) f(g p_b) \in
\mM_{k,-k/2}^{\chi},$$ where by $\chi^{-1}(z)$, $\chi^{-1}(p_b)$ we mean $\chi^{-1}(c_K(\det z)^{1/2})$ and $\chi^{-1}(c_K(\det b))$ respectively with $c_K: \AK^{\times} \twoheadrightarrow \Cl_K$ the canonical map. The last equality
clearly implies that 
a Fourier coefficient of $\pi_{\chi}f$ is in $\Oo$ when
the corresponding Fourier coefficient of $f$ is in $\Oo$ (since $\ell 
\nmid h_K$). Note the slight abuse of terminology 
when we say that $f$ (or other adelic hermitian modular form) has 
Fourier coefficients in $\Oo$. By saying so, we mean that for 
$h \in \mS$ and $r \in \GL_2(\AKf)$ one has $e^{2 \pi \tr h} 
c_{f}(h,r) \in \Oo$. We will continue this abuse. Apply 
$\pi_{\chi}$ 
to 
both 
sides of
(\ref{we1}). Write $\Xi = \pi^{-3} \pi_{\chi} \Xi'$ and $g_0 = \pi^{-3} \pi_{\chi} g$. Then we have \be
\label{inn4} \Xi = C_{\phi,\chi}  
f_{\phi,\chi} + g_0 \in
\mM_{k, -k/2}^{\chi}\ee with $\left< g_0, f_{\phi,\chi} \right> =0$ 
and $C_{\phi, \chi}:=\pi^{-3}C$.

Combining Corollaries \ref{intE} and \ref{inttheta}
we get that for every $b \in \mB$ and every $h \in \mS$, the 
$h$-Fourier coefficient $c_{\Xi_b}(h)$ of  $\Xi_b$ is in $\Oo$.

Fix an ordinary pair $(\tau, b_0)$ and as before set $c=\det \tau$. Then $\val_{\ell}(c(\tau))=0$ with $c(\tau)$ as in section \ref{The inner product ratio}. Since $Nc>1$ it follows from the proof of Lemma 11.14 in \cite{Shimura97} together with Lemma 11.15 in [loc.cit.] and the remark following it that the order of $X_{m,Nc}$ equals the index of the group $\{x \in \AK^{\times} \mid x_{\fp} \in \Oo_{K, \fp}^{\times} \hf \textup{and} \hf x_{\fp} -1 \in Nc\Oo_{K, \fp} \hf \textup{for every $\fp \nmid \infty$}\}$ inside $\AK^{\times}$. 
Hence in particular
the 
assumptions in the theorem imply that $\ell 
\nmid \#X_{m,Nc}$. So, from (\ref{inn4}), (\ref{we1}) and Proposition 
\ref{the frac1} we obtain that \be \label{cfor} C_{\phi, \chi}=(*) 
\eta^{-1}
 \frac{L^{\tuint}(\BC(\phi)\frac{t+k}{2}+1,  \beta \ov{\xi} \chi^{-1})
L^{\tuint}(\BC(\phi)\frac{t+k}{2}+2,  \beta \ov{\xi} \chi^{-1})}
{ L^{\tuint} (\Symm \phi,k)},\ee where $\val_{\ell}((*))\leq 0$ since $\det 
\tau \in \Oo$, $A_N \in \Oo$ and $[\Gamma_0^{\rm h}(Nc): \Gamma]\in \bfZ$. 
Note that under our assumption on the $L$-function (and ignoring the 
factor $\eta^{-1}$), this equality (together with fact that for every $b \in \mB$ the forms $\Xi_b$ 
and $(f_{\phi,\chi})_b$ have Fourier coefficients in $\Oo$) implies that we 
must 
have a mod 
$\varpi^n$ congruence between $f_{\phi,\chi}$ and $-\varpi^n g_0$. 
However, there is no guarantee that $g_0$ is orthogonal to the Maass 
space. So, we will now 
use the Hecke operator $T^{\rm h}$ which we constructed in section \ref{Completed Hecke algebras} to 
`kill' the `Maass' part of $g_0$. 

Indeed, by Theorem \ref{existhecke} there exists $T^{\rm h} \in \mH^{\chi}_{\Oo}$ such that $T^{\rm h} f_{\phi,\chi} = \eta f_{\phi,\chi}$ and $T^{\rm h}f=0$ for any eigenform $f \in \mS_{k,-k/2}^{{\rm M},\chi}$ orthogonal to $f_{\phi,\chi}$.

 We apply $T^{\hh}$ to both sides of (\ref{inn4}).
 As for all $b \in \mB$ and $h \in 
\mS$, the Fourier coefficients $e^{2 \pi
\tr h} c_{\Xi}(h, b)$ of
$\Xi$ 
lie in 
$\Oo$, so
do the Fourier coefficients of
$T^{\hh}\Xi$ by Propositions \ref{f3} and \ref{f4}. 
 Moreover, since
$\theta_{\chi}$ is a cusp form, so are $\Xi$ and $T^{\hh}\Xi$.

 We thus get \be \label{ortho974}
T^{\tuh} \Xi = \eta
C_{\phi, \chi} f_{\phi, \chi} +  T^{\tuh} g_0\ee with
$T^{\tuh}  
g_0$
orthogonal to the Maass space.

As $C_{\phi, \chi} \in E \subset
\bfC$ by (\ref{cfor}), it makes sense to
talk about
its 
$\varpi$-adic valuation. Suppose $\val_{\varpi}(\eta \hs
C_{\phi, \chi}) =
- n
\in \bfZ_{<0}$. 
Note that
since
the $(h,b)$-Fourier coefficients of $T^{\rm h}\Xi$ and of 
$f_{\phi, \chi}$ lie
in
$\Oo$ for all $b \in \mB$ and all $h \in \mS$, but $\eta
\hs C_{f_{\phi, \chi}}
\not\in \Oo$, we must have that either $T^{\rm h} g_0 \neq
0$ or
$e^{2 \pi \tr h}c_{f_{\phi, \chi}}(h, b) \equiv 0$ mod $\varpi$ for all $b\in 
\mB$ and all 
$h 
\in \mS$. 
\begin{lemma} \label{missl} There exists a pair $( h, b)$ such that  
$e^{2 \pi \tr h}c_{f_{\phi, \chi}}( h, b) \not\equiv 0$ mod 
$\varpi$. \end{lemma}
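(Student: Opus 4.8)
The plan is to verify that the ordinary pair $(\tau, b_0)$ whose existence was established just above is already a pair $(h,b)$ of the required form. The only thing that needs to be checked is that condition (1) of Definition \ref{cord1}, which controls the $\ell$-adic valuation of the \emph{complex conjugate} $\ov{e^{2\pi\tr\tau}c_{f_{\phi,\chi}}(\tau,b_0)}$, forces the coefficient $e^{2\pi\tr\tau}c_{f_{\phi,\chi}}(\tau,b_0)$ itself to be a $\varpi$-adic unit. So the heart of the matter is to compare $\val_{\ell}$ of a Fourier coefficient of $f_{\phi,\chi}$ with $\val_{\ell}$ of its complex conjugate; once these are shown to agree, the lemma is immediate.

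First I would record how complex conjugation acts on the Maass coefficient functions. By Proposition \ref{heckedesc3} we have $\Desc_{\mB}(f_{\phi,\chi}) = (\chi(b)(\phi-\phi^{\rho}))_{b\in\mB}$, and since the $n$-th Fourier coefficient of $\phi^{\rho}$ is $\ov{a_{\phi}(n)}$, the coefficients $a_{\phi}(n)-\ov{a_{\phi}(n)}$ of $\phi-\phi^{\rho}$ are purely imaginary. The descent formula of Theorem \ref{desc4} reads $i\,a_K(n)D_K^{-1/2}\,c_{b,f_{\phi,\chi}}(n) = \chi(b)\bigl(a_{\phi}(n)-\ov{a_{\phi}(n)}\bigr)$; in particular $c_{b,f_{\phi,\chi}}(n)$ is supported on those $n$ with $a_K(n)\neq 0$, which is part of the structure of Krieg's descent (cf. \cite{Krieg91}). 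For such $n$, conjugating the displayed identity and using that $a_K(n)$ and $D_K$ are real while $\ov{i}=-i$ and $\ov{\chi(b)}=\chi(b)^{-1}$ yields $\ov{c_{b,f_{\phi,\chi}}(n)} = \chi(b)^{-2}\,c_{b,f_{\phi,\chi}}(n)$; as both sides vanish when $a_K(n)=0$, this identity holds for all $n$. Since $\chi$ is a character of the finite group $\Cl_K$, the scalar $\chi(b)^{-2}$ is a root of unity, hence a $\varpi$-adic unit.

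Next I would insert this into the Maass relation (\ref{Maass condition}) specialized to $q=b_0$. Because $\mB$ is chosen as in Corollary \ref{scalarcor}, the matrix $b_0$ is scalar with $b_0^*b_0=I_2$, so $b_0^*\tau b_0=\tau$ and the product over $p$ in (\ref{Maass condition}) is trivial; thus $e^{2\pi\tr\tau}c_{f_{\phi,\chi}}(\tau,b_0)=\sum_{d\mid\epsilon(\tau)}d^{k-1}c_{b_0,f_{\phi,\chi}}(D_Kd^{-2}\det\tau)$. Conjugating term by term and pulling out the common unit $\chi(b_0)^{-2}$ (the factors $d^{k-1}$ being rational) gives $\ov{e^{2\pi\tr\tau}c_{f_{\phi,\chi}}(\tau,b_0)} = \chi(b_0)^{-2}\,e^{2\pi\tr\tau}c_{f_{\phi,\chi}}(\tau,b_0)$, so the two quantities have equal $\ell$-adic valuation. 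Finally, condition (1) of Definition \ref{cord1} asserts $\val_{\ell}\bigl(\ov{e^{2\pi\tr\tau}c_{f_{\phi,\chi}}(\tau,b_0)}\bigr)=0$; hence $\val_{\ell}\bigl(e^{2\pi\tr\tau}c_{f_{\phi,\chi}}(\tau,b_0)\bigr)=0$ as well, that is $e^{2\pi\tr\tau}c_{f_{\phi,\chi}}(\tau,b_0)\not\equiv 0\pmod{\varpi}$, and the pair $(h,b)=(\tau,b_0)$ works. I expect the only genuine obstacle to be the conjugation bookkeeping of the middle paragraph; once the relation $\ov{c_{b,f_{\phi,\chi}}(n)}=\chi(b)^{-2}c_{b,f_{\phi,\chi}}(n)$ is in hand, the statement follows purely formally from the existence of an ordinary pair.
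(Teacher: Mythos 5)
Your argument is correct in substance but takes a genuinely different route from the paper's. The paper proves the lemma by contradiction and makes no use of the ordinary pair at all: assuming $e^{2\pi\tr h}c_{f_{\phi,\chi}}(h,b)\equiv 0 \pmod{\varpi}$ for every pair, it specializes to $h=\bsmat p&0\\0&1\esmat$ with $p=1$ or a prime, so that (\ref{Maass condition}) forces $c_{b,f_{\phi,\chi}}(D_Kp)\equiv 0$, hence via Theorem \ref{desc4} $a_{\phi}(pD_K)\equiv\ov{a_{\phi}(pD_K)}\pmod{\varpi}$; since $|a_{\phi}(D_K)|=D_K^{(k-2)/2}$ is an $\ell$-adic unit, multiplicativity yields $a_{\phi}(p)\equiv\ov{a_{\phi}(p)}$ for all $p$, i.e.\ $\phi\equiv\phi^{\rho}\pmod{\varpi}$, contradicting Proposition \ref{congffrho93}. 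You instead piggyback on the prior lemma asserting the existence of an ordinary pair, and supply the conjugation identity $\ov{c_{b,f_{\phi,\chi}}(n)}=\chi(b)^{-2}c_{b,f_{\phi,\chi}}(n)$ to pass from condition (1) of Definition \ref{cord1} (a statement about the complex conjugate) to the coefficient itself. That identity is correct for the $n$ that actually occur, and it has independent value: the paper silently makes the same passage in the proof of Theorem \ref{mainthm} when it deduces $\val_{\ell}(c(\tau))=0$ from ordinarity of $(\tau,b_0)$, so your computation justifies a step left implicit there. The trade-off is that your proof shifts all the arithmetic content onto the ordinary-pair lemma, which the paper proves only by reference to Lemma 7.10 of \cite{Klosin09} --- and that argument is essentially the contradiction argument just described; the paper's direct proof keeps the text self-contained and, in particular, does not make the lemma logically dependent on a result quoted from elsewhere.

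One point to tighten: your justification of the identity in the degenerate case $a_K(n)=0$ (``both sides vanish'') is not something Theorem \ref{desc4} gives you --- the descent formula then reads $0=0$ and places no constraint on $c_{b,f_{\phi,\chi}}(n)$. Fortunately the issue is vacuous for your purposes: the only arguments fed into $c_{b,f_{\phi,\chi}}$ by (\ref{Maass condition}) are $n=D_Kd^{-2}\det h$ with $h\in\mS$ and $d\mid\epsilon(h)$, and for such $n$ one has $-n\equiv N_{K/\bfQ}(x)d^{-2}\pmod{D_K}$ for a suitable $x\in\OK$, hence $-n$ is a square or zero mod $D_K$ and $a_K(n)\neq 0$. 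Either make that observation explicit or simply restrict the conjugation identity to those $n$ with $a_K(n)\neq 0$, which is all your argument requires.
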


\begin{proof} Assume on the contrary that $e^{2 \pi \tr h}c_{f_{\phi, \chi}}(h, b) \equiv 
0$ mod
$\varpi$ for all pairs $( h,b)$. By our choice of $\mB$, taking $h=\bsmat p 
& 0 \\ 0 & 1\esmat$ with $p=1$ or a prime, (\ref{Maass 
condition}) implies then that $c_{b,f_{\phi, \chi}}(D_K p) \equiv 0$ mod 
$\varpi$ for all primes $p$ and for $p=1$. Using Theorem \ref{desc4} we 
get that 
$a_{\phi}(p D_K) 
- \ov{a_{\phi}(p D_K)} \equiv 0$ mod $\varpi$ for all primes $p$ and $p=1$. 
Here $a_{\phi}(n)$ stands for the $n$th Fourier coefficient of $\phi$. 
By taking 
$p=1$ 
we conclude that 
$a_{\phi}(D_K) \equiv \ov{a_{\phi}(D_K)}$ (mod $\varpi$). Since $|a_{\phi}(D_K)| = D_K^{(k-2)/2}$ 
(see for example \cite{Iwaniec97}, formula (6.90)), we have 
$\val_{\ell}(a_{\phi}(D_K))=0$ since $\ell \nmid D_K$. Hence 
we must have 
$a_{\phi}(p)\equiv \ov{a_{\phi}(p)}$ (mod $\varpi$) 
for all primes $p$. 
This on the other hand implies 
that $\ov{\rho}_f|_{G_K}$ is not absolutely irreducible by Proposition \ref{congffrho93}. This contradicts our assumptions. \end{proof}

By Lemma \ref{missl} we must have
$ T^{\rm h} g_0 \neq
0$.
Write
$\eta \hs C_{\phi, \chi}= a \varpi^{-n}$ with $a \in \Oo^{\times}$. Then
the Fourier
coefficients of $(\varpi^n T^{\rm h}g_0)_b$ lie in $\Oo$ and one has
$$f_{\phi,\chi}
\equiv
-a^{-1}\varpi^n
T^{\rm h}g_0 \quad (\textup{mod} \hf \varpi^n).$$
As explained above, $-a^{-1}\varpi^nT^{\rm h}g_0$ is a
hermitian modular form orthogonal to
the Maass
space. This completes the proof of Theorem \ref{mainthm}.
\end{proof}

\begin{cor} \label{cormain} Suppose that $\xi, \chi, \beta$ in Theorem \ref{mainthm}
can be chosen so that $$\val_{\varpi}\left(\prod_{j=1}^2
L^{\textup{int}}(\BC(\phi),j+(t+k)/2, \beta \ov{\xi} \chi^{-1})\right)=0,$$ 
 then
$n$  
in Theorem \ref{mainthm} can be taken to be
$\val_{\varpi}(L^{\textup{int}}(\Symm \phi,k))$. \end{cor}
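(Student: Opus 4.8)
The plan is to observe that Corollary \ref{cormain} is an immediate specialization of Theorem \ref{mainthm}, obtained by feeding the valuation hypothesis into the defining formula for $n$. Recall that in Theorem \ref{mainthm} the integer $n$ is defined by
$$-n = \val_{\varpi}\left(\prod_{j=1}^2 L^{\textup{int}}(\BC(\phi),j+(t+k)/2, \beta \ov{\xi} \chi^{-1})\right) - \val_{\varpi}(L^{\textup{int}}(\Symm \phi,k)).$$
First I would invoke the corollary's hypothesis, namely that $\xi, \chi, \beta$ are chosen so that the first term on the right-hand side --- the $\varpi$-valuation of the product of the two base-change $L$-values --- vanishes. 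Substituting this, the formula collapses to
$$-n = -\val_{\varpi}(L^{\textup{int}}(\Symm \phi,k)),$$
so that $n = \val_{\varpi}(L^{\textup{int}}(\Symm \phi,k))$, which is exactly the asserted value.

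Next I would note that every remaining hypothesis of Theorem \ref{mainthm} (on $k$, $\ell$, $\phi$, $\xi$, $N$, and the ordinarity and absolute irreducibility conditions) is carried over unchanged by assumption, so the theorem applies verbatim with this value of $n$ and produces the desired $f \in \mS_{k,-k/2}^{\chi}$, orthogonal to the Maass space, satisfying $f \equiv f_{\phi,\chi} \pmod{\varpi^n}$. The only point meriting a remark is that the conclusion is nonvacuous precisely when $n>0$, i.e.\ when $\val_{\varpi}(L^{\textup{int}}(\Symm \phi,k))>0$; this is the condition $-n<0$ appearing in the theorem, and it is the case of genuine interest since it is the positivity of the symmetric square $L$-value that forces the congruence.

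There is no real obstacle here: the corollary is a bookkeeping restatement that isolates $L^{\textup{int}}(\Symm \phi,k)$ as the single quantity governing the depth of the congruence, under the simplifying assumption that the base-change $L$-factors in the numerator of the inner product ratio (cf.\ Proposition \ref{the frac1} and \eqref{cfor}) are $\varpi$-adic units. Accordingly I would present the proof as a two-line substitution into the statement of Theorem \ref{mainthm}, with the cautionary remark about $n>0$ as above.
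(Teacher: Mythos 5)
Your proof is correct and coincides with what the paper intends: the corollary is an immediate substitution into the defining relation $-n = \val_{\varpi}\bigl(\prod_{j=1}^2 L^{\textup{int}}(\BC(\phi),j+(t+k)/2,\beta\ov{\xi}\chi^{-1})\bigr) - \val_{\varpi}(L^{\textup{int}}(\Symm\phi,k))$ from Theorem \ref{mainthm}, which is why the paper records no separate proof. Your remark that the hypothesis $-n<0$ then amounts to $\val_{\varpi}(L^{\textup{int}}(\Symm\phi,k))>0$ is also accurate.
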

  
\begin{rem} As already discussed in Remark 7.15 of \cite{Klosin09}, the existence of
character $\xi$ %(or equivalently $\beta \ov{\xi} \chi^{-1}$) 
as in Corollary \ref{cormain} is not known in general. Note that one needs to ``control'' two $L$-values at the same time to ensure that their product is a $\varpi$-adic unit. 
However, 
%in contrast to \cite{Klosin09}, 
if the class number of $K$ is larger than one, we now have (slightly) more flexibility as we also get to choose the character $\beta$ (or equivalently $\chi^{-1}\beta$). While we still do not have a proof for this fact it seems very unlikely that for all the possible combinations of the characters $\xi$ and $\beta$ the product of $L$-values should always involve non-zero powers of $\varpi$. 
\end{rem}
\begin{rem} The ordinarity assumption on $\phi$ in Theorem \ref{mainthm} is used in section \ref{Completed Hecke algebras} to
construct the Hecke operator $T^{\rm h}$ annihilating the Maass part of $g_0$ as
above as well as to ensure that $(*)$ in (\ref{periods2}) is a
$\varpi$-adic unit. Note that the operator $T^{\rm h}$ is not necessary provided that $\phi$ is not congruent (mod $\varpi$) to any other $\phi' \in S_{k-1}(D_K, \chi_K)$. Indeed, then there cannot be any `Maass part' of $g_0\in \mS^{\chi}_{k, -k/2}$ that is congruent to $f_{\phi, \chi}$. One expects that the set of primes $\ell$ of $\bfQ$
such that a given (non-CM) form $\phi$ is ordinary at $\ell$ has Dirichlet
density one, but for now no proof of this fact is known. An analogous
statement for elliptic curves was proved by Serre \cite{Serre66}.
\end{rem}

\subsection{The Maass ideal} \label{The Maass ideal}

\begin{cor} \label{cormain78} Under the assumptions of Theorem
\ref{mainthm} there a is cuspidal Hecke eigenform $g \in \mS_{k, -k/2}^{\chi}$ such that
$\val_{\varpi}(\lambda_{f_{\phi,\chi}}(T) - \lambda_g(T))>0$ for all
Hecke
operators
$T \in \mH^{\chi}_{\mathcal{O}}$. Here the homomorphism $\lambda$ sends the Hecke operator to its eigenvalue.
\end{cor}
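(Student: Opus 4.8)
The plan is to upgrade the Fourier-coefficient congruence of Theorem \ref{mainthm} to a congruence of Hecke eigenvalue systems. Recall that Theorem \ref{mainthm} produces a cusp form $f \in \mS_{k,-k/2}^{\chi}$, orthogonal to the Maass space $\mS_{k,-k/2}^{\tuM,\chi}$, with $\Oo$-integral normalized Fourier coefficients (with respect to the fixed admissible base $\mB$) and satisfying $f \equiv f_{\phi,\chi} \pmod{\varpi^{n}}$ for some $n \geq 1$. First I would observe that $f \not\equiv 0 \pmod{\varpi}$: by Lemma \ref{missl} the form $f_{\phi,\chi}$ has a normalized Fourier coefficient that is a $\varpi$-adic unit, and since $n \geq 1$ the corresponding coefficient of $f$ is a unit as well.

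Next I would turn the congruence into a relation inside the Hecke algebra. Fix $T \in \mH^{\chi}_{\Oo}$. Since $f_{\phi,\chi}$ is a Hecke eigenform (Remark \ref{Missingops}, Proposition \ref{heckedesc3}) with eigenvalues $\lambda_{f_{\phi,\chi}}(T) \in \Oo$ (Theorem \ref{eichshi930}, $E$ large), applying $T$ to the congruence and invoking the integrality statement Lemma \ref{integrality382} (so that $T$ acts $\Oo$-linearly on forms with integral normalized Fourier coefficients and thus respects congruences modulo $\varpi$) gives
\[
T f \equiv T f_{\phi,\chi} = \lambda_{f_{\phi,\chi}}(T)\, f_{\phi,\chi} \equiv \lambda_{f_{\phi,\chi}}(T)\, f \pmod{\varpi},
\]
i.e. $(T - \lambda_{f_{\phi,\chi}}(T))\,f \equiv 0 \pmod{\varpi}$ at the level of normalized Fourier coefficients, for every $T \in \mH^{\chi}_{\Oo}$.

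Then I would run a standard localization argument. Let $M$ be the $\Oo$-lattice of forms in $\mS_{k,-k/2}^{\chi}$ orthogonal to the Maass space whose normalized Fourier coefficients (with respect to $\mB$) lie in $\Oo$; this is a finite free $\Oo$-module, stable under $\mH^{\chi}_{\Oo}$ by Lemma \ref{integrality382}, and $f \in M$ with $\bar f \neq 0$ in $M/\varpi M$. Let $\bfT$ denote the image of $\mH^{\chi}_{\Oo}$ in $\End_{\Oo}(M)$; it is a reduced, finite free $\Oo$-algebra (commutativity, self-adjointness and diagonalizability, Proposition \ref{basis956}), so it decomposes as $\bfT = \prod_{\fn}\bfT_{\fn}$ over its maximal ideals as in (\ref{hermdec}), with a matching decomposition $M/\varpi M = \bigoplus_{\fn} (M/\varpi M)_{\fn}$. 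Choose $\fn$ with $\bar f_{\fn}\neq 0$. The relation above shows that for every $T$ the element $T - \lambda_{f_{\phi,\chi}}(T)$ annihilates the nonzero vector $\bar f_{\fn}$, hence is not a unit in the local ring $\bfT_{\fn}/\varpi$; therefore it lies in the maximal ideal, so the image of $T$ in the residue field equals $\lambda_{f_{\phi,\chi}}(T) \bmod \varpi \in \bfF := \Oo/\varpi$. Since $E$ is large enough (Theorem \ref{eichshi930}) the residue field is $\bfF$ and all eigenvalues lie in $\Oo$, so $\fn$ is exactly the maximal ideal cut out by the mod-$\varpi$ eigensystem of $f_{\phi,\chi}$; any eigenform $g$ of the fixed eigenbasis $\mathcal{N}^{\hh}$ lying in the $\fn$-component is cuspidal, orthogonal to the Maass space, and satisfies $\val_{\varpi}(\lambda_{f_{\phi,\chi}}(T) - \lambda_g(T)) > 0$ for all $T \in \mH^{\chi}_{\Oo}$, as desired.

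The main obstacle is the bookkeeping that makes reduction modulo $\varpi$ genuinely Hecke-equivariant on the $\Oo$-lattice cut out by Fourier coefficients; this is precisely what Lemma \ref{integrality382} secures (together with the fact, noted there, that the powers of $p$ appearing in Propositions \ref{f3} and \ref{f4} are $\ell$-adic units), and the verification that $\bar f \neq 0$, which rests on Lemma \ref{missl}. Once these are in place, the commutative-algebra step extracting $g$ is routine, and the orthogonality of $g$ to the Maass space (which distinguishes $g$ from $f_{\phi,\chi}$ and is what the Bloch--Kato application needs) is built into the choice of $M$.
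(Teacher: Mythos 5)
Your proof is correct, and it establishes the intended stronger form of the statement (a $g$ orthogonal to the Maass space), but it is packaged differently from the paper's argument. The paper proceeds by contradiction: using the product decomposition (\ref{hermdec}) of $\mH^{\chi}_{\Oo}$ it extracts an idempotent Hecke operator $T_0$ that fixes $f_{\phi,\chi}$ and kills every form orthogonal to all eigenforms whose eigenvalues are congruent to those of $f_{\phi,\chi}$ mod $\varpi$; if every such eigenform were a Maass form, then $T_0f=0$, and applying $T_0$ to the congruence $f\equiv f_{\phi,\chi}$ of Theorem \ref{mainthm} would force all Fourier coefficients of $f_{\phi,\chi}$ into $\varpi\Oo$, which is ruled out by Theorem \ref{desc4} and Proposition \ref{congffrho93} --- exactly the mechanism of your appeal to Lemma \ref{missl}. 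You instead run the Deligne--Serre localization directly on a lattice $M$ of integral forms orthogonal to the Maass space, with the relation $(T-\lambda_{f_{\phi,\chi}}(T))\bar f=0$ as the engine; the essential inputs (the congruence, Lemma \ref{integrality382}, the decomposition of the Hecke algebra into local factors, and the nonvanishing of $f_{\phi,\chi}$ mod $\varpi$) are the same, and your route has the advantage of producing $g$ constructively rather than by contradiction. The one step you assert without justification is that $M$ is a finite free $\Oo$-module: an $\Oo$-submodule of a finite-dimensional $\bfC$-space cut out by integrality of Fourier coefficients is not automatically contained in an $\Oo$-lattice, and the paper sidesteps this by manipulating idempotents inside the finite $\Oo$-algebra $\mH^{\chi}_{\Oo}$ instead of a lattice of forms. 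The cleanest repair in the spirit of your argument is to apply the idempotent $e_{\fn}$ of (\ref{hermdec}) attached to the maximal ideal $\fn\subset\mH^{\chi}_{\Oo}$ generated by $\varpi$ and the elements $T-\lambda_{f_{\phi,\chi}}(T)$: Lemma \ref{integrality382} gives $e_{\fn}f\equiv e_{\fn}f_{\phi,\chi}=f_{\phi,\chi}\not\equiv 0\pmod{\varpi}$, so $e_{\fn}f$ is a nonzero element of the $\fn$-isotypic part of $\mS_{k,-k/2}^{\textup{NM},\chi}$, and any eigenform occurring in that part is the desired $g$.
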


\begin{proof} Let $f$ be as in Theorem \ref{mainthm}. Using the
decomposition (\ref{hermdec}), we see that there exists a Hecke
operator
$T_0 \in
\mH^{\chi}_{\mathcal{O}}$ such that $T_0f_{\phi, \chi}=f_{\phi, \chi}$ and
$T_0f'=0$
for each
$f'\in
\mathcal{S}_{k, -k/2}^{\chi}$
which is orthogonal to all Hecke eigenforms whose eigenvalues are
congruent to those of $f_{\phi, \chi}$ (mod $\varpi$). Let $\mN^{\rm h}$ be a basis of eigenforms for $\mH_{k, -k/2}^{\chi}$ such that $f_{\phi, \chi} \in \mN^{\rm h}$. Suppose all the elements of
$\mN^{\hh}$
whose eigenvalues are congruent to those of $f_{\phi, \chi}$ (mod $\varpi$) lie in the Maass space.
Then $T_0 f=0$. However, since $f_{\phi, \chi} \equiv f$ (mod $\varpi$), this implies that with respect to some base $\mB$ for all $b \in \mB$ all the Fourier coefficients of the  $b$-component of $f_{\phi, \chi}$ are in $\varpi \Oo$. By Theorem \ref{desc4} and (\ref{Maass condition})
this is only possible if $\phi \equiv
\phi^{\rho}$ (mod $\varpi$). This however leads to a contradiction by Proposition
\ref{congffrho93}. \end{proof}

Recall that we have a Hecke-stable decomposition
$$\mathcal{S}_{k, -k/2}^{\chi} = \mathcal{S}_{k, -k/2}^{{\rm M}, \chi}
\oplus
\mathcal{S}_{k, -k/2}^{\textup{NM}, \chi},$$
where $\mathcal{S}_{k, -k/2}^{\textup{NM}, \chi},$
 denotes the orthogonal
complement of $\mathcal{S}_{k, -k/2}^{{\rm M}, \chi}$
 inside $\mathcal{S}_{k, -k/2}^{\chi}$. Denote by $\mH^{\textup{NM}, \chi}_{\Oo}$
the image of $\mH^{\chi}_{\Oo}$ inside $\textup{End}_{\bfC}
(\mathcal{S}_{k, -k/2}^{\textup{NM}, \chi})$ and let $\Phi:
\mH^{\chi}_{\Oo} \twoheadrightarrow \mH^{\textup{NM}, \chi}_{\Oo}$ be the
canonical $\Oo$-algebra epimorphism. Let
$\textup{Ann}(f_{\phi, \chi})\subset \mH^{\chi}_{\Oo}$ denote the annihilator of
$f_{\phi, \chi}$. It is a prime ideal of
$\mH^{\chi}_{\Oo}$ and $\lambda_{f_{\phi, \chi}}: \mH^{\chi}_{\Oo}
\twoheadrightarrow \Oo$ induces an $\Oo$-algebra isomorphism
$\mH^{\chi}_{\Oo} / \textup{Ann}(f_{\phi, \chi}) \xrightarrow{\sim}
\Oo$.

\begin{definition} \label{CAPidedef} As $\Phi$ is surjective,
$\Phi(\textup{Ann}(f_{\phi, \chi}))$ is an ideal of
$\mH^{\textup{NM}, \chi}_{\Oo}$. We call it the \textit{Maass ideal
associated to $f_{\phi, \chi}$}. \end{definition}
There exists a
non-negative integer $r$ for which the diagram
\be \label{diagramCAP32} \xymatrix{\mH^{\chi}_{\Oo}\ar[r]^{\Phi} \ar[d]&
\mH^{\textup{NM}, \chi}_{\Oo}\ar[d]\\
\mH^{\chi}_{\Oo}/\textup{Ann}(f_{\phi, \chi}) \ar[r]^{\Phi} \ar[d]^{\wr}_{\lambda_{f_{\phi, \chi}}} &
\mH^{\textup{NM}, \chi}_{\Oo}/\Phi(\textup{Ann}(f_{\phi, \chi}))\ar[d]^{\wr}\\
\Oo\ar[r]& \Oo/\varpi^r \Oo  }\ee
all of whose arrows are $\Oo$-algebra epimorphisms, commutes.

\begin{cor} \label{CAPideal1} If $r$ is the integer from diagram
(\ref{diagramCAP32}), and $n$ is as in Theorem
\ref{mainthm}, then $r\geq n$. \end{cor}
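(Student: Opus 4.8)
The plan is to show that the integer $r$ from diagram (\ref{diagramCAP32}) measures the depth to which Hecke eigenvalue systems congruent to those of $f_{\phi,\chi}$ can occur among non-Maass forms, and then to exhibit such a congruence of depth at least $n$ using Theorem \ref{mainthm}. First I would unwind the meaning of $r$: the map $\Oo \to \Oo/\varpi^r\Oo$ on the bottom of (\ref{diagramCAP32}) says that $\varpi^r$ is the precise power of $\varpi$ such that the eigensystem $\lambda_{f_{\phi,\chi}}$ factors through $\mH^{\textup{NM},\chi}_{\Oo}$ modulo $\varpi^r$ but no higher. Concretely, $r$ is the largest integer with the property that there exists an $\Oo$-algebra homomorphism $\mH^{\textup{NM},\chi}_{\Oo} \to \Oo/\varpi^r\Oo$ whose composition with $\Phi$ agrees with $\lambda_{f_{\phi,\chi}}$ reduced mod $\varpi^r$; equivalently, $\varpi^r$ generates the image of $\textup{Ann}(f_{\phi,\chi})$ under $\lambda_{f_{\phi,\chi}}$ after passing to the non-Maass quotient.

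The key step is to translate the congruence produced in Theorem \ref{mainthm} into a lower bound on $r$. Theorem \ref{mainthm} produces a form $f \in \mS_{k,-k/2}^{\chi}$, orthogonal to the Maass space, with $f \equiv f_{\phi,\chi} \pmod{\varpi^n}$. I would first replace $f$ by a Hecke eigenform: decomposing $\mS_{k,-k/2}^{\textup{NM},\chi}$ into eigenspaces and using that the Fourier coefficient congruence $f \equiv f_{\phi,\chi} \pmod{\varpi^n}$ forces, via the action of Hecke operators on Fourier coefficients (Propositions \ref{f3} and \ref{f4}) together with the integrality Lemma \ref{integrality382}, a congruence of the eigenvalue systems. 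More precisely, since the Hecke action on Fourier coefficients is $\Oo$-integral and $f \equiv f_{\phi,\chi} \pmod{\varpi^n}$ at the level of (normalized) Fourier coefficients, applying any $T \in \mH^{\chi}_{\Oo}$ and comparing the $(h,b)$-coefficients that are $\varpi$-adic units (which exist by Lemma \ref{missl}) yields $\val_{\varpi}(\lambda_{f_{\phi,\chi}}(T) - \lambda_f(T)) \geq n$ for all $T$. Because $f$ lies in the non-Maass part, this eigenvalue congruence says exactly that the homomorphism $\lambda_f: \mH^{\textup{NM},\chi}_{\Oo} \to \Oo$ satisfies $\lambda_f \circ \Phi \equiv \lambda_{f_{\phi,\chi}} \pmod{\varpi^n}$, which by the characterization of $r$ above forces $r \geq n$.

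The main obstacle I anticipate is the passage from the \emph{Fourier-coefficient} congruence $f \equiv f_{\phi,\chi} \pmod{\varpi^n}$ of Theorem \ref{mainthm} to the \emph{Hecke-eigenvalue} congruence needed to bound $r$, since $f$ as produced there need not be an eigenform. The clean way to handle this is to argue at the level of the Hecke algebra directly: the congruence of Fourier coefficients shows that the $\Oo$-linear functional ``$(h_0,b_0)$-th Fourier coefficient'' (for the ordinary pair $(\tau,b_0)$, so that this coefficient is a $\varpi$-adic unit on $f_{\phi,\chi}$) intertwines the $\mH^{\chi}_{\Oo}$-action on the span of $f_{\phi,\chi}$ and on $\mS_{k,-k/2}^{\textup{NM},\chi}$ modulo $\varpi^n$. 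Equivalently, I would show that $\lambda_{f_{\phi,\chi}} \pmod{\varpi^n}$ kills $\ker\big(\mH^{\chi}_{\Oo} \to \mH^{\textup{NM},\chi}_{\Oo}\big)$ is \emph{not} what is needed; rather, one checks $\Phi(\textup{Ann}(f_{\phi,\chi})) \subseteq \varpi^n \mH^{\textup{NM},\chi}_{\Oo} + (\text{terms measured by } r)$, and reading off the diagram (\ref{diagramCAP32}) gives $r \geq n$. I would carry this out by noting that the existence of the congruent non-Maass eigenform $g$ from Corollary \ref{cormain78}, refined to depth $n$ by the above Fourier-coefficient argument, directly exhibits a quotient $\mH^{\textup{NM},\chi}_{\Oo}/\Phi(\textup{Ann}(f_{\phi,\chi})) \twoheadrightarrow \Oo/\varpi^n\Oo$ compatible with $\lambda_{f_{\phi,\chi}}$, whence $r \geq n$ by the maximality defining $r$.
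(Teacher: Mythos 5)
Your characterization of $r$ in the first paragraph is correct, but the route you choose to exploit it has a genuine gap, and you actually discard the correct formulation along the way. The gap: you cannot ``refine Corollary \ref{cormain78} to depth $n$.'' The form $f$ produced by Theorem \ref{mainthm} is in general a linear combination of several non-Maass eigenforms, and the Fourier-coefficient congruence $f \equiv f_{\phi,\chi} \pmod{\varpi^n}$ only forces the \emph{aggregate} of the eigenvalue discrepancies, weighted by the coefficients of the eigenform decomposition, to vanish mod $\varpi^n$; it does not produce a single eigenform $g$ with $\lambda_g(T) \equiv \lambda_{f_{\phi,\chi}}(T) \pmod{\varpi^n}$ for all $T$. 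The paper stresses exactly this point (in the introduction and in the remark following Corollary \ref{CAPideal1}): the eigenvalue congruence with any one eigenform can only be guaranteed modulo the first power of $\varpi$, and the Maass ideal is introduced precisely because the depth-$n$ congruence may be distributed among several eigenforms. So your final step --- ``the congruent eigenform, refined to depth $n$, exhibits a surjection onto $\Oo/\varpi^n\Oo$'' --- rests on an object that need not exist. (Your second paragraph also writes $\lambda_f(T)$ for the non-eigenform $f$, which is undefined; you flag this yourself but the proposed repair does not avoid it.)

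The fix is precisely the statement you dismiss as ``not what is needed.'' Unwinding diagram (\ref{diagramCAP32}), one has $\mH^{\textup{NM},\chi}_{\Oo}/\Phi(\Ann(f_{\phi,\chi})) \cong \Oo/\lambda_{f_{\phi,\chi}}(\ker\Phi)$, so $r \geq n$ is \emph{equivalent} to the assertion that $\lambda_{f_{\phi,\chi}}(T) \in \varpi^n\Oo$ for every $T \in \mH^{\chi}_{\Oo}$ annihilating $\mS^{\textup{NM},\chi}_{k,-k/2}$ --- and this is exactly what the non-eigenform $f$ gives you directly. For such $T$ one has $Tf=0$, so applying $T$ to the congruence $f_{\phi,\chi} \equiv f \pmod{\varpi^n}$ and using the $\Oo$-integrality of the Hecke action on Fourier coefficients (Propositions \ref{f3} and \ref{f4}) yields $\lambda_{f_{\phi,\chi}}(T)\, e^{2\pi \textup{tr}\, h}c_{f_{\phi,\chi}}(h,b) \equiv 0 \pmod{\varpi^n}$ for all $(h,b)$; Lemma \ref{missl} supplies a unit coefficient, whence $\lambda_{f_{\phi,\chi}}(T) \equiv 0 \pmod{\varpi^n}$. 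The paper's own proof is the contrapositive of this: it chooses $T \in \Ann(f_{\phi,\chi})$ with $\Phi(T) = \varpi^r$, applies it to the congruence to conclude that the coefficients of $\varpi^r f$ lie in $\varpi^n\Oo$, and derives a contradiction with Lemma \ref{missl} if $r<n$. Either way, the essential move is to apply an annihilating Hecke operator to the Fourier expansion of the congruence itself, never passing through a single deeply congruent eigenform.
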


\begin{proof} Set $\mathcal{N}^{\textup{NM}}:= \{ f \in \mathcal{N}^{\hh}
\mid f \in
\mathcal{S}_{k, -k/2}^{\textup{NM}, \chi}\}.$ Choose any
$T\in
\Phi^{-1}(\varpi^r) \subset \mH^{\chi}_{\Oo}$.
 Suppose that $r <n$, and let $f$ be as in
Theorem \ref{mainthm}.
We have \be \label{aux549} f_{\phi, \chi} \equiv f \quad (\textup{mod} \hf \varpi^n).
\ee and $Tf = \varpi^r f$ and $Tf_{\phi, \chi}=0$. Hence applying $T$ to both sides of
(\ref{aux549}),
we see that for some base $\mB$ and every $b \in \mB$ all the Fourier coefficients of the $b$-component of $\varpi^r f$ lie in $\varpi^n \Oo$.
Since $r < n$ this along with (\ref{aux549}) implies that all the Fourier coefficients of the $b$-component of $f_{\phi, \chi}$ lie $\varpi \Oo$, which is impossible as shown in the proof of
Corollary
\ref{cormain78}. \end{proof}

\begin{rem} The Maass ideal plays a role similar to the classical Eisenstein ideal. Its index inside the Hecke algebra is a measure of the congruences between $f_{\phi, \chi}$ and eigenforms in $\mS_{k, -k/2}^{\chi}$ which are orthogonal to the Maass space. While Corollary \ref{cormain78} only guarantees a Hecke eigenform $f$ orthogonal to the Maass space congruent to $f_{\phi, \chi}$ modulo $\varpi$, the quotient $\mH^{\textup{NM}, \chi}_{\Oo}/\Phi(\textup{Ann}(f_{\phi, \chi}))$ takes into account all such eigenforms $f$ at the same time and hence gives a better idea of how much congruence there is between $f_{\phi, \chi}$ and eigenforms orthogonal to the Maass space. Also, it is exactly the index of the Maass ideal inside $\mH^{\textup{NM}, \chi}_{\Oo}$ that bounds the order of the appropriate Selmer group from below, as we discuss in the next section. \end{rem}

\subsection{Unitary analogue of Harder's conjecture} \label{Unitary analogue of Harder's conjecture}  Let $E$ be a sufficiently large finite extension of $\bfQ_{\ell}$, with ring of integers $\Oo$ and uniformizer $\varpi$. The original Harder's conjecture states that if $\ell$ is ``large'' and  $\varpi \mid L^{\rm alg}(f,j+k)$ (the appropriately normalized algebraic part of the special value of the standard $L$-function of $f$) for a cuspidal elliptic eigenform $f =\sum_{n}a_n(f)e(z) \in S_{r}(\SL_2(\bfZ))$, then there exists a cuspidal (vector-valued) Siegel modular eigenform $F$ of full level, whose eigenvalues for the Hecke operators $T(p)$ (for all primes $p$) are congruent to $p^{k-2} + p^{j+k-1} + a_p(f)$ modulo $\varpi$. Here $r=j+2k-2$, and $T(p)$ is the Hecke operator acting on the space of Siegel modular forms given by the double coset $\Sp_4(\bfZ) \diag(1,1,p,p) \Sp_4(\bfZ)$. For details see  \cite{Harder08} or \cite{VanderGeer08}. 

Recently Dummigan \cite{Dummigan11Harderconjecture} formulated an analogue of this conjecture for the group $\U(2,2)$. (We are grateful to him for sending us his preprint). Let $\phi \in S_{k-1}(D_K, \chi_K)$ be as before. Let $j$ be an integer such that $0 \leq j \leq (k-4)/2$ (note that our $k$ differs from Dummigan's $k$ by 1). Suppose $$\val_{\varpi}(L^{\rm alg}(\Symm \phi, 2k-4-2j))>0.$$ Write $\pi_{\phi}$ for the automorphic representation of $\GL_2(\AQ)$ associated with $\phi$. Let $\Pi(\phi)$ denote the representation $\textup{Ind}_{P(\AQ)}^{U(\AQ)}(\BC_{K/\bfQ}(\pi_{\phi}) \cdot |\det|^{k-(3/2)-j})$ of $U(\AQ)$. Then the unitary analogue of Harder's conjecture asserts that if $0 \leq j < (k-4)/2$ then there exists a cuspidal automorphic representation $\Pi$ of $U(\AQ)$  (whose finite part contributes to the cuspidal cohomology of degree 4 - for details cf. \cite{Dummigan11Harderconjecture}), unramified away from $D_K$, such that \be \label{har1} \lambda_{\Pi(\phi)}(T) \equiv \lambda_{\Pi}(T) \pmod{\varpi}\ee for all Hecke operators $T$ in the local Hecke algebras away from $D_K$. Here $\lambda$ denotes the appropriate Hecke eigenvalue.

Let us now briefly explain the relation of Corollary \ref{cormain78} to this conjecture.
% in the case $j=(k-4)/2$ under some conditions. 
First, assume that $\phi$ is not congruent to any other $\psi \in S_{k-1}(D_K, \chi_K)$ mod $\varpi$. This implies that the Hida congruence module of $\phi$ is a $\varpi$-adic unit, so that $\val_{\varpi}(L^{\rm int}(\Symm \phi, k)) = \val_{\varpi}(L^{\rm alg}(\Symm \phi, k))$ (cf. Proposition \ref{Hida45}). Secondly, when $j=(k-4)/2$, the automorphic representation $\Pi_{\phi}$ attached to the Maass lift $f_{\phi, \mathbf{1}}$ is associated (in the sense of Piatetski-Shapiro \cite{Piatetski-Shapiro83}) with $\Pi(\phi)$. In fact the local representations are isomorphic at all finite places, hence $\Pi_{\phi}$ shares the Hecke eigenvalues with $\Pi(\phi)$. Let $g\in \mS_{k, -k/2}^{\mathbf{1}}$ be a Hecke eigenform congruent to $f_{\phi, \chi}$ as in Corollary \ref{cormain78}. Then the automorphic representation $\Pi$ of $U(\AQ)$ associated with $g$ is cuspidal and unramified everywhere and its eigenvalues satisfy  (\ref{har1}).% Note that we cannot  take $\Pi=\Pi_{\phi}$ - which would trivially satisfy (\ref{har1}) - because $\Pi_{\phi}$ agrees at all finite places with the globally induced representation $\Pi(\phi)$, hence its finite part does not contribute to the cuspidal cohomology.

However, note that Dummigan's conjecture specifically excludes the case $j=(k-4)/2$ hence our result should be viewed as complementary to that conjecture rather than as a case of it. Indeed, the case $j=(k-4)/2$ is special because of the existence of the CAP representation $\Pi_{\phi}$ of $U(\AQ)$ associated with $\Pi(\phi)$ which has a holomorphic vector $f_{\phi, \chi}$ in it.  The holomorphicity of $f_{\phi, \chi}$ in particular allows us to use a holomorphic Eisenstein series to define the form $\Xi$ and for such Eisenstein series we know the integrality of their Fourier coefficients thanks to results of Shimura (cf. section \ref{ellint}). The main point of Dummigan's conjecture is the prediction of the congruence (\ref{har1}) in the absence of a CAP representation.
%Hence it is doubtful that our method could be applied to prove Harder's conjecture for other values of $j$. 

\section{The Bloch-Kato conjecture} \label{The Bloch-Kato conjecture}

In section \ref{Selmer groups} we will discuss how the results of the previous sections can be applied to give evidence for the Bloch-Kato conjecture for a twist of the adjoint motive of an elliptic modular form $\phi$. Since these results (and proofs) are completely analogous to the case considered in \cite{Klosin09}, we will just give the relevant statements and refer the reader to [loc. cit.] for details.
% In section \ref{Descent to Q} we will formulate a certain condition under which the cohomology classes we construct are defined over $\bfQ$. 

\subsection{Selmer groups} \label{Selmer groups}
We begin by defining the Selmer group. For a profinite
group $\mG$ and a $\mG$-module $M$ (where we assume the action of $\mG$ on
$M$
to be continuous) we will consider the group $H^1_{\textup{cont}}(\mG,M)$
of
cohomology classes of continuous cocycles $\mG \rightarrow M$. To shorten
notation we will suppress the subscript `cont' and simply write
$H^1(\mG,M)$.
For a field $L$, and a $\Gal(\ov{L}/L)$-module $M$ (with a continuous
action of $\Gal(\ov{L}/L)$) we sometimes write $H^1(L, M)$ instead of
$H^1_{\textup{cont}} (\Gal(\ov{L}/L), M)$. 

Let
$L$ be a number field. For a rational prime $p$ denote by $\Sigma_{p}$
the set of primes of
$L$ lying over $p$. Let $\S \supset \S_{\ell}$ be a finite set of
primes of $L$ and denote by $G_{\S}$ the Galois group of the maximal
Galois extension $L_{\S}$ of $L$ unramified outside of $\S$. Let $E$ be a (sufficiently large) finite extension of $\bfQ_{\ell}$ with ring of integer $\Oo$ and a fixed uniformizer $\varpi$. Let $V$ be a
finite
dimensional $E$-vector space with a continuous $G_{\Sigma}$-action.
Let $T \subset V$ be a $G_{\S}$-stable
$\Oo$-lattice. Set $W:= V/T$.

We begin by defining local Selmer groups. For every $\fp \in \S$
set
$$H^1_{\textup{un}}(L_{\fp}, M):= \ker \{ H^1(L_{\fp},M)
\xrightarrow{\textup{res}}
H^1(I_{\fp},M)\}.$$ Define the local $\fp$-Selmer
group (for $V$) by
$$H^1_{f}(L_{\fp},V):=\begin{cases} H^1_{\textup{un}}(L_{\fp},
V)& \fp \in \S \setminus \S_{\ell}\\
\ker \{ H^1(L_{\fp},V) \rightarrow
H^1(L_{\fp},V\otimes
B_{\textup{crys}})\} &\fp \in \S_{\ell}. \end{cases} $$
Here $B_{\textup{crys}}$
denotes Fontaine's
ring of $\ell$-adic
periods (cf. \cite{Fontaine82}).

For $\fp \in \S_{\ell}$, we call the $D_{\fp}$-module $V$
\textit{crystalline} (or the $G_L$-module $V$ \textit{crystalline at
$\fp$}) if
$\dim_{\bfQ_{\ell}} V = \dim_{\bfQ_{\ell}} H^0(L_{\fp}, V\otimes
B_{\textup{crys}})$. When we
refer to a Galois representation $\rho: G_L \rightarrow GL(V)$ as being
crystalline at $\fp$,
we mean that $V$ with the $G_L$-module structure defined by $\rho$ is
crystalline at
$\fp$.

For every $\fp$, define $H^1_{f}(L_{\fp},W)$ to be
the image of
$H^1_{f}(L_{\fp},V)$ under the natural map $H^1(L_{\fp},V)
\rightarrow
H^1(L_{\fp},W)$. Using the fact that $\Gal(\ov{\kappa}_{\fp}:
\kappa_{\fp}) = \hat{\bfZ}$ has
cohomological dimension 1, one easily sees that if $W$ is unramified at
$\fp$ and $\fp \not\in \S_{\ell}$, then
$H^1_{f}(L_{\fp},
W) = H^1_{\textup{un}}(L_{\fp},W)$. Here $\kappa_{\fp}$ denotes the
residue field of $L_{\fp}$.

For a $\bfZ_{\ell}$-module $M$, we write $M^{\vee}$ for its Pontryagin
dual defined as
$$M^{\vee} = \Hom_{\textup{cont}}(M, \bfQ_{\ell}/\bfZ_{\ell}).$$ Moreover,
if $M$ is a Galois module, we denote by $M(n):= M\otimes\epsilon^n$ its
$n$-th Tate twist.

\begin{definition} The group $$H^1_f(L, W):= \ker\left\{H^1(G_{\Sigma},W)
\xrightarrow{\textup{res}}
\bigoplus_{\fp \in \Sigma}
\frac{H^1(L_{\fp},W)}{H^1_{f}(L_{\fp},W)}
\right\}$$
is called the (global) \textit{Selmer group of  $W$}.
\end{definition}

For $L=\bfQ$, the group
$H^1_{f}(\bfQ, W)$ is the Selmer group defined by
Bloch and Kato \cite{BlochKato90}, section 5.
Let $\rho: G_{\S} \rightarrow \GL_E(V)$ denote the representation giving
the action of $G_{\S}$ on $V$. The following two lemmas are easy (cf.
\cite{Rubin00}, Lemma 1.5.7 and \cite{Skinner04}).
\begin{lemma} \label{fingen} $H^1_f(L,W)^{\vee}$ is a finitely generated
$\Oo$-module. \end{lemma}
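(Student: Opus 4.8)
The plan is to realize $H^1_f(L,W)$ as an $\Oo$-submodule of a cofinitely generated module and then pass to Pontryagin duals. By the very definition, $H^1_f(L,W)$ is the kernel of a map out of $H^1(G_{\Sigma},W)$, hence an $\Oo$-submodule of $M:=H^1(G_{\Sigma},W)$. Applying the exact contravariant functor $(-)^{\vee}=\Hom_{\textup{cont}}(-,\bfQ_{\ell}/\bfZ_{\ell})$ (exact because $\bfQ_{\ell}/\bfZ_{\ell}$ is divisible) to the inclusion $H^1_f(L,W)\hookrightarrow M$ produces a surjection $M^{\vee}\twoheadrightarrow H^1_f(L,W)^{\vee}$. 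Since a quotient of a finitely generated $\Oo$-module is again finitely generated, it suffices to show that $M^{\vee}$ is finitely generated over $\Oo$, i.e. that $M$ is a cofinitely generated $\Oo$-module.

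First I would note that $W=V/T\cong T\otimes_{\Oo}(E/\Oo)$ is a discrete, $\ell$-primary torsion $\Oo$-module, so $M$ is discrete and $\ell$-primary torsion, and its dual $M^{\vee}$ is a compact $\Oo$-module. By the topological (compact) form of Nakayama's lemma, $M^{\vee}$ is finitely generated over $\Oo$ as soon as $M^{\vee}/\varpi M^{\vee}$ is finite. Dualizing back, $M^{\vee}/\varpi M^{\vee}=(M[\varpi])^{\vee}$, and since finite modules are reflexive this is finite precisely when $M[\varpi]$ is finite. Thus the entire statement reduces to the finiteness of $H^1(G_{\Sigma},W)[\varpi]$.

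To control $M[\varpi]$ I would use the short exact sequence of $G_{\Sigma}$-modules $0\to W[\varpi]\to W\xrightarrow{\varpi}W\to 0$. Its long exact cohomology sequence yields a surjection $H^1(G_{\Sigma},W[\varpi])\twoheadrightarrow H^1(G_{\Sigma},W)[\varpi]$, so it is enough to know that $H^1(G_{\Sigma},W[\varpi])$ is finite. Here $W[\varpi]$ is a \emph{finite} module for $G_{\Sigma}$, the Galois group of the maximal extension of the number field $L$ unramified outside the finite set $\Sigma$, and the finiteness of $H^1(G_{\Sigma},W[\varpi])$ is the classical finiteness theorem for Galois cohomology of a number field with finite coefficients and restricted ramification (cf. \cite{Rubin00}, \cite{Skinner04}).

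The only nonformal ingredient is this last finiteness of $H^1(G_{\Sigma},-)$ on finite modules; I expect it to be the substantive point, while everything else is the formal machinery of Pontryagin duality and Nakayama's lemma. Accordingly I would present the argument in the order above: dualize the defining inclusion to reduce to cofinite generation of $H^1(G_{\Sigma},W)$, invoke compact Nakayama to reduce cofinite generation to finiteness of $H^1(G_{\Sigma},W)[\varpi]$, and then obtain that finiteness from the multiplication-by-$\varpi$ sequence together with the standard finiteness theorem.
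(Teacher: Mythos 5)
Your argument is correct and is essentially the standard one: the paper gives no proof of its own, simply declaring the lemma ``easy'' and citing Rubin's Lemma 1.5.7, whose proof is exactly your chain of reductions (dualize, compact Nakayama, the $\varpi$-multiplication sequence, and the finiteness of $H^1(G_{\Sigma},\cdot)$ with finite coefficients). Nothing is missing.
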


\begin{lemma} \label{length} If the mod
$\varpi$ reduction $\ov{\rho}$ of $\rho$ is absolutely
irreducible, then the length of $H^1_f(L,W)^{\vee}$
as an $\Oo$-module is independent of the choice of the lattice $T$.
\end{lemma}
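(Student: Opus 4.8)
The plan is to leverage the hypothesis on $\ov{\rho}$ to show that, up to homothety, $V$ admits only one $G_{\Sigma}$-stable $\Oo$-lattice, so that the module $W = V/T$ — and hence the entire Selmer group $H^1_f(L,W)$ — is independent of the choice of $T$ up to isomorphism, whence the lengths of the Pontryagin duals agree trivially. This reduces a length comparison to an isomorphism statement.

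First I would establish uniqueness of the lattice up to homothety. Let $T$ and $T'$ be two $G_{\Sigma}$-stable $\Oo$-lattices in $V$. After multiplying $T'$ by a suitable power of $\varpi$ (which changes neither its stability nor, ultimately, the associated $W'$) I may assume $T' \subseteq T$ but $T' \not\subseteq \varpi T$. The image of $T'$ in $\ov{V} := T/\varpi T$ is then a nonzero $(\Oo/\varpi)[G_{\Sigma}]$-submodule. Since $\ov{\rho}$ is absolutely irreducible, $\ov{V}$ is irreducible over the residue field, so this image is all of $\ov{V}$; that is, $T = T' + \varpi T$, and Nakayama's lemma forces $T' = T$. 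Undoing the scaling, any two $G_{\Sigma}$-stable lattices satisfy $T' = \varpi^{n} T$ for some $n \in \bfZ$.

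Next I would transport the Selmer group along a homothety. Scalar multiplication $\times \varpi^{n}$ is an $E[G_{\Sigma}]$-automorphism of $V$ carrying $T$ onto $T'$, hence it induces an isomorphism of Galois modules $W = V/T \xrightarrow{\sim} V/T' = W'$. Because this automorphism commutes with the $G_{\Sigma}$-action and is invertible, it preserves each local subspace $H^1_f(L_{\fp},V)$: for $\fp \in \Sigma \setminus \Sigma_{\ell}$ the unramified condition is intrinsic, and for $\fp \in \Sigma_{\ell}$ the crystalline condition $\ker\{H^1(L_{\fp},V) \to H^1(L_{\fp}, V \otimes B_{\textup{crys}})\}$ is stable under any $E[G_{\Sigma}]$-automorphism of $V$. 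Thus the images of $H^1_f(L_{\fp},V)$ in $H^1(L_{\fp},W)$ and in $H^1(L_{\fp},W')$ correspond under $W \cong W'$, so the local conditions match, the global Selmer groups $H^1_f(L,W)$ and $H^1_f(L,W')$ are isomorphic, and therefore so are their $\Oo$-modules $H^1_f(L,W)^{\vee}$. In particular their lengths coincide, whether finite or infinite, which sidesteps any reliance on Lemma \ref{fingen}.

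The only genuinely delicate point I anticipate is the compatibility at $\fp \in \Sigma_{\ell}$: I must verify that the isomorphism $W \cong W'$ really does identify the crystalline local conditions, i.e. that $H^1_f(L_{\fp},W)$ depends only on the Galois module $W$ (up to the given isomorphism) and not on the presentation $W = V/T$. This is where I would spend the most care, though I expect it to be formal once one observes that $\times \varpi^{n}$ acts compatibly on $V$ and on $V \otimes B_{\textup{crys}}$ as an automorphism, hence preserves the defining kernel and descends functorially to quotients.
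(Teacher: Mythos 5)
Your proposal is correct and is exactly the standard argument lying behind the paper's citation (the paper gives no proof of its own, deferring to \cite{Rubin00}, Lemma 1.5.7): absolute irreducibility of the residual representation forces any two $G_{\Sigma}$-stable lattices to be homothetic, and the homothety $\varpi^{n}$ induces a Galois-module isomorphism $V/T\cong V/T'$ identifying the local conditions, since $H^1_f(L_{\fp},V)$ is an $E$-subspace of $H^1(L_{\fp},V)$ and is therefore preserved by the scalar $\varpi^{n}$. The one point worth making explicit is that the hypothesis on $\ov{\rho}$ (stated for one lattice) implies $T/\varpi T$ is irreducible for \emph{every} stable lattice $T$, because the semisimplification of the reduction is lattice-independent; with that observation your Nakayama step is complete.
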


\begin{rem} \label{length2} For an $\Oo$-module $M$,
$\val_{\ell}(\# M) = [\Oo/\varpi: \bfF_{\ell}]\length_{\Oo}(M)$. \end{rem}

Let $K$ be an imaginary quadratic field of prime discriminant $-D_K$.
 Let $\phi=\sum_{n=1}^{\iy} a(n) q^n \in \mN$ be such
that
$\ov{\rho}_{\phi}|_{G_K}$ is absolutely irreducible. Then by Proposition
\ref{congffrho93}, $f_{\phi, \chi}
\neq 0$. From now on we also assume that $\ad^0 \ov{\rho}_{\phi}|_{G_K}$, the
trace-0-endomorphisms of the representation space of $\ov{\rho}_{\phi}|_{G_K}$
with the usual $G_K$-action, is absolutely irreducible. Finally, to be able to show that the cohomology classes we produce are unramified at the prime $D_K$ we assume that (under the chosen embedding $\ov{\bfQ}_{\ell} \hookrightarrow \bfC$) the Fourier coefficient $a(D_K)$ is neither congruent to $D_K^k$ nor to $D_K^{k-4}$ modulo $\varpi$ (see \cite{Klosin09}, Lemma 9.23 for how this assumption is used).  By (\ref{specialform}) we have $$\rho_{f_{\phi, \chi}}\cong
(\rho_{\phi}|_{G_K} \oplus (\rho_{\phi}\otimes\epsilon)|_{G_K})\otimes \chi \epsilon^{2-k/2}.$$

Let $V$
denote the
representation space of $$\ad^0\rho_{\phi}|_{G_K}(-1)=\ad^0
\rho_{\phi}|_{G_K} \otimes
\epsilon^{-1} \subset \Hom_E((\rho_{\phi}\otimes \epsilon)|_{G_K}, \rho_{\phi}|_{G_K})$$ of
$G_K$. Let $T \subset
V$ be some choice of a
$G_K$-stable lattice.
Set $W=V/T$. Note that the action of $G_K$ on $V$ factors through
$G_{\Sigma_{\ell}}$. Since the mod $\varpi$ reduction of $\ad^0
\rho_{\phi}|_{G_K} \otimes
\epsilon^{-1}$ is absolutely irreducible by assumption,
$\val_{\ell}(H^1_f(K,W)^{\vee})$ is independent of the choice of $T$.

Let $\mN^{{\rm NM}}$, $\mH^{{\rm NM}, \chi}_{\Oo}$ and $\Phi$ be as in section \ref{The Maass ideal}. Let $\fm_{\phi}$ be the maximal ideal of $\mH^{{\rm NM}, \chi}_{\Oo}$ corresponding to $f_{\phi, \chi}$ and write $\mH^{\textup{NM}, \chi}_{\fm_{\phi}}$ for the localization of $\mH^{{\rm NM}, \chi}_{\Oo}$ at $\fm_{\phi}$ and $\Phi_{\fm_{\phi}}$ for the corresponding ``local'' component of $\Phi$.  Write $\mN_{f_{\phi, \chi}}^{\tuNM}$ for the subset of $\mN^{{\rm NM}}$ consisting of eigenforms whose corresponding maximal ideal of $\mH^{{\rm NM}, \chi}_{\Oo}$ is $\fm_{\phi}$.

The main result of this section is the following theorem.

\begin{thm} \label{Selmerrefined} Let $W$ be as above.
Suppose that for each $f \in \mN_{f_{\phi, \chi}}^{\tuNM}$, the representation
$\rho_f: G_K \rightarrow \GL_4(E)$ is absolutely irreducible. Then
$$\val_{\ell}(\# H^1_f(K, W)^{\vee}) \geq \val_{\ell} (\#
\mH^{\textup{NM}, \chi}_{\fm_{\phi}}/\Phi_{\fm_{\phi}}(\Ann (f_{\phi, \chi}))).$$  \end{thm}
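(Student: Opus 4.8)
The plan is to follow the strategy of \cite{Klosin09} (which is itself an instance of the method of Urban \cite{Urban01}), deducing the inequality from the congruences recorded by the Maass ideal together with the Galois representations furnished by Theorem \ref{skinnerurban435}. First I would organize the eigenforms in $\mN_{f_{\phi,\chi}}^{\tuNM}$ into the local Hecke algebra $\mH^{\textup{NM}, \chi}_{\fm_{\phi}}$, a complete local Noetherian $\Oo$-algebra with residue field $\Oo/\varpi$, and assemble the representations $\rho_f$, $f \in \mN_{f_{\phi,\chi}}^{\tuNM}$, into a single continuous representation (or pseudo-representation) $\rho \colon G_{K,\Sigma} \to \GL_4$ with coefficients in (a suitable normalization of) $\mH^{\textup{NM}, \chi}_{\fm_{\phi}}$, unramified outside $\Sigma = \Sigma_{\ell} \cup \{\fp \mid D_K\}$. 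By (\ref{specialform}) and Proposition \ref{congffrho93} the residual representation is the sum of the two \emph{distinct} irreducible two-dimensional blocks $\ov{\rho}_{\phi}|_{G_K} \otimes \chi\epsilon^{2-k/2}$ and $(\ov{\rho}_{\phi} \otimes \epsilon)|_{G_K} \otimes \chi\epsilon^{2-k/2}$; the Hom-space between these blocks is $\ad\rho_{\phi}|_{G_K}(-1)$, whose trace-zero part is exactly $W = \ad^0\rho_{\phi}|_{G_K}(-1)$. I would use the duality $\rho^{\vee}(3) \cong \rho^c\chi^{-2}$ of Theorem \ref{skinnerurban435}(iv) to see that it is this trace-zero piece that governs the construction and that the scalar complement $\epsilon^{-1}$ contributes nothing.

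Next I would invoke the lattice/extension construction of \cite{Urban01}: since each $\rho_f$ is absolutely irreducible (the hypothesis of the theorem) while its residual semisimplification is the reducible sum above, one can choose $G_K$-stable lattices whose reduction realizes a non-split extension of one block by the other, and the collective size of the resulting family of classes is controlled by the reducibility ideal of $\rho$. The crux is to identify this reducibility ideal with the Maass ideal $\Phi_{\fm_{\phi}}(\Ann(f_{\phi,\chi}))$: by Corollary \ref{cormain78} and the discussion in section \ref{The Maass ideal}, the index $\mH^{\textup{NM}, \chi}_{\fm_{\phi}}/\Phi_{\fm_{\phi}}(\Ann(f_{\phi,\chi}))$ measures precisely the congruences between $f_{\phi,\chi}$ and the non-Maass forms, which is the input to the extension class. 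This should yield an injection of $\bigl(\mH^{\textup{NM}, \chi}_{\fm_{\phi}}/\Phi_{\fm_{\phi}}(\Ann(f_{\phi,\chi}))\bigr)^{\vee}$ into $H^1(G_{K,\Sigma}, W)$, giving the inequality of $\ell$-valuations via Lemma \ref{length} and Remark \ref{length2} once the local conditions are verified.

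It then remains to check that the constructed classes actually lie in the Bloch--Kato Selmer group $H^1_f(K, W)$, i.e. that they satisfy the prescribed local conditions at each $\fp \in \Sigma$. At the primes above $\ell$ this follows from the crystallinity, indeed shortness (as $\ell > k$), of $\rho_f|_{D_{\fp}}$ in Theorem \ref{skinnerurban435}(ii),(iii), which forces the extensions to be crystalline and hence to land in $H^1_f(K_{\fp}, W)$, exactly as in \cite{Klosin09}. At the prime $D_K$, where $\rho_f$ is not known to be unramified (Remark \ref{unramat2}), I would invoke the assumption that $a(D_K)$ is congruent to neither $D_K^{k}$ nor $D_K^{k-4}$ modulo $\varpi$ (cf. \cite{Klosin09}, Lemma 9.23) to exclude ramified classes and conclude that the classes are unramified there.

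The main obstacle I anticipate is not the extension construction, which is formal once $\rho$ over $\mH^{\textup{NM}, \chi}_{\fm_{\phi}}$ is in place, but the bookkeeping that upgrades a single nonzero class to the \emph{exact} lower bound by the index: one must capture the full reducibility ideal rather than merely its radical, confirm that the scalar part of $\Hom((\rho_{\phi}\otimes\epsilon)|_{G_K}, \rho_{\phi}|_{G_K})$ does not interfere with the $\ad^0$-valued Selmer group, and match the precise Tate twist in $W = \ad^0\rho_{\phi}|_{G_K}(-1)$ with Urban's hypotheses. Since all of these steps are carried out in \cite{Klosin09} for $K = \bfQ(i)$ and depend only on $\ov{\rho}_{\phi}|_{G_K}$ rather than on $h_K = 1$, I expect them to transfer here with only notational changes.
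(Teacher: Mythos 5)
Your proposal follows essentially the same route as the paper, whose proof of this theorem simply defers to Theorem 9.10 of \cite{Klosin09} and summarizes the same key points you spell out: congruences between $f_{\phi,\chi}$ and non-Maass eigenforms yield non-split extensions of $\rho_{\phi}(1)|_{G_K}$ by $\rho_{\phi}|_{G_K}$ via Urban's lattice construction, the local conditions are verified using crystallinity/shortness at $\ell$ and the $a(D_K)$ hypothesis at $D_K$, and the classes are assembled to bound $H^1_f(K,W)$ below by the index of the Maass ideal. Your sketch is in fact more detailed than the paper's, and the subtleties you flag (capturing the full reducibility ideal, separating the scalar part of $\Hom$ from $\ad^0$) are precisely the ones handled in \cite{Klosin09}.
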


\begin{proof} This is proved in the same way as Theorem 9.10 in \cite{Klosin09} and we will not reproduce the proof here. The key point is that eigenforms $f \in \mS_{k, -k/2}^{{\rm NM},\chi}$ congruent to $f_{\phi, \chi}$ modulo powers of $\varpi$ give rise to non-split extensions of $\rho_{\phi}(1)|_{G_K}$ by $\rho_{\phi}|_{G_K}$. These extensions are checked to satisfy the local conditions defining the Selmer group and can be put together to generate a submodule of $H^1_f(K,W)$ of order no smaller than the index of the Maass ideal inside the local Hecke algebra. In this one mostly follows Urban \cite{Urban01}. \end{proof}

\begin{rem} The irreducibility assumption in Theorem \ref{Selmerrefined} is presumably unnecessary. If one assumes multiplicity one for the Maass forms in the sense that the only eigenforms in $\mM^{\chi}_{k,-k/2}$ (i.e., in particular holomorphic) sharing all Hecke eigenvalues with $f_{\phi, \chi}$ are multiples of $f_{\phi, \chi}$ then one needs to show that all non-CAP cuspidal automorphic representations have irreducible Galois representations. This is expected to be the case, but we know of no proof of this fact. \end{rem} 

\begin{cor} \label{Selemrrefined2} With the same assumptions and notation
as in Theorem \ref{mainthm} and Theorem \ref{Selmerrefined} we have
$$\val_{\ell}(\# H^1_f(K,W)) \geq n [\Oo/\varpi : \bfF].$$ If in addition the characters
$\xi, \beta, \chi$
in Theorem \ref{mainthm} can be taken as in Corollary \ref{cormain},
then
$$\val_{\ell}(\# H^1_f(K,W)) \geq \val_{\ell}(\#
\Oo/L^{\textup{int}}(\Symm \phi,
k)).$$ \end{cor}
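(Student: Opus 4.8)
The plan is to deduce this corollary by feeding the congruence bound of Theorem \ref{mainthm} into the Selmer-group estimate of Theorem \ref{Selmerrefined}, with the Maass-ideal index of Corollary \ref{CAPideal1} as the intermediary. The two arithmetic inputs are already in place: Theorem \ref{Selmerrefined} bounds $\val_\ell(\# H^1_f(K,W)^\vee)$ from below by the $\ell$-valuation of the order of the Maass-ideal quotient $\mH^{\textup{NM},\chi}_{\fm_\phi}/\Phi_{\fm_\phi}(\Ann(f_{\phi,\chi}))$, while Corollary \ref{CAPideal1} shows that the exponent $r$ attached to that quotient in diagram (\ref{diagramCAP32}) satisfies $r\geq n$. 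The remaining work is purely the reconciliation of these statements and a translation of lengths into $\ell$-valuations via Remark \ref{length2}.

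First I would identify the quotient appearing in Theorem \ref{Selmerrefined} with the one defining $r$. By diagram (\ref{diagramCAP32}) the non-localized quotient $\mH^{\textup{NM},\chi}_{\Oo}/\Phi(\Ann(f_{\phi,\chi}))$ is isomorphic to $\Oo/\varpi^r\Oo$; being a quotient of $\Oo$ it is supported only at the maximal ideal $\fm_\phi$ corresponding to $f_{\phi,\chi}$, so localizing at $\fm_\phi$ (which is exact) changes nothing and $\mH^{\textup{NM},\chi}_{\fm_\phi}/\Phi_{\fm_\phi}(\Ann(f_{\phi,\chi}))\cong\Oo/\varpi^r\Oo$ as well. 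Since $\length_\Oo(\Oo/\varpi^r\Oo)=r$, Remark \ref{length2} gives $\val_\ell(\#\,\mH^{\textup{NM},\chi}_{\fm_\phi}/\Phi_{\fm_\phi}(\Ann(f_{\phi,\chi})))=r\,[\Oo/\varpi:\bfF_\ell]$. Combining this with Theorem \ref{Selmerrefined} and then with $r\geq n$ from Corollary \ref{CAPideal1} yields $\val_\ell(\# H^1_f(K,W)^\vee)\geq n\,[\Oo/\varpi:\bfF_\ell]$. Finally, since $H^1_f(K,W)$ and $H^1_f(K,W)^\vee$ are mutual Pontryagin duals they have the same cardinality when finite (and the bound is vacuous otherwise, $H^1_f(K,W)^\vee$ being finitely generated over $\Oo$ by Lemma \ref{fingen}), so the same inequality holds for $\#H^1_f(K,W)$, which is the first assertion.

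For the second inequality I would invoke Corollary \ref{cormain}: under the stated hypothesis on $\xi,\beta,\chi$ one may take $n=\val_\varpi(L^{\tuint}(\Symm\phi,k))$. Because Theorem \ref{mainthm} is only non-trivial when $-n<0$, this valuation is positive, hence $L^{\tuint}(\Symm\phi,k)\in\Oo$ and $\#\,\Oo/L^{\tuint}(\Symm\phi,k)\Oo$ is meaningful. A second application of Remark \ref{length2}, using $\length_\Oo(\Oo/x\Oo)=\val_\varpi(x)$ for $x\in\Oo$, gives $\val_\ell(\#\,\Oo/L^{\tuint}(\Symm\phi,k)\Oo)=\val_\varpi(L^{\tuint}(\Symm\phi,k))\,[\Oo/\varpi:\bfF_\ell]=n\,[\Oo/\varpi:\bfF_\ell]$, which is exactly the lower bound just obtained. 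Substituting produces the displayed inequality.

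The argument is essentially bookkeeping, so I do not expect a genuine obstacle; the one point demanding care is the compatibility flagged above between the localized Hecke-algebra quotient used in Theorem \ref{Selmerrefined} and the global exponent $r$ of Corollary \ref{CAPideal1}. Making the support argument precise, namely that $\Phi(\Ann(f_{\phi,\chi}))$ localizes correctly and that the resulting module is concentrated at $\fm_\phi$, is the only step where one must be attentive rather than formal.
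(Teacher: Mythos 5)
Your proposal is correct and follows exactly the route the paper intends: Theorem \ref{Selmerrefined} combined with Corollary \ref{CAPideal1} (via diagram (\ref{diagramCAP32}) and Remark \ref{length2}), with Corollary \ref{cormain} supplying $n=\val_{\varpi}(L^{\tuint}(\Symm\phi,k))$ for the second inequality; the paper states this in one line as an immediate consequence. Your explicit reconciliation of the localized quotient in Theorem \ref{Selmerrefined} with the global exponent $r$ of Corollary \ref{CAPideal1} is a detail the paper leaves implicit, and you handle it correctly.
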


\begin{proof} The corollary follows immediately from Theorem
\ref{Selmerrefined} and Corollary \ref{CAPideal1}. \end{proof}

With the assumptions as in Corollary \ref{Selemrrefined2} we have thus the following inclusion of the fractional ideals of $\Oo$: 
\be \label{cont}\#H^1_f(K,W )\cdot \Oo \subset
L^{\tuint}(\Symm
\phi,k)\cdot \Oo .\ee
Note that since $\chi_K$ is the nebentypus of $\phi$ one has $$\ad^0\rho_{\phi}(-1)\chi_K = \Symm \rho_{\phi} (k-3).$$ Here we treat $\chi_K$ as a Galois character via class field theory.
%the symmetric square of $\phi$ can be related to the adjoint of $\phi$ and 
Hence assuming that a certain technicality concerning the Tamagawa factors at the prime $D_K$ can be proved (cf. section 9.3 in \cite{Klosin09}), the Bloch-Kato conjecture can be formulated as follows:

\begin{conj}[Bloch-Kato] \label{BKconj8} One has the following equality of fractional ideals of $\Oo$:
\be \label{BK8eq} \#H^1_f(\bfQ, \Symm \rho_{\phi}(k-3))\cdot \Oo = \#H^1_f(\bfQ, \ad^0\rho_{\phi}(-1)\chi_K)\cdot \Oo =
L^{\tuint}(\Symm
\phi,k)\cdot \Oo .\ee 
\end{conj}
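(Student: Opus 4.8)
The plan is to prove the two equalities separately and, for the analytic one, to split it into matching divisibilities. The first equality in (\ref{BK8eq}) is purely motivic: as recorded immediately before the statement, $\Symm\rho_{\phi}(k-3)=\ad^0\rho_{\phi}(-1)\chi_K$ as $G_{\bfQ}$-representations, so $H^1_f(\bfQ,\Symm\rho_{\phi}(k-3))$ and $H^1_f(\bfQ,\ad^0\rho_{\phi}(-1)\chi_K)$ are one and the same group and there is nothing to prove. It remains to establish
\[
\#H^1_f(\bfQ,\ad^0\rho_{\phi}(-1)\chi_K)\cdot\Oo = L^{\tuint}(\Symm\phi,k)\cdot\Oo .
\]
The link to the Selmer group over $K$ studied in this paper is the descent under $c:=\Gal(K/\bfQ)$. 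Because $\ell$ is odd, inflation--restriction yields a $c$-eigenspace decomposition
\[
H^1_f(K,W)=H^1_f(\bfQ,\ad^0\rho_{\phi}(-1))\oplus H^1_f(\bfQ,\ad^0\rho_{\phi}(-1)\chi_K),
\]
with $W$ as in Theorem~\ref{Selmerrefined}, the first summand being the $c=+1$ part and the second the $c=-1$ part. Thus the quantity to be computed is exactly the minus eigenspace of the $K$-Selmer group of section~\ref{Selmer groups}.

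For the lower bound I would refine the containment (\ref{cont}), which bounds the full group $H^1_f(K,W)$, to its minus component. The classes built in the proof of Theorem~\ref{Selmerrefined} arise from congruences between $f_{\phi,\chi}$ and eigenforms $f\in\mS_{k,-k/2}^{\tuNM,\chi}$ and represent extensions of the two diagonal blocks of (\ref{specialform}); the conjugate self-duality $\rho_f^{\vee}(3)\cong\rho_f^c\chi^{-2}$ of Theorem~\ref{skinnerurban435}(iv) swaps these blocks under $c$ and thereby forces each such extension class into the $\chi_K$-isotypic ($c=-1$) part. Carrying this refinement through the argument gives $\#H^1_f(\bfQ,\ad^0\rho_{\phi}(-1)\chi_K)\cdot\Oo\subset L^{\tuint}(\Symm\phi,k)\cdot\Oo$, and Corollary~\ref{cormain} together with the valuation identity $\val_{\ell}(L^{\tualg}(\Symm\phi,k))=\val_{\ell}(L^{\tualg}(\ad\phi,-1,\chi_K))$ recorded in the introduction identifies the exponent with $\val_{\varpi}(L^{\tuint}(\Symm\phi,k))$. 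The complementary plus eigenspace is governed by the untwisted value $L^{\tualg}(\Symm\phi,k-3)$, so no valuation leaks into the wrong summand.

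The reverse divisibility $\#H^1_f(\bfQ,\ad^0\rho_{\phi}(-1)\chi_K)\cdot\Oo\supset L^{\tuint}(\Symm\phi,k)\cdot\Oo$ is the crux and the main obstacle. I envisage two routes. The Iwasawa-theoretic one invokes the cyclotomic Main Conjecture for $\rho_{\phi}$ over $K$, whose ``Selmer divides $L$'' half is precisely the divisibility furnished by the Euler-system and Skinner--Urban methods alluded to in the introduction; one specializes at the relevant Tate twist, imposes the crystalline (resp. short) condition at $\ell$ supplied by Theorem~\ref{skinnerurban435}(ii)--(iii), and extracts the $c=-1$ isotypic part to land on $\ad^0\rho_{\phi}(-1)\chi_K$. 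The cleaner route upgrades the congruence input to a modularity statement: deform $\ov{\rho}_{\phi}|_{G_K}$ with the local conditions dictated by Theorem~\ref{skinnerurban435}, prove the attendant $R=T$ theorem by Taylor--Wiles--Kisin patching, and apply Wiles' numerical criterion. The criterion equates the order of the cotangent space of the deformation ring---which is $H^1_f(\bfQ,\ad^0\rho_{\phi}(-1)\chi_K)$---with the order of the congruence module of the Hecke algebra, and Proposition~\ref{Hida45} together with Theorem~\ref{FF11} rewrites that congruence module as $L^{\tuint}(\Symm\phi,k)$ up to a $\varpi$-adic unit. This route delivers both inequalities simultaneously, hence the full equality rather than merely the upper bound.

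The genuinely hard part lies entirely in the previous paragraph. Proving the modularity lifting theorem over the imaginary quadratic field $K$ with exactly the crystalline/short behaviour at $\ell$ and the prescribed ramification at $D_K$, and verifying the freeness and complete-intersection hypotheses of the numerical criterion in this CAP/endoscopic setting, are both substantial; the Iwasawa alternative is no easier, requiring the full main-conjecture divisibility for $\Symm\rho_{\phi}$. Finally, to pass from the refined Selmer estimate to the clean equality of fractional ideals of $\Oo$ as stated in (\ref{BK8eq}), one must still dispose of the Tamagawa-factor technicality at $D_K$ flagged in the text---this is exactly what the hypotheses $a(D_K)\not\equiv D_K^{k},D_K^{k-4}\pmod{\varpi}$ of section~\ref{Selmer groups} are designed to control.
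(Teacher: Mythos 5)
There is a fundamental mismatch here: the statement you are proving is labeled a \emph{conjecture} in the paper, and the paper offers no proof of it. What the paper actually establishes (Corollary \ref{Selemrrefined2} and the containment (\ref{cont})) is one divisibility over $K$, namely $\#H^1_f(K,W)\cdot\Oo \subset L^{\tuint}(\Symm\phi,k)\cdot\Oo$, and the text immediately after the conjecture states explicitly that this ``falls short of proving that the left-hand side of (\ref{BK8eq}) is contained in the right-hand side, because the module $H^1_f(K,W)$ can potentially be larger than $H^1_f(\bfQ,\ad^0\rho_{\phi}(-1)\chi_K)$.'' Your first step --- the inflation--restriction/Shapiro decomposition of $H^1_f(K,W)$ into $\pm$ eigenspaces for $\ell$ odd --- is fine and is implicit in the paper's introduction. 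But your claimed refinement, that the extension classes constructed from congruences land in the $c=-1$ eigenspace because the conjugate self-duality $\rho_f^{\vee}(3)\cong\rho_f^c\chi^{-2}$ of Theorem \ref{skinnerurban435}(iv) ``swaps the blocks,'' is precisely the step the paper flags as open, and the one-line argument does not work: the duality relates a class to its $c$-conjugate up to the pairing, which constrains but does not force anti-invariance; pinning down the eigenspace requires a genuinely finer analysis than anything in the paper (this is why the paper only asserts a bound for the Selmer group over $K$).

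The reverse divisibility paragraph is not a proof but a delegation to major unproven inputs, and the ``cleaner route'' contains a concrete error. Wiles' numerical criterion, via the Greenberg--Wiles formalism, identifies the cotangent space of a deformation ring with a Selmer group of the adjoint at the \emph{central} twist $\ad^0\rho_{\phi}(1)$ (this is exactly the case treated by Diamond--Flach--Guo, cited in the introduction); it does not compute $H^1_f(\bfQ,\ad^0\rho_{\phi}(-1)\chi_K)$, which is a different Tate twist to which Taylor--Wiles--Kisin patching has no direct access --- indeed the whole point of the congruence/Maass-ideal method of this paper (following Ribet, Brown, Urban) is that for such non-central twists only the one-sided ``$L$-value divides Selmer'' bound is available. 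The Iwasawa-theoretic alternative likewise assumes the main-conjecture divisibility for $\Symm^2\rho_{\phi}$ over $K$, which is nowhere established here. So the proposal proves neither inclusion beyond what the paper already has: the lower bound is not refined to the correct eigenspace over $\bfQ$, and the upper bound rests on inputs that are either unavailable or misapplied. (The first equality in (\ref{BK8eq}), being the identity $\Symm\rho_{\phi}(k-3)=\ad^0\rho_{\phi}(-1)\chi_K$ recorded in the paper, is the only part of your argument that is complete --- and it is trivial.)
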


Thus Corollary \ref{Selemrrefined2} provides evidence for Conjecture \ref{BKconj8}, but falls short of proving that the left-hand side of (\ref{BK8eq}) is contained in the right-hand side, because the module $H^1_f(K,W ) = H^1_f(K, \ad^0\rho_{\phi}|_{G_K}(-1))$ can potentially be larger than $H^1_f(\bfQ, \ad^0\rho_{\phi}(-1)\chi_K)$. 
%See the next section for more details and a condition under which one can show that the constructed extensions indeed lie in $H^1_f(\bfQ, \ad^0\rho_{\phi}(-1)\chi_K)$, as desired, hence giving us the sought containment in (\ref{BK8eq}). Analogous results hold for the Selmer group of $\ad^0 \rho_{\phi}(2)$. 
For a more detailed discussion see \cite{Klosin09}, section 9.3.

\bibliography{standard2}
\bibliographystyle{plain}

\vspace{50pt}

\noindent Author's address:\\

\noindent Department of Mathematics\\
Queens College,\\
City University of New York,\\
65-30 Kissena Blvd.\\
Flushing, NY 11367\\
USA\\
\\
email: klosin@math.utah.edu

\end{document}